\theoremstyle{definition}
\newtheorem*{rep@theorem}{\rep@title}
\newcommand{\newreptheorem}[2]{%
\newenvironment{rep#1}[1]{%
 \def\rep@title{#2~\ref{##1}}%
 \begin{rep@theorem}}%
 {\end{rep@theorem}}}
\newtheorem{theorem}{Theorem}[section]
\newtheorem{definition}[theorem]{Definition}
\newtheorem{lemma}[theorem]{Lemma}
\newtheorem{proposition}[theorem]{Proposition}
\newtheorem{corollary}[theorem]{Corollary}
\newtheorem{conjecture}[theorem]{Conjecture}
\newtheorem*{claim}{Claim}
\theoremstyle{remark}
\newtheorem{remark}[theorem]{Remark}
\begin{document}

\title[Hodge theory for combinatorial geometries]{Hodge theory for combinatorial geometries}

\author{Karim Adiprasito}
\author{June Huh}
\author{Eric Katz}


\address{Einstein Institute of Mathematics, Hebrew University of Jerusalem, Jerusalem, Israel}
\email{adiprasito@math.huji.ac.il}

\address{Institute for Advanced Study, Princeton, New Jersey, USA.}
\email{junehuh@ias.edu}

\address{Department of Mathematics, The Ohio State University, Ohio, USA.}
\email{katz.60@osu.edu}

\maketitle

\section{Introduction} 

The combinatorial theory of matroids starts with Whitney \cite{Whitney}, who introduced matroids as models for independence 
 in vector spaces and graphs. See \cite[Chapter I]{SourceBook} for an excellent historical overview.
By definition, a \emph{matroid} $\mathrm{M}$ is given by a closure operator  defined on all subsets of a finite set $E$ satisfying the Steinitz-Mac\hspace{0.5mm}Lane exchange property: 
\begin{multline*}
\text{For every subset $I$ of $E$ and  every element $a$ not in the closure of $I$,}\\
\text{if $a$ is in the closure of ${I \cup\{ b\}}$, then $b$ is in the closure of  $I \cup \{a\}$.}
\end{multline*}
The matroid is called \emph{loopless} if the empty subset of $E$ is closed, and is called a \emph{combinatorial geometry} if in addition all single element subsets of $E$ are closed.
A closed subset of $E$  is called a \emph{flat} of $\mathrm{M}$, and every subset of $E$  has a well-defined rank and corank in the poset of all flats of $\mathrm{M}$.
The notion of matroid played a fundamental role in graph theory, coding theory, combinatorial optimization, and mathematical logic; we refer to \cite{Welsh} and \cite{Oxley} for general introduction.

As a generalization of the chromatic polynomial of a graph \cite{Birkhoff,WhitneyLE}, Rota defined for an arbitrary matroid $\mathrm{M}$  the \emph{characteristic polynomial} 
\[
\chi_\mathrm{M}( \lambda)=\sum_{I \subseteq E} (-1)^{|I|}\  \lambda^{\text{crk}(I)},
\]
where the sum is over all subsets $I \subseteq E$ and $\text{crk}(I)$ is the corank of $I$ in  $\mathrm{M}$ \cite{Foundations}.
Equivalently, the characteristic polynomial of $\mathrm{M}$ is
\[
\chi_{\mathrm{M}}(\lambda)\ =\ \sum_{F} \mu(\varnothing,F)
\hspace{0.5mm} \lambda^{\text{crk}(F)},
  \]
where the sum is over all flats $F$ of $\mathrm{M}$ and $\mu$ is the  M\"obius function of the poset of flats of $\mathrm{M}$,
 see Chapters 7 and 8 of~\cite{white}. Among the problems  that withstood many advances in matroid theory are the following log-concavity conjectures formulated in the 1970s.  

Write $r+1$ for the \emph{rank} of $\mathrm{M}$, that is, the rank of $E$ in the poset of flats of $\mathrm{M}$.

\begin{conjecture}\label{ConjectureCharacteristic}
Let $w_k(\mathrm{M})$ be the absolute value of the coefficient of $\lambda^{r-k+1}$ in the characteristic polynomial of $\mathrm{M}$.
Then the sequence $w_k(\mathrm{M})$ is log-concave:
\[
w_{k-1}(\mathrm{M}) w_{k+1}(\mathrm{M}) \le w_k(\mathrm{M})^2 \ \ \text{for all $1 \le k\le r$.}
\]
In particular, 
the sequence $w_k(\mathrm{M})$ is unimodal: 
\[
w_0(\mathrm{M}) \le w_1(\mathrm{M}) \le \cdots \le w_l(\mathrm{M})  \ge \cdots \ge w_r(\mathrm{M}) \ge w_{r+1}(\mathrm{M}) \ \ \text{for some index $l$.}
\]
\end{conjecture}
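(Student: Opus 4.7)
The plan is to identify the coefficients $w_k(\mathrm{M})$ with mixed intersection numbers in a combinatorially defined graded ring associated to $\mathrm{M}$ and then deduce the log-concavity from a Hodge-theoretic inequality that this ring satisfies. The strategy is modeled on the argument available for matroids realizable over $\mathbb{C}$: for such $\mathrm{M}$ the $w_k$ can be realized as intersection numbers on the smooth projective wonderful compactification of the arrangement complement, and the Khovanskii--Teissier inequalities there give the log-concavity. The work is to make that argument intrinsic so that it applies to arbitrary, possibly non-realizable, matroids.

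The first ingredient is the \emph{Chow ring} $A^\bullet(\mathrm{M})$: a graded $\mathbb{Z}$-algebra with top nonzero piece in degree $r$, generated in degree one by symbols $x_F$ indexed by the nonempty proper flats of $\mathrm{M}$, modulo the ``linear'' relations encoding the modular structure of $\mathrm{M}$ and the monomial relations forcing $x_F x_G = 0$ whenever the flats $F$ and $G$ are incomparable. This should be the Chow ring of the smooth toric variety defined by the Bergman fan of $\mathrm{M}$, and it comes with a degree isomorphism $\deg \colon A^r(\mathrm{M}) \xrightarrow{\sim} \mathbb{Z}$. I would next exhibit two distinguished nef classes $\alpha, \beta \in A^1(\mathrm{M})$, each a prescribed sum of certain generators $x_F$, and verify by a M\"obius-inversion computation directly from the definition of $\chi_\mathrm{M}$ that
\[
w_k(\mathrm{M}) \ =\ \deg\!\bigl(\alpha^{\,a_k}\hspace{0.3mm}\beta^{\,b_k}\bigr), \qquad a_k+b_k=r,
\]
for suitable exponents $a_k, b_k$. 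This reduces the conjecture to the log-concavity of the mixed-degree sequence $k \mapsto \deg(\alpha^{a_k}\beta^{b_k})$.

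The heart of the proof is to establish the \textbf{K\"ahler package} for $A^\bullet(\mathrm{M})$: Poincar\'e duality in top degree, the hard Lefschetz theorem, and the Hodge--Riemann bilinear relations, all holding with respect to classes in an open ``K\"ahler cone'' $\mathcal{K}(\mathrm{M}) \subseteq A^1(\mathrm{M})$ whose closure contains $\alpha$ and $\beta$. I would prove these three properties simultaneously by induction on $|E|$, comparing the Chow ring of $\mathrm{M}$ with those of simpler matroids via \emph{matroidal flips}: stellar subdivisions of the Bergman fan along distinguished flats. Each flip produces a short exact sequence relating the Chow rings before and after, and one must show that Poincar\'e duality is preserved, that hard Lefschetz persists, and --- the delicate point --- that the signature required by the Hodge--Riemann relations is maintained on the primitive part. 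This is where I expect all the difficulty to lie: without an ambient projective variety to invoke, the Hodge--Riemann signature must be transported across each flip by a purely combinatorial deformation argument that carefully tracks the primitive decomposition and ensures no eigenvalue of the Hodge form crosses zero except as that decomposition mandates.

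Once the K\"ahler package is established, the conjecture is immediate. The degree-one Hodge--Riemann relations say that for any $L$ in the K\"ahler cone the bilinear form $(x,y) \mapsto -\deg(L^{r-2}xy)$ is positive definite on the primitive part of $A^1(\mathrm{M})$; equivalently, the Hodge-index form $(x,y) \mapsto \deg(L^{r-2}xy)$ has signature $(+,-,\ldots,-)$. Applied to K\"ahler classes that limit to positive combinations of the nef classes $\alpha$ and $\beta$ in the closure of $\mathcal{K}(\mathrm{M})$, this yields
\[
\deg(\alpha^{a+1}\beta^{b-1}) \cdot \deg(\alpha^{a-1}\beta^{b+1}) \ \le\ \deg(\alpha^{a}\beta^{b})^2 \quad \text{whenever } a+b=r,
\]
which is the desired log-concavity $w_{k-1}(\mathrm{M})\hspace{0.3mm}w_{k+1}(\mathrm{M}) \le w_k(\mathrm{M})^2$. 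Unimodality then follows formally from log-concavity of a sequence of nonnegative integers.
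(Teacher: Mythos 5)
Your overall architecture --- the Chow ring of the Bergman fan, the K\"ahler package proved by induction across matroidal flips, and the two nef classes $\alpha,\beta$ --- is exactly the paper's. But two of your reduction steps fail as stated. First, the identity $w_k(\mathrm{M})=\deg(\alpha^{a_k}\beta^{b_k})$ with $a_k+b_k=r$ is false: $\chi_\mathrm{M}$ has degree $r+1$, so there are $r+2$ coefficients $w_0,\dots,w_{r+1}$ but only $r+1$ degrees $\deg(\alpha^{r-k}\beta^{k})$, and already for the Boolean matroid on two elements one has $w_1=2$ while every $\deg(\alpha^{a}\beta^{b})$ equals $1$. What is true is that the coefficients $\mu^k$ of the \emph{reduced} characteristic polynomial $\overline{\chi}_\mathrm{M}(\lambda)=\chi_\mathrm{M}(\lambda)/(\lambda-1)$ satisfy $\mu^k=\deg(\alpha^{r-k}\beta^{k})$; one must then recover $w_k=\mu^k+\mu^{k-1}$ and invoke the fact that convolving nonnegative log-concave sequences preserves log-concavity.

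Second, the inequality $\deg(\alpha^{a+1}\beta^{b-1})\deg(\alpha^{a-1}\beta^{b+1})\le\deg(\alpha^{a}\beta^{b})^2$ for \emph{all} $a+b=r$ does not follow from the degree-one Hodge--Riemann relations in the way you describe. With a single Lefschetz class $L$, the Hodge-index signature on $A^1$ gives $\deg(\ell_1^2L^{r-2})\deg(\ell_2^2L^{r-2})\le\deg(\ell_1\ell_2L^{r-2})^2$; letting $L\to\beta$ with $\ell_1=\alpha$, $\ell_2=\beta$ yields only the endpoint case $\mu^{r-2}\mu^{r}\le(\mu^{r-1})^2$, while taking $L$ a general positive combination of $\alpha$ and $\beta$ mixes all the intermediate degrees upon expanding $L^{r-2}$ and does not isolate the individual Alexandrov--Fenchel-type inequalities. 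The full two-parameter statement requires the \emph{mixed} Hodge--Riemann relations (positivity for the pairing $(x,y)\mapsto\deg(\ell_3\cdots\ell_r\,xy)$ with varying ample $\ell_i$), a strictly stronger assertion needing its own inductive proof. The paper sidesteps this: it proves only the endpoint inequality from degree-one HR applied to the nef classes $\alpha,\beta$, and obtains the cases $k<r-1$ by induction on rank via truncation, using $\mu^k(\mathrm{M})=\mu^k(\mathrm{tr}(\mathrm{M}))$ for $k<r$. You need one of these two devices to close the argument.
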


We remark that the positivity of the numbers $w_k(\mathrm{M})$  is used to deduce the unimodality from the log-concavity \cite[Chapter 15]{WelshBook}.

For chromatic polynomials, the unimodality was conjectured by Read, and the log-concavity was conjectured by Hoggar \cite{Read,Hoggar}.
The prediction of Read was then extended to arbitrary matroids by Rota  and  Heron,
and the conjecture in its full generality was given by Welsh  \cite{Rota,Heron,WelshBook}.
We refer to \cite[Chapter 8]{white}  and \cite[Chapter 15]{Oxley} for  overviews and historical accounts.

A subset $I \subseteq E$ is said to be \emph{independent} in $\mathrm{M}$ if no element $i$ in $I$ is in the closure of $I \setminus \{i\}$.
A related conjecture of Welsh and Mason  concerns the number of independent subsets of $E$ of given cardinality \cite{Welsh,Mason}.

\begin{conjecture}\label{ConjectureIndependence}
Let $f_k(\mathrm{M})$ be the number of independent subsets of $E$ with cardinality $k$.
Then the sequence $f_k(\mathrm{M})$ is log-concave:
\[
f_{k-1}(\mathrm{M}) f_{k+1}(\mathrm{M}) \le f_k(\mathrm{M})^2 \ \ \text{for all $1 \le k \le r$.}
\]
In particular, the sequence $f_k(\mathrm{M})$ is unimodal: 
\[
f_0(\mathrm{M}) \le f_1(\mathrm{M}) \le \cdots \le  f_l(\mathrm{M}) \ge \cdots \ge f_r(\mathrm{M}) \ge f_{r+1}(\mathrm{M}) \ \ \text{for some index $l$.}
\]
\end{conjecture}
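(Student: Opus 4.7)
The plan is to derive Conjecture~\ref{ConjectureIndependence} from Conjecture~\ref{ConjectureCharacteristic} by constructing, for each loopless matroid $\mathrm{M}$, an auxiliary matroid $\widetilde{\mathrm{M}}$ whose absolute Whitney numbers encode the independence numbers $f_k(\mathrm{M})$ in a way that preserves log-concavity. A natural candidate, following constructions of Brylawski and Lenz, is a truncation of a free extension of $\mathrm{M}$ by sufficiently many generic points; equivalently, one may use the \emph{free coextension} of $\mathrm{M}$, the matroid on $E \cup \{p\}$ one rank higher than $\mathrm{M}$ whose dual is obtained from $\mathrm{M}^{\ast}$ by freely adding $p$. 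Its flats admit a clean description: they are either of the form $F \cup \{p\}$ for a flat $F$ of $\mathrm{M}$, or they are independent subsets of $\mathrm{M}$ of rank at most $r$.

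First I would establish the key polynomial identity relating $\chi_{\widetilde{\mathrm{M}}}(\lambda)$ to the independence generating polynomial $f_\mathrm{M}(t) = \sum_k f_k(\mathrm{M}) \hspace{0.3mm} t^k$. Using the flat description, this reduces to a direct M\"obius computation in the lattice of flats of $\widetilde{\mathrm{M}}$, and expresses each $f_k(\mathrm{M})$ as a positive linear combination of absolute Whitney coefficients $w_j(\widetilde{\mathrm{M}})$ with log-concave, typically binomial-coefficient, weights. This step is pure matroid bookkeeping.

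Next I would invoke Conjecture~\ref{ConjectureCharacteristic} applied to $\widetilde{\mathrm{M}}$: the sequence $w_k(\widetilde{\mathrm{M}})$ is positive and log-concave. Combined with the key identity and the standard closure properties of positive log-concave sequences under binomial transforms and under convolution with positive log-concave sequences, this transfers log-concavity to the $f_k(\mathrm{M})$. Since $f_k(\mathrm{M}) > 0$ for $0 \le k \le r+1$ is evident from the definition, unimodality will then follow from log-concavity exactly as in the deduction recorded after Conjecture~\ref{ConjectureCharacteristic}.

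The principal obstacle is Conjecture~\ref{ConjectureCharacteristic} itself, which is not combinatorially accessible and requires the Hodge-theoretic framework developed in the body of the paper: the construction of the Chow ring of a matroid, and the verification of Poincar\'e duality, the hard Lefschetz theorem, and the Hodge--Riemann relations on it. By contrast, once Conjecture~\ref{ConjectureCharacteristic} is available for all loopless matroids, the reduction sketched here needs only elementary matroid combinatorics and standard manipulations of log-concave sequences, with no further appeal to geometry.
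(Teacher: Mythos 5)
Your choice of auxiliary matroid is exactly the one the paper uses: the free dual extension $\mathrm{M}\times p:=(\mathrm{M}^*+p)^*$, with the Brylawski--Lenz fact that the independence numbers of $\mathrm{M}$ appear as coefficients of a characteristic polynomial of $\mathrm{M}\times p$. But the reduction you then propose --- deriving Conjecture~\ref{ConjectureIndependence} from Conjecture~\ref{ConjectureCharacteristic} --- has a genuine gap, and the gap is precisely in your ``key polynomial identity.'' The correct identity is that $f_k(\mathrm{M})$ equals the absolute value of a coefficient of the \emph{reduced} characteristic polynomial $\overline{\chi}_{\mathrm{M}\times p}(\lambda)=\chi_{\mathrm{M}\times p}(\lambda)/(\lambda-1)$, i.e.\ $f_k(\mathrm{M})=\mu^k(\mathrm{M}\times p)$ in the paper's notation; it is \emph{not} a positive (binomial-weighted) combination of the absolute Whitney numbers $w_j(\mathrm{M}\times p)$. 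Since $w_j=\mu^j+\mu^{j-1}$, expressing $\mu^k$ in terms of the $w_j$ requires an alternating sum, and the logical implication between the two log-concavity statements runs the wrong way for your purposes: log-concavity of $(\mu^k)$ implies log-concavity of $(w_k)$ by convolution with the log-concave sequence $(1,1)$, but log-concavity of $(w_k)$ does not let you deconvolve to get log-concavity of $(\mu^k)$. Concretely, Conjecture~\ref{ConjectureCharacteristic} applied to $\mathrm{M}\times p$ only tells you that $f_k(\mathrm{M})+f_{k-1}(\mathrm{M})$ is log-concave in $k$, which does not yield log-concavity of $f_k(\mathrm{M})$.

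What the paper actually proves in Section~\ref{SectionLCConjectures} is the stronger statement that the coefficients $\mu^k(\mathrm{M})$ of the reduced characteristic polynomial are log-concave for every loopless matroid (Proposition~\ref{PropositionLogConcave}), obtained by identifying $\mu^k(\mathrm{M})$ with $\deg(\alpha_{\mathrm{M}}^{r-k}\beta_{\mathrm{M}}^k)$ and applying the Hodge--Riemann relations to the nef classes $\alpha_{\mathrm{M}},\beta_{\mathrm{M}}$; both Conjecture~\ref{ConjectureCharacteristic} and Conjecture~\ref{ConjectureIndependence} are then downstream of that single inequality. To repair your argument you should reduce Conjecture~\ref{ConjectureIndependence} to the log-concavity of the reduced characteristic polynomial of $\mathrm{M}\times p$ (which the paper supplies), not to Conjecture~\ref{ConjectureCharacteristic}. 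The closing step --- positivity of the $f_k$ and the passage from log-concavity to unimodality --- is fine as you state it.
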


We prove Conjecture \ref{ConjectureCharacteristic} and Conjecture \ref{ConjectureIndependence} 
by constructing a ``cohomology ring'' of $\mathrm{M}$ that satisfies the hard Lefschetz theorem and the Hodge-Riemann relations, see Theorem \ref{MainTheoremIntroduction}.

\subsection{} 



Matroid theory has experienced a remarkable development in the past century, and has been connected to diverse areas such as topology \cite{GM}, geometric model theory~\cite{Pillay}, and noncommutative geometry \cite{Neumann}. 
The study  of  hyperplane arrangements provided a particularly strong connection, see for example \cite{Orlik-Terao,Arrangements}. 
Most important for our purposes is the work of de Concini and Procesi on certain ``wonderful'' compactifications of hyperplane arrangement complements \cite{DeConcini-Procesi}. 
The original work focused only on realizable matroids, but Feichtner and Yuzvinsky \cite{Feichtner-Yuzvinsky} defined a commutative ring associated to an arbitrary matroid that specializes to the cohomology ring of a wonderful compactification in the realizable case. 

\begin{definition}
Let $S_\mathrm{M}$ be the polynomial ring
\[
S_\mathrm{M}:=\mathbb{R}\big[x_F| \text{$F$ is a nonempty proper flat of $\mathrm{M}$}\big].
\]
The \emph{Chow ring} of $\mathrm{M}$ is defined to be the quotient 
\[
A^*(\mathrm{M})_\mathbb{R}:=S_\mathrm{M}/(I_\mathrm{M}+J_\mathrm{M}),
\]
 where $I_\mathrm{M}$ is the ideal  generated by the quadratic monomials 
\[
x_{F_1}x_{F_2}, \ \ \text{$F_1$ and $F_2$ are two incomparable nonempty proper flats  of $\mathrm{M}$,}
\]
and  $J_\mathrm{M}$ is the ideal  generated by the linear forms 
\[
\sum_{i_1 \in F} x_F - \sum_{i_2 \in F} x_F, \ \ \text{$i_1$ and $i_2$ are  distinct elements of the ground set $E$.}
\]
\end{definition}

Conjecture \ref{ConjectureCharacteristic} was proved for matroids realizable over $\mathbb{C}$ in \cite{Huh} by relating  $w_k(\mathrm{M})$ to the Milnor numbers of a hyperplane arrangement realizing $\mathrm{M}$ over $\mathbb{C}$.
Subsequently in  \cite{Huh-Katz},  using the intersection theory of wonderful compactifications and the Khovanskii-Teissier inequality \cite[Section 1.6]{Lazarsfeld}, the conjecture was verified for matroids that are realizable over some field. Lenz used this result to deduce Conjecture \ref{ConjectureIndependence} for matroids realizable over some field \cite{Lenz}.

After the completion of \cite{Huh-Katz}, it was gradually realized that the validity of the Hodge-Riemann relations  for the Chow ring of $\mathrm{M}$  is a vital ingredient for the proof of the log-concavity conjectures, see Theorem \ref{MainTheoremIntroduction} below.
While the Chow ring of $\mathrm{M}$ could be defined for arbitrary $\mathrm{M}$, it was unclear how to formulate and prove the Hodge-Riemann relations. 
From the point of view of \cite{Feichtner-Yuzvinsky}, the ring $A^*(\mathrm{M})_\mathbb{R}$ is the Chow ring of a smooth, but noncompact toric variety $X(\Sigma_{\mathrm{M}})$, and  
there is no obvious way to reduce  to the classical case of projective varieties. 
In fact, we will see that $X(\Sigma_{\mathrm{M}})$ is ``Chow equivalent'' to a smooth or mildly singular projective variety over $\mathbb{K}$ if and only if the matroid $\mathrm{M}$ is realizable over $\mathbb{K}$, see Theorem~\ref{Chow-equivalence}.

\subsection{} 

We are  nearing a difficult chasm, as there is no reason to expect a working Hodge theory beyond the case of realizable matroids. Nevertheless, there was some evidence on the existence of such a theory
 for arbitrary matroids. 
 For example, it was proved in \cite{Adiprasito}, using the method of concentration of measure, that the log-concavity conjectures hold for a class of non-realizable matroids introduced by Goresky and MacPherson in \cite[III.4.1]{SMT}.


We now state the main theorem of this paper.
A real-valued function $c$ on the set of nonempty proper subsets of $E$  is said to be \emph{strictly submodular} if 
\[
 c_{I_1}+c_{I_2} > c_{I_1 \cap I_2} +c_{I_1 \cup I_2} \ \ \text{for any two incomparable subsets $I_1,I_2 \subseteq E$,}
\]
where we replace $c_\varnothing$ and $c_E$ by zero whenever they appear in the above inequality.
We note that strictly submodular functions exist. For example, 
\[
I \longmapsto |I| |E \setminus I|
\]
is a strictly submodular function.
A strictly submodular function $c$ defines an element
\[
\ell(c):= \sum_F c_F x_F\in A^1(\mathrm{M})_\mathbb{R},
\]
where the sum is over all nonempty proper flats of $\mathrm{M}$.
Note that the rank function of \emph{any} matroid on $E$ 
can, when restricted to the set of nonempty proper subsets of $E$, be obtained as a \emph{limit} of strictly submodular functions. 
We write ``$\text{deg}$'' for the isomorphism $A^r(\mathrm{M})_\mathbb{R} \simeq \mathbb{R}$ determined by the property that
\[
\text{deg}(x_{F_1}x_{F_2}\cdots x_{F_r})=1 \ \ \text{for any flag of nonempty proper flats} \ \ F_1 \subsetneq F_2 \subsetneq  \cdots \subsetneq F_r.
\]
We refer to Section \ref{SectionDegreeMap} for the existence and the uniqueness of the linear map ``$\text{deg}$''.

\begin{theorem}\label{MainTheoremIntroduction}
Let $\ell$ be an element of $A^1(\mathrm{M})_\mathbb{R}$ associated to a strictly submodular function.
\begin{compactenum}[(1)]
\item (Hard Lefschetz theorem) For every nonnegative integer $q \le \frac{r}{2}$, the multiplication by $\ell$ defines an isomorphism
\[
L_\ell^{q}:\ A^q(\mathrm{M})_{\mathbb{R}} \longrightarrow A^{r-q}(\mathrm{M})_{\mathbb{R}}, \qquad a \longmapsto  \ell^{r-2q} \cdot a.
\]
\item (Hodge--Riemann relations) For every nonnegative integer $q \le \frac{r}{2}$, the multiplication by $\ell$ defines a symmetric bilinear form
\[
Q_\ell^{q}:\ A^q(\mathrm{M})_{\mathbb{R}} \times A^q(\mathrm{M})_{\mathbb{R}} \longrightarrow \mathbb{R},  \qquad (a_1,a_2) \longmapsto (-1)^q \ \text{deg}(a_1 \cdot  L^q_\ell \ a_2)
\]
that is positive definite on the kernel of 
$\ell \cdot L^q_\ell$.
\end{compactenum} 
\end{theorem}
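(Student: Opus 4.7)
The plan is a double induction—an outer induction on the rank $r$ of $\mathrm{M}$ and an inner combinatorial interpolation between $A^*(\mathrm{M})_\mathbb{R}$ and a reference ring controlled by matroids of strictly smaller rank. Throughout I carry Poincaré duality as a third companion statement to (1) and (2), because the three Kähler-package properties are intertwined: for any Poincaré-duality algebra, HR at a single class together with HL on an open cone forces HR on the entire connected component of that cone (the signature cannot jump except where HL fails). Since the set of strictly submodular classes is convex and hence connected, it suffices to verify HR at one well-chosen reference class and HL at every strictly submodular class.

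The main technical device is an order-filter presentation. For an order filter $\mathscr{P}$ in the lattice of nonempty proper flats of $\mathrm{M}$, introduce a ``partial Chow ring'' $A^*(\mathrm{M},\mathscr{P})_\mathbb{R}$ built using only the variables $x_F$ with $F \in \mathscr{P}$, subject to the obvious analogues of $I_\mathrm{M}$ and $J_\mathrm{M}$. When $\mathscr{P}$ is the full set of nonempty proper flats, this recovers $A^*(\mathrm{M})_\mathbb{R}$; when $\mathscr{P}$ is as small as possible, the ring admits a direct description in terms of Chow rings of restrictions $\mathrm{M}|_F$ and contractions $\mathrm{M}/F$, both of rank strictly less than $r$, for which HL and HR are available from the outer induction.

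The inner induction adjoins one flat $F$ to $\mathscr{P}$ at a time; call each such step a \emph{matroidal flip}. The key structural input is a semi-small decomposition
\[
A^*(\mathrm{M},\mathscr{P} \cup \{F\})_\mathbb{R} \;\cong\; A^*(\mathrm{M},\mathscr{P})_\mathbb{R} \;\oplus\; \bigoplus_{i} \bigl(\text{correction summand}_i\bigr),
\]
in which the correction summands are degree-shifted tensor products of Chow rings of $\mathrm{M}|_F$ and $\mathrm{M}/F$, and the decomposition is orthogonal with respect to the Hodge form $Q^q_\ell$ of a reference class $\ell$ placed on the common boundary of the two flip chambers. The outer induction supplies HR for each correction summand, and the orthogonality then propagates HR from $(\mathrm{M},\mathscr{P})$ to $(\mathrm{M},\mathscr{P} \cup \{F\})$. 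After finitely many flips one reaches $A^*(\mathrm{M})_\mathbb{R}$ at the anchor class, and the formal reduction from the first paragraph upgrades this to HR and HL at every strictly submodular class.

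The hard part, in my view, is the semi-small decomposition and its sign bookkeeping. One must exhibit the decomposition explicitly, verify that the Hodge form is orthogonal across the summands, and check that each correction summand carries precisely the sign of HR that aligns with the $(-1)^q$ appearing in the definition of $Q^q_\ell$. Getting these signs to match is what makes a matroidal flip compatible with the outer induction; the remaining ingredients—choosing the anchor class, running the inner induction, and propagating from the anchor class to the full strictly submodular cone by continuity of the signature—are comparatively formal once the flip lemma is in place.
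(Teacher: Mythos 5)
Your architecture --- order filters, one-flat-at-a-time matroidal flips, a decomposition of $A^*(\mathrm{M},\mathscr{P}\cup\{F\})_\mathbb{R}$ into $A^*(\mathrm{M},\mathscr{P})_\mathbb{R}$ plus correction summands built from smaller matroids, Poincar\'e duality carried as a companion statement, and propagation of HR through the ample cone by constancy of the signature --- is exactly the paper's. But there is a genuine gap in how you close the loop. Your own reduction requires HL \emph{at every strictly submodular class}, and you dismiss obtaining it as ``comparatively formal.'' It is not: nothing in the flip lemma gives nondegeneracy of $Q^q_\ell$ away from the anchor class, and without nondegeneracy along a path the signature cannot be transported. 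The paper's mechanism is a separate pillar of the proof: after replacing $\mathrm{M}$ by its combinatorial geometry, the star of every ray of $\Sigma_{\mathrm{M},\mathscr{P}}$ is a product of at most two smaller Bergman fans (Proposition~\ref{Star}); the outer induction together with a tensor-product lemma (Proposition~\ref{TensorHR}, whose base case is a Lindstr\"om--Gessel--Viennot computation) gives HR for each such star, hence for each quotient $A^*(\Sigma)_\mathbb{R}/\text{ann}(x_\mathbf{e})$; and a positivity argument (Proposition~\ref{lHRtoHL}) converts these \emph{local} Hodge--Riemann relations into the \emph{global} hard Lefschetz property for every ample class. Your proposal contains no substitute for this local-to-global step, and without it the continuity argument in your first paragraph cannot start.

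A second, more technical problem: the decomposition cannot be made orthogonal with respect to the Hodge form of a class ``on the common boundary of the two flip chambers.'' For the boundary class $\Phi_Z(\ell_-)$ the form is in fact \emph{degenerate} on the correction summands --- top-degree products of Gysin images with small total exponent of $x_Z$ vanish (Lemma~\ref{LemmaUpperTriangular}) --- so ``HR for each correction summand'' with respect to that form is not even well posed. The paper instead takes $\ell_+(t)=\Phi_Z(\ell_-)-t\hspace{0.5mm}x_Z$ for small $t>0$, conjugates $Q^q_{\ell_+(t)}$ by a $t$-dependent rescaling of the Gysin summands, and shows the rescaled forms converge as $t\to 0$ to a block form whose blocks are the HR form on $A^*(\mathrm{M},\mathscr{P}_-)_\mathbb{R}$ and the HR form of $1\otimes\ell_Z-x_Z\otimes 1$ on $\mathbb{R}[x_Z]/(x_Z^{\mathrm{rk}(Z)-1})\otimes_\mathbb{R} A^*(\mathrm{M}_Z)_\mathbb{R}$; note this is the contraction tensored with a truncated polynomial ring, not the Chow rings of both the restriction and the contraction as you state. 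The sign bookkeeping you correctly identify as delicate is resolved by the minus sign in front of $x_Z$ and by the degree normalization $(-x_Z)^{\mathrm{rk}(Z)-2}\mapsto 1$ on the truncated factor.
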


In fact, we will prove that  the Chow ring of $\mathrm{M}$ satisfies the hard Lefschetz theorem and the Hodge-Riemann relations with respect to any strictly convex piecewise linear function on the tropical linear space $\Sigma_\mathrm{M}$ associated to $\mathrm{M}$, see Theorem~\ref{MainTheoremBody}.
This implies Theorem~\ref{MainTheoremIntroduction}.
Our proof of the hard Lefschetz theorem and the Hodge-Riemann relations for general matroids is inspired by an ingenious inductive proof of the analogous facts for simple polytopes given by McMullen \cite{McMullen} (compare also \cite{CM} for related ideas in a different context). To show that this program, with a considerable amount of work, extends beyond polytopes, is our main purpose~here.

In Section \ref{SectionLCConjectures}, we show that the Hodge-Riemann relations, which are in fact stronger than the hard Lefschetz theorem, imply Conjecture \ref{ConjectureCharacteristic} and Conjecture \ref{ConjectureIndependence}.
We remark that, in the context of projective toric varieties, a similar reasoning leads to the Alexandrov-Fenchel inequality on mixed volumes of convex bodies.
In this respect, broadly speaking the approach of the present paper can be viewed as following  Rota's idea
 that  log-concavity conjectures should follow from their relation with the theory of mixed volumes of convex bodies, see \cite{Kung}.
 
\subsection{} 

There are other combinatorial approaches to  intersection theory for matroids.
 Mikhalkin et. al. introduced an integral Hodge structure for arbitrary matroids modeled on the cohomology of hyperplane arrangement complements   \cite{IKMZ}.
Adiprasito and Bj\"orner showed that an analogue of the Lefschetz hyperplane section theorem holds for all smooth (i.e.\ locally matroidal) projective tropical varieties \cite{ABj-pr}. 

Theorem \ref{MainTheoremIntroduction} should be compared with the counterexample to a version of Hodge conjecture for positive currents in \cite{BH}:
The example used in \cite{BH} gives a tropical variety that satisfies Poincar\'e duality, the hard Lefschetz theorem, but not the Hodge-Riemann relations. 

Finally, we remark that Zilber and Hrushovski have worked on subjects related to intersection theory for finitary combinatorial geometries, see \cite{Hrushovski}.  At present the relationship between their approach and ours is unclear.

 \subsection{Overview over the paper.}
 Sections~2 and~3 develop basic combinatorics and geometry of order filters in the poset of nonempty proper flats of a matroid $\mathrm{M}$. The order filters and the corresponding geometric objects $\Sigma_{\mathrm{M},\mathscr{P}}$, which are related to each other by ``matroidal flips'', play a central role in our inductive approach to the main theorem \ref{MainTheoremIntroduction}.
 
Sections~4 and~5 discuss piecewise linear and polynomial functions on simplicial fans, 
and in particular those on the Bergman fan $\Sigma_\mathrm{M}$.
These sections are more conceptual than the previous sections,
and, with the exception of the important technical subsection 4.3, 
can be read immediately after the introduction.

 In Section~6, we prove that the Chow ring $A^*(\mathrm{M})$ satisfies Poincar\'e duality.
The result and the inductive scheme in its proof  will be used in the proof of the main theorem \ref{MainTheoremIntroduction}.
After some general algebraic preparation in Section~7, the Hard Lefschetz theorem and the Hodge-Riemann relations for  matroids will be proved in Section~8.

 In Section~9, we identify the coefficients of the reduced characteristic polynomial of a matroid as ``intersection numbers'' in the Chow ring of the matroid.
The identification is used to deduce the log-concavity conjectures from the Hodge-Riemann relations.

\subsection*{Acknowledgements}
The authors thank Patrick Brosnan, Eduardo Cattani, Ben Elias, Ehud Hrushovski, Gil Kalai,  and Sam Payne for valuable conversations. We thank Antoine Chambert-Loir, Chi Ho Yuen, and the anonymous referees for meticulous reading. Their valuable suggestions significantly improved the quality of the paper.
Karim Adiprasito was supported by a Minerva Fellowship from the Max Planck Society and NSF Grant DMS-1128155.
June Huh was supported by a Clay Research Fellowship and NSF Grant DMS-1128155.
Eric Katz  was supported by an NSERC Discovery grant.

\section{Finite sets and their subsets}

\subsection{}

Let $E$ be a nonempty finite set of cardinality $n+1$, say $\{0,1,\ldots,n\}$.
We write $\mathbb{Z}^E$ for the free abelian group generated by the standard basis vectors $\mathbf{e}_i$  corresponding to the elements $i \in E$.
For an arbitrary subset $I \subseteq E$, we set
\[
\mathbf{e}_I:=\sum_{i \in I} \mathbf{e}_i.
\]
We associate to the set $E$ a dual pair of rank $n$ free abelian groups
\[
\mathbf{N}_E:=\mathbb{Z}^E/ \langle\mathbf{e}_E\rangle, \qquad \mathbf{M}_E:=\mathbf{e}_E^\perp \subset \mathbb{Z}^E, \qquad \langle -,- \rangle:\mathbf{N}_E \times \mathbf{M}_E \longrightarrow \mathbb{Z}.
\]
The corresponding real vector spaces will be denoted
\[
\mathbf{N}_{E,\mathbb{R}}:=\mathbf{N}_E \otimes_\mathbb{Z} \mathbb{R}, \qquad \mathbf{M}_{E,\mathbb{R}}:=\mathbf{M}_E \otimes_\mathbb{Z} \mathbb{R}.
\]
We use the same symbols $\mathbf{e}_i$ and $\mathbf{e}_I$ to denote their images in $\mathbf{N}_E$ and $\mathbf{N}_{E,\mathbb{R}}$.

The groups $\mathbf{N}$ and $\mathbf{M}$ associated to nonempty finite sets are related to each other in a natural way. 
For example, if $F$ is a nonempty subset of $E$, then there is a surjective homomorphism
\[
\mathbf{N}_E \longrightarrow \mathbf{N}_F, \qquad \mathbf{e}_I \longmapsto \mathbf{e}_{I \cap F},
\]
and an injective homomorphism
\[
\mathbf{M}_F \longrightarrow \mathbf{M}_E, \qquad \mathbf{e}_i-\mathbf{e}_j \longmapsto \mathbf{e}_i-\mathbf{e}_j.
\]
If $F$ is a nonempty  proper subset of $E$, we have a decomposition
\[
(\mathbf{e}_F^\perp \subset \mathbf{M}_E)=(\mathbf{e}_{E\setminus F}^\perp \subset \mathbf{M}_E)=\mathbf{M}_F \oplus \mathbf{M}_{E \setminus F}.
\]
Dually, we have an isomorphism from the quotient space
\[
\mathbf{N}_E/\langle \mathbf{e}_F\rangle=\mathbf{N}_E/\langle \mathbf{e}_{E \setminus F} \rangle \longrightarrow \mathbf{N}_F \oplus \mathbf{N}_{E \setminus F}, \qquad \mathbf{e}_I \longmapsto \mathbf{e}_{I \cap F} \oplus \mathbf{e}_{I \setminus F}.
\]
This isomorphism will be used later to analyze local structure of Bergman fans. 

More generally, for any map between nonempty finite sets  $\pi:E_1 \to E_2$, there is an associated homomorphism
\[
\pi_\mathbf{N}:\mathbf{N}_{E_2} \longrightarrow \mathbf{N}_{E_1}, \quad \mathbf{e}_I \longmapsto \mathbf{e}_{\pi^{-1}(I)},
\]
and the dual homomorphism 
\[
\pi_\mathbf{M}:\mathbf{M}_{E_1} \longrightarrow \mathbf{M}_{E_2}, \qquad \mathbf{e}_i-\mathbf{e}_j \longmapsto \mathbf{e}_{\pi(i)}-\mathbf{e}_{\pi(j)}. 
\]
When $\pi$ is surjective, $\pi_\mathbf{N}$ is injective and $\pi_\mathbf{M}$ is surjective.

\subsection{}

Let $\mathscr{P}(E)$ be the poset of nonempty proper subsets of $E$.
Throughout this section the symbol $\mathscr{F}$ will stand for a totally ordered subset of $\mathscr{P}(E)$, that is, a flag of nonempty proper subsets of $E$:
\[
\mathscr{F}=\Big\{F_1 \subsetneq F_2 \subsetneq \cdots \subsetneq F_l\Big\} \subseteq \mathscr{P}(E).
\]
We write $\text{min}\ \mathscr{F}$ for the intersection of all subsets in $\mathscr{F}$.
In other words, we set
\[
\text{min}\ \mathscr{F}:=\begin{cases} F_1& \text{if $\mathscr{F}$ is nonempty,}\\E & \text{if $\mathscr{F}$ is empty.}\end{cases}
\]

\begin{definition}\label{BooleanCompatiblePair}
When $I$ is a proper subset of  $\text{min}\ \mathscr{F}$, we say that $I$ is \emph{compatible} with $\mathscr{F}$ in $E$, and write $I<\mathscr{F}$.
\end{definition}

The set of all compatible pairs in $E$ form a poset under the relation
\[
(I_1<\mathscr{F}_1) \preceq (I_2<\mathscr{F}_2) \Longleftrightarrow \text{$I_1 \subseteq I_2$ and $\mathscr{F}_1 \subseteq \mathscr{F}_2$}.
\]
We note that any maximal compatible pair $I<\mathscr{F}$ gives a basis of the group $\mathbf{N}_E$:
\[
\Big\{\text{$\mathbf{e}_i$ and $\mathbf{e}_F$ for $i \in I$ and $F \in \mathscr{F}$}\Big\} \subseteq \mathbf{N}_E.
\]
If $0$ is the unique element of $E$ not in $I$ and not in any member of $\mathscr{F}$, then the above basis of $\mathbf{N}_E$ is related to the basis $\{\mathbf{e}_1,\mathbf{e}_2,\ldots,\mathbf{e}_n\}$ by an invertible upper triangular matrix.

\begin{definition}
For each compatible pair $I<\mathscr{F}$ in $E$, we define two polyhedra
\begin{align*}
\vartriangle_{I < \mathscr{F}}&:=\text{conv}\Big\{\text{$\mathbf{e}_i$ and $\mathbf{e}_F$ for $i \in I$ and $F \in \mathscr{F}$}\Big\}\subseteq \mathbf{N}_{E,\mathbb{R}}, \\[2pt]
\sigma_{I <\mathscr{F}}&:=\text{cone}\Big\{\text{$\mathbf{e}_i$ and $\mathbf{e}_F$ for $i \in I$ and $F \in \mathscr{F}$}\Big\} \subseteq \mathbf{N}_{E,\mathbb{R}}.
\end{align*}
Here ``\text{conv}\ S'' stands for the set of convex combinations of a set of vectors $S$, and ``\text{cone}\ S'' stands for the set of nonnegative linear combinations of a set of vectors $S$.
\end{definition}

Since maximal compatible pairs give bases of $\mathbf{N}_E$, the polytope $\vartriangle_{I < \mathscr{F}}$ is a simplex, and the cone $\sigma_{I<\mathscr{F}}$ is unimodular with respect to the lattice $\mathbf{N}_E$.
When $\{i\}$ is compatible with $\mathscr{F}$, 
\[
\vartriangle_{\{i\} < \mathscr{F}}  = \vartriangle_{\varnothing < \{\{i\}\} \cup \mathscr{F}} \ \ \text{and} \ \
\sigma_{\{i\} < \mathscr{F}}  = \sigma_{\varnothing < \{\{i\}\} \cup \mathscr{F}}.
\]
Any proper subset of $E$ is compatible with the empty flag in $\mathscr{P}(E)$, and the empty subset of $E$ is compatible with any flag in $\mathscr{P}(E)$. 
Thus we may write the simplex $\vartriangle_{I<\mathscr{F}}$ as the simplicial join
\[
\vartriangle_{I<\mathscr{F}}\ =\ \vartriangle_{I<\varnothing}*\vartriangle_{\varnothing<\mathscr{F}}
\]
and the cone $\sigma_{I<\mathscr{F}}$ as the vector sum
\[
 \sigma_{I<\mathscr{F}}=\sigma_{I<\varnothing}+\sigma_{\varnothing<\mathscr{F}}.
\]
The set of all simplices of the form $\vartriangle_{I<\mathscr{F}}$ is in fact a simplicial complex. More precisely, we have
\[
\vartriangle_{I_1 < \mathscr{F}_1} \cap \vartriangle_{I_2 < \mathscr{F}_2}=\vartriangle_{I_1 \cap I_2 < \mathscr{F}_1 \cap \mathscr{F}_2} \ \ \text{if $|I_1|\neq 1$ and $|I_2|\neq 1$}.
\]

\subsection{}

An \emph{order filter} $\mathscr{P}$ of $\mathscr{P}(E)$ is a collection of nonempty proper subsets of $E$ with the following property:
\[
 \text{If $F_1 \subseteq F_2$ are nonempty proper subsets of $E$, then $F_1 \in \mathscr{P}$ implies $F_2 \in \mathscr{P}$.}
\]
We do not require that $\mathscr{P}$ is closed under intersection of subsets.
We will see in Proposition \ref{NormalFan} that any such order filter cuts out a simplicial sphere in the simplicial complex of compatible pairs.

\begin{definition}
The \emph{Bergman complex} of an order filter $\mathscr{P} \subseteq \mathscr{P}(E)$ is the collection of simplices
\[
\Delta_\mathscr{P}:=\Big\{\text{$\vartriangle_{I < \mathscr{F}}\ $  for $I \notin \mathscr{P}$ and $\mathscr{F} \subseteq\mathscr{P}$}\Big\}.
\]
The \emph{Bergman fan} of an order filter $\mathscr{P} \subseteq \mathscr{P}(E)$ is the collection of simplicial cones
\[
\Sigma_\mathscr{P}:=\Big\{ \text{$\sigma_{I < \mathscr{F}}\ $  for $I \notin \mathscr{P}$ and $\mathscr{F} \subseteq\mathscr{P}$}\Big\}.
\]
The Bergman complex $\Delta_\mathscr{P}$ is a simplicial complex because $\mathscr{P}$ is an order filter.
\end{definition}


The extreme cases $\mathscr{P}=\varnothing$ and $\mathscr{P}=\mathscr{P}(E)$ correspond to familiar geometric objects.
When $\mathscr{P}$ is empty, the collection $\Sigma_\mathscr{P}$ is the normal fan of the standard $n$-dimensional simplex
\[
\Delta_n:=\text{conv}\big\{\mathbf{e}_0,\mathbf{e}_1,\ldots,\mathbf{e}_n\big\} \subseteq \mathbb{R}^E.
\]
When $\mathscr{P}$ contains all nonempty proper subsets of $E$, the collection $\Sigma_\mathscr{P}$ is the normal fan of the $n$-dimensional permutohedron
\[
\Pi_n:=\text{conv}\Big\{(x_0,x_1,\ldots,x_n) \mid \text{$x_0,x_1,\ldots,x_n$ is a permutation of $0,1,\ldots,n$}\Big\} \subseteq \mathbb{R}^E.
\]
Proposition~\ref{NormalFan} below shows that, in general,  the Bergman complex $\Delta_\mathscr{P}$ is a simplicial sphere and $\Sigma_\mathscr{P}$ is a complete unimodular fan.

\begin{proposition}\label{NormalFan}
For any order filter $\mathscr{P} \subseteq \mathscr{P}(E)$, the collection $\Sigma_\mathscr{P}$ is  the normal fan of a polytope.
\end{proposition}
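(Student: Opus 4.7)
The plan is to realize $\Sigma_\mathscr{P}$ as the inner normal fan of the Minkowski sum
\[
P_\mathscr{P} \;:=\; \Delta_E \,+\, \sum_{F \in \mathscr{P}} \Delta_F, \qquad \Delta_F := \operatorname{conv}\{\mathbf{e}_i : i \in F\} \subset \mathbb{R}^E.
\]
This choice specializes correctly to the two extreme cases identified in the text: $P_\varnothing = \Delta_n$ has normal fan $\Sigma_\varnothing$, while $P_{\mathscr{P}(E)}$ is normally equivalent to the permutohedron $\Pi_n$. Since the inner normal fan of a Minkowski sum is the common refinement of the summands' fans, the normal cone of $P_\mathscr{P}$ at a vertex $v = \mathbf{e}_{j_E} + \sum_{F \in \mathscr{P}} \mathbf{e}_{j_F}$ is cut out by the inequalities $u_{j_G} \le u_k$ for $k \in G$ as $G$ ranges over $\mathscr{P}\cup\{E\}$, so the task reduces to matching these normal cones with the $\sigma_{I<\mathscr{F}}$.

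Fix a maximal compatible pair $(I<\mathscr{F})$ with $I \notin \mathscr{P}$ and $\mathscr{F} = \{F_1 \subsetneq \cdots \subsetneq F_l\} \subseteq \mathscr{P}$, so $|I| + l = n$. For $u = \sum_{i \in I} a_i \mathbf{e}_i + \sum_{F \in \mathscr{F}} b_F \mathbf{e}_F$ in the interior of $\sigma_{I<\mathscr{F}}$, the coordinate $u_k$ depends only on the block of the ordered partition
\[
E \;=\; (E\setminus F_l) \,\sqcup\, (F_l \setminus F_{l-1}) \,\sqcup\, \cdots \,\sqcup\, (F_1\setminus I) \,\sqcup\, I
\]
in which $k$ lies, and the values jump strictly upward from $0$ on $E\setminus F_l$ up to $a_i+\sum_F b_F$ on $I$. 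Maximality forces every block other than $I$ to be a singleton $\{k_0\},\{k_1\},\ldots,\{k_l\}$. So for every $G \in \mathscr{P}\cup\{E\}$ the minimum of $u|_G$ is attained at the unique element of $G$ in the lowest-valued block meeting $G$, unless $G\subseteq I$ — which the order-filter hypothesis rules out, since $G \in \mathscr{P}$ and $G \subseteq I$ would force $I \in \mathscr{P}$ by upward closure. Hence $\sigma_{I<\mathscr{F}}$ is contained in the normal cone of a single vertex $v_{I<\mathscr{F}}$ of $P_\mathscr{P}$, whose Minkowski data depends only on $(I,\mathscr{F})$.

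For the reverse inclusion, the defining inequalities $u_{j_G}\le u_k$ with $G \in \{E, F_l, F_{l-1}, \ldots, F_1\}$ collapse to the single chain
\[
u_{k_0} \le u_{k_1} \le \cdots \le u_{k_l} \le u_i \quad \text{for all } i \in I.
\]
Expanding $u$ in the unimodular basis $\{\mathbf{e}_i\}_{i\in I}\cup\{\mathbf{e}_F\}_{F\in\mathscr{F}}$ of $N_E$ (supplied by maximality of $(I,\mathscr{F})$), this chain becomes exactly $a_i \ge 0$, $b_F \ge 0$ — the defining inequalities of $\sigma_{I<\mathscr{F}}$. Every other $G \in \mathscr{P}\setminus\mathscr{F}$ contributes only redundant constraints: again $G\not\subseteq I$ by upward closure, so the minimizer of $u|_G$ is the $k_p$ with smallest index $p\le l$ in $G$, and the inequality $u_{k_p}\le u_k$ ($k\in G$) is implied by the chain. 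Since a generic $u\in N_{E,\mathbb{R}}$ determines a unique $(I,\mathscr{F})$ — sort its coordinates $u_{k_0}<\cdots<u_{k_n}$ and take $l$ to be the largest index with $\{k_l,\ldots,k_n\}\in\mathscr{P}$ — the maximal cones of $\Sigma_\mathscr{P}$ biject with those of the normal fan of $P_\mathscr{P}$, proving the proposition. The principal technical hurdle is exactly this redundancy check: $\sigma_{I<\mathscr{F}}$ is defined by only the $l+1$ chain inequalities, whereas the normal cone a priori carries one constraint per element of $\mathscr{P}$; showing that all of those reduce to the chain is where upward closure of $\mathscr{P}$ is essential.
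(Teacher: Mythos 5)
Your argument is correct, but it takes a genuinely different route from the paper. The paper never exhibits a polytope: it observes that passing from $\Sigma_{\mathscr{P}_-}$ to $\Sigma_{\mathscr{P}_-\cup\{Z\}}$ is a stellar subdivision along $\sigma_{Z<\varnothing}$, and concludes by induction from $\Sigma_\varnothing$, since a stellar subdivision of a normal fan is again a normal fan (truncating a face of the simplex). That inductive "one flat at a time" structure is reused throughout the paper (it is the prototype of the matroidal flip), which is what the paper's proof buys. Your proof instead produces the polytope explicitly as the Minkowski sum $\Delta_E+\sum_{F\in\mathscr{P}}\Delta_F$ and matches vertex normal cones with the maximal $\sigma_{I<\mathscr{F}}$; the cone computations (the collapse of the inequalities indexed by $E,F_l,\dots,F_1$ to a single chain, the redundancy of the constraints from $G\in\mathscr{P}\setminus\mathscr{F}$ via upward closure, and the unique recovery of $(I,\mathscr{F})$ from the sorted coordinates of a generic $u$) are all correct. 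What this buys is an explicit strictly convex support function $u\mapsto\min_E u+\sum_{F\in\mathscr{P}}\min_F u$, which connects directly to the submodular description of the ample cone in Section 4, at the cost of losing the inductive structure.

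One step should be made explicit. Equality of the sets of \emph{maximal} cones only shows that the normal fan of $P_\mathscr{P}$ equals the set of faces of maximal cones of $\Sigma_\mathscr{P}$; to conclude that this set is all of $\Sigma_\mathscr{P}$ you must also check that every $\sigma_{I<\mathscr{F}}$ with $I\notin\mathscr{P}$ and $\mathscr{F}\subseteq\mathscr{P}$ is a face of a maximal such cone. This follows easily from your sorting picture: refine the ordered partition $(E\setminus F_l,\dots,F_1\setminus I,I)$ to a linear order $k_0,\dots,k_n$ with $I$ on top, let $l'$ be the largest index with $\{k_{l'},\dots,k_n\}\in\mathscr{P}$, and note that upward closure of $\mathscr{P}$ together with $I\notin\mathscr{P}$ forces the resulting maximal pair to contain $(I,\mathscr{F})$. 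With that sentence added the proof is complete.
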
 

\begin{proof}
We show that $\Sigma_\mathscr{P}$ can be obtained from $\Sigma_\varnothing$ by 
performing a sequence of stellar subdivisions.
This implies that a polytope with normal fan $\Sigma_\mathscr{P}$ can be obtained by repeatedly truncating  faces of the standard simplex $\Delta_n$.
For a detailed discussion of stellar subdivisions of normal fans and truncations of polytopes, we refer to Chapters III and V of \cite{Ewald}.
In the language of toric geometry, this shows that the toric variety of $\Sigma_\mathscr{P}$ can be obtained from the $n$-dimensional projective space
by blowing-up torus orbit closures.

Choose a sequence of order filters   obtained by adding a single subset in $\mathscr{P}$ at a time:
\[
\varnothing,\ldots, \mathscr{P}_{-},\mathscr{P}_+,\ldots,\mathscr{P} \quad \text{with} \quad \mathscr{P}_+=\mathscr{P}_- \cup \{Z\}.
\]
The corresponding sequence of $\Sigma$ interpolates between the collections $\Sigma_\varnothing$ and $\Sigma_\mathscr{P}$:
\[
\Sigma_\varnothing \rightsquigarrow \ldots \rightsquigarrow    \Sigma_{\mathscr{P}_{-}}\rightsquigarrow \Sigma_{\mathscr{P}_{+}} \rightsquigarrow   \ldots \rightsquigarrow  \Sigma_{\mathscr{P}}.
\]
The modification in the middle replaces the cones of the form $\sigma_{Z<\mathscr{F}}$  with the sums of the form
\[
\sigma_{\varnothing<\{Z\}}+\sigma_{I<\mathscr{F}}, 
\]
where $I$ is any proper subset of $Z$.
In other words, the modification is the stellar subdivision of $\Sigma_{\mathscr{P}_-}$ relative to the cone $\sigma_{Z<\varnothing}$.
Since a stellar subdivision of the normal fan of a polytope is the normal fan of a polytope, by induction we know that the collection $\Sigma_\mathscr{P}$ is  the normal fan of a polytope.
\end{proof}

Note that, in the notation of the preceding paragraph, $\Sigma_{\mathscr{P}_-}=\Sigma_{\mathscr{P}_+}$ if $Z$ has cardinality $1$.

\section{Matroids and their flats}

\subsection{}

Let $\mathrm{M}$ be a loopless matroid of rank $r+1$ on the ground set $E$.
We denote $\text{rk}_\mathrm{M}$, $\text{crk}_\mathrm{M}$, and $\text{cl}_\mathrm{M}$ for the rank function, the corank function, and  the closure operator of $\mathrm{M}$ respectively. 
We omit the subscripts when $\mathrm{M}$ is understood from the context.
If $F$ is a nonempty proper flat of $\mathrm{M}$, we write
\begin{align*}
\mathrm{M}^F&:=\text{the restriction of $\mathrm{M}$ to $F$, a loopless matroid on $F$ of rank $=\text{rk}_\mathrm{M}(F)$},\\
\mathrm{M}_F&:=\text{the contraction of $\mathrm{M}$ by $F$, a loopless matroid on $E \setminus F$  of rank $=\text{crk}_\mathrm{M}(F)$}.
\end{align*}
We refer to \cite{Oxley} and \cite{WelshBook} for basic notions of matroid theory.

Let $\mathscr{P}(\mathrm{M})$ be the poset of nonempty proper flats of $\mathrm{M}$.
There is an injective map from the poset of the restriction
\[
\iota^F: \mathscr{P}(\mathrm{M}^F) \longrightarrow \mathscr{P}(\mathrm{M}), \qquad G \longmapsto G,
\]
and an injective map from the poset of the contraction
\[
\iota_F:\mathscr{P}(\mathrm{M}_F) \longrightarrow \mathscr{P}(\mathrm{M}), \qquad G \longmapsto G \cup F.
\]
We identify the flats of $\mathrm{M}_F$ with the flats of $\mathrm{M}$ containing $F$ using  $\iota_F$.
If $\mathscr{P}$ is a subset of $\mathscr{P}(\mathrm{M})$, we set
\[
\mathscr{P}^F:=(\iota^F)^{-1} \mathscr{P} \ \ \text{and} \ \ \mathscr{P}_F:=(\iota_F)^{-1} \mathscr{P}.
\]

\subsection{}

Throughout this section the symbol $\mathscr{F}$ will stand for a totally ordered subset of $\mathscr{P}(\mathrm{M})$, that is, a flag of nonempty proper flats of $\mathrm{M}$:
\[
\mathscr{F}=\Big\{F_1 \subsetneq F_2 \subsetneq \cdots \subsetneq F_l\Big\} \subseteq \mathscr{P}(\mathrm{M}).
\]
As before, we write $\text{min}\ \mathscr{F}$ for the intersection of all members of $\mathscr{F}$ inside $E$.
We extend the notion of compatibility in Definition~\ref{BooleanCompatiblePair} to the case when the matroid $\mathrm{M}$ is not Boolean.

\begin{definition}
When $I$ is a subset of $\text{min}\ \mathscr{F}$ of cardinality less than $\text{rk}_\mathrm{M}(\text{min}\ \mathscr{F})$, we say that $I$ is \emph{compatible} with $\mathscr{F}$ in $\mathrm{M}$, and write $I<_\mathrm{M}\mathscr{F}$.
\end{definition}

Since any flag of nonempty proper flats of $\mathrm{M}$ has length at most $r$, 
 any cone 
 \[
 \sigma_{I<_\mathrm{M}\mathscr{F}}=\text{cone}\Big\{\text{$\mathbf{e}_i$ and $\mathbf{e}_F$ for $i \in I$ and $F \in \mathscr{F}$}\Big\}
 \]
associated to a compatible pair in $\mathrm{M}$  has dimension at most $r$.
Conversely, any such cone  is contained in an $r$-dimensional  cone of the same type:
For this one may  take 
\begin{align*}
I' &= \text{a subset that is maximal among those containing $I$ and compatible with $\mathscr{F}$ in $\mathrm{M}$,}\\
\mathscr{F}' &= \text{a flag of flats maximal among those containing $\mathscr{F}$ and compatible with $I'$ in $\mathrm{M}$},
\end{align*}
or alternatively take
\begin{align*}
\mathscr{F}' &= \text{a flag of flats maximal among those containing $\mathscr{F}$ and compatible with $I$ in $\mathrm{M}$,}\\
I' &= \text{a subset that is maximal among those containing $I$ and compatible with $\mathscr{F}'$ in $\mathrm{M}$.}
\end{align*}

We note that any subset of $E$ with cardinality at most $r$ is compatible in $\mathrm{M}$ with the empty flag of flats, and the empty subset of $E$ is compatible in $\mathrm{M}$ with any flag of nonempty proper flats of $\mathrm{M}$. 
Therefore we may write
\[
\vartriangle_{I<_\mathrm{M}\mathscr{F}}\ =\ \vartriangle_{I<_\mathrm{M}\varnothing}*\vartriangle_{\varnothing<_\mathrm{M}\mathscr{F}}\ \ \text{and} \ \ \sigma_{I<_\mathrm{M}\mathscr{F}}=\sigma_{I<_\mathrm{M}\varnothing}+\sigma_{\varnothing<_\mathrm{M}\mathscr{F}}.
\]
The set of all simplices associated to compatible pairs in $\mathrm{M}$ form a simplicial complex, that is,
\[
\vartriangle_{I_1 <_\mathrm{M} \mathscr{F}_1} \cap \vartriangle_{I_2 <_\mathrm{M} \mathscr{F}_2}=\vartriangle_{I_1 \cap I_2 <_\mathrm{M} \mathscr{F}_1 \cap \mathscr{F}_2}.
\]

\subsection{}

An \emph{order filter} $\mathscr{P}$ of $\mathscr{P}(\mathrm{M})$ is a collection of nonempty proper flats of $\mathrm{M}$ with the following property:
\[
 \text{If $F_1 \subseteq F_2$ are nonempty proper flats of $\mathrm{M}$, then $F_1 \in \mathscr{P}$ implies $F_2 \in \mathscr{P}$.}
\]
We write $\widehat{\mathscr{P}}:=\mathscr{P} \cup \{E\}$ for the order filter of the lattice of flats of $\mathrm{M}$ generated by $\mathscr{P}$.

\begin{definition}\label{BergmanFan}
The \emph{Bergman fan} of an order filter $\mathscr{P} \subseteq \mathscr{P}(\mathrm{M})$ is the set of simplicial cones
\[
\Sigma_{\mathrm{M},\mathscr{P}}:=\Big\{ \text{$\sigma_{I < \mathscr{F}}\ $  for $\text{cl}_\mathrm{M}(I) \notin \widehat{\mathscr{P}}$ and $\mathscr{F} \subseteq\mathscr{P}$}\Big\}.
\]
The \emph{reduced Bergman fan} of $\mathscr{P}$ is the subset of the Bergman fan
\[
\widetilde{\Sigma}_{\mathrm{M},\mathscr{P}}:=\Big\{ \text{$\sigma_{I <_\mathrm{M} \mathscr{F}}\ $  for $\text{cl}_\mathrm{M}(I) \notin \widehat{\mathscr{P}}$ and $\mathscr{F} \subseteq\mathscr{P}$}\Big\}.
\]
When $\mathscr{P}=\mathscr{P}(\mathrm{M})$, we omit $\mathscr{P}$ from the notation and write the Bergman fan by $\Sigma_\mathrm{M}$.
\end{definition}

We note that the \emph{Bergman complex}  and the \emph{reduced Bergman complex} $\widetilde{\Delta}_{\mathrm{M},\mathscr{P}} \subseteq \Delta_{\mathrm{M},\mathscr{P}}$, defined in analogous ways using the simplices $\vartriangle_{I<\mathscr{F}}$ and $\vartriangle_{I<_\mathrm{M}\mathscr{F}}$,  share the same set of vertices.

Two extreme cases give familiar geometric objects.
When $\mathscr{P}$ is the set of all nonempty proper flats of $\mathrm{M}$, we have
\[
\Sigma_{\mathrm{M}}=\Sigma_{\mathrm{M},\mathscr{P}}=\widetilde{\Sigma}_{\mathrm{M},\mathscr{P}}=\text{the fine subdivision of the tropical linear space of $\mathrm{M}$ \cite{Ardila-Klivans}}.
\]
When $\mathscr{P}$ is empty, we have
\[
\widetilde{\Sigma}_{\mathrm{M},\varnothing}=\text{the $r$-dimensional skeleton of the normal fan of the simplex $\Delta_n$},
\]
and $\Sigma_{\mathrm{M},\varnothing}$ is the fan whose maximal cones are  $\sigma_{F<\varnothing}$ for rank $r$ flats $F$ of $\mathrm{M}$.
We remark that
\[
\Delta_{\mathrm{M},\varnothing}=\text{the Alexander dual of the matroid complex $\text{IN}(\textrm{M}^*)$ of the dual matroid $\mathrm{M}^*$}.
\]
See \cite{Bjorner} for basic facts on the matroid complexes and \cite[Chapter 5]{Miller-Sturmfels} for the Alexander dual of a simplicial complex.

We show that, in general, the Bergman fan and the reduced Bergman fan are indeed fans, and the reduced Bergman fan is pure of dimension $r$.

\begin{proposition}\label{Subfan}
The collection $\Sigma_{\mathrm{M},\mathscr{P}}$ is a subfan of the normal fan of a polytope.
\end{proposition}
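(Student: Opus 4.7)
The plan is to realize $\Sigma_{\mathrm{M},\mathscr{P}}$ as a subfan of a Boolean Bergman fan already produced by Proposition~\ref{NormalFan}. The key bookkeeping is to choose a companion order filter in $\mathscr{P}(E)$ whose non-membership condition exactly encodes the matroid condition $\text{cl}_{\mathrm{M}}(I) \notin \widehat{\mathscr{P}}$. Set
\[
\mathscr{P}':=\Big\{I \in \mathscr{P}(E)\ :\ \text{cl}_{\mathrm{M}}(I) \in \widehat{\mathscr{P}}\Big\}.
\]
Since $\text{cl}_{\mathrm{M}}$ is monotone and $\widehat{\mathscr{P}}$ is upward closed in the lattice of flats of $\mathrm{M}$, the inclusion $I_1 \subseteq I_2$ in $\mathscr{P}(E)$ forces $\text{cl}_{\mathrm{M}}(I_1) \subseteq \text{cl}_{\mathrm{M}}(I_2)$, so $\mathscr{P}'$ is an order filter of $\mathscr{P}(E)$ in the Boolean sense. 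Proposition~\ref{NormalFan} therefore provides a polytope whose normal fan is $\Sigma_{\mathscr{P}'}$.

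Next I would check the inclusion of collections $\Sigma_{\mathrm{M},\mathscr{P}} \subseteq \Sigma_{\mathscr{P}'}$. If $\sigma_{I<\mathscr{F}}$ lies in $\Sigma_{\mathrm{M},\mathscr{P}}$, then by definition $\text{cl}_{\mathrm{M}}(I) \notin \widehat{\mathscr{P}}$, which is precisely $I \notin \mathscr{P}'$. Each $F \in \mathscr{F}$ lies in $\mathscr{P}$ and is a flat equal to its own closure, so $\mathscr{F} \subseteq \mathscr{P}'$. Hence $\sigma_{I<\mathscr{F}}$ is one of the generating cones of $\Sigma_{\mathscr{P}'}$.

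Finally I would verify the subfan property. Because the generators $\{\mathbf{e}_i\}_{i\in I}\cup\{\mathbf{e}_F\}_{F\in\mathscr{F}}$ of $\sigma_{I<\mathscr{F}}$ form part of a basis of $N_E$, every face is of the form $\sigma_{I'<\mathscr{F}'}$ with $I' \subseteq I$ and $\mathscr{F}' \subseteq \mathscr{F}$. Monotonicity of $\text{cl}_{\mathrm{M}}$ gives $\text{cl}_{\mathrm{M}}(I') \subseteq \text{cl}_{\mathrm{M}}(I)$; since $\widehat{\mathscr{P}}$ is upward closed, $\text{cl}_{\mathrm{M}}(I) \notin \widehat{\mathscr{P}}$ forces $\text{cl}_{\mathrm{M}}(I') \notin \widehat{\mathscr{P}}$. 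Together with $\mathscr{F}' \subseteq \mathscr{F} \subseteq \mathscr{P}$, this shows $\sigma_{I'<\mathscr{F}'} \in \Sigma_{\mathrm{M},\mathscr{P}}$, so the collection is closed under taking faces inside the genuine fan $\Sigma_{\mathscr{P}'}$.

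The conceptual content sits in the first step, namely choosing $\mathscr{P}'$ so that the closure condition on $I$ translates to Boolean non-membership. Once the translation is in place, the order-filter property of $\mathscr{P}'$ and the face-closure of $\Sigma_{\mathrm{M},\mathscr{P}}$ both reduce to monotonicity of the closure operator, so I do not anticipate a genuine obstacle beyond this packaging.
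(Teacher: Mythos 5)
Your proof is correct and takes essentially the same approach as the paper: the paper uses the order filter $\widetilde{\mathscr{P}}$ of subsets of $E$ containing a flat of $\mathscr{P}$, which sits inside your $\mathscr{P}'$, but either choice yields a Boolean Bergman fan that is a normal fan by Proposition~\ref{NormalFan} and contains $\Sigma_{\mathrm{M},\mathscr{P}}$. Your closing face-closure verification is exactly the paper's opening observation that $\mathscr{P}$ being an order filter makes $\Sigma_{\mathrm{M},\mathscr{P}}$ closed under taking faces.
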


\begin{proof}
Since $\mathscr{P}$ is an order filter, any face of a cone in $\Sigma_{\mathrm{M},\mathscr{P}}$ is in $\Sigma_{\mathrm{M},\mathscr{P}}$.
Therefore it is enough to show that there is a normal fan of a polytope that contains $\Sigma_{\mathrm{M},\mathscr{P}}$ as a subset.

For this we consider the order filter of $\mathscr{P}(E)$ generated by $\mathscr{P}$, that is, the collection of sets
\[
\widetilde{\mathscr{P}}:=\big\{\text{nonempty proper subset of $E$ containing a flat in $\mathscr{P}$}\big\} \subseteq \mathscr{P}(E).
\]
If the closure of  $I \subseteq E$ in $\mathrm{M}$ is not in $\widehat{\mathscr{P}}$, then $I$ does not contain any flat in  $\mathscr{P}$,
and hence
\[
\Sigma_{\mathrm{M},\mathscr{P}} \subseteq \Sigma_{\widetilde{\mathscr{P}}}.
\]
The latter collection is the normal fan of a polytope by Proposition~\ref{NormalFan}.
\end{proof}

Since $\mathscr{P}$ is an order filter, any face of a cone in $\widetilde{\Sigma}_{\mathrm{M},\mathscr{P}}$ is in $\widetilde{\Sigma}_{\mathrm{M},\mathscr{P}}$, and hence  $\widetilde{\Sigma}_{\mathrm{M},\mathscr{P}}$ is a subfan of $\Sigma_{\mathrm{M},\mathscr{P}}$. 
It follows that the reduced Bergman fan also is a subfan of the normal fan of a polytope.

\begin{proposition}\label{Purity}
The reduced Bergman fan $\widetilde{\Sigma}_{\mathrm{M},\mathscr{P}}$ is pure of dimension $r$.
\end{proposition}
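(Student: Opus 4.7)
The plan is to show that every cone $\sigma_{I<_\mathrm{M}\mathscr{F}}$ in $\widetilde{\Sigma}_{\mathrm{M},\mathscr{P}}$ is a face of an $r$-dimensional cone of $\widetilde{\Sigma}_{\mathrm{M},\mathscr{P}}$. The generators $\{\mathbf{e}_i\}_{i\in I}\cup\{\mathbf{e}_F\}_{F\in\mathscr{F}}$ are linearly independent in $N_E$: the matroid compatibility $I<_\mathrm{M}\mathscr{F}$ implies the Boolean compatibility $I<\mathscr{F}$ (since $|I|<\text{rk}_\mathrm{M}(\min\mathscr{F})\le|\min\mathscr{F}|$), and any maximal Boolean extension gives a basis of $N_E$ by Section~2. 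So $\dim\sigma_{I<_\mathrm{M}\mathscr{F}}=|I|+|\mathscr{F}|$. Writing $\mathscr{F}=\{F_1\subsetneq\cdots\subsetneq F_l\}$, the compatibility $|I|<\text{rk}_\mathrm{M}(F_1)$ together with $\text{rk}_\mathrm{M}(F_1)<\cdots<\text{rk}_\mathrm{M}(F_l)\le r$ yields $|I|+l\le r$, with equality precisely when $|I|=\text{rk}_\mathrm{M}(F_1)-1$, the ranks jump by exactly one, and $F_l$ is a coatom. So if the cone already has dimension $r$ there is nothing to do; otherwise I need to construct a saturated extension of $(I,\mathscr{F})$.

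Write $G_0:=\text{cl}_\mathrm{M}(I)$ and $F_0:=\min\mathscr{F}$ (with $F_0:=E$ when $\mathscr{F}=\varnothing$). If no flat of $\mathscr{P}$ lies in $[G_0,F_0]$, then $\mathscr{F}=\varnothing$ is forced and I would take $\mathscr{F}':=\varnothing$ together with any $I'\subseteq E$ of cardinality $r$ containing $I$: the hypothesis prevents $\text{cl}_\mathrm{M}(I')\in\mathscr{P}$, and $\text{cl}_\mathrm{M}(I')\ne E$ because $|I'|<r+1$. Otherwise choose a minimal element $F^\sharp$ of the nonempty poset $\{F\in\mathscr{P}:G_0\subseteq F\subseteq F_0\}$ and set
\[
F^\dagger:=\begin{cases}F^\sharp & \text{if }|I|<\text{rk}_\mathrm{M}(F^\sharp),\\ \text{a flat of rank $|I|+1$ with $F^\sharp\subseteq F^\dagger\subseteq F_0$} & \text{if }|I|\ge\text{rk}_\mathrm{M}(F^\sharp).\end{cases}
\]
In the second branch such $F^\dagger$ exists as a proper flat by semimodularity of the flat lattice, using $\text{rk}_\mathrm{M}(F^\sharp)\le|I|+1\le\text{rk}_\mathrm{M}(F_0)$ together with the non-maximality bound $|I|+1\le r$; and $F^\dagger\in\mathscr{P}$ by the order filter property. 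Set $I':=I$ in the second branch, and in the first branch extend $I$ to any $I'\subseteq F^\dagger$ of cardinality $\text{rk}_\mathrm{M}(F^\dagger)-1$. Finally let $\mathscr{F}'$ be a saturated chain of proper flats of $\mathrm{M}$ running from $F^\dagger$, through the flats of $\mathscr{F}$, up to a coatom of $\mathrm{M}$; every flat of $\mathscr{F}'$ lies in $\mathscr{P}$ by the order filter property.

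To verify that $(I',\mathscr{F}')$ defines a cone in $\widetilde{\Sigma}_{\mathrm{M},\mathscr{P}}$ extending $\sigma_{I<_\mathrm{M}\mathscr{F}}$ and having dimension $r$: the compatibility $I'<_\mathrm{M}\mathscr{F}'$ holds since $|I'|<\text{rk}_\mathrm{M}(F^\dagger)=\text{rk}_\mathrm{M}(\min\mathscr{F}')$, and this forces $\text{cl}_\mathrm{M}(I')\subsetneq F^\dagger$, in particular $\text{cl}_\mathrm{M}(I')\ne E$. In the second branch $\text{cl}_\mathrm{M}(I')=G_0\notin\widehat{\mathscr{P}}$ by hypothesis, while in the first branch $G_0\subseteq\text{cl}_\mathrm{M}(I')\subsetneq F^\sharp$, and if $\text{cl}_\mathrm{M}(I')$ were in $\mathscr{P}$ it would contradict the minimality of $F^\sharp$. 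A direct count yields $|I'|+|\mathscr{F}'|=(\text{rk}_\mathrm{M}(F^\dagger)-1)+(r-\text{rk}_\mathrm{M}(F^\dagger)+1)=r$. The main obstacle that forces the case split is that $|I|$ can exceed $\text{rk}_\mathrm{M}(G_0)$ when $I$ is $\mathrm{M}$-dependent, so the minimal $\mathscr{P}$-flat above $G_0$ can have rank too small to serve as $\min\mathscr{F}'$ without violating compatibility; the second branch sidesteps this by enlarging $F^\dagger$ above $F^\sharp$ instead of extending $I$.
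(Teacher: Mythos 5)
Your construction is correct: the dimension count, the compatibility checks, and the verification that $\text{cl}_{\mathrm{M}}(I')\notin\widehat{\mathscr{P}}$ all go through in each branch. The paper reaches the same goal by a dual extremal choice: it first extends $\mathscr{F}$ to a flag $\mathscr{F}'$ \emph{maximal} among flags in $\mathscr{P}$ containing $\mathscr{F}$ and compatible with $I$, then picks a flat $F$ \emph{maximal} among those containing $I$ and strictly contained in $\text{min}\ \mathscr{F}'$, and enlarges $I$ inside $F$ to cardinality $\text{rk}_{\mathrm{M}}(F)=r-|\mathscr{F}'|$; the condition $\text{cl}_{\mathrm{M}}(I')\notin\mathscr{P}$ is then deduced from $F\notin\mathscr{P}$, which follows from the maximality of $\mathscr{F}'$. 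You instead anchor on the \emph{minimal} $\mathscr{P}$-flat $F^{\sharp}$ in the interval $[\text{cl}_{\mathrm{M}}(I),\text{min}\ \mathscr{F}]$ and treat explicitly, via the branch $|I|\ge\text{rk}_{\mathrm{M}}(F^{\sharp})$, the possibility that $I$ is dependent and its cardinality exceeds the rank of that flat --- a subtlety the paper absorbs into the single chain of inequalities $|I|\le r-|\mathscr{F}'|\le|F|$. Both arguments rest on the same underlying facts (the order-filter property of $\mathscr{P}$ and the gradedness of the lattice of flats); yours is longer and requires the case split, but the closure verification is more transparent, since minimality of $F^{\sharp}$ immediately excludes $\text{cl}_{\mathrm{M}}(I')\in\mathscr{P}$, whereas the paper's corresponding step (deducing $\text{cl}_{\mathrm{M}}(I)=F$ from $|I|\ge\text{rk}_{\mathrm{M}}(F)$) is stated rather tersely.
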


\begin{proof}
Let $I$ be a subset of $E$ whose closure is not in $\mathscr{P}$, and let $\mathscr{F}$ be a flag of flats in $\mathscr{P}$ compatible with $I$ in $\mathrm{M}$.
We show that there are $I'$  containing $I$  and $\mathscr{F}'$ containing $\mathscr{F}$ such that
\[
I'<_\mathrm{M} \mathscr{F}', \quad \text{cl}_\mathrm{M}(I') \notin \widehat{\mathscr{P}}, \quad \mathscr{F}' \subseteq \mathscr{P}, \quad \text{and} \quad |I'|+|\mathscr{F}'|=r.
\]

First choose any flag of flats $\mathscr{F}'$ that is maximal among those containing $\mathscr{F}$, contained in $\mathscr{P}$, and compatible with $I$ in $\mathrm{M}$.
Next choose any flat $F$ of $\mathrm{M}$ that is maximal among those containing $I$ and strictly contained in $\text{min}\ \mathscr{F}'$.

We note that, by the maximality of  $F$ and the maximality of $\mathscr{F}'$ respectively,
\[
\text{rk}_\mathrm{M}(F)=\text{rk}_\mathrm{M}(\text{min}\ \mathscr{F}') -1=r-|\mathscr{F}'|.
\]
Since the rank of a set is at most its cardinality, the above implies
 \[
|I| \le r-|\mathscr{F}'| \le |F|.
\]
This shows that  there is $I'$ containing $I$, contained in $F$, and with cardinality exactly $r-|\mathscr{F}'|$.
Any such $I'$ is automatically compatible with $\mathscr{F}'$ in $\mathrm{M}$.

We show that the closure of $I'$ is not in $\mathscr{P}$ by showing that 
the flat $F$ is not in $\mathscr{P}$. If otherwise, by the maximality of $\mathscr{F}'$, the set $I$ cannot be compatible in $\mathrm{M}$ with the flag $\{F\}$, meaning
\[
|I| \ge \text{rk}_\mathrm{M}(F).
\]
The above implies that the closure of $I$ in $\mathrm{M}$, which is not in $\mathscr{P}$, is equal to $F$. This gives the desired contradiction.
\end{proof}

Our inductive approach to the hard Lefschetz theorem and the Hodge-Riemann relations for matroids is modeled on the observation that any facet of a permutohedron is the product of two smaller permutohedrons.
We note below that the Bergman fan $\Sigma_{\mathrm{M},\mathscr{P}}$ has an analogous local structure when $\mathrm{M}$ has no parallel elements, that is, when no two elements of $E$ are contained in a common rank $1$ flat of $\mathrm{M}$.

Recall that the \emph{star} of a cone $\sigma$ in a fan $\Sigma$ in a vector space $\mathbf{N}_\mathbb{R}$ is the fan 
\[
\text{star}(\sigma,\Sigma):=\big\{\overline{\sigma'} \mid \text{$\overline{\sigma'} $ is the image in $\mathbf{N}_\mathbb{R}/\langle \sigma\rangle$ of a cone $\sigma'$ in $\Sigma$ containing $\sigma$}\big\}.
\]
If $\sigma$ is a ray generated by a  vector $\mathbf{e}$,
we write $\text{star}(\mathbf{e},\Sigma)$ for the star of $\sigma$ in $\Sigma$.

\begin{proposition}\label{Star}
Let $\mathrm{M}$ be a loopless matroid on $E$, and let $\mathscr{P}$ be an order filter of $\mathscr{P}(\mathrm{M})$.
\begin{enumerate}[(1)]\itemsep 5pt
\item If $F$ is a flat in $\mathscr{P}$, then the isomorphism
$
\mathbf{N}_{E}/\langle \mathbf{e}_F\rangle \rightarrow \mathbf{N}_{F} \oplus \mathbf{N}_{E \setminus F}
$
induces a bijection
\[
\text{star}(\mathbf{e}_F,\Sigma_{\mathrm{M},\mathscr{P}}) \longrightarrow \Sigma_{\mathrm{M}^F,\mathscr{P}^F} \times \Sigma_{\mathrm{M}_F}.
\]
\item If $\{i\}$ is a proper flat of $\mathrm{M}$, then the isomorphism
$
\mathbf{N}_E/\langle \mathbf{e}_i \rangle \rightarrow \mathbf{N}_{E \setminus \{i\}} 
$
induces a bijection
\[
\text{star}(\mathbf{e}_i,\Sigma_{\mathrm{M},\mathscr{P}}) \longrightarrow \Sigma_{\mathrm{M}_{\{i\}},\mathscr{P}_{\{i\}}}.
\]
\end{enumerate}
Under the same assumptions, the stars of $\mathbf{e}_F$ and $\mathbf{e}_i$ in the reduced Bergman fan $\widetilde{\Sigma}_{\mathrm{M},\mathscr{P}}$ admit analogous descriptions.
\end{proposition}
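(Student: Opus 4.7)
The plan is to classify the cones in each star by reading off their compatible pairs and then to transport the generators via the lattice isomorphisms from Section~2.

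For part~(1), I would first observe that a cone $\sigma_{I<\mathscr{F}}$ in $\Sigma_{\mathrm{M},\mathscr{P}}$ contains the ray $\mathbb{R}_{\geq 0}\mathbf{e}_F$ precisely when $F\in\mathscr{F}$, and that compatibility $I\subsetneq\min\mathscr{F}\subseteq F$ then forces $I\subseteq F$. Splitting $\mathscr{F}=\mathscr{F}_{<}\sqcup\{F\}\sqcup\mathscr{F}_{>}$ by membership relative to $F$, the maps $\iota^F$ and $\iota_F$ identify $\mathscr{F}_{<}$ with a flag in $\mathscr{P}(\mathrm{M}^F)$ and identify $\mathscr{F}_{>}$, via $G\mapsto G\setminus F$, with a flag of nonempty proper flats of $\mathrm{M}_F$. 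Under $N_E/\langle\mathbf{e}_F\rangle\cong N_F\oplus N_{E\setminus F}$, the generators $\mathbf{e}_j$ for $j\in I$ and $\mathbf{e}_G$ for $G\in\mathscr{F}_{<}$ land in the first summand, while $\mathbf{e}_G=\mathbf{e}_{G\cap F}+\mathbf{e}_{G\setminus F}$ for $G\in\mathscr{F}_{>}$ maps to $(0,\mathbf{e}_{G\setminus F})$ because $\mathbf{e}_F=0$ in $N_F$. Hence $\overline{\sigma_{I<\mathscr{F}}}$ is the product cone $\sigma_{I<\mathscr{F}_{<}}\times\sigma_{\varnothing<\mathscr{F}_{>}'}$ with $\mathscr{F}_{>}'=\{G\setminus F:G\in\mathscr{F}_{>}\}$.

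The remaining check is that every decoration is preserved. Since $I\subseteq F$ the closures in $\mathrm{M}$ and in $\mathrm{M}^F$ agree, and as $F\in\mathscr{P}$ the exclusion $\text{cl}_\mathrm{M}(I)\notin\widehat{\mathscr{P}}$ is equivalent to $\text{cl}_{\mathrm{M}^F}(I)\notin\widehat{\mathscr{P}^F}$; the condition $\mathscr{F}_{<}\subseteq\mathscr{P}^F$ is automatic, and on the $\mathrm{M}_F$ side the requirement $\text{cl}_{\mathrm{M}_F}(\varnothing)\notin\widehat{\mathscr{P}(\mathrm{M}_F)}$ follows from loopless-ness of $\mathrm{M}_F$, which in turn uses that $F$ is a flat of $\mathrm{M}$. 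Reassembly $(J,\mathscr{G}_{1};\mathscr{G}_{2})\mapsto(J,\mathscr{G}_{1}\cup\{F\}\cup\{G\cup F:G\in\mathscr{G}_{2}\})$ is the evident inverse. For part~(2) one either specialises~(1) to $F=\{i\}$ when $\{i\}\in\mathscr{P}$, noting that $\mathrm{M}^{\{i\}}$ is rank one with trivial Bergman fan and that $\mathscr{P}_{\{i\}}=\mathscr{P}(\mathrm{M}_{\{i\}})$ because $\mathscr{P}$ is an order filter containing $\{i\}$, or one repeats the same analysis when $\{i\}\notin\mathscr{P}$, where the ray $\mathbb{R}_{\geq 0}\mathbf{e}_i$ instead arises from $\sigma_{\{i\}<\varnothing}$ and only the case $i\in I$ contributes.

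The reduced Bergman fan assertion follows word for word: the constraint $|I|<\text{rk}_\mathrm{M}(\min\mathscr{F})$ splits additively into $|I|<\text{rk}_{\mathrm{M}^F}(\min\mathscr{F}_{<})$ on the restriction side and the inequality $0<\text{rk}_{\mathrm{M}_F}(\min\mathscr{F}_{>}')$ on the contraction side, the latter being automatic because $\mathrm{M}_F$ is loopless. The main bookkeeping hurdle is the translation of the closure/filter condition $\widehat{\mathscr{P}}$ under restriction and contraction, together with the need in part~(2) to separate the two possible origins of the ray $\mathbb{R}_{\geq 0}\mathbf{e}_i$ depending on whether the singleton $\{i\}$ sits in $\mathscr{P}$ as a flat or is merely the support of an element generator.
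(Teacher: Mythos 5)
Your verification is correct; the paper actually states Proposition~\ref{Star} without proof, treating it as immediate from the definitions, and your argument supplies exactly the definitional unwinding intended: identify the cones containing the relevant ray, split the flag at $F$, transport generators under $N_E/\langle \mathbf{e}_F\rangle \cong N_F \oplus N_{E\setminus F}$, and match the conditions $\text{cl}_\mathrm{M}(I)\notin\widehat{\mathscr{P}}$ and $\mathscr{F}\subseteq\mathscr{P}$ with their counterparts for $\mathrm{M}^F$, $\mathrm{M}_F$, and $\mathrm{M}_{\{i\}}$, including the necessary case split in (2) on whether $\{i\}\in\mathscr{P}$. The only imprecision is the phrase ``splits additively'' for the reduced-fan condition: the inequality $|I|<\text{rk}_\mathrm{M}(\text{min}\ \mathscr{F})$ is carried entirely by the restriction factor (since $\text{min}\ \mathscr{F}\subseteq F$) while the contraction factor's condition is vacuous by looplessness, but your following clause makes clear this is what you mean.
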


Recall that a loopless matroid  is a combinatorial geometry if all single element subsets of E are flats.
When $\mathrm{M}$ is not a combinatorial geometry, the star of $\mathbf{e}_i$ in $\Sigma_{\mathrm{M},\mathscr{P}}$ is not necessarily a product of smaller Bergman fans.
However, when $\mathrm{M}$ is a combinatorial geometry, Proposition~\ref{Star} shows that
the star of every ray in $\Sigma_{\mathrm{M},\mathscr{P}}$ is a product of at most two Bergman fans.

\section{Piecewise linear functions and their convexity}\label{SectionPLFunctions}

\subsection{}\label{SectionGroupA1}

Piecewise linear functions on possibly incomplete fans will play an important role throughout the paper. In this section, we prove several general properties concerning convexity of such functions, working with a dual pair free abelian groups 
\[
\langle -,-\rangle: \mathbf{N} \times \mathbf{M} \longrightarrow \mathbb{Z}, \quad \mathbf{N}_\mathbb{R}:=\mathbf{N} \otimes_\mathbb{Z} \mathbb{R}, \quad \mathbf{M}_\mathbb{R}:=\mathbf{M} \otimes_\mathbb{Z} \mathbb{R},
\]
and a  fan $\Sigma$ in the vector space $\mathbf{N}_\mathbb{R}$. Throughout this section we assume that $\Sigma$ is  \emph{unimodular}, that is, every cone in $\Sigma$ is generated by a part of a basis of $\mathbf{N}$.
The set of primitive ray generators of $\Sigma$ will be denoted $V_\Sigma$.

We say that a function $\ell:|\Sigma| \rightarrow \mathbb{R}$ is  \emph{piecewise linear} if it is continuous and the restriction of $\ell$ to any cone in $\Sigma$ is the restriction of a  linear function on $\mathbf{N}_{\mathbb{R}}$. 
The function $\ell$ is said to be \emph{integral} if
\[
\ell\big( |\Sigma| \cap \mathbf{N}\big) \subseteq \mathbb{Z},
\]
and the function $\ell$ is said to be \emph{positive} if
\[
\ell\big( |\Sigma| \setminus \{0\}\big) \subseteq \mathbb{R}_{>0}.
\]

An important example of a piecewise linear function on $\Sigma$ is the \emph{Courant function} $x_\mathbf{e}$ associated to a primitive ray generator $\mathbf{e}$ of $\Sigma$,
whose values at $V_\Sigma$ are given by the Kronecker delta function.
Since  $\Sigma$ is unimodular, the Courant functions are integral, and they form a basis of the group of integral piecewise linear functions on $\Sigma$:
\[
\text{PL}(\Sigma)=\Bigg\{  \sum_{\mathbf{e} \in V_\Sigma} c_\mathbf{e} \hspace{0.5mm}x_\mathbf{e}
\mid c_\mathbf{e} \in \mathbb{Z} \Bigg\} \simeq \mathbb{Z}^{V_{\Sigma}}.
\]
An integral linear function on $\mathbf{N}_\mathbb{R}$ restricts to an integral piecewise linear function on $\Sigma$, giving a homomorphism
\[
\text{res}_\Sigma: \mathbf{M} \longrightarrow \text{PL}(\Sigma), \qquad m \longmapsto \sum_{\mathbf{e} \in V_\Sigma} \langle \mathbf{e},m \rangle   \hspace{0.5mm} x_\mathbf{e}.
\]
We denote the cokernel of the restriction map by
\[
A^1(\Sigma):=\text{PL}(\Sigma)/\mathbf{M}.
\]
In general, this group may have torsion, even under our assumption that $\Sigma$ is unimodular. 
When integral piecewise linear functions $\ell$ and $\ell'$ on $\Sigma$ differ by the restriction of an integral linear function on $\mathbf{N}_\mathbb{R}$, we say that $\ell$ and $\ell'$ are \emph{equivalent} over $\mathbb{Z}$.

Note that the group of piecewise linear functions  modulo linear functions on $\Sigma$ can be identified with the tensor product
\[
A^1(\Sigma)_\mathbb{R}:=A^1(\Sigma) \otimes_\mathbb{Z} \mathbb{R}.
\]
When piecewise linear functions $\ell$ and $\ell'$ on $\Sigma$ differ by the restriction of a linear function on $\mathbf{N}_\mathbb{R}$, we say that $\ell$ and $\ell'$ are \emph{equivalent}.

We describe three basic pullback homomorphisms between the groups $A^1$.
Let $\Sigma'$ be a subfan of $\Sigma$, and  let $\sigma$ be a cone in $\Sigma$. 

\begin{enumerate}[(1)]\itemsep 5pt
\item
The restriction of functions from $\Sigma$ to $\Sigma'$ defines a surjective homomorphism
\[
\text{PL}(\Sigma) \longrightarrow \text{PL}(\Sigma'),
\]
and this descends to a surjective homomorphism
\[
\textrm{p}_{\Sigma' \subseteq \Sigma}: A^1(\Sigma) \longrightarrow A^1(\Sigma').
\]
In terms of Courant functions, $\textrm{p}_{\Sigma' \subseteq \Sigma}$ is uniquely determined by its values
\[
x_\mathbf{e} \longmapsto \begin{cases} x_{\mathbf{e}} & \text{if $\mathbf{e}$ is in $V_{\Sigma'}$,} \\ 0& \text{if otherwise}. \end{cases}
\]
\item
Any integral piecewise linear function $\ell$ on $\Sigma$ is equivalent over $\mathbb{Z}$ to an integral piecewise linear function $\ell'$ that is zero on $\sigma$, and the choice of such $\ell'$ is unique up to an integral linear function on $\mathbf{N}_\mathbb{R}/\langle \sigma\rangle$.
Therefore we have a surjective homomorphism
\[
\textrm{p}_{\sigma \in \Sigma}: A^1(\Sigma) \longrightarrow A^1(\text{star}(\sigma,\Sigma)),
\]
uniquely determined by its values on $x_{\mathbf{e}}$ for primitive ray generators $\mathbf{e}$ not contained in $\sigma$:
\[
x_\mathbf{e} \longmapsto \begin{cases} x_{\overline{\mathbf{e}}} & \text{if there is a cone in $\Sigma$ containing $\mathbf{e}$ and $\sigma$,} \\ 0& \text{if otherwise}. \end{cases}
\]
Here $\overline{\mathbf{e}}$ is the image of $\mathbf{e}$ in the quotient space $\mathbf{N}_\mathbb{R}/\langle \sigma\rangle$.
\item
A piecewise linear function on the product of two fans $\Sigma_1 \times \Sigma_2$ is the sum of its restrictions to the subfans 
\[
\Sigma_1 \times \{0\} \subseteq \Sigma_1 \times \Sigma_2 \ \ \text{and} \ \ \{0\} \times \Sigma_2 \subseteq  \Sigma_1 \times \Sigma_2.
\]
Therefore we have an isomorphism
\[
\text{PL}(\Sigma_1 \times \Sigma_2) \simeq \text{PL}(\Sigma_1) \oplus \text{PL}(\Sigma_2),
\]
and this descends to an isomorphism
 \[
\textrm{p}_{\Sigma_1,\Sigma_2}: A^1(\Sigma_1 \times \Sigma_2) \simeq A^1(\Sigma_1) \oplus A^1(\Sigma_2).
\]
\end{enumerate}

\subsection{}

We define the \emph{link} of a cone $\sigma$ in $\Sigma$ to be the collection
\[
\text{link}(\sigma,\Sigma):=\big\{\sigma' \in \Sigma \mid \text{$\sigma'$ is contained in a cone in $\Sigma$ containing $\sigma$, and $\sigma \cap \sigma'=\{0\}$} \big\}.
\]
Note that the link of $\sigma$ in $\Sigma$ is a subfan of $\Sigma$.

\begin{definition}\label{DefinitionConvexity}
Let $\ell$ be a piecewise linear function on $\Sigma$, and let $\sigma$ be a cone in  $\Sigma$.
\begin{enumerate}[(1)]\itemsep 5pt
\item The function  $\ell$ is \emph{convex} around $\sigma$ if it is equivalent to a piecewise linear function that is zero on $\sigma$ and nonnegative on the rays of the link of $\sigma$.
\item The function $\ell$ is \emph{strictly convex} around $\sigma$ if it is equivalent to a piecewise linear function that is zero on $\sigma$ and positive on the rays of the link of $\sigma$.
\end{enumerate}
The function $\ell$ is \emph{convex} if it is convex around every cone in $\Sigma$, and \emph{strictly convex} if it is strictly convex around every cone in $\Sigma$.
\end{definition}

When $\Sigma$ is complete, the function $\ell$ is convex in the sense of Definition~\ref{DefinitionConvexity} if and only if  it is convex in the usual sense:
\[
\ell(\mathbf{u}_1+\mathbf{u}_2) \le \ell(\mathbf{u}_1)+\ell(\mathbf{u}_2) \ \ \text{for} \ \ \mathbf{u}_1, \mathbf{u}_2 \in \mathbf{N}_\mathbb{R}.
\]
In general, writing $\iota$ for the inclusion of the torus orbit closure corresponding to $\sigma$ in the toric variety of $\Sigma$, we have
\[
\text{$\ell$ is convex around $\sigma$} \Longleftrightarrow \text{$\iota^*$ of the class of the divisor associated to $\ell$ is effective}.
\]
For a detailed discussion  and related notions of convexity from the point of view of toric geometry, see \cite[Section 2]{Gibney-Maclagan}.

\begin{definition}
The \emph{ample cone} of $\Sigma$ is the open convex cone
\[
\mathscr{K}_\Sigma:=\big\{\text{classes of strictly convex piecewise linear functions on $\Sigma$}\big\} \subseteq A^1(\Sigma)_\mathbb{R}.
\]
The \emph{nef cone} of $\Sigma$ is the closed convex cone
\[
\mathscr{N}_\Sigma:=\big\{\text{classes of convex piecewise linear functions on $\Sigma$}\big\} \subseteq A^1(\Sigma)_\mathbb{R}.
\]
\end{definition}

Note that the closure of the ample cone $\mathscr{K}_\Sigma$ is contained in the nef cone $\mathscr{N}_\Sigma$.
In many interesting cases, the reverse inclusion also holds.

\begin{proposition}\label{Kleiman}
If $\mathscr{K}_\Sigma$ is nonempty, then $\mathscr{N}_\Sigma$ is the closure of $\mathscr{K}_\Sigma$.
\end{proposition}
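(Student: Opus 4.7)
The plan is to show the two inclusions $\overline{\mathscr{K}_\Sigma} \subseteq \mathscr{N}_\Sigma$ and $\mathscr{N}_\Sigma \subseteq \overline{\mathscr{K}_\Sigma}$ separately. The first is essentially formal: convexity around a fixed cone $\sigma$ is cut out by finitely many (weak) linear inequalities on the coefficients of a representative in $\mathrm{PL}(\Sigma)$, and these inequalities descend to closed half-space conditions on $A^1(\Sigma)_\mathbb{R}$. Since $\Sigma$ has only finitely many cones, $\mathscr{N}_\Sigma$ is the intersection of finitely many closed half-spaces and is therefore closed; as it clearly contains $\mathscr{K}_\Sigma$, it contains its closure.

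For the reverse inclusion, I would use the hypothesis to pick any class $\ell_0 \in \mathscr{K}_\Sigma$ and, given an arbitrary $\ell \in \mathscr{N}_\Sigma$, consider the one-parameter family
\[
\ell_\epsilon := \ell + \epsilon\, \ell_0 \in A^1(\Sigma)_\mathbb{R}, \qquad \epsilon > 0.
\]
Clearly $\ell_\epsilon \to \ell$ in $A^1(\Sigma)_\mathbb{R}$ as $\epsilon \to 0^+$, so it suffices to verify that $\ell_\epsilon \in \mathscr{K}_\Sigma$ for every $\epsilon > 0$, i.e. that $\ell_\epsilon$ is strictly convex around every cone $\sigma \in \Sigma$.

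Fix $\sigma$. By convexity of $\ell$ around $\sigma$, we may choose a representative $\ell' \in \mathrm{PL}(\Sigma)$ of $\ell$ that vanishes on $\sigma$ and is nonnegative on $\operatorname{link}(\sigma,\Sigma)$; by strict convexity of $\ell_0$ around $\sigma$, we may likewise choose a representative $\ell_0'$ of $\ell_0$ that vanishes on $\sigma$ and is positive on the nonzero points of $|\operatorname{link}(\sigma,\Sigma)|$. Then $\ell' + \epsilon\, \ell_0'$ is a representative of $\ell_\epsilon$ that vanishes on $\sigma$ and is positive on the nonzero points of the link for every $\epsilon>0$, establishing strict convexity around $\sigma$. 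Since this holds for each cone $\sigma$ of $\Sigma$, we conclude $\ell_\epsilon \in \mathscr{K}_\Sigma$, and letting $\epsilon \to 0^+$ gives $\ell \in \overline{\mathscr{K}_\Sigma}$.

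There is no serious obstacle; the only subtlety is the bookkeeping that the representatives $\ell'$ and $\ell_0'$ guaranteed by Definition~\ref{DefinitionConvexity} can be chosen independently for each $\sigma$ (which is fine, since strict convexity is checked one cone at a time), and that the convexity/strict-convexity conditions add in the expected way so that a nonnegative function plus a positive function is positive. Both are immediate from the definitions.
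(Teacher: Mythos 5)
Your proof is correct and follows essentially the same route as the paper: the key step in both is that for $\ell$ nef and $\ell_0$ ample, the perturbation $\ell + \epsilon\,\ell_0$ is strictly convex for every $\epsilon>0$, checked cone by cone by adding representatives that vanish on $\sigma$. The paper treats the easy inclusion $\overline{\mathscr{K}_\Sigma}\subseteq\mathscr{N}_\Sigma$ as a prior remark rather than part of the proof, whereas you spell out the closedness of $\mathscr{N}_\Sigma$; this is a harmless elaboration, not a different argument.
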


\begin{proof}
If $\ell_1$ is a convex piecewise linear function and $\ell_2$ is strictly convex piecewise linear function on $\Sigma$, then the sum $\ell_1+\epsilon \hspace{0.5mm}  \ell_2$ is strictly convex for every positive number $\epsilon$. 
This shows that the nef cone of $\Sigma$ is in the closure of the ample cone of $\Sigma$.
\end{proof}

We record here that the various pullbacks of an ample class are ample.
The proof is straightforward from Definition~\ref{DefinitionConvexity}.

\begin{proposition}\label{PropositionAmplePullback}
Let $\Sigma'$ be a subfan of $\Sigma$, $\sigma$ be a cone in $\Sigma$, and let $\Sigma_1 \times \Sigma_2$ be a product fan.
\begin{enumerate}[(1)]\itemsep 5pt
\item
The pullback homomorphism $\textrm{p}_{\Sigma' \subseteq\Sigma}$ induces a map between the ample cones
\[
 \mathscr{K}_{\Sigma} \longrightarrow \mathscr{K}_{\Sigma'}.
\]
\item
The pullback  homomorphism $\textrm{p}_{\sigma \in \Sigma}$ induces a map between the ample cones
\[
 \mathscr{K}_\Sigma \longrightarrow \mathscr{K}_{\text{star}(\sigma,\Sigma)}.
\]
\item
The isomorphism $\textrm{p}_{\Sigma_1,\Sigma_2}$ induces a bijective map
 between the ample cones
\[
\mathscr{K}_{\Sigma_1 \times \Sigma_2} \longrightarrow \mathscr{K}_{\Sigma_1} \times \mathscr{K}_{\Sigma_2}.
\]
\end{enumerate}
\end{proposition}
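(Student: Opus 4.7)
The proof is a direct unpacking of Definition~\ref{DefinitionConvexity}. In each part, the underlying reason things work is that $\Sigma$ (and $\Sigma_1 \times \Sigma_2$) is unimodular and hence simplicial, so every cone in the fan containing a given face decomposes along rays.

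For (1), I would observe that each cone $\sigma'\in\Sigma'$ is also a cone of $\Sigma$, and because $\Sigma'$ is a subfan we have $\text{link}(\sigma',\Sigma')\subseteq \text{link}(\sigma',\Sigma)$. A representative $\tilde{\ell}$ of $\ell\in\mathscr{K}_\Sigma$ which vanishes on $\sigma'$ and is positive on $\text{link}(\sigma',\Sigma)\setminus\{0\}$ restricts to a representative of $\textrm{p}_{\Sigma'\subseteq\Sigma}(\ell)$ with the same two properties on $\Sigma'$. Running this argument at every $\sigma'\in\Sigma'$ gives the inclusion $\mathscr{K}_\Sigma \to \mathscr{K}_{\Sigma'}$.

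For (2), fix a cone $\overline{\tau}$ of $\text{star}(\sigma,\Sigma)$ arising from $\tau\in\Sigma$ with $\tau\supseteq\sigma$. Strict convexity of $\ell$ around $\tau$ supplies a representative $\tilde{\ell}$ vanishing on $\tau$ (hence on $\sigma$) and positive on $\text{link}(\tau,\Sigma)\setminus\{0\}$; this descends to $N_\mathbb{R}/\langle\sigma\rangle$, represents $\textrm{p}_{\sigma\in\Sigma}(\ell)$, and vanishes on $\overline{\tau}$. The key step is positivity on $\text{link}(\overline{\tau},\text{star}(\sigma,\Sigma))$. Given a cone there, lift it to $\tau'\in\Sigma$ with $\tau'\supseteq\sigma$ and note that $\tau,\tau'$ lie in a common cone $\tau''$ of $\Sigma$ with $\overline{\tau}\cap\overline{\tau'}=\{0\}$ in $N_\mathbb{R}/\langle\sigma\rangle$; simpliciality of $\tau''$ forces $\tau\cap\tau'=\sigma$. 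The rays of $\tau'$ then split into rays of $\sigma$ together with a complementary set spanning a face $\rho$ of $\tau''$ satisfying $\rho\cap\tau=\{0\}$, so $\rho\in\text{link}(\tau,\Sigma)$ and $\overline{\rho}=\overline{\tau'}$. Decomposing a lift as $u=u_\sigma+u_\rho$ yields $\tilde{\ell}(u)=\tilde{\ell}(u_\rho)>0$ whenever $\overline{u}\ne 0$, giving strict convexity of $\textrm{p}_{\sigma\in\Sigma}(\ell)$ around $\overline{\tau}$.

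For (3), under $\textrm{p}_{\Sigma_1,\Sigma_2}$ a class is represented by $\ell(u_1,u_2)=\ell_1(u_1)+\ell_2(u_2)$. By simpliciality, every cone of $\Sigma_1\times\Sigma_2$ is a product $\tau_1\times\tau_2$ and the link of $\tau_1\times\tau_2$ consists precisely of products $\rho_1\times\rho_2$ with each $\rho_i\in\text{link}(\tau_i,\Sigma_i)\cup\{\{0\}\}$ and $(\rho_1,\rho_2)\ne(\{0\},\{0\})$. If each $\ell_i$ is strictly convex, modified representatives $\tilde{\ell}_i$ satisfy $\tilde{\ell}_1(u_1)+\tilde{\ell}_2(u_2)>0$ on such a link away from the origin, because at least one summand is strictly positive and the other is nonnegative; this gives strict convexity of $\ell$. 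Conversely, applying strict convexity of $\ell$ to cones of the form $\tau_1\times\{0\}$ and projecting recovers strict convexity of $\ell_1$, and symmetrically for $\ell_2$; this yields the claimed bijection of ample cones.

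I expect the only real technical point to be in (2): namely, extracting from $\tau''$ a complementary cone $\rho$ whose image in $N_\mathbb{R}/\langle\sigma\rangle$ equals $\overline{\tau'}$ and which genuinely lives in $\text{link}(\tau,\Sigma)$. Parts (1) and (3) are then short direct verifications from the definitions.
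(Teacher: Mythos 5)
Your proof is correct, and it follows the same route the paper intends: the paper states that the proposition "is straightforward from Definition~\ref{DefinitionConvexity}" and omits the details, which is exactly the unpacking you carry out. Your treatment of part (2) — using simpliciality of the common cone $\tau''$ to extract the complementary face $\rho$ with $\rho \in \text{link}(\tau,\Sigma)$ and $\overline{\rho}=\overline{\tau'}$ — correctly supplies the one genuinely nontrivial verification.
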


Recall that the support function of a polytope is strictly convex piecewise linear function on the normal fan of the polytope. 
An elementary proof can be found in  \cite[Corollary A.19]{Oda}.
It follows from the first item of Proposition~\ref{PropositionAmplePullback} that any subfan of the normal fan of a polytope has a nonempty ample cone.
In particular, by Proposition~\ref{Subfan}, the Bergman fan  $\Sigma_{\mathrm{M},\mathscr{P}}$ has a nonempty ample cone.

Strictly convex piecewise linear functions on the normal fan of the permutohedron can be described in a particularly nice way:
A piecewise linear function on $\Sigma_{\mathscr{P}(E)}$ is strictly convex if and only if it is of the form
\[
\sum_{F \in \mathscr{P}(E)} c_F x_F, \ \ \text{$c_{F_1}+c_{F_2} > c_{F_1 \cap F_2} +c_{F_1 \cup F_2}$ for any incomparable $F_1,F_2$, with $c_\varnothing=c_E=0$}.
\]
For this and related results, see \cite{Batyrev-Blume}.
The restriction of any such \emph{strictly submodular function} gives a strictly convex function on the Bergman fan $\Sigma_\mathrm{M}$, and defines an ample class on $\Sigma_\mathrm{M}$.

\subsection{}

We specialize to the case of matroids and prove basic properties of convex piecewise linear functions on the Bergman fan $\Sigma_{\mathrm{M},\mathscr{P}}$.
We write $\mathscr{K}_{\mathrm{M},\mathscr{P}}$ for the ample cone of $\Sigma_{\mathrm{M},\mathscr{P}}$, and  $\mathscr{N}_{\mathrm{M},\mathscr{P}}$ for the nef cone of $\Sigma_{\mathrm{M},\mathscr{P}}$.

\begin{proposition}\label{MatroidKleiman}
Let $\mathrm{M}$ be a loopless matroid on $E$, and let $\mathscr{P}$ be an order filter of $\mathscr{P}(\mathrm{M})$.
\begin{enumerate}[(1)]\itemsep 5pt
\item The nef cone of $\Sigma_{\mathrm{M},\mathscr{P}}$ is equal to the closure of the ample cone of $\Sigma_{\mathrm{M},\mathscr{P}}$:
\[
\overline{\mathscr{K}_{\mathrm{M},\mathscr{P}}}=\mathscr{N}_{\mathrm{M},\mathscr{P}}.
\]
\item The ample cone of $\Sigma_{\mathrm{M},\mathscr{P}}$ is equal to the interior of the nef cone of $\Sigma_{\mathrm{M},\mathscr{P}}$:
\[
\mathscr{K}_{\mathrm{M},\mathscr{P}}=\mathscr{N}^\circ_{\mathrm{M},\mathscr{P}}.
\]
\end{enumerate}
\end{proposition}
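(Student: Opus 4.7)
The plan is to derive part (1) directly from Proposition~\ref{Kleiman} after exhibiting a strictly convex function on $\Sigma_{\mathrm{M},\mathscr{P}}$, and then to obtain part (2) as a purely convex-geometric consequence of part (1).

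For part (1), by Proposition~\ref{Kleiman} it is enough to show that $\mathscr{K}_{\mathrm{M},\mathscr{P}} \neq \varnothing$. This is already announced in the paragraph following Proposition~\ref{PropositionAmplePullback}: Proposition~\ref{Subfan} realizes $\Sigma_{\mathrm{M},\mathscr{P}}$ as a subfan of the normal fan of a polytope, the support function of that polytope gives a strictly convex piecewise linear class, and Proposition~\ref{PropositionAmplePullback}(1) guarantees that strict convexity survives the pullback along the inclusion. Hence $\mathscr{K}_{\mathrm{M},\mathscr{P}}$ is nonempty, and Proposition~\ref{Kleiman} gives $\overline{\mathscr{K}_{\mathrm{M},\mathscr{P}}} = \mathscr{N}_{\mathrm{M},\mathscr{P}}$.

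For part (2), I would verify that $\mathscr{K}_{\mathrm{M},\mathscr{P}}$ is a nonempty open convex cone in the finite-dimensional space $A^1(\Sigma_{\mathrm{M},\mathscr{P}})_\mathbb{R}$. Convexity and positive homogeneity are immediate from Definition~\ref{DefinitionConvexity}, since the sum and positive scalar multiples of functions vanishing on $\sigma$ and strictly positive on $\mathrm{link}(\sigma, \Sigma_{\mathrm{M},\mathscr{P}})$ retain both properties for every $\sigma$. Openness is the only point calling for attention: for each of the finitely many cones $\sigma$, strict convexity around $\sigma$ amounts to strict positivity of an appropriate representative at the finitely many primitive ray generators of $\mathrm{link}(\sigma,\Sigma_{\mathrm{M},\mathscr{P}})$, and such strict inequalities persist under sufficiently small perturbations of the class. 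Once $\mathscr{K}_{\mathrm{M},\mathscr{P}}$ is known to be open, convex, and nonempty, the desired identity $\mathscr{K}_{\mathrm{M},\mathscr{P}} = \mathscr{N}^{\circ}_{\mathrm{M},\mathscr{P}}$ follows from the elementary fact that $\mathrm{int}(\overline{C}) = C$ for any nonempty open convex set $C$ in a finite-dimensional real vector space, applied with $C = \mathscr{K}_{\mathrm{M},\mathscr{P}}$ and using part (1) to replace $\overline{\mathscr{K}_{\mathrm{M},\mathscr{P}}}$ by $\mathscr{N}_{\mathrm{M},\mathscr{P}}$. I anticipate no serious obstacle; the only nontrivial ingredients are Propositions~\ref{Subfan}, \ref{PropositionAmplePullback}, and \ref{Kleiman}, which are already in hand, together with the routine openness verification.
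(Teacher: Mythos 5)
Your proposal is correct and follows the paper's own argument essentially verbatim: nonemptiness of the ample cone via Proposition~\ref{Subfan}, then Proposition~\ref{Kleiman} for part (1), and the fact that a nonempty open convex set equals the interior of its closure for part (2). The extra detail you supply on openness of $\mathscr{K}_{\mathrm{M},\mathscr{P}}$ is a correct elaboration of what the paper takes for granted in calling it ``the open convex cone.''
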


\begin{proof}
Propositions~\ref{Subfan} shows that the ample cone $\mathscr{K}_{\mathrm{M},\mathscr{P}}$ is nonempty.
Therefore, by Proposition~\ref{Kleiman}, the nef cone  $\mathscr{N}_{\mathrm{M},\mathscr{P}}$ is equal to the closure of $\mathscr{K}_{\mathrm{M},\mathscr{P}}$.

The second assertion can be deduced from the first using the following general property of convex sets:
An open convex set is equal to the interior of its closure.
\end{proof}

The main result here is that the ample cone and its ambient vector space
\[
\mathscr{K}_{\mathrm{M},\mathscr{P}} \subseteq A^1(\Sigma_{\mathrm{M},\mathscr{P}})_\mathbb{R}
\]
depend only on $\mathscr{P}$ and the combinatorial geometry of $\mathrm{M}$, see Proposition~\ref{CombinatorialGeometry} below.
We set
\[
\overline{E}:=\big\{ A \mid \text{$A$ is a rank $1$ flat of $\mathrm{M}$}\big\}.
\]

\begin{definition}
The \emph{combinatorial geometry} of $\mathrm{M}$ is the simple matroid $\overline{\mathrm{M}}$ on $\overline{E}$
 determined by its poset of nonempty proper flats $\mathscr{P}(\overline{\mathrm{M}}) = \mathscr{P}(\mathrm{M})$.
\end{definition}

The set of primitive ray generators of $\Sigma_{\mathrm{M},\mathscr{P}}$ is the disjoint union
\[
\big\{\mathbf{e}_i \mid \text{the closure of $i$ in $\mathrm{M}$ is not in $\mathscr{P}$}\big\}
 \cup \big\{ \mathbf{e}_F \mid \text{$F$ is a flat in $\mathscr{P}$}\big\}  \subseteq \mathbf{N}_{E,\mathbb{R}},
 \]
and
the set of primitive ray generators of $\Sigma_{\overline{\mathrm{M}},\mathscr{P}}$ is the disjoint union
 \[
 \big\{\mathbf{e}_A \mid \text{$A$ is a rank $1$ flat of $\mathrm{M}$ not in $\mathscr{P}$}\big\}  
 \cup \big\{ \mathbf{e}_{F} \mid \text{$F$ is a flat in $\mathscr{P}$}\big\}\subseteq \mathbf{N}_{\overline{E},\mathbb{R}}.
\]
The corresponding Courant functions on the Bergman fans will be denoted  $x_i$,  $x_F$, and $x_A$, $x_F$ respectively.

Let $\pi$ be the surjective map between the ground sets of $\mathrm{M}$ and $\overline{\mathrm{M}}$ given by the closure operator of $\mathrm{M}$. We fix an arbitrary section $\iota$ of $\pi$ by choosing an element from each rank $1$ flat:
\[
\pi: E \longrightarrow \overline{E}, \qquad \iota: \overline{E} \longrightarrow E, \qquad \pi\circ\iota=\text{id}.
\]
The maps $\pi$ and $\iota$ induce the horizontal homomorphisms in the diagram 
\[
\xymatrixcolsep{5pc}
\xymatrixrowsep{3pc}
\xymatrix{
\text{PL}(\Sigma_{\mathrm{M},\mathscr{P}}) \ar@<0.5ex>[r]^{\pi_{\text{PL}}} & \text{PL}(\Sigma_{\overline{\mathrm{M}},\mathscr{P}}) \ar@<0.5ex>[l]^{\iota_{\text{PL}}}\\
\mathbf{M}_E \ar@<0.5ex>[r]^{\pi_\mathbf{M}} \ar[u]^{\text{res}} & \mathbf{M}_{\overline{E}}, \ar[u]_{\text{res}} \ar@<0.5ex>[l]^{\iota_\mathbf{M}}
}
\]
where the homomorphism $\pi_{\text{PL}}$ is obtained by setting
\[
x_i \longmapsto x_{\pi(i)}, \quad  x_F \longmapsto x_F,  \quad \text{for elements $i$ whose closure is not in $\mathscr{P}$, and for flats $F$ in $\mathscr{P}$,
}
\]
and the homomorphism $\iota_{\text{PL}}$ is obtained by setting
\[
x_A \longmapsto x_{\iota(A)}, \quad  x_F \longmapsto x_F,  \quad \text{for rank $1$ flats $A$ not in $\mathscr{P}$, and for flats $F$ in $\mathscr{P}$.}
\]
In the diagram above,  we have
\[
\pi_{\text{PL}} \circ \text{res}= \text{res} \circ \pi_\mathbf{M}, \quad
\iota_{\text{PL}} \circ \text{res}= \text{res} \circ \iota_\mathbf{M}, \quad
\pi_{\text{PL}} \circ \iota_{\text{PL}}=\text{id}, \quad
\pi_{\mathbf{M}} \circ \iota_{\mathbf{M}}=\text{id}.
\]

\begin{proposition}\label{PropositionSimplification}
The homomorphism $\pi_{\text{PL}}$ induces an isomorphism
\[
\underline{\pi}_{\text{PL}}:  A^1(\Sigma_{\mathrm{M},\mathscr{P}}) \longrightarrow A^1(\Sigma_{\overline{\mathrm{M}},\mathscr{P}}).
\]
The homomorphism $\iota_{\text{PL}}$ induces the inverse isomorphism
\[
\underline{\iota}_{\text{PL}}:  A^1(\Sigma_{\overline{\mathrm{M}},\mathscr{P}}) \longrightarrow A^1(\Sigma_{\mathrm{M},\mathscr{P}}).
\]
\end{proposition}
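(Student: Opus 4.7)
The plan is to verify that both $\pi_{\text{PL}}$ and $\iota_{\text{PL}}$ descend to well-defined maps on the $A^1$ groups, and then verify the two compositions are the identity.

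First I would check well-definedness. The four identities at the end of the preceding subsection give $\pi_{\text{PL}}(\text{res}(M_E)) \subseteq \text{res}(M_{\overline{E}})$ and $\iota_{\text{PL}}(\text{res}(M_{\overline{E}})) \subseteq \text{res}(M_E)$, so both homomorphisms descend to the quotients by $M$. The identity $\underline{\pi}_{\text{PL}} \circ \underline{\iota}_{\text{PL}} = \text{id}$ is then immediate from $\pi_{\text{PL}} \circ \iota_{\text{PL}} = \text{id}$, which is visible on the basis of Courant functions of $\Sigma_{\overline{\mathrm{M}},\mathscr{P}}$: a Courant function $x_A$ maps to $x_{\iota(A)}$ and back to $x_{\pi(\iota(A))} = x_A$, while each $x_F$ is fixed.

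The substantive step is the other composition. It suffices to evaluate $\iota_{\text{PL}} \circ \pi_{\text{PL}}$ on the Courant basis of $\Sigma_{\mathrm{M},\mathscr{P}}$. On $x_F$ with $F \in \mathscr{P}$ the composition is the identity. On $x_i$ with $\text{cl}_\mathrm{M}(\{i\}) = A \notin \mathscr{P}$, the composition sends $x_i$ to $x_{\iota(A)}$, so I would produce an element $m_i \in M_E$ with $\text{res}(m_i) = x_i - x_{\iota(A)}$. The natural candidate is
\[
m_i := \mathbf{e}_i - \mathbf{e}_{\iota(\pi(i))} \in M_E,
\]
which lies in $M_E$ because both entries sum to zero against $\mathbf{e}_E$.

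The key computation is then to pair $m_i$ against each primitive ray generator of $\Sigma_{\mathrm{M},\mathscr{P}}$. Against $\mathbf{e}_j$ (for $j \in E$ with $\text{cl}_\mathrm{M}(\{j\}) \notin \mathscr{P}$) the pairing is $\delta_{ji} - \delta_{j,\iota(A)}$, which contributes exactly $x_i - x_{\iota(A)}$. The step I expect to require a moment's care is the vanishing against $\mathbf{e}_F$ for flats $F \in \mathscr{P}$: the pairing equals $\mathbf{1}_F(i) - \mathbf{1}_F(\iota(A))$, and I need this to be zero. This is the only place the \emph{flat} hypothesis is used: since $F$ is closed and $i$ generates the rank one flat $A$, we have $i \in F \Leftrightarrow A \subseteq F \Leftrightarrow \iota(A) \in F$, so the pairing vanishes. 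Therefore $\text{res}(m_i) = x_i - x_{\iota(A)}$ in $\text{PL}(\Sigma_{\mathrm{M},\mathscr{P}})$, and the composition $\underline{\iota}_{\text{PL}} \circ \underline{\pi}_{\text{PL}}$ is the identity on $A^1(\Sigma_{\mathrm{M},\mathscr{P}})$, completing the proof.

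Aside from this small verification using the defining property of flats, the proof is essentially bookkeeping: the whole statement is really the fact that the linear relations from $M_E$ identify all the $x_i$'s belonging to a common rank one flat, which collapses $\Sigma_{\mathrm{M},\mathscr{P}}$ onto $\Sigma_{\overline{\mathrm{M}},\mathscr{P}}$ at the level of $A^1$.
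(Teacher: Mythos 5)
Your proposal is correct and follows essentially the same route as the paper: both reduce to showing $\underline{\iota}_{\text{PL}} \circ \underline{\pi}_{\text{PL}} = \text{id}$, and both do so by restricting the linear function $\mathbf{e}_i - \mathbf{e}_j$ (your $j = \iota(\pi(i))$) to the fan and observing that, because $i$ and $j$ have the same closure, a flat in $\mathscr{P}$ contains one iff it contains the other, so the $x_F$ terms cancel and $x_i$ is equivalent to $x_{\iota(\pi(i))}$ over $\mathbb{Z}$.
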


We use the same symbols  to denote the isomorphisms 
$
 A^1(\Sigma_{\mathrm{M},\mathscr{P}})_\mathbb{R} \leftrightarrows A^1(\Sigma_{\overline{\mathrm{M}},\mathscr{P}})_\mathbb{R}.
$

\begin{proof}
It is enough to check that the composition $\underline{\iota}_{\text{PL}} \circ \underline{\pi}_{\text{PL}}$ is the identity.
Let $i$ and $j$ be elements whose closures are not in $\mathscr{P}$.
Consider the linear function on $\mathbf{N}_{E,\mathbb{R}}$ given by the integral vector 
\[
\mathbf{e}_i-\mathbf{e}_j \in \mathbf{M}_E.
\] 
The restriction of this linear function to $\Sigma_{\mathrm{M},\mathscr{P}}$ is the linear combination
\[
\text{res}(\mathbf{e}_i-\mathbf{e}_j)=\Big(x_i+\sum_{i \in F \in \mathscr{P}} x_F\Big)-\Big(x_j+\sum_{j \in F \in \mathscr{P}} x_F\Big).
\]
If $i$ and $j$ have the same closure, then a flat  contains $i$ if and only if it contains $j$, and hence  the linear function witnesses that the piecewise linear functions $x_i$ and $x_j$ are equivalent over $\mathbb{Z}$.
It follows that  $\underline{\iota}_{\text{PL}} \circ \underline{\pi}_{\text{PL}}=\text{id}$.
\end{proof}

The maps $\pi$ and $\iota$ induce simplicial maps between the Bergman complexes
\[
\xymatrix{
\Delta_{\mathrm{M},\mathscr{P}} \ar@<0.5ex>[rr]^{\pi_\Delta} && \Delta_{\overline{\mathrm{M}},\mathscr{P}} \ar@<0.5ex>[ll]^{\iota_\Delta},
} \qquad
\vartriangle_{I< \mathscr{F}}\ \longmapsto\  \vartriangle_{\pi(I)< \mathscr{F}},
\quad
 \vartriangle_{\mathscr{I}< \mathscr{F}}\ \longmapsto\  \vartriangle_{\iota(\mathscr{I})< \mathscr{F}}.
\]
The simplicial map $\pi_\Delta$ collapses those simplices containing vectors of parallel elements,
and  
\[
\pi_\Delta \circ \iota_\Delta=\text{id}.
\]
The other composition $\iota_\Delta \circ \pi_\Delta$ is a deformation retraction. 
For this  note that
 \[
\vartriangle_{I<\mathscr{F}} \ \in \Delta_{\mathrm{M},\mathscr{P}} \Longrightarrow \iota_\Delta \circ \pi_\Delta (\vartriangle_{I<\mathscr{F}}) \ \cup \vartriangle_{I<\mathscr{F}} \ \subseteq \ \vartriangle_{\pi^{-1} \pi I<\mathscr{F}.}
\]
The simplex $\vartriangle_{\pi^{-1} \pi I<\mathscr{F}}$ is in $\Delta_{\mathrm{M},\mathscr{P}}$, and hence we can find a homotopy $\iota_\Delta \circ \pi_\Delta \simeq \text{id}$.

\begin{proposition}\label{CombinatorialGeometry}
The isomorphism $\underline{\pi}_{\text{PL}}$ restricts to a bijective map between the ample cones
\[
\mathscr{K}_{{\mathrm{M},\mathscr{P}}}  \longrightarrow \mathscr{K}_{{\overline{\mathrm{M}},\mathscr{P}}}.
\]
\end{proposition}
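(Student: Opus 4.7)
The plan is to deduce the desired bijection $\underline{\pi}_{\text{PL}}(\mathscr{K}_{\mathrm{M},\mathscr{P}}) = \mathscr{K}_{\overline{\mathrm{M}},\mathscr{P}}$ from the analogous identity for nef cones,
\[
\underline{\pi}_{\text{PL}}(\mathscr{N}_{\mathrm{M},\mathscr{P}}) = \mathscr{N}_{\overline{\mathrm{M}},\mathscr{P}},
\]
by invoking Proposition~\ref{MatroidKleiman}(2), which identifies the ample cone with the interior of the nef cone, and noting that $\underline{\pi}_{\text{PL}}$ is a linear isomorphism (Proposition~\ref{PropositionSimplification}) and hence a homeomorphism taking interiors to interiors. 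Since $\underline{\iota}_{\text{PL}}$ is the inverse of $\underline{\pi}_{\text{PL}}$, the nef identity reduces to two inclusions.

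For the first inclusion $\underline{\pi}_{\text{PL}}(\mathscr{N}_{\mathrm{M},\mathscr{P}}) \subseteq \mathscr{N}_{\overline{\mathrm{M}},\mathscr{P}}$, the strategy is to identify $\Sigma_{\overline{\mathrm{M}},\mathscr{P}}$ with the fan induced by $\Sigma_{\mathrm{M},\mathscr{P}}$ on the sublinear space $\pi_N(N_{\overline{E},\mathbb{R}}) \subseteq N_{E,\mathbb{R}}$. I would first verify the cone identity $\pi_N(\sigma_{\mathscr{I}<\mathscr{F}}) = \pi_N(N_{\overline{E},\mathbb{R}}) \cap \sigma_{\pi^{-1}(\mathscr{I})<\mathscr{F}}$, with the cone on the right belonging to $\Sigma_{\mathrm{M},\mathscr{P}}$ because $\text{cl}_{\mathrm{M}}(\pi^{-1}(\mathscr{I})) = \text{cl}_{\overline{\mathrm{M}}}(\mathscr{I}) \notin \widehat{\mathscr{P}}$ and $\pi^{-1}(\mathscr{I}) \subsetneq \min \mathscr{F}$. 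Under this identification, $\pi_{\text{PL}}$ is restriction of piecewise linear functions. Given convex $\ell$ on $\Sigma_{\mathrm{M},\mathscr{P}}$ with a separator $m \in M_{E,\mathbb{R}}$ around $\sigma_{\pi^{-1}(\mathscr{I})<\mathscr{F}}$, the projection $\pi_M(m) \in M_{\overline{E},\mathbb{R}}$ is a separator for $\pi_{\text{PL}}(\ell)$ around $\sigma_{\mathscr{I}<\mathscr{F}}$; non-negativity on each link ray $\mathbf{e}_A$ follows by writing $\mathbf{e}_A = \sum_{i \in A} \mathbf{e}_i$ and observing that each $\mathbf{e}_i$ lies in the link of $\sigma_{\pi^{-1}(\mathscr{I})<\mathscr{F}}$ in $\Sigma_{\mathrm{M},\mathscr{P}}$.

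For the second inclusion $\underline{\iota}_{\text{PL}}(\mathscr{N}_{\overline{\mathrm{M}},\mathscr{P}}) \subseteq \mathscr{N}_{\mathrm{M},\mathscr{P}}$, given convex $\bar\ell = \sum_A c_A x_A + \sum_F c_F x_F$ on $\Sigma_{\overline{\mathrm{M}},\mathscr{P}}$, I would choose in the class $[\iota_{\text{PL}}(\bar\ell)] \in A^1(\Sigma_{\mathrm{M},\mathscr{P}})_\mathbb{R}$ the \emph{symmetric} representative
\[
\ell^{\text{sym}} = \sum_A \frac{c_A}{|A|}\sum_{i \in A} x_i + \sum_F c_F x_F,
\]
which differs from $\iota_{\text{PL}}(\bar\ell)$ only by a combination of the linear functions $\text{res}(\mathbf{e}_i - \mathbf{e}_j) = x_i - x_j$ for parallel $i,j$ (as computed in the proof of Proposition~\ref{PropositionSimplification}). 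To verify convexity of $\ell^{\text{sym}}$ around any cone $\sigma_{I<\mathscr{F}}$ of $\Sigma_{\mathrm{M},\mathscr{P}}$, take a separator $\bar m \in M_{\overline{E},\mathbb{R}}$ for $\sigma_{\pi(I)<\mathscr{F}}$ in $\Sigma_{\overline{\mathrm{M}},\mathscr{P}}$ and form the ``averaged lift'' $m_0 \in M_{E,\mathbb{R}}$ given by $m_0(\mathbf{e}_k) = \bar m(\mathbf{e}_{\pi(k)})/|\pi(k)|$. A direct computation yields $(\ell^{\text{sym}} - \text{res}(m_0))(\mathbf{e}_k) = (\bar\ell - \text{res}(\bar m))(\mathbf{e}_{\pi(k)})/|\pi(k)|$ on element rays and $(\ell^{\text{sym}} - \text{res}(m_0))(\mathbf{e}_F) = (\bar\ell - \text{res}(\bar m))(\mathbf{e}_F)$ on flat rays; both the vanishing on $\sigma_{I<\mathscr{F}}$ and the non-negativity on its link transfer from the corresponding properties of $\bar m$ for $\sigma_{\pi(I)<\mathscr{F}}$, using the link correspondence developed for the first inclusion.

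The main technical point lies in the second inclusion. The averaged lift $m_0$ is delicate precisely in the case where $I$ meets a parallel class $A$ in a proper nonempty subset: for such $I$, the lift forces $(\ell^{\text{sym}} - \text{res}(m_0))(\mathbf{e}_j) = 0$ on link rays $\mathbf{e}_j$ with $j \in A \setminus I$, so one cannot upgrade to strict positivity. This is exactly the obstruction that prevents a direct argument for ample cones --- but for nef cones the equality $=0$ is allowed, and this is why the detour through nef cones and their interiors is essential to the argument.
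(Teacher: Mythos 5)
Your proposal is correct and follows essentially the same route as the paper's proof: reduce to the nef cones via Proposition~\ref{MatroidKleiman} (using that $\underline{\pi}_{\text{PL}}$ is a linear isomorphism), and establish the two inclusions through the correspondence between cones and links of $\Sigma_{\mathrm{M},\mathscr{P}}$ and $\Sigma_{\overline{\mathrm{M}},\mathscr{P}}$. The paper packages the two inclusions more structurally --- the $\pi$-direction via the identities $\pi_\Sigma^{-1}(\text{faces of } \sigma_{\mathscr{I}<\mathscr{F}}) = \text{faces of } \sigma_{\pi^{-1}(\mathscr{I})<\mathscr{F}}$ and $\pi_\Sigma^{-1}(\text{link}) = \text{link}$, and the $\iota$-direction by applying $\iota_{\text{PL}}$ directly to a representative of $\overline{\ell}$ that is zero on the cone and nonnegative on its link --- whereas you make the separating linear forms and the averaging over parallel classes explicit, but the content is the same.
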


\begin{proof} 
By Proposition~\ref{MatroidKleiman}, it is enough to show that $\underline{\pi}_{\text{PL}}$ restricts to a bijective map
\[
\mathscr{N}_{{\mathrm{M},\mathscr{P}}}  \longrightarrow \mathscr{N}_{{\overline{\mathrm{M}},\mathscr{P}}}.
\]
We use the following maps corresponding to $\pi_\Delta$ and $\iota_\Delta$:
\[
\xymatrix{
\Sigma_{\mathrm{M},\mathscr{P}} \ar@<0.5ex>[rr]^{\pi_\Sigma} && \Sigma_{\overline{\mathrm{M}},\mathscr{P}} \ar@<0.5ex>[ll]^{\iota_\Sigma},
} \qquad
\sigma_{I< \mathscr{F}}\ \longmapsto\  \sigma_{\pi(I)< \mathscr{F}},
\quad
 \sigma_{\mathscr{I}< \mathscr{F}}\ \longmapsto\  \sigma_{\iota(\mathscr{I})< \mathscr{F}}.
\]

One direction is more direct:
The homomorphism $\iota_{\text{PL}}$ maps a convex piecewise linear function $\overline{\ell}$ to a convex piecewise linear function $\iota_{\text{PL}}(\overline{\ell})$.
Indeed, for any cone $\sigma_{I<\mathscr{F}}$ in $\Sigma_{\mathrm{M},\mathscr{P}}$,
\begin{multline*}
\Big(\text{$\overline{\ell}$ is zero on $\sigma_{\pi(I)<\mathscr{F}}$ and nonnegative on the link of $\sigma_{\pi(I)<\mathscr{F}}$ in $\Sigma_{\overline{\mathrm{M}},\mathscr{P}}$}\Big) \Longrightarrow\\
 \Big(\text{$\iota_{\text{PL}}(\overline{\ell})$ is zero on $\sigma_{\pi^{-1}\pi(I)<\mathscr{F}}$ and nonnegative on the link of $\sigma_{\pi^{-1}\pi(I)<\mathscr{F}}$ in $\Sigma_{\mathrm{M},\mathscr{P}}$}\Big)\\
   \Longrightarrow 
   \Big(\text{$\iota_{\text{PL}}(\overline{\ell})$ is zero on $\sigma_{I<\mathscr{F}}$ and nonnegative on the link of $\sigma_{I<\mathscr{F}}$ in $\Sigma_{\mathrm{M},\mathscr{P}}$}\Big).
\end{multline*}

Next we show the other direction:  The homomorphism $\pi_{\text{PL}}$ maps a convex piecewise linear function $\ell$ to a convex piecewise linear function $\pi_{\text{PL}}(\ell)$.
The main claim is that, for any cone $\sigma_{\mathscr{I}<\mathscr{F}}$ in $\Sigma_{\overline{\mathrm{M}},\mathscr{P}}$, 
\[
\text{$\ell$ is convex around $\sigma_{\pi^{-1}(\mathscr{I})<\mathscr{F}}$}\Longrightarrow \text{ $\pi_{\text{PL}}(\ell)$ is convex around $\sigma_{\mathscr{I}<\mathscr{F}}$.}
\]
This can be deduced from the following identities between the subfans of $\Sigma_{\mathrm{M},\mathscr{P}}$:
\begin{align*}
\pi_\Sigma^{-1}\Big( \text{the set of all faces of $\sigma_{\mathscr{I}<\mathscr{F}}$}\Big)&= 
\Big( \text{the set of all faces of $\sigma_{\pi^{-1}(\mathscr{I})<\mathscr{F}}$}\Big), \\
\pi_\Sigma^{-1} \Big( \text{the link of $\sigma_{\mathscr{I}<\mathscr{F}}$ in $\Sigma_{\overline{\mathrm{M}},\mathscr{P}}$}\Big)&=
\Big( \text{the link of $\sigma_{\pi^{-1}(\mathscr{I})<\mathscr{F}}$ in $\Sigma_{\mathrm{M},\mathscr{P}}$}\Big).
\end{align*}
It is straightforward to check the two equalities  from the definitions of $\Sigma_{\mathrm{M},\mathscr{P}}$ and $\Sigma_{\overline{\mathrm{M}},\mathscr{P}}$.
\end{proof}

\begin{remark}
Note that a Bergman fan and the corresponding reduced Bergman fan share the same set of primitive ray generators. Therefore we have isomorphisms

\[
\xymatrix{
A^1(\Sigma_{\mathrm{M},\mathscr{P}}) \ar@<0.5ex>[r] \ar@<0.5ex>[d] & A^1(\Sigma_{\overline{\mathrm{M}},\mathscr{P}})   \ar@<0.5ex>[d] \ar@<0.5ex>[l]\\
A^1(\widetilde{\Sigma}_{\mathrm{M},\mathscr{P}}) \ar@<0.5ex>[r]\ar@<0.5ex>[u]& A^1(\widetilde{\Sigma}_{\overline{\mathrm{M}},\mathscr{P}}). \ar@<0.5ex>[l] \ar@<0.5ex>[u]
}
\]
We remark that there are inclusion maps between the corresponding ample cones
\[
\xymatrix{
\mathscr{K}_{\mathrm{M},\mathscr{P}} \ar@{=}[r] \ar@{->}[d]& \mathscr{K}_{\overline{\mathrm{M}},\mathscr{P}} \ar@{->}[d] \\
\widetilde{\mathscr{K}}_{\mathrm{M},\mathscr{P}}  & \ar@{->}[l]\widetilde{\mathscr{K}}_{\overline{\mathrm{M}},\mathscr{P}}.
}
\]
In general, all three inclusions shown above may be strict. 
\end{remark}

\section{Homology and cohomology}

\subsection{}

Let $\Sigma$ be a unimodular fan in an $n$-dimensional latticed vector space $\mathbf{N}_\mathbb{R}$, and let $\Sigma_k$ be the set of $k$-dimensional cones in $\Sigma$.
If $\tau$ is a codimension $1$ face of a unimodular cone $\sigma$, we write
\[
\mathbf{e}_{\sigma/\tau}:=\text{the primitive generator of the unique $1$-dimensional face of $\sigma$ not in $\tau$}.
\]

\begin{definition}
A \emph{$k$-dimensional Minkowski weight} on $\Sigma$ is a function 
\[
\omega: \Sigma_k \longrightarrow \mathbb{Z}
\]
which satisfies the \emph{balancing condition}: For every $(k-1)$-dimensional cone $\tau$ in $\Sigma$,
\[
\sum_{\tau \subset \sigma} \omega(\sigma) \hspace{0.5mm} \mathbf{e}_{\sigma/\tau} \   \text{is contained in the subspace generated by $\tau$}.
\]
The \emph{group of Minkowski weights} on $\Sigma$ is the  group
\[
\text{MW}_*(\Sigma):=\bigoplus_{k \in \mathbb{Z}} \text{MW}_k(\Sigma),
\]
where
$\text{MW}_k(\Sigma):=\big\{\text{$k$-dimensional Minkowski weights on $\Sigma$}\big\} \subseteq \mathbb{Z}^{\Sigma_k}$.
\end{definition}

The group of Minkowski weights was studied by Fulton and Sturmfels  in the context of toric geometry  \cite{Fulton-Sturmfels}.
An equivalent notion of stress space was independently pursued by Lee in \cite{Lee}.
Both were inspired by McMullen, who introduced the notion of weights on polytopes and initiated the study of its algebraic properties \cite{PolytopeAlgebra,Weights}.
We record here some immediate properties of the group of Minkowski weights on $\Sigma$.

\begin{enumerate}[(1)]\itemsep 5pt
\item The group $\text{MW}_0(\Sigma)$ is canonically isomorphic to the group of integers:
\[
\text{MW}_0(\Sigma) =\mathbb{Z}^{\Sigma_0} \simeq \mathbb{Z}.
\]
\item The group $\text{MW}_1(\Sigma)$ is perpendicular to the image of the restriction map  from $\mathbf{M}$: 
\[
\text{MW}_1(\Sigma)=\text{im}(\text{res}_\Sigma)^\perp \subseteq \mathbb{Z}^{\Sigma_1}.
\]
\item  The group $\text{MW}_k(\Sigma)$ is trivial for $k$ negative or $k$ larger than the dimension of $\Sigma$.
\end{enumerate}

If $\Sigma$ is in addition complete, then an $n$-dimensional weight on  $\Sigma$ satisfies the balancing condition if and only if it is constant. 
Therefore, in this case, there is a canonical isomorphism
\[
\text{MW}_n(\Sigma) \simeq \mathbb{Z}.
\]
We show that the Bergman fan $\Sigma_{\mathrm{M}}$ has the same property with respect to its dimension $r$.

\begin{proposition}\label{PropositionMatroidBalancing}
An $r$-dimensional weight  on $\Sigma_\mathrm{M}$ satisfies the balancing condition if and only if it is constant.
\end{proposition}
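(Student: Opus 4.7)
Since $\mathrm{M}$ is loopless and $\mathscr{P}(\mathrm{M})$ is the full poset of nonempty proper flats, the condition $\mathrm{cl}_\mathrm{M}(I)\notin\widehat{\mathscr{P}(\mathrm{M})}$ forces $I = \varnothing$, so the maximal cones of $\Sigma_\mathrm{M}$ are $\sigma_\mathscr{F} := \sigma_{\varnothing <_\mathrm{M} \mathscr{F}}$ indexed by complete flags $\mathscr{F} = \{F_1 \subsetneq \cdots \subsetneq F_r\}$ of nonempty proper flats. A codimension one cone $\tau$ is obtained by deleting one entry $F_i$ from such a flag, and the maximal cones containing $\tau$ are indexed by the flats $F$ with $F_{i-1} \subsetneq F \subsetneq F_{i+1}$ and $\mathrm{rk}_\mathrm{M}(F) = i$ (setting $F_0 = \varnothing$ and $F_{r+1} = E$); equivalently, by the rank-one flats of the rank-two loopless minor $\mathrm{N} := \mathrm{M}^{F_{i+1}}/F_{i-1}$. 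The plan has two steps: (a) show that the balancing condition at each such $\tau$ is equivalent to $\omega$ being constant on the maximal cones above $\tau$, and (b) show connectivity of the dual graph of maximal cones.

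For (a), pass to the quotient $Q := N_{E,\mathbb{R}}/\mathrm{span}(\tau)$. Writing $B_j := F_j \setminus F_{j-1}$, the defining relations of $Q$ are $\mathbf{e}_{F_j} = 0$ for $j \neq i$ together with $\mathbf{e}_E = 0$, which by successive subtraction are equivalent to $\sum_{k \in B_j}\overline{\mathbf{e}}_k = 0$ for every $j \in \{1,\ldots,r+1\}\setminus\{i, i+1\}$ and $\sum_{k \in B_i \cup B_{i+1}}\overline{\mathbf{e}}_k = 0$. In particular, on the subspace spanned by $\{\overline{\mathbf{e}}_k : k \in F_{i+1}\setminus F_{i-1}\}$, the only relation is $\sum_k \overline{\mathbf{e}}_k = 0$. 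For any flat $F$ above $\tau$, the decomposition $\mathbf{e}_F = \mathbf{e}_{F_{i-1}} + \mathbf{e}_{F \setminus F_{i-1}}$ gives $\overline{\mathbf{e}}_F = \overline{\mathbf{e}}_{A_F}$ with $A_F := F \setminus F_{i-1}$, and the $A_F$ are exactly the rank-one flats of $\mathrm{N}$, hence partition $F_{i+1}\setminus F_{i-1}$. The balancing equation $\sum_F \omega(\sigma_F)\overline{\mathbf{e}}_F = 0$ therefore unfolds to $\sum_{k \in F_{i+1}\setminus F_{i-1}} \omega(\sigma_{F(k)})\overline{\mathbf{e}}_k = 0$, where $F(k)$ is the unique $F$ with $k \in A_F$; by uniqueness of the relation, this forces $\omega(\sigma_F)$ to be independent of $F$. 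The same computation shows constant weights are balanced, covering both directions.

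For (b), I would induct on $\mathrm{rk}(\mathrm{M})$. The base case $\mathrm{rk}(\mathrm{M}) = 2$ is immediate since any two singleton flags differ in their one flat. For the inductive step, Proposition~\ref{Star}(1) identifies the complete flags of $\mathrm{M}$ starting with a fixed rank-one flat $A$ with the complete flags of $\mathrm{M}_A$, so by induction these form a connected subgraph of the dual graph. To bridge different starting flats $A \neq A'$, set $H := \mathrm{cl}_\mathrm{M}(A \cup A')$, a rank-two flat containing both; applying the inductive hypothesis inside $\mathrm{M}_H$ one can arrange two flags beginning $A \subsetneq H$ and $A' \subsetneq H$ respectively with identical tail, after which they differ by a single exchange at the first position inside the rank-two interval $[\varnothing, H]$. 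Combining (a) along any path in the dual graph then forces $\omega$ to be globally constant.

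The main obstacle is the bookkeeping in step (a): one must correctly identify the relations among the $\overline{\mathbf{e}}_k$ in the quotient and then exploit the crucial combinatorial input that the rank-one flats of the loopless rank-two minor $\mathrm{N}$ partition its ground set $F_{i+1}\setminus F_{i-1}$. The connectivity statement in (b) is classical and could alternatively be invoked via shellability of geometric lattices, but the inductive argument via Proposition~\ref{Star} fits naturally with the framework of the paper.
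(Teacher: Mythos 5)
Your proposal is correct and follows essentially the same route as the paper: step (a) is an explicit linear-algebra unpacking of the paper's appeal to the flat partition property (the rank-one flats of the rank-two minor partition $F_{i+1}\setminus F_{i-1}$, so the ray generators of the star sum to zero and any proper subset is independent), and step (b) is the paper's Lemma on connectedness in codimension one, proved by the same induction on rank via stars and a common rank-two flat. No gaps.
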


It follows that there is a canonical isomorphism
$
\text{MW}_r(\Sigma_\mathrm{M}) \simeq \mathbb{Z}.
$
We begin the proof of Proposition~\ref{PropositionMatroidBalancing} with the following lemma.

\begin{lemma}\label{LemmaConnectedness}
The Bergman fan $\Sigma_\mathrm{M}$ is connected in codimension $1$. 
\end{lemma}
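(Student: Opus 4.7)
The plan is to translate the statement into a graph-connectedness problem and prove it by induction on the rank of $\mathrm{M}$. Recall that the maximal cones of $\Sigma_\mathrm{M}$ are precisely $\sigma_{\varnothing<\mathscr{F}}$ for maximal flags $\mathscr{F}=\{F_1\subsetneq\cdots\subsetneq F_r\}$ of nonempty proper flats of $\mathrm{M}$, and two such cones share a codimension-$1$ face if and only if the underlying flags differ in exactly one entry. Hence the lemma amounts to showing that the \emph{flag graph} $\mathcal{G}(\mathrm{M})$---whose vertices are maximal flags of proper flats and whose edges are pairs of flags that coincide in all but one position---is connected. I induct on $r$. The base cases $r=0,1$ are immediate: for $r=1$ any two singletons $\{F\},\{F'\}$ form an edge.

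For the inductive step I would isolate two building blocks. The first is a \emph{same-top reduction}: any two maximal flags $\mathscr{F},\mathscr{F}'$ whose top flat is a common rank-$r$ flat $G$ are connected in $\mathcal{G}(\mathrm{M})$. This is because the proper flats of $\mathrm{M}^G$ are exactly the flats of $\mathrm{M}$ strictly contained in $G$, so deleting/appending $G$ gives a bijection between maximal flags of $\mathrm{M}$ with top $G$ and maximal flags of $\mathrm{M}^G$ which clearly preserves one-entry swaps. Since $\mathrm{M}^G$ is a loopless matroid of strictly smaller rank $r$, the induction hypothesis applies.

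The second building block is a \emph{top exchange}: for any two distinct rank-$r$ flats $G,G'$ there exist maximal flags $\mathscr{F}_G,\mathscr{F}_{G'}$ of $\mathrm{M}$ with tops $G,G'$ respectively that differ only in the top. Here I use that the lattice of flats of a matroid is semimodular: since $G\ne G'$ are coatoms one has $G\vee G'=E$, so
\[
\text{rk}(G\wedge G')\;\geq\;\text{rk}(G)+\text{rk}(G')-\text{rk}(G\vee G')\;=\;r+r-(r+1)\;=\;r-1,
\]
while $\text{rk}(G\wedge G')<r$ since $G\not\subseteq G'$; thus $\text{rk}(G\wedge G')=r-1$. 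Choose any maximal flag $F_1\subsetneq\cdots\subsetneq F_{r-2}\subsetneq G\wedge G'$ of flats below $G\wedge G'$ (empty when $r=1$) and append $G$ or $G'$ to obtain the desired adjacent pair $\mathscr{F}_G,\mathscr{F}_{G'}$.

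Combining the two blocks finishes the proof: given arbitrary maximal flags $\mathscr{F},\mathscr{F}'$ with tops $G,G'$, the top-exchange block produces adjacent $\mathscr{F}_G,\mathscr{F}_{G'}$, and the same-top reduction connects $\mathscr{F}$ to $\mathscr{F}_G$ and $\mathscr{F}_{G'}$ to $\mathscr{F}'$ inside $\mathcal{G}(\mathrm{M}^{G})$ and $\mathcal{G}(\mathrm{M}^{G'})$ respectively. I expect the only delicate point to be the semimodular rank computation that pins down $\text{rk}(G\wedge G')=r-1$; once that is in place, the induction assembles itself and the geometric content of the lemma is carried by the obvious correspondence between maximal cones and maximal flags.
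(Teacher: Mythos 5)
Your overall strategy---reduce to connectivity of the graph of maximal flags and induct on rank---parallels the paper's, but the paper inducts from the \emph{bottom} of the flag via contractions $\mathrm{M}_{\min\mathscr{F}}$, whereas you induct from the \emph{top} via restrictions $\mathrm{M}^G$. This dualization breaks at your ``top exchange'' step, and the gap is genuine. First, the inequality is quoted backwards: submodularity of the rank function gives
\[
\text{rk}(G\wedge G')\;\le\;\text{rk}(G)+\text{rk}(G')-\text{rk}(G\vee G')\;=\;r-1,
\]
an upper bound, not a lower bound. Second, the conclusion $\text{rk}(G\wedge G')=r-1$ is simply false for general coatoms: in the uniform matroid $U_{3,4}$ (rank $3$, so $r=2$) the rank-$2$ flats $G=\{1,2\}$ and $G'=\{3,4\}$ satisfy $G\cap G'=\varnothing$. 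Since two maximal flags differing only in the top entry must share a rank-$(r-1)$ flat contained in $G\cap G'$, no adjacent pair with tops $G$ and $G'$ exists here; one must pass through intermediate tops, e.g.\ $\{1\}\subsetneq\{1,2\}$, then $\{1\}\subsetneq\{1,3\}$, then $\{3\}\subsetneq\{1,3\}$, then $\{3\}\subsetneq\{3,4\}$. So the building block on which your induction is assembled fails.

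The underlying asymmetry is that the lattice of flats is semimodular but not modular: two distinct \emph{atoms} always join to a rank-$2$ flat, while two distinct \emph{coatoms} need not meet in a rank-$(r-1)$ flat. The paper exploits the direction that does hold: when $\min\mathscr{F}\neq\min\mathscr{G}$, it chooses a maximal flag $\mathscr{H}$ of flats containing $\min\mathscr{F}\cup\min\mathscr{G}$, so that $\{\min\mathscr{F}\}\cup\mathscr{H}$ and $\{\min\mathscr{G}\}\cup\mathscr{H}$ are adjacent (they differ only in the bottom entry), and then connects $\sigma_{\mathscr{F}}$ to $\sigma_{\{\min\mathscr{F}\}\cup\mathscr{H}}$ by the induction hypothesis applied to the contraction $\mathrm{M}_{\min\mathscr{F}}$. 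To salvage your top-down version you would need the additional (true but not immediate) fact that the graph on coatoms with adjacency $\text{rk}(G\cap G')=r-1$ is connected; the bottom-up argument avoids this entirely.
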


We remark that Lemma~\ref{LemmaConnectedness} is a direct consequence of the shellability of $\Delta_\mathrm{M}$, see \cite{Bjorner}.

\begin{proof}
The claim is that, for any two $r$-dimensional cones  $\sigma_\mathscr{F}, \sigma_\mathscr{G}$ in $\Sigma_\mathrm{M}$, there is a sequence 
\[
\sigma_\mathscr{F}= \sigma_0 \supset \tau_1 \subset \sigma_1 \supset \cdots  \subset \sigma_{l-1} \supset \tau_l \subset \sigma_l = \sigma_\mathscr{G},
\]
where $\tau_i$ is a common facet of $\sigma_{i-1}$ and $\sigma_i$ in $\Sigma_\mathrm{M}$.
We express this by writing $\sigma_\mathscr{F} \sim \sigma_\mathscr{G}$.

We prove by induction on the rank of $\mathrm{M}$. 
If $\text{min}\ \mathscr{F}=\text{min}\ \mathscr{G}$, then the induction hypothesis applied to $\mathrm{M}_{\text{min}\hspace{0.5mm} \mathscr{F}}$ shows that
\[
\sigma_\mathscr{F} \sim \sigma_{\mathscr{G}}.
\]
If otherwise,  we choose a  flag of nonempty proper flats $\mathscr{H}$ 
maximal among those satisfying $\text{min}\ \mathscr{F} \cup \text{min}\ \mathscr{G}<\mathscr{H}$.
By the induction hypothesis applied to  $\mathrm{M}_{\text{min}\hspace{0.5mm} \mathscr{F}}$, 
we have
\[
\sigma_\mathscr{F} \sim \sigma_{\{\text{min}\hspace{0.5mm} \mathscr{F}\} \cup \mathscr{H}}.
\]
Similarly, by the induction hypothesis applied to $\mathrm{M}_{\text{min}\hspace{0.5mm} \mathscr{G}}$, we have
\[
\sigma_\mathscr{G} \sim \sigma_{\{\text{min}\hspace{0.5mm} \mathscr{G}\} \cup \mathscr{H}}.
\]
Since any $1$-dimensional fan is connected in codimension $1$, this complete the induction.
\end{proof}

\begin{proof}[Proof of Proposition~\ref{PropositionMatroidBalancing}]
The proof is based on the \emph{flat partition property} for matroids $\mathrm{M}$ on $E$:
\[
\text{If $F$ is a flat of $\mathrm{M}$, then the flats of $\mathrm{M}$ that cover $F$ partition $E \setminus F$.}
\]
Let $\tau_\mathscr{G}$ be a codimension $1$ cone in the Bergman fan $\Sigma_\mathrm{M}$, and set
\[
V_{\text{star}(\mathscr{G})}:= \text{the set of primitive ray generators of the star of
 $\tau_\mathscr{G}$ in $\Sigma_\mathrm{M}$} \subseteq \mathbf{N}_{E,\mathbb{R}}/\langle \tau_\mathscr{F}\rangle.
 \]
The flat partition property applied to the restrictions of $\mathrm{M}$ shows that, first, 
the sum of all the vectors in $V_{\text{star}(\mathscr{G})}$ is zero and, second, any proper subset of $V_{\text{star}(\mathscr{G})}$ is linearly independent.
Therefore,  for an $r$-dimensional weight $\omega$ on $\Sigma_\mathrm{M}$,
\[
\text{$\omega$ satisfies the balancing condition at $\tau_\mathscr{G}$} \Longleftrightarrow
\text{$\omega$ is constant on cones containing $\tau_{\mathscr{G}}$.}
\]
By the connectedness of Lemma~\ref{LemmaConnectedness}, the latter condition 
for every $\tau_\mathscr{G}$ 
implies that $\omega$ is constant.
\end{proof}

\subsection{}

We continue to work with a unimodular fan $\Sigma$ in $\mathbf{N}_\mathbb{R}$.
As before, we write $V_\Sigma$ for the set of primitive ray generators of $\Sigma$.
Let $S_\Sigma$ be the polynomial ring over $\mathbb{Z}$ with variables indexed by $V_\Sigma$:
\[
S_\Sigma:=\mathbb{Z}[x_\mathbf{e} ]_{\mathbf{e} \in V_\Sigma}.
\]
For each $k$-dimensional cone $\sigma$ in $\Sigma$, we associate a degree $k$ square-free monomial 
\[
x_\sigma:=\prod_{\mathbf{e} \in \sigma} x_\mathbf{e}.
\]
The subgroup of $S_\Sigma$ generated by all such monomials $x_\sigma$ will be denoted
\[
Z^k(\Sigma):=\bigoplus_{\sigma \in \Sigma_k} \mathbb{Z}\hspace{0.5mm} x_\sigma.
\]
Let $Z^*(\Sigma)$ be the sum of $Z^k(\Sigma)$ over all nonnegative integers $k$.

\begin{definition}\label{ChowRingFan}
The \emph{Chow  ring} of $\Sigma$ is the commutative graded algebra
\[
A^*(\Sigma):=S_\Sigma/(I_\Sigma+J_\Sigma),
\]
where $I_\Sigma$ and $J_\Sigma$ are the ideals of $S_\Sigma$ defined by
\begin{align*}
I_\Sigma&:=\text{the ideal generated by the square-free monomials not in $Z^*(\Sigma)$,}\\
J_\Sigma&:=\text{the ideal generated by the linear forms $\sum_{\mathbf{e} \in V_\Sigma} \langle \mathbf{e}, m \rangle  \hspace{0.5mm}  x_\mathbf{e}$ for $m \in \mathbf{M}$.}
\end{align*}
We write $A^k(\Sigma)$ for the degree $k$ component of $A^*(\Sigma)$, and set 
\[
A^*(\Sigma)_\mathbb{R}:=A^*(\Sigma) \otimes_\mathbb{Z} \mathbb{R} \quad \text{and}  \quad A^k(\Sigma)_\mathbb{R}:=A^k(\Sigma)\otimes_\mathbb{Z} \mathbb{R}.
\]
\end{definition}

If we identify the variables of $S_\Sigma$ with the Courant functions on $\Sigma$,
then the degree $1$ component of  $A^*(\Sigma)$ agrees with the group introduced in Section~\ref{SectionPLFunctions}:
\[
A^1(\Sigma)=\text{PL}(\Sigma)/\mathbf{M}.
\]
Note that the pullback homomorphisms between $A^1$ introduced in that section uniquely extend to graded ring homomorphisms between $A^*$:
\begin{enumerate}[(1)]\itemsep 5pt
\item The homomorphism $\textrm{p}_{\Sigma' \subseteq \Sigma}$  uniquely extends to a surjective graded ring homomorphism
\[
\textrm{p}_{\Sigma' \subseteq \Sigma}: A^*(\Sigma) \longrightarrow A^*(\Sigma').
\]
\item The homomorphism $\textrm{p}_{\sigma \in \Sigma}$  uniquely extends to a surjective graded ring homomorphism
\[
\textrm{p}_{\sigma \in \Sigma}: A^*(\Sigma) \longrightarrow A^*(\text{star}(\sigma,\Sigma)).
\]
\item The isomorphism $\textrm{p}_{\Sigma_1,\Sigma_2}$ uniquely extends to a graded ring isomorphism
\[
\textrm{p}_{\Sigma_1,\Sigma_2}: A^*(\Sigma_1 \times \Sigma_2) \longrightarrow A^*(\Sigma_1) \otimes_\mathbb{Z} A^*(\Sigma_2).
\]
\end{enumerate}

We remark that the Chow ring $A^*(\Sigma)_\mathbb{R}$ can be identified with the ring of piecewise polynomial functions on $\Sigma$ modulo linear functions on $\mathbf{N}_\mathbb{R}$, see \cite{Billera}.

\begin{proposition}\label{PropositionGeneration}
The group $A^k(\Sigma)$ is generated by $Z^k(\Sigma)$ for each nonnegative integer $k$.
\end{proposition}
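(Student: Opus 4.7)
The plan is to show that every monomial in $S_\Sigma$, when reduced modulo $I_\Sigma+J_\Sigma$, can be rewritten as a $\mathbb{Z}$-linear combination of monomials of the form $x_\sigma$ with $\sigma\in\Sigma$. A general monomial $x_{\mathbf{e}_1}^{a_1}\cdots x_{\mathbf{e}_k}^{a_k}$ (with the $\mathbf{e}_i$ distinct and all $a_i\ge 1$) either has support $\{\mathbf{e}_1,\dots,\mathbf{e}_k\}$ not spanning a cone of $\Sigma$, in which case it lies in $I_\Sigma$ and vanishes, or it is supported on a cone $\tau=\mathrm{cone}(\mathbf{e}_1,\dots,\mathbf{e}_k)\in\Sigma$. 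For the latter, I would attach the \emph{excess} $\mathrm{exc}(x^{\mathbf{a}}):=\bigl(\sum a_i\bigr)-k$ and argue by induction on $\mathrm{exc}$, keeping the total degree fixed. The base case $\mathrm{exc}=0$ forces the monomial to be square-free on a cone, hence already in $Z^k(\Sigma)$.

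For the inductive step, assume $\mathrm{exc}(x^{\mathbf{a}})\ge 1$, so without loss of generality $a_1\ge 2$. Because $\Sigma$ is unimodular, the primitive generators $\mathbf{e}_1,\dots,\mathbf{e}_k$ of $\tau$ form part of a $\mathbb{Z}$-basis of $N$, so there exists $m\in M$ with $\langle\mathbf{e}_1,m\rangle=1$ and $\langle\mathbf{e}_i,m\rangle=0$ for $i=2,\dots,k$. The linear form $\sum_{\mathbf{e}\in V_\Sigma}\langle\mathbf{e},m\rangle\,x_{\mathbf{e}}$ lies in $J_\Sigma$, and multiplying it by the monomial $x_{\mathbf{e}_1}^{a_1-1}x_{\mathbf{e}_2}^{a_2}\cdots x_{\mathbf{e}_k}^{a_k}$ yields, modulo $J_\Sigma$,
\[
x_{\mathbf{e}_1}^{a_1}x_{\mathbf{e}_2}^{a_2}\cdots x_{\mathbf{e}_k}^{a_k}\;=\;-\sum_{\mathbf{e}\notin\{\mathbf{e}_1,\dots,\mathbf{e}_k\}}\langle\mathbf{e},m\rangle\,x_{\mathbf{e}}\,x_{\mathbf{e}_1}^{a_1-1}x_{\mathbf{e}_2}^{a_2}\cdots x_{\mathbf{e}_k}^{a_k},
\]
since the terms from $\mathbf{e}=\mathbf{e}_i$ with $i\ge 2$ vanish by the choice of $m$.

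For each $\mathbf{e}$ on the right-hand side, the monomial $x_{\mathbf{e}}\,x_{\mathbf{e}_1}^{a_1-1}\cdots x_{\mathbf{e}_k}^{a_k}$ is either killed by $I_\Sigma$ (if $\{\mathbf{e},\mathbf{e}_1,\dots,\mathbf{e}_k\}$ is not the set of primitive generators of a cone of $\Sigma$), or it is supported on a cone of dimension $k+1$ with total degree $\sum a_i$; in the latter case its excess equals $\sum a_i-(k+1)=\mathrm{exc}(x^{\mathbf{a}})-1$. Thus every non-vanishing term on the right has strictly smaller excess, and the induction closes. Since the relation is homogeneous, the whole argument takes place in $A^k(\Sigma)$, proving the proposition.

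The genuine content is unimodularity: it ensures the existence of the dual vector $m$ whose pairing is $1$ on $\mathbf{e}_1$ and $0$ on the remaining generators of $\tau$, which is what allows a single application of a $J_\Sigma$-relation to trade a high-power monomial on $\tau$ for monomials on cones strictly containing $\tau$. The only potential pitfall is to verify that the excess strictly decreases and that the procedure terminates, but both are built into the quantity $\mathrm{exc}$ and the bound $\dim\sigma\le\dim\Sigma$.
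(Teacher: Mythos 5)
Your proof is correct and is essentially the paper's own argument: the paper performs descending induction on the dimension of the supporting cone of a fixed-degree monomial, which is exactly your induction on the excess $\bigl(\sum a_i\bigr)-\#\{\text{support}\}$, and it uses the same unimodularity trick to produce $m\in M$ dual to $\mathbf{e}_1$ and trade one power of $x_{\mathbf{e}_1}$ for monomials supported on strictly larger cones.
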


In particular, if $k$ larger than the dimension of $\Sigma$, then $A^k(\Sigma)=0$.

\begin{proof}
Let $\sigma$ be a cone in $\Sigma$,  let $\mathbf{e}_1,\mathbf{e}_2,\ldots,\mathbf{e}_l$ be its primitive ray generators.
and consider a degree $k$ monomial of the form
\[
x_{\mathbf{e}_1}^{k_1}x_{\mathbf{e}_2}^{k_2} \cdots x_{\mathbf{e}_l}^{k_l}, \qquad k_1 \ge k_2 \ge \cdots \ge k_l \ge 1.
\]
We show that the image of this monomial in $A^k(\Sigma)$ is in the span of $Z^k(\Sigma)$.

We do this by descending induction on the dimension of $\sigma$.
If $\text{dim}\ \sigma=k$, there is nothing to prove. 
If otherwise, we use the unimodularity of $\sigma$ to choose $m \in \mathbf{M}$ such that 
\[
\langle \mathbf{e}_1,m\rangle=-1 \ \ \text{and} \ \ \langle \mathbf{e}_2,m\rangle=\cdots=\langle \mathbf{e}_{l},m\rangle=0.
\]
This shows that, modulo the relations given by $I_\Sigma$ and $J_\Sigma$, we have
\[
x_{\mathbf{e}_1}^{k_1}x_{\mathbf{e}_2}^{k_2} \cdots x_{\mathbf{e}_l}^{k_l}=x_{\mathbf{e}_1}^{k_1-1} x_{\mathbf{e}_2}^{k_2}\cdots x_{\mathbf{e}_l}^{k_l}\ \sum_{\mathbf{e} \in \text{link}(\sigma)} \langle \mathbf{e},m\rangle \hspace{0.5mm}x_\mathbf{e},
\]
where the sum is over the set of primitive ray generators of the link of $\sigma$ in $\Sigma$.
The induction hypothesis applies to each of the terms in the expansion of the right-hand side.
\end{proof}

The group of $k$-dimensional weights on $\Sigma$ can be identified with the dual of $Z^k(\Sigma)$ under the tautological isomorphism
\[
\mathrm{t}_\Sigma: \mathbb{Z}^{\Sigma_k} \longrightarrow \text{Hom}_\mathbb{Z}(Z^k(\Sigma),\mathbb{Z}), \qquad
\omega \longmapsto \Big( x_\sigma \longmapsto \omega(\sigma)\Big).
\]
By Proposition~\ref{PropositionGeneration}, the target of $\mathrm{t}_\Sigma$ contains 
$\text{Hom}_\mathbb{Z}(A^k(\Sigma),\mathbb{Z})$ as a subgroup.

\begin{proposition}\label{PropositionBasicDuality}
The isomorphism $\mathrm{t}_\Sigma$ restricts to the bijection between the subgroups
\[
  \text{MW}_k(\Sigma) \longrightarrow  \text{Hom}_\mathbb{Z}(A^k(\Sigma),\mathbb{Z}). 
\]
\end{proposition}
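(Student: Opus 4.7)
The plan is to identify both $\mathrm{MW}_k(\Sigma)$ and the image of $\mathrm{Hom}_\mathbb{Z}(A^k(\Sigma), \mathbb{Z})$ inside $\mathrm{Hom}_\mathbb{Z}(Z^k(\Sigma), \mathbb{Z})$ and then check that they coincide under $\mathrm{t}_\Sigma$. By Proposition~\ref{PropositionGeneration}, the composition $Z^k(\Sigma) \hookrightarrow S_\Sigma \twoheadrightarrow A^*(\Sigma)$ surjects onto $A^k(\Sigma)$, and dualizing identifies $\mathrm{Hom}_\mathbb{Z}(A^k(\Sigma), \mathbb{Z})$ with the annihilator in $\mathrm{Hom}_\mathbb{Z}(Z^k(\Sigma), \mathbb{Z})$ of the kernel $R^k := Z^k(\Sigma) \cap (I_\Sigma + J_\Sigma)$. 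The goal thus reduces to showing that $\mathrm{t}_\Sigma(\omega)$ annihilates $R^k$ if and only if $\omega$ is balanced.

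For each $\tau \in \Sigma_{k-1}$ and each $m \in M$ vanishing on every primitive generator of $\tau$, I set
\[
r_{\tau, m} := \sum_{\sigma \in \Sigma_k,\; \sigma \supset \tau} \langle \mathbf{e}_{\sigma/\tau}, m\rangle\, x_\sigma \in Z^k(\Sigma).
\]
These lie in $R^k$: expand $x_\tau \cdot \text{res}_\Sigma(m) = \sum_{\mathbf{e}} \langle \mathbf{e}, m\rangle\, x_\tau x_\mathbf{e}$ and observe that the summands with $\mathbf{e} \in \tau$ vanish by the hypothesis on $m$, the summands with $\mathbf{e} \notin \tau$ and $\tau + \mathbb{R}_{\geq 0}\mathbf{e} \notin \Sigma$ lie in $I_\Sigma$, and the rest assemble to $r_{\tau, m}$. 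A direct computation now gives
\[
\mathrm{t}_\Sigma(\omega)(r_{\tau, m}) = \Big\langle \sum_{\sigma \supset \tau} \omega(\sigma)\, \mathbf{e}_{\sigma/\tau},\; m\Big\rangle,
\]
which vanishes as $m$ ranges over $M \cap \text{span}(\tau)^\perp$ if and only if $\sum_{\sigma \supset \tau} \omega(\sigma)\, \mathbf{e}_{\sigma/\tau}$ lies in the span of $\tau$. Running $\tau$ over $\Sigma_{k-1}$ recovers the balancing condition precisely, which gives one direction and pins down what the reverse must accomplish.

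For the reverse direction---that a balanced $\omega$ forces $\mathrm{t}_\Sigma(\omega)$ to kill \emph{every} element of $R^k$---I extend $\mathrm{t}_\Sigma(\omega)$ to a $\mathbb{Z}$-linear functional $\tilde\phi\colon S_\Sigma^k \to \mathbb{Z}$ that vanishes on all of $I_\Sigma^k + J_\Sigma^k$. I set $\tilde\phi$ to be zero on every monomial whose support is not a cone of $\Sigma$ (so that $I_\Sigma^k$ is killed automatically), and $\tilde\phi(x_\sigma) = \omega(\sigma)$ on the square-free $k$-cone monomials. For a remaining non-square-free monomial $x_\alpha$ whose support is some cone $\sigma$ of dimension less than $k$, I pick a primitive generator $\mathbf{e}$ of $\sigma$ appearing in $x_\alpha$ with exponent at least two, use unimodularity of $\sigma$ to select $m \in M$ with $\langle \mathbf{e}, m\rangle = -1$ and $\langle \mathbf{e}', m\rangle = 0$ for the other primitive generators of $\sigma$, and define $\tilde\phi(x_\alpha)$ via the substitution $x_\mathbf{e} \mapsto -\sum_{\mathbf{e}' \in \text{link}(\sigma,\Sigma)} \langle \mathbf{e}', m\rangle\, x_{\mathbf{e}'}$ appearing in the proof of Proposition~\ref{PropositionGeneration}, iterating $\tilde\phi$ on the resulting monomials of strictly smaller total excess degree.

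The main obstacle is verifying that this recursive definition is independent of the sequence of choices of $\mathbf{e}$ and $m$ and that the resulting $\tilde\phi$ annihilates every generator $x_\beta \cdot \text{res}_\Sigma(m')$ of $J_\Sigma^k$. Both statements reduce to the balancing identity applied at each codimension-one cone of $\Sigma$ encountered during the reduction, together with the fact (available from unimodularity) that any $\mathbb{Z}$-linear relation among the Courant functions of primitive generators of a cone of $\Sigma$ pulls back from $\text{res}_\Sigma$. Once $\tilde\phi$ is in hand, it kills $R^k \subseteq (I_\Sigma + J_\Sigma)^k$ and restricts on $Z^k(\Sigma)$ to $\mathrm{t}_\Sigma(\omega)$, completing the proof.
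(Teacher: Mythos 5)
Your identification of $\text{Hom}_\mathbb{Z}(A^k(\Sigma),\mathbb{Z})$ with the functionals on $Z^k(\Sigma)$ annihilating $R^k = Z^k(\Sigma)\cap(I_\Sigma+J_\Sigma)$, your verification that each $r_{\tau,m}$ lies in $R^k$, and the computation $\mathrm{t}_\Sigma(\omega)(r_{\tau,m}) = \langle \sum_{\sigma\supset\tau}\omega(\sigma)\,\mathbf{e}_{\sigma/\tau}, m\rangle$ are all correct and coincide with the paper's argument; together with $\langle\tau\rangle^{\perp\perp}=\langle\tau\rangle$ this settles the direction ``$\mathrm{t}_\Sigma(\omega)$ kills $R^k$ implies $\omega$ is balanced.''

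The reverse direction is where the proposal has a genuine gap. The paper's proof rests on the single assertion that $R^k$ is \emph{generated} by the elements $r_{\tau,m}$ (its ``main point''); granting that, both directions follow at once from your displayed computation. You instead try to bypass the generation statement by extending $\mathrm{t}_\Sigma(\omega)$ to a functional $\tilde\phi$ on all of $S_\Sigma^k$ killing $I_\Sigma^k+J_\Sigma^k$, defined by running the reduction algorithm from Proposition~\ref{PropositionGeneration}. But the existence of such a $\tilde\phi$ is \emph{equivalent} to what you are trying to prove, and its construction hinges on exactly the two points you defer: (i) independence of the recursively defined value $\tilde\phi(x_\alpha)$ from the choices of $\mathbf{e}$, of $m$, and of the order of reductions, and (ii) vanishing of $\tilde\phi$ on every generator $x_\beta\cdot\text{res}_\Sigma(m')$ of $J_\Sigma^k$, where $x_\beta$ is an arbitrary, possibly non-square-free, degree-$(k-1)$ monomial whose support may have dimension well below $k-1$. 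The balancing condition only directly controls $\mathrm{t}_\Sigma(\omega)$ on the elements $r_{\tau,m}$ with $\tau\in\Sigma_{k-1}$; the discrepancy between two reduction paths, and the low-support generators of $J_\Sigma^k$, a priori live in the span of (non-square-free monomial)$\,\times\,$(linear form), and collapsing these to $\mathbb{Z}$-combinations of the $r_{\tau,m}$ requires an induction on excess degree and codimension of support that is precisely the combinatorial content of the proposition. Saying that both points ``reduce to the balancing identity at each codimension-one cone encountered'' names the destination without building the road. Either carry out that confluence argument in detail, or prove or cite the paper's generation statement directly: it is the Fulton--Sturmfels presentation of $A_{n-k}(X(\Sigma))$ by invariant cycles and relations, transported through the isomorphism $A^*(\Sigma)\simeq A^*(X(\Sigma))$.
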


The bijection in Proposition~\ref{PropositionBasicDuality}  is an analogue of the Kronecker duality homomorphism in algebraic topology.
We use it to define the \emph{cap product} 
\[
A^l(\Sigma) \times \text{MW}_k(\Sigma) \longrightarrow \text{MW}_{k-l}(\Sigma), \qquad \xi \cap \omega \hspace{0.5mm}(\sigma) := \mathrm{t}_\Sigma\hspace{0.5mm} \omega \hspace{0.5mm}(\xi \cdot x_\sigma).
\]
This makes the group $\text{MW}_*(\Sigma)$ a graded module over the Chow ring $A^*(\Sigma)$.

\begin{proof}
The homomorphisms from $A^k(\Sigma)$ to $\mathbb{Z}$ bijectively correspond to the homomorphisms from  $Z^k(\Sigma)$ to $\mathbb{Z}$ which vanish on the subgroup
\[
 Z^k(\Sigma)   \cap  (I_\Sigma+J_\Sigma) \subseteq Z^k(\Sigma).
\]
The main point is that this subgroup is generated by polynomials of the form
\[
 \Bigg( \sum_{\mathbf{e} \in \text{link}(\tau)} \langle \mathbf{e}, m \rangle  \hspace{0.5mm}  x_\mathbf{e}\Bigg) x_\tau, 
\]
where $\tau$ is a $(k-1)$-dimensional cone of $\Sigma$ and $m$ is an element perpendicular to $\langle \tau \rangle$. 
This is a special case of \cite[Theorem 1]{FMSS}.
It follows that
a $k$-dimensional weight $\omega$ corresponds to a homomorphism $A^k(\Sigma) \to \mathbb{Z}$ if and only if
\[
 \sum_{\tau \subset \sigma} \omega(\sigma)\hspace{0.5mm} \langle \mathbf{e}_{\sigma/\tau}, m \rangle =0 \ \text{for all $m \in \langle \tau \rangle^\perp$},
\]
where the sum is over all $k$-dimensional cones $\sigma$ in $\Sigma$ containing $\tau$.
Since $\langle\tau \rangle^{\perp\perp}=\langle \tau \rangle$, the latter condition is equivalent to the balancing condition on $\omega$ at $\tau$.
\end{proof}

\subsection{}\label{SectionDegreeMap}

The ideals $I_\Sigma$ and $J_\Sigma$ have a particularly simple description when $\Sigma=\Sigma_\mathrm{M}$.
In this case, we label the variables of  $S_\Sigma$  by the nonempty proper flats of $\mathrm{M}$, and write
\[
S_{\Sigma}=\mathbb{Z}[x_F]_{F \in \mathscr{P}(\mathrm{M})}.
\]
For a flag  of nonempty proper flats $\mathscr{F}$, we set
$
x_\mathscr{F}= \prod_{F \in \mathscr{F}} x_F.
$

\begin{enumerate}[(1)]\itemsep 5pt
\item[(Incomparability relations)] The ideal $I_\Sigma$ is generated by the  quadratic monomials 
\[
x_{F_1}x_{F_2},
\]
where $F_1$ and $F_2$ are two incomparable nonempty proper flats  of $\mathrm{M}$.
\item[(Linear relations)] The ideal $J_\Sigma$ is generated by the linear forms 
\[
\sum_{i_1 \in F} x_F - \sum_{i_2 \in F} x_F,
\]
where $i_1$ and $i_2$ are  distinct elements of the ground set $E$.
\end{enumerate}
The quotient ring $A^*(\Sigma_\mathrm{M})$ and its generalizations were studied  by Feichtner and Yuzvinsky in \cite{Feichtner-Yuzvinsky}.

\begin{definition}\label{DefinitionAlphaBeta}
To an element $i$ in $E$, we associate linear forms
\[
\alpha_{\mathrm{M},i}:=\sum_{i \in F} x_F, \quad \beta_{\mathrm{M},i}:=\sum_{i \notin F} x_F.
\]
Their classes  in $A^*(\Sigma_\mathrm{M})$, which are independent of $i$,  will be written $\alpha_\mathrm{M}$ and $\beta_\mathrm{M}$ respectively.
\end{definition}

We show that $A^r(\Sigma_\mathrm{M})$ is generated by the  element $\alpha_\mathrm{M}^r$, where $r$ is the dimension of $\Sigma_\mathrm{M}$.

\begin{proposition}\label{PropositionFundamentalClass}
Let $F_1 \subsetneq F_2 \subsetneq \cdots \subsetneq F_k$ be any flag of nonempty proper flats of $\mathrm{M}$.
\begin{enumerate}[(1)]\itemsep 5pt
\item If the rank of $F_m$ is not $m$ for some  $m \le k$, then
\[
x_{F_1}x_{F_2} \cdots x_{F_{k}} \hspace{0.5mm}\alpha_\mathrm{M}^{r-k}=\hspace{0.5mm} 0 \hspace{0.5mm}\in A^r(\Sigma_\mathrm{M}).
\]
\item If the rank of $F_m$ is $m$ for all  $m \le k$, then 
\[
x_{F_1}x_{F_2} \cdots x_{F_{k}} \hspace{0.5mm}\alpha_\mathrm{M}^{r-k}=\alpha_\mathrm{M}^r \in A^r(\Sigma_\mathrm{M}).
\]
\end{enumerate}
\end{proposition}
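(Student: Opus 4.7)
The plan is to prove (1) and (2) simultaneously by downward induction on $k$, starting at $k = r$ and descending to $k = 0$. At the base $k = r$, part (1) is vacuous, because any length-$r$ flag of nonempty proper flats has strictly increasing ranks in $\{1, \ldots, r\}$ and hence automatically satisfies $\mathrm{rk}(F_m) = m$ for every $m$. For part (2) at this base, I would invoke Propositions~\ref{PropositionMatroidBalancing} and~\ref{PropositionBasicDuality}: the constant function is the generator of $\mathrm{MW}_r(\Sigma_\mathrm{M}) \cong \mathbb{Z}$, and Kronecker duality produces a degree map $A^r(\Sigma_\mathrm{M}) \to \mathbb{Z}$ sending each maximal monomial $x_{F_1}\cdots x_{F_r}$ to $1$. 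Combined with the generation statement of Proposition~\ref{PropositionGeneration}, this shows that $A^r(\Sigma_\mathrm{M})$ is cyclic and that all maximal monomials coincide; a direct expansion of $\alpha_\mathrm{M}^r = \alpha_{\mathrm{M}, i_1}\cdots \alpha_{\mathrm{M}, i_r}$ along a transversal $i_1,\ldots,i_r$ of a chosen maximal flag then identifies $\alpha_\mathrm{M}^r$ with this common value.

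For the inductive step from length $k$ down to length $k-1$, I would fix a flag $F_1 \subsetneq \cdots \subsetneq F_{k-1}$ and choose any $i \in E \setminus F_{k-1}$, which is possible because $F_{k-1}$ is a proper subset of $E$. Representing $\alpha_\mathrm{M}$ by $\alpha_{\mathrm{M}, i} = \sum_{G \ni i} x_G$ in $A^1(\Sigma_\mathrm{M})$, I compute
\[
x_{F_1}\cdots x_{F_{k-1}}\, \alpha_\mathrm{M}^{r-k+1} \ =\ \Big(x_{F_1}\cdots x_{F_{k-1}}\, \alpha_{\mathrm{M}, i}\Big)\, \alpha_\mathrm{M}^{r-k} \ =\ \sum_{\substack{G \supsetneq F_{k-1} \\ i \in G}} x_{F_1}\cdots x_{F_{k-1}} x_G\, \alpha_\mathrm{M}^{r-k}.
\]
The restriction on $G$ is forced by the incomparability relations: for $x_G x_{F_1}\cdots x_{F_{k-1}}$ to be nonzero, $G$ must be comparable with each $F_j$, and $i \in G$ together with $i \notin F_{k-1}$ rules out $G \subseteq F_j$, so $G \supsetneq F_{k-1}$.

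Each summand is indexed by a length-$k$ flag $F_1 \subsetneq \cdots \subsetneq F_{k-1} \subsetneq G$, and the inductive hypothesis applies termwise. If some $\mathrm{rk}(F_m) \neq m$ for $m \le k-1$, the defect persists in every summand and IH(1) makes each term vanish, proving (1) at length $k-1$. Otherwise, IH(1) kills every summand with $\mathrm{rk}(G) \neq k$, while IH(2) converts every surviving summand into $\alpha_\mathrm{M}^r$; the flat partition property (used already in the proof of Proposition~\ref{PropositionMatroidBalancing}) says that the rank-$k$ flats covering $F_{k-1}$ partition $E \setminus F_{k-1}$, so exactly one $G$ in the sum has $\mathrm{rk}(G) = k$, and the sum collapses to a single copy of $\alpha_\mathrm{M}^r$, yielding (2) at length $k-1$.

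The main obstacle is the base case $k = r$: one needs the integral statement that $A^r(\Sigma_\mathrm{M})$ is cyclic, that every maximal monomial represents the same class, and that $\alpha_\mathrm{M}^r$ is precisely this generator. The Minkowski weight duality identifies the rank and pins down the degrees of the maximal monomials and of $\alpha_\mathrm{M}^r$, but turning this into an equality (rather than merely an equality modulo torsion) is the delicate step. Once the base case is in hand, the combinatorial core of the proposition—the incomparability reduction followed by the unique-covering argument from flat partition—propagates the identity uniformly down through every length $k$ by the expansion displayed above.
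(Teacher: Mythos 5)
Your inductive step is sound and is essentially the same computation as the paper's: multiply by the representative $\alpha_{\mathrm{M},i}$ for a suitable $i$, use the incomparability relations to restrict the sum to flats $G$ strictly containing $F_{k-1}$ and containing $i$, and then use the flat partition property to see that exactly one such $G$ of the correct rank survives. The problem is the base case of your downward induction. At $k=r$ you must prove the integral identity $x_{F_1}\cdots x_{F_r}=\alpha_\mathrm{M}^r$ in $A^r(\Sigma_\mathrm{M})$, and the tools you invoke do not deliver it. Propositions~\ref{PropositionMatroidBalancing} and~\ref{PropositionBasicDuality} give $\operatorname{Hom}_\mathbb{Z}(A^r(\Sigma_\mathrm{M}),\mathbb{Z})\simeq\mathbb{Z}$, generated by the functional taking the value $1$ on every maximal monomial; together with Proposition~\ref{PropositionGeneration} this shows only that any two maximal monomials, and $\alpha_\mathrm{M}^r$, agree \emph{modulo torsion}. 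At this point of the paper $A^r(\Sigma_\mathrm{M})$ is not known to be torsion-free --- torsion-freeness is a consequence of Theorem~\ref{PoincareDuality}, whose proof depends on Proposition~\ref{PropositionDegreeMap} and hence on the very proposition you are proving --- so you cannot upgrade the mod-torsion statement to an equality without circularity. Your closing paragraph concedes exactly this (``the delicate step''), and the ``direct expansion of $\alpha_\mathrm{M}^r$ along a transversal'' that you offer as the remedy is not a one-line computation: carried out carefully, factor by factor, it \emph{is} an ascending induction using part (1) and the unique-covering argument at each stage.

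That observation also points to the repair, which is what the paper does: keep the descending induction for part (1) (whose base case $k=r-1$ is immediate, since the rank of $F_{r-1}$ is forced to be $r$ and the sum over proper flats containing $F_{r-1}$ and a point outside it is empty), but run part (2) \emph{upward} in $k$, starting from the trivial identity $\alpha_\mathrm{M}^r=\alpha_\mathrm{M}^r$ and replacing one factor of $\alpha_\mathrm{M}$ by $x_{F_j}$ at a time via the same expansion you wrote down. Reversing the direction of the induction for part (2) eliminates the unproved base case entirely; with that change your argument becomes the paper's proof.
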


In particular, for any two maximal flags of nonempty proper flats $\mathscr{F}_1$ and $\mathscr{F}_2$ of $\mathrm{M}$,
 \[
x_{\mathscr{F}_1}=x_{\mathscr{F}_2} \in A^r(\Sigma_\mathrm{M}).
 \]
Since $\text{MW}_r(\Sigma_\mathrm{M})$ is isomorphic to $\mathbb{Z}$, this implies that
$A^r(\Sigma_\mathrm{M})$ is  isomorphic to $\mathbb{Z}$, see Proposition~\ref{PropositionDegreeMap}.

\begin{proof}
As a general observation, we note that for any element $i$ not in a nonempty proper flat $F$,
\[
x_F \ \alpha_\mathrm{M}= x_F  \ \Big( \sum_G x_G\Big) \in A^*(\Sigma_\mathrm{M}),
\]
where the sum is over all proper flats containing $F$ and  $\{i\}$. 
In particular, if the rank of $F$ is $r$, then the product is zero.

We prove the first assertion by descending induction on $k$, which is necessarily less than $r$.
If $k=r-1$, then the rank of $F_k$ should be $r$, and hence the product is zero.
For general $k$, we choose an element $i$ not in  $F_k$.
By the observation made above, we have
\[
x_{F_1}x_{F_2} \cdots x_{F_{k}} \hspace{0.5mm}\alpha_\mathrm{M}^{r-k}=
x_{F_1}x_{F_2} \cdots x_{F_{k}} \hspace{0.5mm} \Big(\sum_{G} x_G\Big) \hspace{0.5mm}\alpha_\mathrm{M}^{r-k-1},
\]
where the sum is over all proper flats  containing $F_k$ and $\{i\}$. 
The right-hand side is zero by the induction hypothesis for $k+1$ applied to each of the terms in the expansion.

We prove the second assertion by ascending induction on $k$. 
When $k=1$, we choose an element $i$ in $F_1$, and consider the corresponding representative of $\alpha_{\mathrm{M}}$ to write
\[
\alpha_\mathrm{M}^{r}=\Big(\sum_G x_G\Big)\alpha_\mathrm{M}^{r-1},
\]
where the sum is over all  proper flats containing $i$.
By the first part of the proposition for $k=1$, only one term in the expansion of the right-hand side is nonzero, and this gives
\[
\alpha_\mathrm{M}^{r}=x_{F_1} \hspace{0.5mm}\alpha_\mathrm{M}^{r-1}.
\]
For general $k$, we start from the induction hypothesis
\[
\alpha_\mathrm{M}^{r}=x_{F_1} x_{F_2}\cdots x_{F_{k-1}}  \alpha_\mathrm{M}^{r-(k-1)}.
\]
Choose an element $i$ in $F_k \setminus F_{k-1}$ and use the general observation made above to write
\[
\alpha_\mathrm{M}^{r}=x_{F_1} x_{F_2}\cdots x_{F_{k-1}}  \Big(\sum_G x_G\Big)\alpha_\mathrm{M}^{r-k},
\]
where the sum is over all proper flats containing $F_{k-1}$ and $\{i\}$.
By the first part of the proposition for $k$, only one term in the expansion of the right-hand side is nonzero, and we get
\[
\alpha_\mathrm{M}^{r}=x_{F_1} x_{F_2}\cdots x_{F_{k-1}}  x_{F_k}\alpha_\mathrm{M}^{r-k}.
\]
\end{proof}

When $\Sigma$ is complete, Fulton and Sturmfels  showed in \cite{Fulton-Sturmfels} that there is an isomorphism
\[
A^{k}(\Sigma) \longrightarrow \text{MW}_{n-k}(\Sigma), \qquad \xi \longmapsto \big( \sigma \longmapsto \text{deg} \ \xi \cdot x_\sigma \big),
\]
where $n$ is the dimension of $\Sigma$ and  $``\text{deg}"$ is the degree map of the complete toric variety of $\Sigma$.
In Theorem~\ref{PoincareDuality}, we show that there is an  isomorphism  for the Bergman fan
\[
A^{k}(\Sigma_{\mathrm{M}}) \longrightarrow \text{MW}_{r-k}(\Sigma_{\mathrm{M}}), \qquad \xi \longmapsto \big( \sigma_\mathscr{F} \longmapsto \text{deg} \ \xi \cdot x_\mathscr{F} \big),
\]
where $r$ is the dimension of $\Sigma_\mathrm{M}$ and
$``\text{deg}"$ is a homomorphism constructed from $\mathrm{M}$.
These isomorphisms are analogues of the Poincar\'e duality homomorphism in algebraic topology.

\begin{definition}\label{MatroidDegreeMap}
The \emph{degree map} of $\mathrm{M}$ is the homomorphism
obtained by taking the cap product 
\[
\text{deg}: A^r(\Sigma_\mathrm{M}) \longrightarrow \mathbb{Z}, \qquad \xi \longmapsto \xi \cap 1_\mathrm{M},
\]
where $1_\mathrm{M}=1$ is the constant $r$-dimensional Minkowski weight on  $\Sigma_\mathrm{M}$.
\end{definition}

By Proposition~\ref{PropositionGeneration},  the homomorphism $\text{deg}$ is uniquely determined by its property
\[
\text{deg}(x_\mathscr{F})=1 \ \ \text{for all monomials $x_\mathscr{F}$ corresponding to an $r$-dimensional cone in $\Sigma_\mathrm{M}$.}
\]

\begin{proposition}\label{PropositionDegreeMap}
The degree map of $\mathrm{M}$ is an isomorphism.
\end{proposition}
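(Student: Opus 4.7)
The plan is to combine Propositions \ref{PropositionGeneration} and \ref{PropositionFundamentalClass} to show that $A^r(\Sigma_\mathrm{M})$ is cyclic, then check surjectivity of $\text{deg}$ directly, and finally observe that any surjection from a cyclic group onto $\mathbb{Z}$ is automatically an isomorphism.

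First I would verify that $\text{deg}$ is well defined and compute it on monomials. By Proposition~\ref{PropositionMatroidBalancing}, the constant weight $1_\mathrm{M}$ on $\Sigma_\mathrm{M}$ is balanced, so by Proposition~\ref{PropositionBasicDuality} it corresponds to a homomorphism $A^r(\Sigma_\mathrm{M})\to \mathbb{Z}$, namely $\text{deg}$. For any maximal flag of nonempty proper flats $\mathscr{F}=\{F_1\subsetneq\cdots\subsetneq F_r\}$, the cone $\sigma_\mathscr{F}$ is $r$-dimensional in $\Sigma_\mathrm{M}$, so $\text{deg}(x_\mathscr{F})=\mathrm{t}_{\Sigma_\mathrm{M}}\hspace{0.5mm}1_\mathrm{M}(x_\mathscr{F})=1$. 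In particular $\text{deg}$ is surjective.

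Next I would show that $A^r(\Sigma_\mathrm{M})$ is cyclic. By Proposition~\ref{PropositionGeneration}, $A^r(\Sigma_\mathrm{M})$ is generated as an abelian group by the monomials $x_\mathscr{F}$, where $\mathscr{F}$ ranges over maximal flags of nonempty proper flats of $\mathrm{M}$. Any such $\mathscr{F}=\{F_1\subsetneq\cdots\subsetneq F_r\}$ has strictly increasing integer ranks satisfying $1\le \text{rk}(F_1)<\cdots<\text{rk}(F_r)\le r$, which forces $\text{rk}(F_m)=m$ for all $m$. Therefore the hypothesis of Proposition~\ref{PropositionFundamentalClass}(2) applies with $k=r$, giving $x_\mathscr{F}=\alpha_\mathrm{M}^r$ in $A^r(\Sigma_\mathrm{M})$. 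Thus every generator is equal to the single class $\alpha_\mathrm{M}^r$, and $A^r(\Sigma_\mathrm{M})$ is a cyclic group generated by $\alpha_\mathrm{M}^r$.

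Finally I would conclude. A cyclic group admitting a surjection onto $\mathbb{Z}$ must itself be infinite cyclic, since every proper quotient of a finite cyclic group is finite. Hence $\text{deg}:A^r(\Sigma_\mathrm{M})\to\mathbb{Z}$ is an isomorphism, sending the generator $\alpha_\mathrm{M}^r$ to $1$. I do not anticipate a real obstacle here: the content is entirely absorbed into the two preceding propositions, and the only subtlety is recognizing that maximality of a flag of proper flats in a rank $r+1$ matroid forces the rank-matching condition $\text{rk}(F_m)=m$ needed to invoke Proposition~\ref{PropositionFundamentalClass}(2).
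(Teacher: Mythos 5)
Your proof is correct and follows essentially the same route as the paper: the paper's own argument is precisely that Proposition~\ref{PropositionFundamentalClass}(2) makes $A^r(\Sigma_\mathrm{M})$ cyclic with generator $\alpha_\mathrm{M}^r$ and that $\deg(\alpha_\mathrm{M}^r)=\deg(x_\mathscr{F})=1$. You have merely spelled out the two small points the paper leaves implicit (that maximal flags automatically satisfy $\operatorname{rk}(F_m)=m$, and that a cyclic group surjecting onto $\mathbb{Z}$ must be infinite cyclic), both of which are fine.
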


\begin{proof}
The second part of Proposition~\ref{PropositionFundamentalClass} shows that 
$A^r(\Sigma_\mathrm{M})$ is generated by the element $\alpha_\mathrm{M}^r$,
and that  $\text{deg} (\alpha_\mathrm{M}^r)=\text{deg}(x_\mathscr{F})=1$.
\end{proof}

\subsection{}

We remark on algebraic geometric properties of Bergman fans,
working over a fixed field $\mathbb{K}$.
For basics on toric varieties, we refer to \cite{Fulton}.
The results of this subsection will be independent from the remainder of the paper.

The main object is the  smooth toric variety $X(\Sigma)$  over $\mathbb{K}$ associated to a unimodular fan $\Sigma$ in $\mathbf{N}_\mathbb{R}$:
\[
X(\Sigma):=\bigcup_{\sigma \in \Sigma} \text{Spec} \ \mathbb{K}[\sigma^\vee \cap M].
\]
It is known that the Chow ring of $\Sigma$ is naturally isomorphic to the Chow  ring of $X(\Sigma)$:
\[
A^*(\Sigma) \longrightarrow A^*(X({\Sigma})), \qquad x_\sigma \longmapsto [X({\text{star}(\sigma)})].
\]
 See \cite[Section 10]{Danilov} for the proof when $\Sigma$ is complete, and see \cite{Bifet-DeConcini-Procesi} and \cite{Brion} for the general case.

\begin{definition}
A morphism between smooth algebraic varieties $X_1 \to X_2$ is a \emph{Chow equivalence}
if the induced homomorphism between the Chow rings $A^*(X_2) \to A^*(X_1)$ is an isomorphism.
\end{definition}

In fact, the results of this subsection will be valid for any variety that is locally a quotient of a manifold by a finite group so that $A^*(X) \otimes_\mathbb{Z}\mathbb{Q}$ has the structure of a graded algebra over $\mathbb{Q}$.
Matroids provide nontrivial examples of Chow equivalences. 
For example, consider the subfan
${\widetilde{\Sigma}_{\mathrm{M},\mathscr{P}}}\subseteq {\Sigma_{\mathrm{M},\mathscr{P}}}$
and the corresponding open subset
\[
X({\widetilde{\Sigma}_{\mathrm{M},\mathscr{P}}})\subseteq X({\Sigma_{\mathrm{M},\mathscr{P}}}).
\]
In Proposition~\ref{PropositionOpenInclusion}, we show that
 the above inclusion is a Chow equivalence for any $\mathrm{M}$ and  $\mathscr{P}$.

We remark that, when  $\mathbb{K}=\mathbb{C}$,  a Chow equivalence need not induce an isomorphism between  singular cohomology rings. 
For example, consider any  line in a projective plane minus two points
\[
\mathbb{C}\mathbb{P}^1 \subseteq \mathbb{C}\mathbb{P}^2 \setminus \{p_1,p_2\}.
\]
The inclusion is a Chow equivalence for any two distinct points $p_1,p_2$ outside $\mathbb{C}\mathbb{P}^1$,
but the two spaces have different singular cohomology rings.

We show that the notion of Chow equivalence can be used to characterize the realizability of matroids.

\begin{theorem}\label{Chow-equivalence}
There is a Chow equivalence from a smooth projective variety over $\mathbb{K}$ to $X(\Sigma_{\mathrm{M}})$ if and only if the matroid $\mathrm{M}$ is realizable over  $\mathbb{K}$.
\end{theorem}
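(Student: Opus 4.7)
The plan is to treat the two directions separately.

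For the forward direction, I would assume $\mathrm{M}$ is realized over $\mathbb{K}$ by a hyperplane arrangement $\{H_0,\ldots,H_n\}$ in $\mathbb{P}^r$ and take $Y$ to be the wonderful compactification $Y_\mathrm{M}$ of de Concini and Procesi \cite{DeConcini-Procesi}, obtained as the iterated blow-up of $\mathbb{P}^r$ along the (proper transforms of the) linear subspaces corresponding to flats of $\mathrm{M}$, processed in order of increasing dimension. Then $Y_\mathrm{M}$ is smooth and projective, and the natural morphism $Y_\mathrm{M} \to X(\Sigma_\mathrm{M})$ extending the inclusion of the arrangement complement into its defining torus identifies $A^*(Y_\mathrm{M})$ with $A^*(\Sigma_\mathrm{M})$ by the theorem of Feichtner and Yuzvinsky \cite{Feichtner-Yuzvinsky}, giving the Chow equivalence.

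For the converse, suppose $f \colon Y \to X(\Sigma_\mathrm{M})$ is a Chow equivalence with $Y$ smooth projective. By Proposition \ref{PropositionSimplification}, $A^*(\Sigma_\mathrm{M})$ depends only on the combinatorial geometry $\overline{\mathrm{M}}$ of $\mathrm{M}$, and realizability of $\mathrm{M}$ over $\mathbb{K}$ is equivalent to realizability of $\overline{\mathrm{M}}$, so I may assume $\mathrm{M}$ is simple. The strategy is to produce a birational morphism $\phi \colon Y \to \mathbb{P}^r$ whose coordinate hyperplane intersections in an ambient $\mathbb{P}^n$ give the realizing arrangement. Pull back each distinguished effective divisor $D_{\alpha,i} := \sum_{i\in F} D_F$ on $X(\Sigma_\mathrm{M})$ to an effective divisor $D_i = f^* D_{\alpha,i}$ on $Y$, all representing the common class $f^*\alpha_\mathrm{M}$; set $L := \mathcal{O}_Y(f^*\alpha_\mathrm{M})$ and let $s_i \in H^0(Y,L)$ be the section with zero divisor $D_i$. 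Since $\mathrm{M}$ is simple, restriction to the preimage of the open torus in $X(\Sigma_\mathrm{M})$ expresses the $s_i$ as the pairwise distinct characters $\chi^{\mathbf{e}_i - \mathbf{e}_0}\, s_0$, so the $s_i$ are linearly independent, and $f^*$ preserves this independence by generic dominance. The sections also have no common zero: if $y$ were a common zero, the set $\mathscr{F}_y = \{F : y \in \mathrm{supp}(f^* D_F)\}$ would be forced to be a chain of proper flats (by the simplicial structure of $\Sigma_\mathrm{M}$) whose union covers $E$, contradicting properness of $\max \mathscr{F}_y$. Hence the $s_i$ define a morphism $\phi \colon Y \to \mathbb{P}^n$, and by Proposition \ref{PropositionFundamentalClass}, $\alpha_\mathrm{M}^r$ has degree $1$ while $\alpha_\mathrm{M}^{r+1} = 0$, so $\phi(Y)$ has dimension $r$ and $\deg(\phi)\cdot \deg(\phi(Y)) = 1$ forces $\phi$ birational onto a linear subspace $\phi(Y) = \mathbb{P}^r \subset \mathbb{P}^n$. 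Define $H_i \subset \mathbb{P}^r$ as the intersection of the $i$-th coordinate hyperplane of $\mathbb{P}^n$ with $\phi(Y)$.

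The main obstacle is the final verification that $\{H_i\}_{i\in E}$ actually realizes $\mathrm{M}$, i.e.\ that $\mathrm{codim}\, \bigcap_{i\in S} H_i = \mathrm{rk}_\mathrm{M}(S)$ in $\mathbb{P}^r$ for every $S \subseteq E$. My plan here is to read codimensions off intersection numbers through $\phi$: for a flat $F$ of rank $k$, the preimage $\phi^{-1}(\bigcap_{i \in F} H_i) = \bigcap_{i \in F} D_i$ contains $f^* D_F$ as a distinguished component, and the calculation of Proposition \ref{PropositionFundamentalClass} yields $[f^* D_F]\cdot (f^*\alpha_\mathrm{M})^{r-k} = (f^*\alpha_\mathrm{M})^r$ while the product vanishes in higher degree, pinning down $\mathrm{codim}\, \phi(f^* D_F) = k$ in $\mathbb{P}^r$ and hence the required codimension of $\bigcap_{i\in F} H_i$. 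The case of a general $S$ reduces to this by replacing $S$ with $\mathrm{cl}_\mathrm{M}(S)$, using that hyperplane intersections depend only on closures of index sets. The delicate part of this last step is ruling out that some other component of $\bigcap_{i \in F} D_i$ dominates the image and drops its codimension; this should follow from the explicit local description of $\Sigma_\mathrm{M}$ via Proposition \ref{Star}, which forces the support of $D_F$ to be the unique stratum of $Y$ projecting onto $\bigcap_{i \in F} H_i$.
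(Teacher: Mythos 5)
Your ``if'' direction is the paper's argument (wonderful model plus Feichtner--Yuzvinsky), and the first half of your converse --- pulling back $\alpha_\mathrm{M}$, computing that $(f^*\alpha_{\mathrm{M}})^r$ generates, and concluding that the image in $\mathbb{P}^n$ is an $r$-plane --- is also exactly what the paper does (the paper packages your sections $s_i$ as the composite $g=\pi_{\mathrm{M}}\circ f$ with the toric blow-down $\pi_\mathrm{M}$, which disposes of base-point-freeness for free). The divergence, and the genuine gap, is in how you identify the matroid of the resulting arrangement with $\mathrm{M}$. You propose to verify $\operatorname{codim}\bigcap_{i\in S}H_i=\mathrm{rk}_\mathrm{M}(S)$ by intersection numbers, and the step you flag as delicate is not closed by what you suggest: Proposition~\ref{Star} describes stars of rays in the fan $\Sigma_\mathrm{M}$, but $Y$ is an abstract smooth projective variety with no stratification --- a Chow equivalence transports cycle classes, not strata --- so there is no ``unique stratum of $Y$'' to appeal to. To close it along your lines you would need the vanishing $f^*(x_G\,\alpha_\mathrm{M}^{\,j})=0$ for $j>r-\mathrm{rk}(G)$ (which requires Poincar\'e duality, Theorem~\ref{PoincareDuality}, not just Proposition~\ref{PropositionFundamentalClass}) together with the set-theoretic identity $\bigcap_{i\in F}\operatorname{supp}D_{\alpha,i}=\bigcup_{G\supseteq F}D_G$, which bounds every component of $\bigcap_{i\in F}D_i$; the same identity (flats containing $S$ are exactly the flats containing $\mathrm{cl}_\mathrm{M}(S)$) is also what would justify your reduction from general $S$ to flats, which as stated is circular, since ``hyperplane intersections depend only on closures'' is part of what realizability asserts. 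The paper avoids all of this: $f(Y)$ is the strict transform of the $r$-plane, i.e.\ the wonderful model of the arrangement it defines, so the flats of the realized matroid $\mathrm{M}'$ are exactly the $F$ with $D_F\cap f(Y)\neq\varnothing$; the containment $f(Y)\subseteq X(\Sigma_\mathrm{M})$ forces every flat of $\mathrm{M}'$ to be a flat of $\mathrm{M}$, and injectivity of $f^*$ on $A^1$ (no $x_F$ is rationally trivial) forces the converse, whence $\mathrm{M}=\mathrm{M}'$.

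Three smaller but real errors. First, the sections $s_i$ on $Y$ are not linearly independent in general: $f$ maps $Y$ onto an $r$-dimensional subvariety of the $n$-dimensional $X(\Sigma_\mathrm{M})$, so ``generic dominance'' is unavailable, and when the image is a proper linear subspace there are $n-r$ relations among them; fortunately you only need that they are nonzero, pairwise non-proportional for non-parallel $i$, and base-point free. Second, the displayed identity $[f^*D_F]\cdot(f^*\alpha_\mathrm{M})^{r-k}=(f^*\alpha_\mathrm{M})^r$ is dimensionally inconsistent (degrees $r-k+1$ versus $r$); what Proposition~\ref{PropositionFundamentalClass} actually gives is $x_{F_1}\cdots x_{F_{k-1}}x_F\,\alpha_\mathrm{M}^{\,r-k}=\alpha_\mathrm{M}^{\,r}$ for an initial flag ending at $F$. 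Third, the opening reduction to simple $\mathrm{M}$ via Proposition~\ref{PropositionSimplification} is not justified: that proposition identifies Chow groups but produces no morphism from $Y$ to the different toric variety $X(\Sigma_{\overline{\mathrm{M}}})$; the reduction is unnecessary anyway, since parallel elements simply yield proportional sections and hence parallel vectors in the realization.
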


\begin{proof}
This is a classical variant of the tropical characterization of the realizability of matroids in \cite{Katz-Payne}. We write $r$ for the dimension of $\Sigma_\mathrm{M}$, and $n$ for the dimension of  $X({\Sigma_\mathrm{M}})$. 
As before, the ground set of  $\mathrm{M}$ will be $E=\{0,1,\ldots,n\}$.

The ``if" direction follows from the construction of De Concini-Procesi wonderful models \cite{DeConcini-Procesi}.
Suppose that the loopless matroid $\mathrm{M}$ is realized by a spanning set of nonzero vectors
\[
\mathscr{R}=\{f_0,f_1,\ldots,f_n\} \subseteq V/\mathbb{K}.
\]
The realization $\mathscr{R}$ gives an injective linear map between two projective spaces
\[
L_\mathscr{R}: \mathbb{P}(V^\vee) \longrightarrow X(\Sigma_\varnothing), \qquad L_\mathscr{R}=[f_0:f_1:\cdots:f_n],
\]
where $\Sigma_\varnothing$ is the complete fan in $\mathbf{N}_{E,\mathbb{R}}$ corresponding to the empty order filter of $\mathscr{P}(E)$.
Note that the normal fan of the $n$-dimensional permutohedron $\Sigma_{\mathscr{P}(E)}$ can be obtained from  the normal fan of the $n$-dimensional simplex $\Sigma_\varnothing$ by performing a sequence of  stellar subdivisions.
In other words, there is a morphism between toric varieties
\[
\pi: X(\Sigma_{\mathscr{P}(E)}) \longrightarrow X(\Sigma_\varnothing),
\]
which is the composition of blowups of torus-invariant subvarieties.
To be explicit, consider a sequence of order filters of $\mathscr{P}(E)$  obtained by adding a single subset at a time:
\[
\varnothing,\ldots, \mathscr{P}_{-},\mathscr{P}_+,\ldots,\mathscr{P}(E) \quad \text{with} \quad \mathscr{P}_+=\mathscr{P}_- \cup \{Z\}.
\]
The corresponding sequence of $\Sigma$ interpolates between the collections $\Sigma_\varnothing$ and $\Sigma_{\mathscr{P}(E)}$:
\[
\Sigma_\varnothing \rightsquigarrow \ldots \rightsquigarrow    \Sigma_{\mathscr{P}_{-}}\rightsquigarrow \Sigma_{\mathscr{P}_{+}} \rightsquigarrow   \ldots \rightsquigarrow  \Sigma_{\mathscr{P}(E)}.
\]
The modification in the middle replaces the cones of the form $\sigma_{Z<\mathscr{F}}$  with the sums of the form
\[
\sigma_{\varnothing<\{Z\}}+\sigma_{I<\mathscr{F}}, 
\]
where $I$ is any proper subset of $Z$.
The wonderful model $Y_\mathscr{R}$ associated to $\mathscr{R}$ is by definition the strict transform of $\mathbb{P}(V^\vee)$ under the composition of toric blowups $\pi$.
The torus-invariant prime divisors of $X(\Sigma_{\mathscr{P}(E)})$ correspond to nonempty proper subsets of $E$,
and those divisors intersecting $Y_\mathscr{R}$ exactly correspond to nonempty proper flats of $\mathrm{M}$.
Therefore, the smooth projective variety $Y_\mathscr{R}$  is contained in the open subset
\[
X(\Sigma_\mathrm{M}) \subseteq X(\Sigma_{\mathscr{P}(E)}).
\]
The inclusion $Y_\mathscr{R} \subseteq X(\Sigma_\mathrm{M})$ is a Chow equivalence \cite[Corollary 2]{Feichtner-Yuzvinsky}.

The ``only if'' direction follows from  computations in $A^*(\Sigma_\mathrm{M})$ made in the previous subsection.
Suppose that there is a Chow equivalence from a smooth projective variety 
\[
f: Y \longrightarrow X({\Sigma_\mathrm{M}}).
\]
Proposition~\ref{PropositionGeneration} and Proposition~\ref{PropositionDegreeMap} show that
\[
A^r(Y)  \simeq A^r(\Sigma_\mathrm{M})\simeq \mathbb{Z} \quad \text{and} \quad A^k(Y) \simeq A^k(\Sigma_\mathrm{M}) \simeq 0 \ \ \text{for all $k$ larger than $r$.}
\] 
Since $Y$ is complete, the above implies that the dimension of $Y$ is $r$.
Let $g$ be the composition
\[
\xymatrixcolsep{2.5pc}
\xymatrix{
Y \ar[r]^<<<<<{f}  & X(\Sigma_\mathrm{M}) \ar[r]^<<<<<{\pi_\mathrm{M}}
&X(\Sigma_\varnothing) \simeq \mathbb{P}^n,
}
\]
where $\pi_\mathrm{M}$ is the restriction of the composition of toric blowups  $\pi$.
We use Proposition~\ref{PropositionFundamentalClass} to compute the degree of the image $g(Y)\subseteq \mathbb{P}^n$.

For this we note that,  for any element $i \in E$, we have
\[
\pi_\mathrm{M}^{-1} \{z_i=0\}=\bigcup_{i \in F} D_F,
\]
where $z_i$ is the homogeneous coordinate of $\mathbb{P}^n$ corresponding to $i$
and $D_F$ is the torus-invariant prime divisor of $X(\Sigma_\mathrm{M})$ corresponding to a  nonempty proper flat $F$.
All the components of $\pi_\mathrm{M}^{-1} \{z_i=0\}$ appear with multiplicity $1$, and hence
\[
\pi_\mathrm{M}^*\ \mathcal{O}_{\mathbb{P}^n}(1)=\alpha_\mathrm{M} \in A^1(\Sigma_\mathrm{M}).
\]
Hence, under the isomorphism $f^*$ between the Chow rings, the $0$-dimensional cycle $(g^* \mathcal{O}_{\mathbb{P}^n}(1))^r$ is the image of the generator
\[
(\pi_\mathrm{M}^{*} \ \mathcal{O}_{\mathbb{P}^n}(1))^r=\alpha_\mathrm{M}^r \in A^r(\Sigma_\mathrm{M})\simeq  \mathbb{Z}.
\]
By the projection formula, the above implies that the degree of the image of $Y$ in $\mathbb{P}^n$ is $1$. 
In other words, $g(Y)\subseteq \mathbb{P}^n$ is an $r$-dimensional linear subspace defined over $\mathbb{K}$.
We  express the inclusion  in the form
\[
L_\mathscr{R}: \mathbb{P}(V^\vee) \longrightarrow \mathbb{P}^n, \qquad L_\mathscr{R}=[f_0:f_1:\cdots:f_n].
\]

Let $\mathrm{M}'$ be the loopless matroid on $E$ defined by the set of nonzero vectors $\mathscr{R} \subseteq V/\mathbb{K}$.
The image of $Y$ in $X(\Sigma_\mathrm{M})$ is the wonderful model $Y_\mathscr{R}$, and hence
\[
X(\Sigma_{\mathrm{M}'}) \subseteq X(\Sigma_{\mathrm{M}}).
\]
Observe that none of the torus-invariant prime divisors of $X(\Sigma_{\mathrm{M}})$ are rationally equivalent to zero.
Since $f$ is a Chow equivalence,
the observation implies that the torus-invariant prime divisors of
$X(\Sigma_{\mathrm{M}'})$  and $X(\Sigma_{\mathrm{M}})$
bijectively correspond to each other.
Since a matroid is determined by its set of nonempty proper flats, this shows that $\mathrm{M}=\mathrm{M}'$.
\end{proof}

\section{Poincar\'e duality for matroids}\label{SectionDecompositionTheorem}

\subsection{}

The principal result of this section is an analogue of Poincar\'e duality for $A^*(\Sigma_{\mathrm{M},\mathscr{P}})$, see Theorem~\ref{PoincareDuality}.
We give an alternative description of the Chow ring  suitable for this purpose.

\begin{definition}\label{DefinitionIntermediateChow}
Let $S_{E \cup \mathscr{P}}$ be the polynomial ring  over $\mathbb{Z}$ with variables indexed by $E \cup \mathscr{P}$:
\[
S_{E \cup \mathscr{P}}:=\mathbb{Z}[x_i,x_F]_{i \in E,F\in \mathscr{P}}.
\]
The \emph{Chow ring} of  $(\mathrm{M},\mathscr{P})$ is the commutative graded algebra
\[
A^*(\mathrm{M},\mathscr{P}):=S_{E \cup\mathscr{P}}/(\mathscr{I}_1+\mathscr{I}_2+\mathscr{I}_3+\mathscr{I}_4),
\]
where $\mathscr{I}_1$, $\mathscr{I}_2$, $\mathscr{I}_3$, $\mathscr{I}_4$ are the ideals of $S_{E \cup \mathscr{P}}$ defined below.
\begin{enumerate}[(1)]\itemsep 5pt
\item[(Incomparability relations)] The ideal $\mathscr{I}_{1}$ is generated by the  quadratic monomials 
\[
x_{F_1}x_{F_2},
\]
where $F_1$ and $F_2$ are two incomparable flats  in the order filter $\mathscr{P}$.
\item[(Complement relations)] The ideal $\mathscr{I}_2$ is generated by the  quadratic monomials 
\[
x_{i}\hspace{0.5mm}x_{F},
\]
where $F$ is a flat in the order filter $\mathscr{P}$ and $i$ is an element in the complement $E \setminus F$.
\item[(Closure relations)] The ideal $\mathscr{I}_3$ is generated by the monomials
\[
\prod_{i \in I} x_i,
\]
where $I$ is an independent set  of $\mathrm{M}$ whose closure is in $\mathscr{P} \cup \{E\}$.
\item[(Linear relations)] The ideal $\mathscr{I}_4$ is generated by the linear forms 
\[
\Big(x_{i}+\sum_{i \in F} x_F\Big) -\Big(x_{j}+ \sum_{j \in F} x_F\Big),
\]
where $i$ and $j$ are  distinct elements of $E$ and the sums are over flats $F$ in $\mathscr{P}$.
\end{enumerate}
When $\mathscr{P}=\mathscr{P}(\mathrm{M})$, we omit $\mathscr{P}$ from the notation and write the Chow ring by $A^*(\mathrm{M})$.
\end{definition}

When $\mathscr{P}$ is empty, the relations in $\mathscr{I}_4$ show that all $x_i$ are equal in the Chow ring, and hence
\[
A^*(\mathrm{M},\varnothing) \simeq \mathbb{Z}[x]/(x^{r+1}).
\]
When $\mathscr{P}$ is $\mathscr{P}(\mathrm{M})$, the relations in $\mathscr{I}_3$ show that  all  $x_i$ are zero in the Chow ring, and hence
\[
A^*(\mathrm{M}) \simeq A^*(\Sigma_{\mathrm{M}}).
\]
In general, if $i$ is an element whose closure is in $\mathscr{P}$, then  $x_i$ is zero in the Chow ring. 
The square-free monomial relations in the remaining set of variables correspond bijectively to the non-faces of the Bergman complex $\Delta_{\mathrm{M},\mathscr{P}}$, and hence
\[
A^*(\mathrm{M},\mathscr{P})\simeq A^*(\Sigma_{\mathrm{M},\mathscr{P}}).
\]
More precisely, in the notation of Definitions \ref{ChowRingFan} and \ref{DefinitionIntermediateChow}, for $\Sigma=\Sigma_{\mathrm{M},\mathscr{P}}$ we have
\[
\mathscr{I}_1+\mathscr{I}_2+\mathscr{I}_3=I_{\Sigma} \quad \text{and} \quad \mathscr{I}_4=J_{\Sigma}.
\]

We show that the Chow ring of $(\mathrm{M},\mathscr{P})$ is also isomorphic to the Chow ring of 
the reduced Bergman fan $\widetilde{\Sigma}_{\mathrm{M},\mathscr{P}}$.

\begin{proposition}\label{PropositionOpenInclusion}
Let $I$ be a subset of $E$, and let $F$ be a flat in an order filter $\mathscr{P}$ of $\mathscr{P}(\mathrm{M})$.
\begin{enumerate}[(1)] \itemsep 5pt
\item If $I$ has  cardinality at least the rank of $F$, then
\[
\Big( \prod_{i \in I} x_i\Big) x_F =0 \in A^*(\mathrm{M},\mathscr{P}).
\]
\item If $I$ has cardinality at least $r+1$, then
\[
\prod_{i \in I} x_i=0 \in A^*(\mathrm{M},\mathscr{P}).
\]
\end{enumerate}
\end{proposition}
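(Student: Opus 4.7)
My plan is to prove Part~(1) by a double induction — outer on $\mathrm{rk}(F)$ and inner on the deviance $d(I):=|I|-\mathrm{rk}(I)$ — and then to derive Part~(2) by a similar induction on $d(I)$, invoking Part~(1) as a subroutine.

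For Part~(1), I would first dispose of the easy cases. If some $i\in I$ lies in $E\setminus F$, the relation $\mathscr{I}_2$ kills $x_ix_F$; so I may assume $I\subseteq F$. If $\mathrm{cl}(I)=F$, any basis $J\subseteq I$ of $F$ satisfies $\mathrm{cl}(J)=F\in\mathscr{P}$, so $\prod_{j\in J}x_j\in\mathscr{I}_3$ and the full product vanishes — this handles $d(I)=0$ and the dependent-but-spanning case, as well as the outer base $\mathrm{rk}(F)=1$. In the remaining case $\mathrm{rk}(F)\ge 2$, $\mathrm{cl}(I)\subsetneq F$, and $I$ is dependent with $d(I)\ge 1$. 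I would fix a basis $J\subseteq I$ of $I$, pick $c\in I\setminus J$ (which forces $c\in\mathrm{cl}(I\setminus\{c\})$), and pick $k\in F\setminus\mathrm{cl}(I)$. The linear relation in $\mathscr{I}_4$ at the pair $(c,k)$ reads
\[
x_c \;=\; x_k \;+\; \sum_{\substack{G\in\mathscr{P}\\ k\in G,\ c\notin G}} x_G \;-\; \sum_{\substack{G\in\mathscr{P}\\ c\in G,\ k\notin G}} x_G,
\]
and after multiplying by $x_F\prod_{i\in I\setminus\{c\}}x_i$ the product $(\prod_{i\in I}x_i)x_F$ decomposes into three pieces, one for each term on the right-hand side.

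I would then check that each of the three pieces vanishes. For the first, $I^{\ast}:=(I\setminus\{c\})\cup\{k\}\subseteq F$ satisfies $|I^\ast|=|I|$ and $\mathrm{rk}(I^\ast)=\mathrm{rk}(I)+1$ (since $k\notin\mathrm{cl}(I)$), so $d(I^\ast)=d(I)-1$ and the inner induction yields $(\prod_{i\in I^\ast}x_i)x_F=0$. In the second, any nonvanishing summand would force $I\setminus\{c\}\subseteq G$ via $\mathscr{I}_2$, giving $c\in\mathrm{cl}(I\setminus\{c\})\subseteq G$, a contradiction with $c\notin G$. In the third, nonvanishing forces $I\subseteq G$ and, via $\mathscr{I}_1$, $G$ comparable with $F$; the conditions $k\in F$ and $k\notin G$ exclude $G\supseteq F$, leaving $\mathrm{cl}(I)\subseteq G\subsetneq F$, and then $|I\setminus\{c\}|=|I|-1\ge\mathrm{rk}(F)-1\ge\mathrm{rk}(G)$ together with $I\setminus\{c\}\subseteq G$ lets the outer induction applied to $G$ kill $x_G\prod_{i\in I\setminus\{c\}}x_i$.

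For Part~(2), I would set $F:=\mathrm{cl}(I)$ and induct on $d(I)$. If $\mathrm{cl}(I)=E$ or $d(I)=0$, any basis of $I$ is independent with closure $E$ and $\mathscr{I}_3$ kills $\prod_{i\in I}x_i$. Otherwise $F$ is a proper flat and the same construction applies with $k\in E\setminus F$ and no initial $x_F$ factor: piece~(a) has $d(I^\ast)=d(I)-1$ and vanishes by induction, piece~(b) vanishes by the same closure-containment argument, and in piece~(c) the surviving $G\in\mathscr{P}$ satisfy $G\supseteq F$, so that $|I\setminus\{c\}|\ge r\ge\mathrm{rk}(G)$ and $I\setminus\{c\}\subseteq G$, whence Part~(1) kills each summand. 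The main obstacle is the bookkeeping — verifying, after each linear-relation substitution, that every surviving $G$ in the third piece lies in strictly smaller rank (for Part~(1)) or within the scope of Part~(1) (for Part~(2)). The crux making the trick close up is the observation that $c\in\mathrm{cl}(I\setminus\{c\})$ is exactly what forces the ``$c\notin G$'' sum to collapse via $\mathscr{I}_2$.
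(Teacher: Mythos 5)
Your proof is correct and is essentially the paper's own argument: after reducing to $I\subseteq F$, the paper also swaps a dependent element for one outside $\mathrm{cl}(I)$ via the linear relation in $\mathscr{I}_4$, kills the cross terms $x_G$ by exactly the same three-way case analysis, and deduces Part~(2) the same way from Part~(1). The only difference is bookkeeping — the paper inducts on the single quantity $\mathrm{rk}(F)-\mathrm{rk}(I)$ where you use the lexicographic pair $(\mathrm{rk}(F),\,|I|-\mathrm{rk}(I))$ — and both organizations are valid.
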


In other words, the inclusion of the open subset
$X({\widetilde{\Sigma}_{\mathrm{M},\mathscr{P}}})\subseteq X({\Sigma_{\mathrm{M},\mathscr{P}}})$
 is a Chow equivalence.
Since the reduced Bergman fan  has dimension $r$, this implies that
\[
A^k(\mathrm{M},\mathscr{P})=0 \ \ \text{for $k>r$.}
\]

\begin{proof}
For the first assertion, we use complement relations in $\mathscr{I}_2$ to reduce to the case when $I \subseteq F$.
We prove by induction on the difference between the rank of $F$ and the rank of $I$.

When the difference is zero,   $I$ contains a basis of $F$, and the desired vanishing follows from a closure relation in $\mathscr{I}_3$.
When the difference is positive, we choose a subset $J \subseteq F$ with
\[
 \text{rk}(J)=\text{rk}(I)+1, \quad I \setminus J=\{i\} \ \ \text{and} \ \ J \setminus I=\{j\}.
\]
From the linear relation in $\mathscr{I}_4$ for $i$ and $j$, we deduce that
\[
x_{i}+\sum_{\substack{i \in G \\ j \notin G}} x_G=
x_{j}+\sum_{\substack{j \in G \\ i \notin G}} x_G,
\]
where the sums are over flats $G$ in $\mathscr{P}$.
Multiplying both sides by 
$\Big(\prod_{i \in I \cap J} x_i \Big) x_F$,
we get
\[
\Big( \prod_{i \in I} x_i\Big) x_F=\Big( \prod_{j \in J} x_j\Big) x_F.
\]
Indeed, a term involving $x_G$ in the expansions of the products is zero in the Chow ring by
\begin{enumerate}[(1)]\itemsep 5pt
\item  an incomparability relation  in $\mathscr{I}_1$, if $G \nsubseteq F$,
\item  a complement relation in  $\mathscr{I}_2$, if $I \cap J \nsubseteq G$, 
\item  the induction hypothesis for  $I \cap J \subseteq G$, if otherwise. 
\end{enumerate}
The right-hand side of the equality is zero
by the induction hypothesis for $J \subseteq F$.

The second assertion can be proved in the same way, by descending induction on the rank of $I$, using the first part of the proposition.
\end{proof}

We record here that the isomorphism of Proposition~\ref{PropositionSimplification} uniquely extends  to an isomorphism between the corresponding Chow rings.

\begin{proposition}
The homomorphism $\pi_{\text{PL}}$ induces an isomorphism of graded rings
\[
\underline{\pi}_{\text{PL}}:  A^*(\mathrm{M},\mathscr{P}) \longrightarrow A^*(\overline{\mathrm{M}},\mathscr{P}).
\]
The homomorphism $\iota_{\text{PL}}$ induces the inverse isomorphism of graded rings
\[
\underline{\iota}_{\text{PL}}:  A^*(\overline{\mathrm{M}},\mathscr{P}) \longrightarrow A^*(\mathrm{M},\mathscr{P}).
\]
\end{proposition}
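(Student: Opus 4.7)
The plan is to lift the degree-one isomorphisms of Proposition~\ref{PropositionSimplification} to homomorphisms of the ambient polynomial rings $S_{E \cup \mathscr{P}}$ and $S_{\overline{E} \cup \mathscr{P}}$, verify that these lifts preserve the four defining ideals of the Chow rings, and then check on degree-one generators that the resulting graded ring maps are mutually inverse, using Proposition~\ref{PropositionSimplification} exactly once to handle the pair of parallel elements $i$ and $\iota(\pi(i))$. Concretely, I would extend $\pi_{\text{PL}}$ to the ring homomorphism $\widetilde{\pi}: S_{E \cup \mathscr{P}} \to S_{\overline{E} \cup \mathscr{P}}$ defined by $x_i \mapsto x_{\pi(i)}$ for all $i \in E$ and $x_F \mapsto x_F$ for all $F \in \mathscr{P}$, and extend $\iota_{\text{PL}}$ analogously to $\widetilde{\iota}$.

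The first step is to check that $\widetilde{\pi}$ carries each of the four ideals $\mathscr{I}_1, \mathscr{I}_2, \mathscr{I}_3, \mathscr{I}_4$ for $(\mathrm{M},\mathscr{P})$ into the corresponding ideal for $(\overline{\mathrm{M}},\mathscr{P})$. The essential observations are: a flat $F$ contains $i$ if and only if $F$ contains the rank-one flat $\pi(i)$; an independent set $I$ of $\mathrm{M}$ is sent by $\pi$ bijectively onto an independent set of $\overline{\mathrm{M}}$ with the same closure, since parallel elements cannot coexist in an independent set; and the image $\widetilde{\pi}\bigl((x_i + \sum_{i \in F} x_F) - (x_j + \sum_{j \in F} x_F)\bigr)$ either lands in $\mathscr{I}_4$ for $\overline{\mathrm{M}}$ when $\pi(i) \neq \pi(j)$ or collapses to zero when $\pi(i) = \pi(j)$. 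The analogous verifications for $\widetilde{\iota}$ are more direct because $\iota$ is a section. This produces the graded ring homomorphisms $\underline{\pi}_{\text{PL}}$ and $\underline{\iota}_{\text{PL}}$.

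To conclude that these maps are mutually inverse, it suffices to check on degree-one generators, since each Chow ring is generated in degree one. The composition $\underline{\pi}_{\text{PL}} \circ \underline{\iota}_{\text{PL}}$ fixes $x_A$ because $\pi \circ \iota = \text{id}$ and fixes $x_F$ trivially, so it is the identity on $A^*(\overline{\mathrm{M}}, \mathscr{P})$. For the reverse composition, the only nontrivial check is the equality $x_i = x_{\iota(\pi(i))}$ in $A^1(\mathrm{M}, \mathscr{P})$, which holds because $i$ and $\iota(\pi(i))$ are parallel, so the relations in $\mathscr{I}_4$ reduce to $x_i = x_{\iota(\pi(i))}$, exactly as used in the proof of Proposition~\ref{PropositionSimplification}. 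I expect the main obstacle to lie in the ideal-preservation step, specifically in confirming $\widetilde{\pi}(\mathscr{I}_3) \subseteq \mathscr{I}_3$ (ensuring independence and closure behave well under $\pi$) and $\widetilde{\pi}(\mathscr{I}_2) \subseteq \mathscr{I}_2$ (noting that $i \notin F$ forces the rank-one flat $\pi(i)$ not to be contained in $F$); once these are in hand, everything else is formal.
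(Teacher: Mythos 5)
Your proposal is correct and follows essentially the same route as the paper: extend $\pi_{\text{PL}}$ and $\iota_{\text{PL}}$ to the polynomial rings $S_{E\cup\mathscr{P}}$ and $S_{\overline{E}\cup\mathscr{P}}$, observe that the monomial relations in $\mathscr{I}_1$, $\mathscr{I}_2$, $\mathscr{I}_3$ (and the linear relations in $\mathscr{I}_4$) are preserved, and use the parallel-element identity $x_i = x_{\iota(\pi(i))}$ from Proposition~\ref{PropositionSimplification} to see the compositions are the identity. The paper's proof is just a terser version of exactly this argument.
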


\begin{proof}
Consider the extensions of  $\pi_{\text{PL}}$ and $\iota_{\text{PL}}$ to the polynomial rings 
\[
\xymatrix{
S_{E \cup \mathscr{P}} \ar@<0.5ex>[rr]^{\widetilde{\pi}_\text{PL}} &&S_{\overline{E} \cup \mathscr{P}}\ar@<0.5ex>[ll]^{\widetilde{\iota}_\text{PL}}.
}
\]
The result follows from the observation that $\widetilde{\pi}_{\text{PL}}$ and $\widetilde{\iota}_{\text{PL}}$ preserve the monomial relations in $\mathscr{I}_1$, $\mathscr{I}_2$, and $\mathscr{I}_3$.
\end{proof}

\subsection{}\label{SectionMatroidalFlip}

Let $\mathscr{P}_-$ be an order filter of $\mathscr{P}(\mathrm{M})$, and let $Z$ be a flat  maximal in  $\mathscr{P}(\mathrm{M}) \setminus \mathscr{P}_-$.
We set
\[
\mathscr{P}_+:=\mathscr{P}_- \cup \{Z\} \subseteq \mathscr{P}(\mathrm{M}).
\]
The collection $\mathscr{P}_+$ is an order filter of $\mathscr{P}(\mathrm{M})$.
 
\begin{definition}
The \emph{matroidal flip} from $\mathscr{P}_-$ to $\mathscr{P}_+$ is the modification of fans $ \Sigma_{\mathrm{M},\mathscr{P}_{-}} \hspace{-1.5mm}\rightsquigarrow \Sigma_{\mathrm{M},\mathscr{P}_{+}}$.
\end{definition}

The  flat $Z$  will be called the \emph{center} of the matroidal flip. 
The matroidal flip removes the  cones  
\[
\sigma_{I < \mathscr{F}} \ \ \text{with} \ \  \text{cl}_\mathrm{M}(I)=Z  \ \ \text{and} \ \ \text{min}\ \mathscr{F}\neq Z,
\]
and replaces them with the cones 
\[
\sigma_{I < \mathscr{F}} \ \ \text{with} \ \ \text{cl}_\mathrm{M}(I) \neq Z \ \ \text{and} \ \ \text{min}\ \mathscr{F}=Z.
\]
The center $Z$ is necessarily minimal in $\mathscr{P}_+$, and we have
\begin{align*}
&\text{star}(\ \sigma_{Z<\varnothing}\ ,\ \Sigma_{\mathrm{M},\mathscr{P}_-}) \simeq \Sigma_{\mathrm{M}_Z}, \\
&\text{star}(\sigma_{\varnothing<\{Z\}},\Sigma_{\mathrm{M},\mathscr{P}_+}) \simeq \Sigma_{\mathrm{M}^Z,\varnothing} \times \Sigma_{\mathrm{M}_Z}.
\end{align*}

\begin{remark}
The matroidal flip preserves the homotopy type of the underlying simplicial complexes $\Delta_{\mathrm{M},\mathscr{P}_-}$ and $\Delta_{\mathrm{M},\mathscr{P}_+}$.
To see this, consider the inclusion 
\[
\Delta_{\mathrm{M},\mathscr{P}_+} \subseteq \Delta^{*}_{\mathrm{M},\mathscr{P}_-}:=\text{the stellar subdivision of $\Delta_{\mathrm{M},\mathscr{P}_-}$ relative to  $\vartriangle_{Z<\varnothing}$}.
\]
We claim that the left-hand side is a deformation retract of the right-hand side.
More precisely, there is a sequence of compositions of elementary collapses
\begin{multline*}
\Delta^{*}_{\mathrm{M},\mathscr{P}_-} =\Delta^{1,1}_{\mathrm{M},\mathscr{P}_-} \ \rightsquigarrow \ \Delta^{1,2}_{\mathrm{M},\mathscr{P}_-}\  \rightsquigarrow \   \cdots \ \rightsquigarrow\ \Delta^{1,\text{crk}(Z)-1}_{\mathrm{M},\mathscr{P}_-} \ \rightsquigarrow \ \\
\Delta^{1,\text{crk}(Z)}_{\mathrm{M},\mathscr{P}_-} =\Delta^{2,1}_{\mathrm{M},\mathscr{P}_-} \ \rightsquigarrow\ \Delta^{2,2}_{\mathrm{M},\mathscr{P}_-} \ \rightsquigarrow\  \cdots \ \rightsquigarrow \  \Delta^{2,\text{crk}(Z)-1}_{\mathrm{M},\mathscr{P}_-} \ \rightsquigarrow \ \hspace{35mm}\\
\Delta^{2,\text{crk}(Z)}_{\mathrm{M},\mathscr{P}_-} =\Delta^{3,1}_{\mathrm{M},\mathscr{P}_-} \ \rightsquigarrow\ \Delta^{3,2}_{\mathrm{M},\mathscr{P}_-} \ \rightsquigarrow \
 \cdots \ \rightsquigarrow\ \Delta^{3,\text{crk}(Z)-1}_{\mathrm{M},\mathscr{P}_-}\  \rightsquigarrow\ \cdots \ \rightsquigarrow \ \Delta_{\mathrm{M},\mathscr{P}_+}, \hspace{10mm}
\end{multline*}
where  $\Delta^{m,k+1}_{\mathrm{M},\mathscr{P}_-}$ is the subcomplex of $\Delta^{m,k}_{\mathrm{M},\mathscr{P}_-}$ obtained by collapsing all the faces $\vartriangle_{I<\mathscr{F}}$ with
\[
 \text{cl}_\mathrm{M}(I) = Z, \quad  \text{min}\ \mathscr{F}\neq Z, \quad  |Z \setminus I|=m,\quad |\mathscr{F}|=\text{crk}_\mathrm{M}(Z)-k.
\]
The faces $\vartriangle_{I<\mathscr{F}}$ satisfying the above conditions can be collapsed in $\Delta^{m,k}_{\mathrm{M},\mathscr{P}_-}$ because
\[
\text{link}(\vartriangle_{I<\mathscr{F}},\Delta^{m,k}_{\mathrm{M},\mathscr{P}_-})=\{\mathbf{e}_Z\}.
\]
It follows that the homotopy type of the Bergman complex $\Delta_{\mathrm{M},\mathscr{P}}$ is independent of $\mathscr{P}$.
For basics of elementary collapses of simplicial complexes, see \cite[Chapter 6]{Kozlov}.
The special case that $\Delta_{\mathrm{M},\varnothing}$ is homotopic  to $\Delta_\mathrm{M}$ is 
an elementary consequence of the nerve theorem and gives a homotopy version of the usual crosscut theorem \cite[Chapter 13]{Kozlov}.
\end{remark}

We construct  homomorphisms associated to the matroidal flip, the \emph{pullback homomorphism} and the \emph{Gysin homomorphism}.

\begin{proposition}
There is a graded ring homomorphism between the Chow rings
\[
\Phi_Z: A^*(\mathrm{M},\mathscr{P}_-) \longrightarrow A^*(\mathrm{M},\mathscr{P}_+)
\]
uniquely determined by the property
\[
x_F \longmapsto x_F \quad \text{and} \quad x_i \longmapsto \begin{cases} x_i+x_Z &  \text{if $i \in Z$,}\\  x_i & \text{if $i \notin Z$.} \end{cases}
\] 
\end{proposition}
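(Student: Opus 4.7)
The plan is to build $\Phi_Z$ as a quotient of a ring homomorphism between the polynomial rings. Since $\mathscr{P}_+ = \mathscr{P}_- \cup \{Z\}$, the variable set $E \cup \mathscr{P}_-$ is a subset of $E \cup \mathscr{P}_+$, so the assignment in the statement extends uniquely to a graded ring homomorphism
\[
\widetilde{\Phi}_Z : S_{E \cup \mathscr{P}_-} \longrightarrow S_{E \cup \mathscr{P}_+}.
\]
I then need to check that $\widetilde{\Phi}_Z$ sends each of the four generating sets for $\mathscr{I}_1,\mathscr{I}_2,\mathscr{I}_3,\mathscr{I}_4$ of $A^*(\mathrm{M},\mathscr{P}_-)$ into the corresponding ideal $\mathscr{I}_1+\mathscr{I}_2+\mathscr{I}_3+\mathscr{I}_4$ of $A^*(\mathrm{M},\mathscr{P}_+)$. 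Once this is done, uniqueness is automatic because the classes of $x_i$ and $x_F$ generate $A^*(\mathrm{M},\mathscr{P}_-)$.

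For the incomparability relations $\mathscr{I}_1$ this is trivial, since $\mathscr{P}_- \subseteq \mathscr{P}_+$ and the images are the same monomials. The linear relations $\mathscr{I}_4$ are the key point of the definition: if $i,j\in E$, then
\[
\widetilde{\Phi}_Z\Big(x_i+\textstyle\sum_{i\in F\in\mathscr{P}_-}x_F\Big)=x_i+\sum_{i\in F\in\mathscr{P}_-}x_F+\mathbf{1}_{i\in Z}\, x_Z=x_i+\sum_{i\in F\in\mathscr{P}_+}x_F,
\]
so the image of the $\mathscr{I}_4$-generator for $(i,j)$ in the source is precisely the $\mathscr{I}_4$-generator for $(i,j)$ in the target. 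The shift $x_i\mapsto x_i+x_Z$ for $i\in Z$ is forced by exactly this computation.

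For the complement relations $\mathscr{I}_2$, fix $F\in\mathscr{P}_-$ and $i\in E\setminus F$. Then
\[
\widetilde{\Phi}_Z(x_ix_F)=x_ix_F+\mathbf{1}_{i\in Z}\,x_Zx_F.
\]
The first summand is a generator of $\mathscr{I}_2$ for $\mathscr{P}_+$. For the second, suppose $i\in Z$, so $i\in Z\setminus F$. Then $Z\not\subseteq F$. Also $F\not\subseteq Z$: if $F\subseteq Z$ then, since $\mathscr{P}_-$ is an order filter containing $F$ and $Z$ is a flat with $F\subseteq Z$, the filter property forces $Z\in\mathscr{P}_-$, contradicting the choice of $Z$. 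Hence $F$ and $Z$ are incomparable and $x_Zx_F\in\mathscr{I}_1$ for $\mathscr{P}_+$.

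The main obstacle is the closure relation check: given an independent set $I\subseteq E$ with $\mathrm{cl}_\mathrm{M}(I)\in\mathscr{P}_-\cup\{E\}$, expand
\[
\widetilde{\Phi}_Z\Big(\prod_{i\in I}x_i\Big)=\sum_{J\subseteq I\cap Z}\Big(\prod_{i\in I\setminus J}x_i\Big)x_Z^{|J|}.
\]
The summand $J=\varnothing$ is $\prod_{i\in I}x_i$, which lies in $\mathscr{I}_3$ for $\mathscr{P}_+$ since $\mathscr{P}_-\cup\{E\}\subseteq\mathscr{P}_+\cup\{E\}$. For a summand with $J\neq\varnothing$, we first argue that the case $I\subseteq Z$ cannot occur: if it did, then $\mathrm{cl}_\mathrm{M}(I)\subseteq Z$ is a nonempty proper flat in $\mathscr{P}_-\cup\{E\}$, hence in $\mathscr{P}_-$, which by the filter property forces $Z\in\mathscr{P}_-$ and contradicts the choice of $Z$. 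Thus there exists $i_0\in I\setminus Z$; then the factor $x_{i_0}x_Z$ appears in every $J\neq\varnothing$ summand, and this factor lies in $\mathscr{I}_2$ for $\mathscr{P}_+$. Combining these four verifications shows that $\widetilde{\Phi}_Z$ descends to the required graded ring homomorphism $\Phi_Z$.
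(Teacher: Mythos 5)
Your proof is correct and follows essentially the same route as the paper's: define the map on the polynomial ring and verify that each of the four ideals $\mathscr{I}_1,\ldots,\mathscr{I}_4$ is carried into the ideal defining $A^*(\mathrm{M},\mathscr{P}_+)$, with the two nontrivial checks ($\mathscr{I}_2$ and $\mathscr{I}_3$) resting on the fact that $Z$ is minimal in $\mathscr{P}_+$ — which you derive directly from the order-filter property of $\mathscr{P}_-$, exactly as the paper does implicitly. The only cosmetic difference is that you spell out the uniqueness and the computation forcing the shift $x_i \mapsto x_i + x_Z$, which the paper leaves to the reader.
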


The map $\Phi_Z$ will be called the \emph{pullback homomorphism} associated to the matroidal flip from $\mathscr{P}_-$ to $\mathscr{P}_+$.
We will show that the pullback homomorphism is injective in Theorem~\ref{DecompositionTheorem}.

\begin{proof}
Consider the homomorphism between the polynomial rings 
\[
\phi_Z: S_{E \cup \mathscr{P}_-} \longrightarrow S_{E \cup \mathscr{P}_+}
\]
defined by the same rule determining $\Phi_Z$.
We claim that
\[
\phi_Z(\mathscr{I}_1) \subseteq \mathscr{I}_1, \quad
\phi_Z(\mathscr{I}_2) \subseteq \mathscr{I}_1 + \mathscr{I}_2, \quad
\phi_Z(\mathscr{I}_3) \subseteq \mathscr{I}_2 + \mathscr{I}_3, \quad
\phi_Z(\mathscr{I}_4) \subseteq \mathscr{I}_4.
\]
The first and the last inclusions are straightforward to verify.

We check the second inclusion. 
For an element $i$  in $E \setminus F$, we have
\[
\phi_Z(x_ix_F) = \begin{cases}x_ix_F+x_Zx_F &\text{if $i \in Z$,}\\ x_ix_F & \text{if $i \notin Z$.} \end{cases} 
\]
If $i$ is in  $Z \setminus F$, then the monomial $x_Z x_F$ is in $\mathscr{I}_1$ because  $Z$ is minimal in $\mathscr{P}_+$.

We check the third inclusion. 
For an independent set $I$ whose closure is in $\mathscr{P}_- \cup \{E\}$, 
\[
\phi_Z\Big(\prod_{i \in I} x_i\Big) =\prod_{i \in I \setminus Z} x_i \ \prod_{i \in I \cap Z} (x_i+x_Z).
\]
The term $\prod_{i \in I} x_i$ in the expansion of the right-hand side is in $\mathscr{I}_3$. 
Since $Z$ is minimal in $\mathscr{P}_+$,
 there is an element in $I \setminus Z$,  and hence all the remaining terms in the expansion are in $\mathscr{I}_2$.
\end{proof}

\begin{proposition}
The pullback homomorphism $\Phi_Z$ is an isomorphism when $\text{rk}_\mathrm{M}(Z)=1$.
\end{proposition}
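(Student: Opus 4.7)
The plan is to exhibit an explicit two-sided inverse $\Psi_Z \colon A^*(\mathrm{M},\mathscr{P}_+) \to A^*(\mathrm{M},\mathscr{P}_-)$, built on two observations that use the hypothesis $\text{rk}_\mathrm{M}(Z) = 1$. First, for every $i \in Z$ one has $\text{cl}_\mathrm{M}(\{i\}) = Z \in \mathscr{P}_+$, so the closure relation in $\mathscr{I}_3$ applied to the singleton $\{i\}$ forces $x_i = 0$ in $A^*(\mathrm{M},\mathscr{P}_+)$. Second, for any $i,j \in Z$, the flats in $\mathscr{P}_-$ containing $i$ are exactly those containing $j$ (both coincide with the flats in $\mathscr{P}_-$ containing $Z$), so the linear relation in $\mathscr{I}_4$ for $(\mathrm{M},\mathscr{P}_-)$ gives $x_i = x_j$ in $A^*(\mathrm{M},\mathscr{P}_-)$. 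Fix any $i_0 \in Z$.

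I would define $\widetilde{\Psi}_Z \colon S_{E \cup \mathscr{P}_+} \to S_{E \cup \mathscr{P}_-}$ on generators by $x_F \mapsto x_F$ for $F \in \mathscr{P}_-$, $x_Z \mapsto x_{i_0}$, $x_i \mapsto 0$ for $i \in Z$, and $x_i \mapsto x_i$ for $i \notin Z$. The bulk of the work is to verify that $\widetilde{\Psi}_Z$ carries each of the four defining ideals of $A^*(\mathrm{M},\mathscr{P}_+)$ into the corresponding ideal sum for $A^*(\mathrm{M},\mathscr{P}_-)$, so that it descends to a ring homomorphism $\Psi_Z$ on Chow rings. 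For the linear relations, the verification splits cleanly into three cases according to whether each of $i,j$ lies in $Z$; in each case one matches the image to a linear relation of $(\mathrm{M},\mathscr{P}_-)$, using that the flats of $\mathscr{P}_-$ containing $Z$ coincide with those containing $i_0$.

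The main obstacle I expect is the image of the complement relation $x_i x_Z$ with $i \notin Z$, which equals $x_i x_{i_0}$ and does not obviously belong to any of the four ideals of $(\mathrm{M},\mathscr{P}_-)$. The resolution uses the maximality of $Z$ in $\mathscr{P}(\mathrm{M}) \setminus \mathscr{P}_-$: the pair $\{i, i_0\}$ is independent because $i \notin Z = \text{cl}_\mathrm{M}(i_0)$, its closure strictly contains $Z$ (as $i \notin Z$), and therefore its closure lies in $\mathscr{P}_- \cup \{E\}$ by maximality. Hence $x_i x_{i_0}$ is a closure generator in $\mathscr{I}_3^-$. A parallel application of maximality handles the incomparability generator $x_Z x_F$ with $F \in \mathscr{P}_-$ incomparable to $Z$: its image $x_{i_0} x_F$ lies in $\mathscr{I}_2^-$ because $Z \not\subseteq F$ forces $i_0 \notin F$.

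Once $\Psi_Z$ is well-defined, checking $\Psi_Z \circ \Phi_Z = \mathrm{id}$ and $\Phi_Z \circ \Psi_Z = \mathrm{id}$ reduces to a routine calculation on generators in which the two preliminary observations do the real work: for $i \in Z$ one has $\Psi_Z \circ \Phi_Z(x_i) = \widetilde{\Psi}_Z(x_i + x_Z) = 0 + x_{i_0} = x_i$ in $A^*(\mathrm{M},\mathscr{P}_-)$ by the second observation, and $\Phi_Z \circ \Psi_Z(x_Z) = \Phi_Z(x_{i_0}) = x_{i_0} + x_Z = x_Z$ in $A^*(\mathrm{M},\mathscr{P}_+)$ by the first observation, with the remaining cases immediate.
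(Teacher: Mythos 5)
Your proposal is correct and follows essentially the same route as the paper: the paper also proves this by constructing the explicit inverse sending $x_Z \mapsto x_j$ for a fixed $j \in Z$, $x_F \mapsto x_F$, and $x_i \mapsto 0$ or $x_i$ according to whether $i \in Z$, using exactly your two observations ($x_i = 0$ in $A^*(\mathrm{M},\mathscr{P}_+)$ for $i \in Z$, and $x_{j_1} = x_{j_2}$ in $A^*(\mathrm{M},\mathscr{P}_-)$ for $j_1, j_2 \in Z$). The paper leaves the well-definedness as "straightforward to check," whereas you correctly supply the details, including the only mildly subtle points (the images of $x_i x_Z$ and $x_Z x_F$).
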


\begin{proof}
Let $j_1$ and $j_2$ be distinct elements of $Z$.
If $Z$ has rank $1$, then a flat contains  $j_1$ if and only if it contains  $j_2$.
It follows from the linear relation in $S_{E \cup \mathscr{P}_-}$ for $j_1$ and $j_2$ that
\[
x_{j_1}=x_{j_2} \in A^*(\mathrm{M},\mathscr{P}_-).
\]
We choose an element $j \in Z$, and construct the inverse
$\Phi_Z'$ of $\Phi_Z$ by setting 
\[
x_Z \longmapsto x_{j}, \quad x_F \longmapsto x_F, \quad \text{and} \quad
x_i \longmapsto \begin{cases} 0 &  \text{if $i \in Z$,}\\  x_i & \text{if $i \notin Z$.} \end{cases}
\]
It is straightforward to check that $\Phi_Z'$ is well-defined, and that $\Phi_Z'=\Phi_Z^{-1}$.
\end{proof}

As before, we identify the flats of $\mathrm{M}_Z$ with the flats of $\mathrm{M}$ containing $Z$, and identify the flats of $\mathrm{M}^Z$ with the flats of $\mathrm{M}$ contained in $Z$.

\begin{proposition}\label{PropositionGysin}
Let $p$ and $q$ be positive integers.
\begin{enumerate}[(1)]\itemsep 5pt
\item There is a group homomorphism
\[
\Psi_{Z}^{p,q}: A^{q-p}(\mathrm{M}_{Z}) \longrightarrow A^{q}(\mathrm{M},\mathscr{P}_+)
\]
uniquely determined by the property
$
x_\mathscr{F} \longmapsto x_{Z}^p \ x_{\mathscr{F}}.
$
\item There is a group homomorphism
\[
\Gamma^{p,q}_Z:A^{q-p}(\mathrm{M}^Z) \longrightarrow A^{q}(\mathrm{M})
\]
uniquely determined by the property $x_\mathscr{F} \longmapsto x_Z^p \ x_\mathscr{F}$.
\end{enumerate}
\end{proposition}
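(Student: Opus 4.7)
The plan for both parts is identical: I would lift the proposed formula to the polynomial ring presenting the source Chow ring, obtain a $\mathbb{Z}$-linear map into the target by multiplication with $x_Z^p$, and then verify that each defining relation of the source is carried into the defining ideal of the target. Proposition~\ref{PropositionGeneration} guarantees that the monomials $x_\mathscr{F}$ span the relevant graded piece, so uniqueness of a homomorphism with the stated formula is automatic once its existence is established; existence is the whole content.

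For $\Psi_Z^{p,q}$, I would identify $A^*(\mathrm{M}_Z)$ with $A^*(\Sigma_{\mathrm{M}_Z})$, whose variables $x_G$ are indexed by flats of $\mathrm{M}$ strictly containing $Z$. Each such $G$ lies in $\mathscr{P}_+$ because $Z\in\mathscr{P}_+$ and $\mathscr{P}_+$ is an order filter, so $x_G$ is already a variable of $S_{E\cup\mathscr{P}_+}$. Incomparability relations lift trivially, since they are already in $\mathscr{I}_1$. For a linear generator $\sum_{i_1\in G\supsetneq Z}x_G-\sum_{i_2\in G\supsetneq Z}x_G$ with $i_1,i_2\in E\setminus Z$ distinct, I would start from the full linear relation $\ell_{i_1,i_2}\in\mathscr{I}_4$ in $A^*(\mathrm{M},\mathscr{P}_+)$ and multiply by $x_Z^p$. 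The terms $x_Z^p x_{i_j}$ vanish modulo $\mathscr{I}_2$ (complement relations, since $i_j\notin Z$), and for each $F\in\mathscr{P}_+$ containing $i_j$ the condition $i_j\notin Z$ forces either $F$ to be incomparable with $Z$ (killing the contribution via $\mathscr{I}_1$) or $F\supsetneq Z$ (a surviving term). Hence $x_Z^p\cdot\ell_{i_1,i_2}$ reduces modulo $\mathscr{I}_1+\mathscr{I}_2$ to precisely the lifted source relation, which is therefore in $\mathscr{I}_1+\mathscr{I}_2+\mathscr{I}_4$.

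For $\Gamma_Z^{p,q}$ the source uses variables $x_G$ for nonempty flats of $\mathrm{M}$ strictly contained in $Z$, and these are again variables of $S_{E\cup\mathscr{P}(\mathrm{M})}$. Incomparability is again immediate. For a linear relation with $i_1,i_2\in Z$ distinct, I would multiply $\ell_{i_1,i_2}\in\mathscr{I}_4$ in $A^*(\mathrm{M})$ by $x_Z^p$. The terms $x_Z^p x_{i_j}$ now vanish modulo $\mathscr{I}_3$, since each singleton $\{i_j\}$ is independent and has rank-one closure in $\mathscr{P}(\mathrm{M})\cup\{E\}$. For each $F\in\mathscr{P}(\mathrm{M})$ containing $i_j$ exactly one of three cases arises: $F$ is incomparable with $Z$, in which case $x_Z x_F\in\mathscr{I}_1$; or $F\supseteq Z$, in which case $i_1,i_2\in Z\subseteq F$ and the two sums cancel on this $F$; or $F\subsetneq Z$ with $F$ nonempty, giving the surviving terms. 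What remains is $x_Z^p$ times the image of the source linear relation, modulo $\mathscr{I}_1+\mathscr{I}_3$, as required.

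The argument is essentially careful bookkeeping; the main point, rather than a single hard step, is to identify the geometric dichotomy underlying the two maps. For $\Psi_Z^{p,q}$, the hypothesis $i_j\notin Z$ forces the surviving flats to lie strictly above $Z$, matching $\mathscr{P}(\mathrm{M}_Z)$; for $\Gamma_Z^{p,q}$, the hypothesis $i_j\in Z$ forces the flats containing $Z$ to cancel in the difference, leaving exactly the flats in $\mathscr{P}(\mathrm{M}^Z)$. The complementary roles of the complement ideal $\mathscr{I}_2$ in the first verification and the closure ideal $\mathscr{I}_3$ in the second are the only subtle aspect of the proof.
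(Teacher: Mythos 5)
Your proposal is correct and follows essentially the same route as the paper: check the incomparability relations directly, then multiply the ambient linear relation $\ell_{i_1,i_2}$ by $x_Z^p$ with $i_1,i_2\in E\setminus Z$ for $\Psi_Z^{p,q}$ (the $x_{i_j}$ terms dying by $\mathscr{I}_2$ and the incomparable flats by $\mathscr{I}_1$) and with $i_1,i_2\in Z$ for $\Gamma_Z^{p,q}$. The paper's proof is terser—it leaves the $\Gamma_Z^{p,q}$ case as "the same way"—whereas you spell out the trichotomy of flats and the role of $\mathscr{I}_3$ there, but the argument is identical in substance.
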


The map $\Psi^{p,q}_Z$ will be called the \emph{Gysin homomorphism} of type $p,q$ associated to the matroidal flip from $\mathscr{P}_-$ to $\mathscr{P}_+$.
We will  show that the Gysin homomorphism is injective when $p < \text{rk}_\mathrm{M}(Z)$ in Theorem~\ref{DecompositionTheorem}.

\begin{proof}
It is clear that  the Gysin homomorphism $\Psi_{Z}^{p,q}$ respects the incomparability relations.
We check that $\Psi_{Z}^{p,q}$  respects the linear relations.

Let $i_1$ and $i_2$ be elements in $E \setminus Z$,
and consider the linear relation in $S_{E \cup\mathscr{P}_+}$ for $i_1$ and $i_2$:
\[
\Big(x_{i_1}+\sum_{i_1 \in F} x_F\Big)-\Big(x_{i_1}+\sum_{i_2 \in F} x_F\Big) \in \mathscr{I}_4.
\]
Since $i_1$ and $i_2$ are not in $Z$, multiplying the linear relation with $x_Z^p$ gives
\[
x_Z^p \Big(\sum_{Z \cup\{i_1\} \subseteq F} x_F - \sum_{Z \cup \{i_2\} \subseteq F} x_F \Big) \in \mathscr{I}_1+\mathscr{I}_2+\mathscr{I}_4.
\]

The second statement on $\Gamma^{p,q}_Z$ can be proved in the same way, using $i_1$ and $i_2$ in $Z$.
\end{proof}

Let $\mathscr{P}$ be any order filter of $\mathscr{P}(\mathrm{M})$.
We choose a sequence of order filters of the form
\[
\varnothing,\mathscr{P}_1,\mathscr{P}_2,\ldots, \mathscr{P},\ldots,\mathscr{P}(\mathrm{M}),
\]
where an order filter in the sequence is obtained from the preceding one by adding a single flat.
The corresponding sequence of matroidal flips interpolates between $\Sigma_{\mathrm{M},\varnothing}$ and $\Sigma_{\mathrm{M}}$:
\[
\Sigma_{\mathrm{M},\varnothing} \rightsquigarrow\Sigma_{\mathrm{M},\mathscr{P}_1} \rightsquigarrow \ldots \rightsquigarrow    \Sigma_{\mathrm{M},\mathscr{P}} \rightsquigarrow   \ldots \rightsquigarrow  \Sigma_{\mathrm{M}}.
\]

\begin{definition}\label{DefinitionIntermediateDegree}
We write $\Phi_\mathscr{P}$ and $\Phi_{\mathscr{P}^c}$ for the compositions of pullback homomorphisms 
\[
\Phi_\mathscr{P}:A^*(\mathrm{M},\varnothing) \longrightarrow A^*(\mathrm{M},\mathscr{P}) \quad \text{and} \quad 
\Phi_{\mathscr{P}^c}:A^*(\mathrm{M},\mathscr{P}) \longrightarrow A^*(\mathrm{M}). 
\]
\end{definition}

Note that $\Phi_\mathscr{P}$ and $\Phi_{\mathscr{P}^c}$ depend only on $\mathscr{P}$ and not on the chosen sequence of matroidal flips.
The composition of all the pullback homomorphisms $\Phi_{\mathscr{P}^c} \circ \Phi_{\mathscr{P}}$
is uniquely determined by its property 
\[
\Phi_{\mathscr{P}^c} \circ \Phi_{\mathscr{P}}\ (x_i) = \alpha_\mathrm{M}.
\]

\subsection{}

Let $\mathscr{P}_-$ and $\mathscr{P}_+$ be as before, and let $Z$ be the center of the matroidal flip from $\mathscr{P}_-$ to $\mathscr{P}_+$. 
For positive integers $p$ and $q$,
we consider  the pullback homomorphism in degree $q$ 
\[
\Phi^q_Z:A^q(\mathrm{M},\mathscr{P}_-) \longrightarrow A^q(\mathrm{M},\mathscr{P}_+)
\]
and the  Gysin homomorphism of type $p,q$ 
\[
\Psi_{Z}^{p,q}: A^{q-p}(\mathrm{M}_{Z}) \longrightarrow A^{q}(\mathrm{M},\mathscr{P}_+).
\]

\begin{proposition}\label{PropositionSurjective}
For any positive integer $q$, the sum of the pullback homomorphism and Gysin homomorphisms 
\[
\Phi_{Z}^q \oplus \bigoplus_{p=1}^{\text{rk}(Z)-1} \Psi_Z^{p,q}
\]
is a surjective group homomorphism to $A^q(\mathrm{M},\mathscr{P}_+)$.
\end{proposition}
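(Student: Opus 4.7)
The plan is to span $A^q(\mathrm{M},\mathscr{P}_+) \simeq A^q(\Sigma_{\mathrm{M},\mathscr{P}_+})$ by the monomials $x_I x_{\mathscr{F}}$ corresponding to the $q$-dimensional cones $\sigma_{I<\mathscr{F}}$ of the Bergman fan (Proposition~\ref{PropositionGeneration}), split the argument according to whether the flat $Z$ appears in $\mathscr{F}$, and then use two algebraic tools---the linear relation $\alpha_{\mathrm{M},i}=\alpha_{\mathrm{M},j}$ in $A^1$ and the closure relation $\prod_{b\in B} x_b=0$ for a basis $B$ of $Z$---to rewrite each monomial as an element of $\mathrm{im}(\Phi_Z^q) + \sum_{p=1}^{\text{rk}(Z)-1} \mathrm{im}(\Psi_Z^{p,q})$.

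If $Z \notin \mathscr{F}$ (so $\mathscr{F} \subseteq \mathscr{P}_-$), the monomial $x_I x_{\mathscr{F}}$ also represents an element of $A^q(\mathrm{M},\mathscr{P}_-)$, and the definition of the pullback gives
\[
\Phi_Z(x_I x_{\mathscr{F}}) = \prod_{i\in I\cap Z}(x_i+x_Z)\prod_{i\in I\setminus Z} x_i\cdot x_{\mathscr{F}} = x_I x_{\mathscr{F}} + \sum_{\varnothing \ne S \subseteq I\cap Z} x_Z^{|S|} x_{I\setminus S} x_{\mathscr{F}},
\]
exhibiting $x_I x_{\mathscr{F}}$ modulo monomials explicitly carrying $x_Z$ as a $\Phi_Z$-image. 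The case $Z \in \mathscr{F}$, together with every correction term above, reduces the task to handling monomials $x_I x_Z^p x_{\mathscr{F}'}$ with $p \ge 1$; the complement and incomparability relations force $I \subseteq Z$ and every $F \in \mathscr{F}'$ to strictly contain $Z$ in any nonzero such monomial.

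Fix $j\in E\setminus Z$ and set $\beta_j := \sum_{F\in\mathscr{P}_-,\,F\supsetneq Z,\,F\not\ni j}x_F$. Multiplying the linear relation $\alpha_{\mathrm{M},i}=\alpha_{\mathrm{M},j}$ by $x_Z^p$ and using $x_j x_Z = 0$ and $x_F x_Z = 0$ for $F$ incomparable with $Z$ collapses it to $x_i x_Z^p = -x_Z^p(x_Z + \beta_j)$; iterating over the elements of $I$ yields the identity
\[
(\ast) \qquad x_I\,x_Z^p\,x_{\mathscr{F}'} = (-1)^{|I|}\,x_Z^p(x_Z+\beta_j)^{|I|}\,x_{\mathscr{F}'}.
\]
Binomial expansion of $(\ast)$ writes the left-hand side as a sum of pure powers $x_Z^{p'} x_{\mathscr{G}'}$ with $p \le p' \le p + |I|$ and $\mathscr{G}'$ a flag of flats strictly containing $Z$, i.e., a flag in $\mathrm{M}_Z$. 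For $1 \le p' \le \text{rk}(Z)-1$, each such term equals $\Psi_Z^{p',q}(x_{\mathscr{G}'})$ and so lies in the Gysin image.

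The main obstacle is the top range $p'\ge\text{rk}(Z)$. For $p'=\text{rk}(Z)$, choose a basis $B$ of $Z$ with $|B|=\text{rk}(Z)$; the closure relation gives $\prod_{b\in B}x_b=0$ in $A^*(\mathrm{M},\mathscr{P}_+)$, so in that ring
\[
\Phi_Z\bigl(\textstyle\prod_{b\in B}x_b\cdot x_{\mathscr{G}}\bigr) = \sum_{T\subseteq B} x_Z^{|T|}x_{B\setminus T}x_{\mathscr{G}}
\]
has vanishing $T=\varnothing$ summand, and applying $(\ast)$ to each remaining summand extracts $(-1)^{\text{rk}(Z)-|T|}x_Z^{\text{rk}(Z)}x_{\mathscr{G}}$ plus monomials whose $x_Z$-power is strictly less than $\text{rk}(Z)$. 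The binomial identity $\sum_{k=0}^{\text{rk}(Z)}\binom{\text{rk}(Z)}{k}(-1)^k=0$ collapses the total coefficient of $x_Z^{\text{rk}(Z)}x_{\mathscr{G}}$ to $(-1)^{\text{rk}(Z)+1}=\pm 1$, so the equation inverts to put $x_Z^{\text{rk}(Z)}x_{\mathscr{G}}$ in the image. For $p'>\text{rk}(Z)$, specializing $(\ast)$ to $I=B$ with $p=1$ and using $\prod_{b\in B}x_b\cdot x_Z=0$ (Proposition~\ref{PropositionOpenInclusion}) gives the relation $(x_Z+\beta_j)^{\text{rk}(Z)}x_Z=0$ in $A^*(\mathrm{M},\mathscr{P}_+)$; multiplying it by $x_Z^{p'-\text{rk}(Z)-1}x_{\mathscr{G}}$ expresses $x_Z^{p'}x_{\mathscr{G}}$ as a sum of monomials $x_Z^{k''}x_{\mathscr{G}''}$ with $k''<p'$, and induction on $p'$ finishes the proof.
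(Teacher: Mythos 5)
Your proposal is correct and follows essentially the same route as the paper: the paper's Lemma 5.8 is exactly your relation $x_ix_Z=-x_Z(x_Z+\beta_j)$, its Lemma 5.10 packages your observation that positive powers of $x_Z$ times monomials in the $x_F$ ($F\supsetneq Z$) land in the Gysin images, and its Lemma 5.12 performs your basis-of-$Z$ computation $\prod_{b\in B}(x_b+x_Z)=(-\beta)^{\mathrm{rk}(Z)}-(-x_Z-\beta)^{\mathrm{rk}(Z)}$ to absorb the powers $x_Z^{p'}$ with $p'\ge\mathrm{rk}(Z)$. The only difference is organizational: you carry out the binomial bookkeeping inline rather than isolating these steps as separate lemmas.
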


The proof is given below Lemma~\ref{LemmaBasisReduction}.
In Theorem~\ref{DecompositionTheorem}, we will show that the sum is in fact an isomorphism.

\begin{corollary}\label{CorollaryTopIsomorphism}
The pullback homomorphism $\Phi_Z$ is an isomorphism in degree $r$:
\[
\Phi_Z^r:A^r(\mathrm{M},\mathscr{P}_-) \simeq A^r(\mathrm{M},\mathscr{P}_+).
\]
\end{corollary}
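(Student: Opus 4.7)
The plan is to deduce Corollary \ref{CorollaryTopIsomorphism} from Proposition \ref{PropositionSurjective} together with a degree-$r$ endpoint calculation that forces every intermediate pullback in the chain of matroidal flips to be an isomorphism.

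First I would dispose of the Gysin terms $\Psi_Z^{p,r}$ that appear in Proposition \ref{PropositionSurjective}. The contraction $\mathrm{M}_Z$ has rank $\text{crk}_\mathrm{M}(Z) = r + 1 - \text{rk}_\mathrm{M}(Z)$, so Proposition \ref{PropositionOpenInclusion}(2) applied to $\mathrm{M}_Z$ gives $A^{k}(\mathrm{M}_Z) = 0$ whenever $k > r - \text{rk}_\mathrm{M}(Z)$. For every $p$ with $1 \le p \le \text{rk}_\mathrm{M}(Z) - 1$ one has $r - p > r - \text{rk}_\mathrm{M}(Z)$, so the source $A^{r-p}(\mathrm{M}_Z)$ of $\Psi_Z^{p,r}$ vanishes. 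Consequently, Proposition \ref{PropositionSurjective} in degree $r$ asserts precisely that $\Phi_Z^r$ alone is surjective.

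Next I would exploit this surjectivity for every matroidal flip simultaneously. Refining $\mathscr{P}_- \subsetneq \mathscr{P}_+$ to a maximal chain of order filters $\varnothing = \mathscr{P}^{(0)} \subsetneq \cdots \subsetneq \mathscr{P}^{(N)} = \mathscr{P}(\mathrm{M})$ that contains the flip $\mathscr{P}_- \rightsquigarrow \mathscr{P}_+$ as one intermediate step, each transition $A^r(\mathrm{M},\mathscr{P}^{(i)}) \to A^r(\mathrm{M},\mathscr{P}^{(i+1)})$ is surjective by the previous paragraph. Hence the full composition $\Phi_{\mathscr{P}^c}\circ\Phi_\mathscr{P}: A^r(\mathrm{M},\varnothing) \to A^r(\mathrm{M})$ is a chain of surjections of abelian groups.

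To finish I would check that this composition is actually an isomorphism. The source satisfies $A^r(\mathrm{M},\varnothing) \simeq \mathbb{Z}[x]/(x^{r+1})$ in degree $r$, generated by $x^r$; the composition sends $x_i$ to $\alpha_\mathrm{M}$ by the defining property recorded just after Definition \ref{DefinitionIntermediateDegree}, hence it sends $x^r$ to $\alpha_\mathrm{M}^r$, which is a generator of $A^r(\mathrm{M}) \simeq \mathbb{Z}$ by Propositions \ref{PropositionFundamentalClass}(2) and \ref{PropositionDegreeMap}. A chain of surjections of abelian groups whose composition is bijective must consist entirely of isomorphisms between copies of $\mathbb{Z}$ (each intermediate group is cyclic as a quotient of $\mathbb{Z}$ and surjects onto $\mathbb{Z}$, hence is isomorphic to $\mathbb{Z}$, and a surjection $\mathbb{Z}\to\mathbb{Z}$ is automatically an isomorphism). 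In particular $\Phi_Z^r$ is an isomorphism. I do not anticipate a serious obstacle: the only real input beyond Proposition \ref{PropositionSurjective} is the numerical vanishing $A^{r-p}(\mathrm{M}_Z)=0$ for $p<\text{rk}_\mathrm{M}(Z)$, and the remainder is purely formal bookkeeping with a chain of surjections.
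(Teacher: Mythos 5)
Your proposal is correct and follows essentially the same route as the paper: kill the Gysin summands in degree $r$ by the vanishing $A^{r-p}(\mathrm{M}_Z)=0$ for $p<\operatorname{rk}_{\mathrm{M}}(Z)$, so that Proposition~\ref{PropositionSurjective} gives surjectivity of $\Phi_Z^r$, then sandwich the flip in a full chain of matroidal flips between $A^r(\mathrm{M},\varnothing)\simeq\mathbb{Z}$ and $A^r(\mathrm{M})\simeq\mathbb{Z}$ and conclude that every surjection in the chain is an isomorphism. The extra verification that the total composition sends $x^r$ to the generator $\alpha_{\mathrm{M}}^r$ is harmless but redundant, since the cyclic-quotient argument you give in parentheses already suffices.
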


Repeated application of the corollary shows that, for any order filter $\mathscr{P}$,
the homomorphisms  $\Phi_\mathscr{P}$ and $\Phi_{\mathscr{P}^c}$  are isomorphisms in degree $r$:
\[
\Phi_\mathscr{P}^r: A^r(\mathrm{M},\varnothing) \simeq A^r(\mathrm{M},\mathscr{P}) \quad \text{and} \quad \Phi_{\mathscr{P}^c}^r:  A^r(\mathrm{M},\mathscr{P}) \simeq A^r(\mathrm{M}).  
\]

\begin{proof}[Proof of Corollary~\ref{CorollaryTopIsomorphism}]
The contracted matroid $\mathrm{M}_Z$ has rank $\text{crk}_\mathrm{M}(Z)$,
and hence
\[
\Psi_Z^{p,q}=0 \ \ \text{when $p <\text{rk}_\mathrm{M}(Z)$ and $q=r$.}
\]
Therefore,  Proposition~\ref{PropositionSurjective} for $q=r$ says that the homomorphism $\Phi_Z$ is surjective in degree $r$.

Choose a sequence of matroidal flips
\[
\Sigma_{\mathrm{M},\varnothing} \rightsquigarrow \ldots \rightsquigarrow    \Sigma_{\mathrm{M},\mathscr{P}_{-}}\rightsquigarrow \Sigma_{\mathrm{M},\mathscr{P}_{+}} \rightsquigarrow   \ldots \rightsquigarrow  \Sigma_{\mathrm{M}},
\]
and consider the corresponding group homomorphisms
\[
\xymatrixcolsep{3pc}
\xymatrix{
A^r(\mathrm{M},\varnothing) \ar[r]^{\Phi_{\mathscr{P}_-}}& A^r(\mathrm{M},\mathscr{P}_-) 
 \ar[r]^{\Phi_{\mathscr{P}_Z}}& A^r(\mathrm{M},\mathscr{P}_+)  \ar[r]^{\Phi_{\mathscr{P}_+^c}}& A^r(\mathrm{M}).
}
\]
Proposition~\ref{PropositionSurjective} applied to each matroidal flips in the sequence shows that
all three homomorphisms are surjective.
The first group is clearly isomorphic to $\mathbb{Z}$, and by Proposition~\ref{PropositionDegreeMap}, the last group is also isomorphic to $\mathbb{Z}$. 
It follows that all three homomorphisms are isomorphisms.
\end{proof}

Let $\beta_{\mathrm{M}_Z}$ be the element $\beta$ in Definition~\ref{DefinitionAlphaBeta} for the contracted matroid $\mathrm{M}_Z$.
The first part of Proposition~\ref{PropositionGysin} shows that the expression $x_Z\hspace{0.5mm} \beta_{\mathrm{M}_Z}$ defines an element in $A^*(\mathrm{M},\mathscr{P}_+)$.

\begin{lemma}\label{MinimalFlatRelation}
For any element $i$ in $Z$, we have
\[
x_ix_Z+x_Z^2+x_Z\beta_{\mathrm{M}_Z} =0\in A^*(\mathrm{M},\mathscr{P}_+).
\]
\end{lemma}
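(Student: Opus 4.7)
The plan is to derive this identity directly from the four families of defining relations of $A^*(\mathrm{M},\mathscr{P}_+)$ in Definition~\ref{DefinitionIntermediateChow}. Since $Z$ is a proper flat of $\mathrm{M}$, we can pick some $j\in E\setminus Z$. Because $Z$ is by construction a minimal element of $\mathscr{P}_+$, every flat $F\in\mathscr{P}_+$ is either equal to $Z$, strictly contains $Z$, or is incomparable with $Z$. This observation is what will let us collapse most of the terms below to zero.

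First I would apply the linear relation in $\mathscr{I}_4$ associated to the pair $i\neq j$:
\[
x_i+\sum_{i\in F,\,F\in\mathscr{P}_+}x_F\ \equiv\ x_j+\sum_{j\in F,\,F\in\mathscr{P}_+}x_F \pmod{\mathscr{I}_4}.
\]
Then I would multiply both sides by $x_Z$ and simplify term by term. On the right, $x_jx_Z=0$ by a complement relation in $\mathscr{I}_2$ (since $j\notin Z$), and $x_Fx_Z=0$ whenever $F$ is incomparable with $Z$ by an incomparability relation in $\mathscr{I}_1$. By the minimality of $Z$ in $\mathscr{P}_+$ noted above, the only $F\in\mathscr{P}_+$ surviving on either side must satisfy $F\supseteq Z$, and on the right $F=Z$ is excluded because $j\notin Z$. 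On the left, the condition $i\in F$ is automatic once $F\supseteq Z$, since $i\in Z$, and the $F=Z$ contribution gives $x_Z^2$.

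After these reductions, the equality becomes
\[
x_ix_Z+x_Z^2+x_Z\!\!\sum_{F\supsetneq Z,\,F\in\mathscr{P}_+}\!\!x_F\ =\ x_Z\!\!\sum_{F\supsetneq Z,\,j\in F,\,F\in\mathscr{P}_+}\!\!x_F,
\]
so transposing and subtracting gives
\[
x_ix_Z+x_Z^2+x_Z\!\!\sum_{F\supsetneq Z,\,j\notin F}\!\!x_F\ =\ 0
\]
in $A^*(\mathrm{M},\mathscr{P}_+)$, where the sum runs over nonempty proper flats $F$ of $\mathrm{M}$ strictly containing $Z$ and not containing $j$.

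The final step is the bookkeeping identification of the last sum with $x_Z\beta_{\mathrm{M}_Z}$. Under the injection $\iota_Z\colon\mathscr{P}(\mathrm{M}_Z)\to\mathscr{P}(\mathrm{M})$, $G\mapsto G\cup Z$, nonempty proper flats of $\mathrm{M}_Z$ correspond precisely to flats $F$ of $\mathrm{M}$ with $Z\subsetneq F\subsetneq E$; moreover, for $j\in E\setminus Z$ we have $j\in G\iff j\in G\cup Z=F$. Thus the sum on the left is exactly the image of $\beta_{\mathrm{M}_Z,j}\in A^1(\mathrm{M}_Z)$ under the map defining the Gysin homomorphism $\Psi_Z^{1,2}$ in Proposition~\ref{PropositionGysin}, completing the identity. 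There is no real obstacle: the only place care is needed is to verify that after multiplication by $x_Z$ the \emph{only} surviving $\mathscr{P}_+$-flats are those comparable to (hence containing) $Z$, which is precisely where the choice that $Z$ is minimal in $\mathscr{P}_+$ is used.
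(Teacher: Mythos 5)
Your proposal is correct and follows essentially the same route as the paper: choose $j\in E\setminus Z$, multiply the linear relation for the pair $i,j$ by $x_Z$, use the minimality of $Z$ in $\mathscr{P}_+$ together with the incomparability and complement relations to kill all terms except those with $F\supseteq Z$, and identify the surviving sum with $\Psi_Z^{1,2}(\beta_{\mathrm{M}_Z})$. The only cosmetic difference is that you write the linear relation in unreduced form and cancel the common flats at the end, whereas the paper starts from the already-cancelled form.
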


\begin{proof}
We choose an element $j$ in $E \setminus Z$, and consider the linear relation in $S_{E \cup \mathscr{P}_+}$ for $i$ and $j$:
\[
\Bigg(x_i+\sum_{\substack{i \in F \\ j \notin F}} x_F\Bigg)-\Bigg(x_j+\sum_{\substack{j \in F \\ i \notin F}} x_F\Bigg) \in \mathscr{I}_4.
\]
Since $i$ is in  $Z$, and $Z$ is minimal in $\mathscr{P}_+$,  multiplying the linear relation with $x_Z$ gives
\[
x_Zx_i+x_Z^2+\Bigg(\sum_{Z \subsetneq F \subsetneq F \cup \{j\}} x_Zx_F\Bigg) \in \mathscr{I}_1+\mathscr{I}_2+\mathscr{I}_4.
\]
The sum in the parenthesis is the image of $\beta_{\mathrm{M}_Z}$ under the  homomorphism $\Psi^{1,2}_Z$.
\end{proof}

Let $\alpha_{\mathrm{M}^Z}$ be the element $\alpha$ in Definition~\ref{DefinitionAlphaBeta} for the restricted matroid $\mathrm{M}^Z$.
The second part of Proposition~\ref{PropositionGysin} shows that the expression $x_Z\hspace{0.5mm} \alpha_{\mathrm{M}^Z}$ defines an element in $A^*(\mathrm{M})$.

\begin{lemma}\label{LemmaAlphaRelation}
If $Z$ is maximal among flats strictly contained in a proper flat $\widetilde{Z}$, then
\[
x_Z x_{\widetilde{Z}}(x_Z+\alpha_{\mathrm{M}^Z})=0 \in A^*(\mathrm{M}).
\]
If $Z$ is maximal among flats strictly contained in the flat $E$, then
\[
x_Z (x_Z+\alpha_{\mathrm{M}^Z})=0 \in A^*(\mathrm{M}).
\]
\end{lemma}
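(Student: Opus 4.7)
The plan is to mimic the proof of Lemma~\ref{MinimalFlatRelation}: apply a linear relation from $\mathscr{I}_4$, multiply by $x_Z x_{\widetilde{Z}}$ (respectively $x_Z$), and use the incomparability relations in $\mathscr{I}_1$ together with the maximality hypothesis on $Z$ to cut the resulting sum down to the desired identity.

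For the first statement, choose any $i \in Z$ and any $j \in \widetilde{Z} \setminus Z$. Since we are working in $A^*(\mathrm{M}) = A^*(\mathrm{M},\mathscr{P}(\mathrm{M}))$, the closure relations in $\mathscr{I}_3$ force $x_i = x_j = 0$, so the linear relation in $\mathscr{I}_4$ for the pair $(i,j)$ reduces to
\[
\sum_{i \in F} x_F \ =\ \sum_{j \in F} x_F \ \in \ A^*(\mathrm{M}),
\]
where the sums are over nonempty proper flats of $\mathrm{M}$. I would multiply both sides by $x_Z x_{\widetilde{Z}}$ and analyze which terms survive using $\mathscr{I}_1$: a term $x_Z x_{\widetilde{Z}} x_F$ is nonzero only if $F$ is comparable to both $Z$ and $\widetilde{Z}$. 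Combined with the hypothesis that no flat satisfies $Z \subsetneq F \subsetneq \widetilde{Z}$, the possible $F$'s are flats $F \subseteq Z$, $F = \widetilde{Z}$, or $F \supsetneq \widetilde{Z}$.

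Carrying out the case analysis: on the left, $i \in F$ with $F$ comparable to both forces $F \subseteq Z$ with $i \in F$, $F = \widetilde{Z}$, or $F \supsetneq \widetilde{Z}$; on the right, $j \in F$ with $F$ comparable to both and $j \notin Z$ forces $F = \widetilde{Z}$ or $F \supsetneq \widetilde{Z}$. The terms with $F = \widetilde{Z}$ and $F \supsetneq \widetilde{Z}$ cancel between the two sides, and what remains is
\[
x_Z x_{\widetilde{Z}} \sum_{i \in F \subsetneq Z} x_F \ +\ x_Z^2 x_{\widetilde{Z}} \ =\ 0.
\]
Since $\alpha_{\mathrm{M}^Z} = \sum_{i \in G} x_G$ summed over nonempty proper flats $G$ of $\mathrm{M}^Z$, and these are exactly the nonempty flats $F$ of $\mathrm{M}$ with $F \subsetneq Z$, the first sum is $x_Z x_{\widetilde{Z}} \alpha_{\mathrm{M}^Z}$ (this is the image under the Gysin homomorphism $\Gamma^{1,*}_Z$ of Proposition~\ref{PropositionGysin}), giving $x_Z x_{\widetilde{Z}}(x_Z + \alpha_{\mathrm{M}^Z}) = 0$.

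For the second statement, take $i \in Z$ and any $j \in E \setminus Z$, and multiply the same linear relation by $x_Z$ alone. On the left, flats comparable to $Z$ that contain $i$ are either $F \subseteq Z$ with $i \in F$ (yielding $x_Z^2 + x_Z \alpha_{\mathrm{M}^Z}$) or $F \supsetneq Z$; but the maximality hypothesis together with $F$ nonempty and proper leaves no such $F \supsetneq Z$. On the right, flats comparable to $Z$ containing $j$ must strictly contain $Z$ (as $j \notin Z$), and again no such proper flat exists. Hence the right side vanishes and we obtain $x_Z(x_Z + \alpha_{\mathrm{M}^Z}) = 0$. The only subtle point is the bookkeeping in the case analysis, but no genuine obstacle arises: the maximality of $Z$ is precisely what makes the collection of $F$'s comparable to both $Z$ and $\widetilde{Z}$ (or to $Z$ with $F \ne E$) small enough to control the cancellation cleanly.
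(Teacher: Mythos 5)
Your proof is correct and follows essentially the same route as the paper's: choose $i\in Z$ and $j\in\widetilde{Z}\setminus Z$ (resp.\ $j\in E\setminus Z$), multiply the linear relation by $x_Z x_{\widetilde{Z}}$ (resp.\ $x_Z$), and use the incomparability relations plus the maximality of $Z$ to identify the surviving terms with $x_Z x_{\widetilde{Z}}(x_Z+\alpha_{\mathrm{M}^Z})$ via $\Gamma^{1,2}_Z$. The only cosmetic difference is that the paper first cancels the flats containing both $i$ and $j$ from the linear relation, whereas you cancel them after multiplying; the case analysis is otherwise identical and complete.
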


\begin{proof}
We justify the first statement; the second statement can be proved in the same way.

Choose an element $i$ in $Z$ and an element $j$  in $\widetilde{Z} \setminus Z$.
The linear relation for $i$ and $j$ shows that
\[
\sum_{\substack{i \in F \\ j \notin F}} x_F=\sum_{\substack{j \in F \\ i \notin F}} x_F \in A^*(\mathrm{M}).
\]
Multiplying both sides by the monomial $x_Z \ x_{\widetilde{Z}}$, the incomparability relations give
\[
x_Z^2 \ x_{\widetilde{Z}}+\Big( \sum_{i \in F \subsetneq Z}  x_F \ x_Z\Big) \ x_{\widetilde{Z}}=0 \in A^*(\mathrm{M}).
\]
The sum in the parenthesis is the image of $\alpha_{\mathrm{M}^Z}$ under the homomorphism $\Gamma^{1,2}_Z$.
\end{proof}

\begin{lemma}\label{LemmaIdeal}
The sum of the images of Gysin homomorphisms is the ideal  generated by $x_Z$:
\[
\sum_{p>0} \sum_{q>0} \text{im}\ \Psi_Z^{p,q}=x_Z \ A^*(\mathrm{M},\mathscr{P}_+).
\]
\end{lemma}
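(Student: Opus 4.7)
The plan is to prove the two inclusions separately. The inclusion $\sum_{p,q} \text{im}\, \Psi_Z^{p,q} \subseteq x_Z \cdot A^*(\mathrm{M},\mathscr{P}_+)$ is immediate from the definition of $\Psi_Z^{p,q}$: every generator of the left-hand side has the form $x_Z^p \cdot x_\mathscr{F}$ with $p \geq 1$, and therefore lies in the principal ideal generated by $x_Z$.

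For the reverse inclusion, write $\mathcal{J}$ for the sum of images. Since $x_Z = \Psi_Z^{1,1}(1) \in \mathcal{J}$, it suffices to prove that $\mathcal{J}$ is an ideal, for then $(x_Z) \subseteq \mathcal{J}$. I would verify the ideal property by checking that $\mathcal{J}$ is closed under multiplication by each of the ring generators $x_G$ (for $G \in \mathscr{P}_+$) and $x_i$ (for $i \in E$). Fix a generator $\eta = x_Z^p x_\mathscr{F}$ of $\mathcal{J}$, where $p \geq 1$ and $\mathscr{F}$ is a flag of nonempty proper flats of $\mathrm{M}_Z$.

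For multiplication by $x_G$, the minimality of $Z$ in $\mathscr{P}_+$ splits the analysis into three cases. If $G$ is incomparable with $Z$, then $x_G\eta = 0$ by the incomparability relation. If $G = Z$, then $x_G\eta = x_Z^{p+1}x_\mathscr{F}$, which is again a Gysin image and so lies in $\mathcal{J}$. If $G \supsetneq Z$, then $G$ is a nonempty proper flat of $\mathrm{M}_Z$, so the product $x_G x_\mathscr{F} \in A^*(\mathrm{M}_Z)$ decomposes, by Proposition~\ref{PropositionGeneration}, as an integer combination $\sum_i c_i x_{\mathscr{G}_i}$ of flag monomials. The well-definedness of the Gysin map (Proposition~\ref{PropositionGysin}) certifies that this identity survives multiplication by $x_Z^p$ in $A^*(\mathrm{M},\mathscr{P}_+)$, yielding $x_G\eta = \sum_i c_i \, x_Z^p x_{\mathscr{G}_i} \in \mathcal{J}$. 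For multiplication by $x_i$: if $i \notin Z$, then $x_Z x_i = 0$ by the complement relation, so $x_i \eta = 0$. If $i \in Z$, Lemma~\ref{MinimalFlatRelation} gives $x_Z x_i = -x_Z^2 - x_Z\beta_{\mathrm{M}_Z}$, hence
\[
x_i\eta \ =\ x_Z^{p-1}\,(x_Z x_i)\,x_\mathscr{F}\ =\ -x_Z^{p+1}x_\mathscr{F} \,-\, x_Z^p\beta_{\mathrm{M}_Z} x_\mathscr{F};
\]
the first summand is in $\mathcal{J}$ by definition, and the second reduces to the previous case since $\beta_{\mathrm{M}_Z}x_\mathscr{F} \in A^*(\mathrm{M}_Z)$ admits a flag-monomial expansion.

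The main technical point is justifying the flag-expansion step: although there is no ring homomorphism $A^*(\mathrm{M}_Z) \to A^*(\mathrm{M},\mathscr{P}_+)$ sending $x_F \mapsto x_F$ (the linear relations are not preserved on the nose because of the extra $x_i$-terms), the well-definedness of the Gysin homomorphism asserts exactly that every relation in $A^*(\mathrm{M}_Z)$ translates into a valid identity in $A^*(\mathrm{M},\mathscr{P}_+)$ once multiplied through by $x_Z^p$. This is precisely the ingredient needed to pass from a flag expansion in $A^*(\mathrm{M}_Z)$ to an identity in $A^*(\mathrm{M},\mathscr{P}_+)$, and it is what makes the reduction go through.
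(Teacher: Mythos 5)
Your proof is correct and follows essentially the same route as the paper's: the paper likewise reduces to showing that every nonzero monomial in the ideal generated by $x_Z$ --- which by the incomparability relations, the complement relations, and the minimality of $Z$ in $\mathscr{P}_+$ must have the form $x_Z^{k}\prod_{F\supsetneq Z}x_F^{k_F}\prod_{i\in Z}x_i^{k_i}$ with $k>0$ --- lies in $\sum_{p}\text{im}\ \Psi_Z^{p,q}$, using Lemma~\ref{MinimalFlatRelation} to eliminate the $x_i$ factors. Your packaging of the computation as ``closure of $\sum_{p,q}\text{im}\ \Psi_Z^{p,q}$ under multiplication by each ring generator'' is a reorganization of the same argument, and your explicit appeal to the well-definedness of $\Psi_Z^{p,q}$ to justify transporting flag-monomial expansions from $A^*(\mathrm{M}_Z)$ into $A^*(\mathrm{M},\mathscr{P}_+)$ after multiplying by $x_Z^p$ correctly makes precise a step the paper leaves implicit.
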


\begin{proof}
It is enough to prove that the right-hand side is contained in the left-hand side.
Since $Z$ is minimal in $\mathscr{P}_+$, the incomparability relations in $\mathscr{I}_1$ and the complement relations in $\mathscr{I}_2$ show that any nonzero degree $q$ monomial in the ideal generated by $x_Z$ is of the form
\[
x_Z^{k} \  \prod_{F\in \mathscr{F}} x_F^{k_F} \ \prod_{i \in I} x_i^{k_i},  \qquad I \subseteq Z < \mathscr{F},
\]
where the sum of the exponents is $q$.
Since the exponent $k$ of $x_Z$ is positive, Lemma~\ref{MinimalFlatRelation} shows that this monomial is  in the sum
\[
\text{im}\ \Psi_Z^{k,q}+\text{im}\ \Psi_Z^{k+1,q}+\cdots + \text{im}\ \Psi_Z^{q,q}. \qedhere
\]
\end{proof}

\begin{lemma}\label{LemmaInclusions}
For positive integers $p$ and $q$, we have
\[
x_Z \ \text{im}\ \Phi_Z^q \subseteq \text{im}\ \Psi_Z^{1,q+1}
\quad \text{and} \quad
x_Z \ \text{im}\ \Psi_Z^{p,q} \subseteq   \text{im}\ \Psi_Z^{p+1,q+1}.
\]
If $F$ is a proper flat strictly containing $Z$, then
\[
x_F \ \text{im}\ \Phi_Z^q \subseteq \text{im}\ \Phi_Z^{q+1}
\quad \text{and} \quad
x_F \ \text{im}\ \Psi_Z^{p,q} \subseteq   \text{im}\ \Psi_Z^{p,q+1}.
\]
\end{lemma}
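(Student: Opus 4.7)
The plan is to treat the four inclusions separately. Three of them are essentially formal consequences of the definitions of $\Phi_Z$ and $\Psi_Z^{p,q}$; only the first inclusion $x_Z \cdot \text{im}\ \Phi_Z^q \subseteq \text{im}\ \Psi_Z^{1,q+1}$ requires substantive input, namely Lemma \ref{MinimalFlatRelation}.

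For the inclusion $x_Z \cdot \text{im}\ \Psi_Z^{p,q} \subseteq \text{im}\ \Psi_Z^{p+1,q+1}$ I simply compute $x_Z \cdot \Psi_Z^{p,q}(\xi) = x_Z^{p+1}\xi = \Psi_Z^{p+1,q+1}(\xi)$ for $\xi \in A^{q-p}(\mathrm{M}_Z)$. For $x_F \cdot \text{im}\ \Phi_Z^q \subseteq \text{im}\ \Phi_Z^{q+1}$ I use that $Z$ is maximal in $\mathscr{P}(\mathrm{M}) \setminus \mathscr{P}_-$, so any proper flat $F \supsetneq Z$ belongs to $\mathscr{P}_-$; hence $x_F$ is already a degree-$1$ variable of $A^*(\mathrm{M},\mathscr{P}_-)$ with $\Phi_Z(x_F) = x_F$, and the ring-homomorphism property gives $x_F \cdot \Phi_Z(y) = \Phi_Z(x_F y) \in \text{im}\ \Phi_Z^{q+1}$. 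For $x_F \cdot \text{im}\ \Psi_Z^{p,q} \subseteq \text{im}\ \Psi_Z^{p,q+1}$ I use that $F \supsetneq Z$ is a proper flat of $\mathrm{M}_Z$, so $x_F$ acts on $A^*(\mathrm{M}_Z)$ and $x_F \cdot \Psi_Z^{p,q}(\xi) = x_Z^p (x_F \xi) = \Psi_Z^{p,q+1}(x_F \xi)$.

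The substantive case $x_Z \cdot \text{im}\ \Phi_Z^q \subseteq \text{im}\ \Psi_Z^{1,q+1}$ I prove by induction on $q$. The base case $q = 0$ is immediate since $\text{im}\ \Phi_Z^0 = \mathbb{Z}$ and $x_Z = \Psi_Z^{1,1}(1)$. For the inductive step, because $A^*(\mathrm{M},\mathscr{P}_-)$ is generated in degree $1$, it suffices to consider products $x_Z \cdot h_1 \cdots h_q$ where each $h_j$ is one of the generators of $\text{im}\ \Phi_Z^1$: $\Phi_Z(x_F) = x_F$ for $F \in \mathscr{P}_-$, $\Phi_Z(x_i) = x_i + x_Z$ for $i \in Z$, or $\Phi_Z(x_i) = x_i$ for $i \notin Z$. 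By the inductive hypothesis $x_Z h_1 \cdots h_{q-1} = x_Z \cdot \eta$ for some $\eta \in A^{q-1}(\mathrm{M}_Z)$, so the task reduces to analyzing multiplication by $h_q$. Of the four subcases, one vanishes by the incomparability relations $\mathscr{I}_1$ (when $h_q = x_F$ with $F$ incomparable to $Z$), one vanishes by the complement relations $\mathscr{I}_2$ (when $h_q = x_i$ with $i \notin Z$), and one yields $x_Z(x_F \eta) = \Psi_Z^{1,q+1}(x_F \eta)$ (when $h_q = x_F$ with $F \supsetneq Z$, since then $F$ is a flat of $\mathrm{M}_Z$).

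The hard part is the remaining subcase $h_q = x_i + x_Z$ with $i \in Z$, where the presence of $x_i$ threatens to take us outside $x_Z \cdot A^*(\mathrm{M}_Z)$. Here Lemma \ref{MinimalFlatRelation} supplies exactly the rewriting needed: $x_Z(x_i + x_Z) = x_Z x_i + x_Z^2 = -x_Z \beta_{\mathrm{M}_Z}$ lies in $x_Z \cdot A^*(\mathrm{M}_Z)$. Consequently $(x_Z \eta)(x_i + x_Z) = -x_Z\, \beta_{\mathrm{M}_Z}\, \eta = -\Psi_Z^{1,q+1}(\beta_{\mathrm{M}_Z} \eta)$, since $\beta_{\mathrm{M}_Z} \eta \in A^q(\mathrm{M}_Z)$, and the induction closes. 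Thus the whole lemma reduces to Lemma \ref{MinimalFlatRelation} combined with routine bookkeeping via $\mathscr{I}_1$ and $\mathscr{I}_2$.
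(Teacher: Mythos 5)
Your proof is correct and follows essentially the same route as the paper: the three ``formal'' inclusions are handled by the definitions of $\Phi_Z$ and $\Psi_Z^{p,q}$, and the first inclusion is reduced to Lemma~\ref{MinimalFlatRelation} together with the relations in $\mathscr{I}_1$ and $\mathscr{I}_2$. The only cosmetic difference is that you peel off one degree-one factor at a time by induction on $q$, whereas the paper writes down the monomial generators of $x_Z\,\mathrm{im}\,\Phi_Z^q$ directly and applies the same case analysis (noting, as you implicitly do, that any $F\in\mathscr{P}_-$ comparable with $Z$ must strictly contain $Z$).
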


\begin{proof}
Only the first inclusion is nontrivial. 
Note that the left-hand side is generated by  elements  of the form
\[
\xi=x_Z \ \prod_{F\in \mathscr{F}} x_F^{k_F}  \prod_{i \in I \setminus Z} x_i^{k_i}  \prod_{i \in I \cap Z} (x_i+x_Z)^{k_i}, 
\]
where $I$ is a subset of $E$ and $\mathscr{F}$ is a flag in $\mathscr{P}_-$.
When $I$ is contained in $Z$,
 Lemma~\ref{MinimalFlatRelation} shows that
\[
\xi=x_Z \prod_{F\in \mathscr{F}} x_F^{k_F} \ \prod_{i \in I} (-\beta_{\mathrm{M}_Z})^{k_i} \in \text{im}\ \Psi_Z^{1,q+1}.
\]
When $I$ is not contained in $Z$, a complement relation in  $S_{E \cup \mathscr{P}_+}$ shows that $\xi=0$.
\end{proof}

\begin{lemma}\label{LemmaBasisReduction}
For any integers $k \ge \text{rk}_\mathrm{M}(Z)$ and $q \ge k$, we have
\[
 \text{im}\ \Psi_{Z}^{k,q} \subseteq  \text{im} \ \Phi_{Z}^q+ \sum_{p=1}^{k-1} \text{im}\ \Psi_{Z}^{p,q}.
\]
\end{lemma}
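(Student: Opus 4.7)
The plan is to prove the lemma by induction on $k$, with the delicate work concentrated in the base case $k = s := \text{rk}_\mathrm{M}(Z)$; the inductive step $k > s$ will reduce quickly via Lemma~\ref{LemmaInclusions}.

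For the base case, I would fix a basis $I = \{i_1, \ldots, i_s\}$ of $Z$ and observe that, since $\text{cl}_\mathrm{M}(I) = Z \notin \mathscr{P}_- \cup \{E\}$, the closure relation is inactive in $A^*(\mathrm{M}, \mathscr{P}_-)$, so $\prod_{t=1}^s x_{i_t} \cdot x_{\mathscr{F}}$ is a well-defined class in $A^q(\mathrm{M}, \mathscr{P}_-)$ whose pullback $\prod_{t=1}^s (x_Z + x_{i_t}) \cdot x_{\mathscr{F}}$ lies in $\text{im}\ \Phi_Z^q$. Working in $A^*(\mathrm{M}, \mathscr{P}_+)$, Lemma~\ref{MinimalFlatRelation} yields $x_Z^c x_{i_t} = -x_Z^c (x_Z + \beta_{\mathrm{M}_Z})$ for every $c \geq 1$ and $i_t \in Z$; induction on $|J|$ then gives
\[
x_Z^c \prod_{t \in J} x_{i_t} = (-1)^{|J|} x_Z^c (x_Z + \beta_{\mathrm{M}_Z})^{|J|} \qquad (c \geq 1, \ J \subseteq I).
\]
Expanding $\prod_{t=1}^s (x_Z + x_{i_t}) = \sum_J x_Z^{s - |J|} \prod_{t \in J} x_{i_t}$, applying this identity to every term with $|J| < s$, and using the closure relation in $\mathscr{P}_+$ to kill the $J = I$ term, the sum collapses via $\sum_{j=0}^s \binom{s}{j}(-1)^j x_Z^{s-j}(x_Z + \beta_{\mathrm{M}_Z})^j = (-\beta_{\mathrm{M}_Z})^s$ to
\[
\prod_{t=1}^s (x_Z + x_{i_t}) = (-1)^{s+1} \sum_{k=1}^s \binom{s}{k} x_Z^k \beta_{\mathrm{M}_Z}^{s-k} \quad \text{in } A^*(\mathrm{M}, \mathscr{P}_+).
\]
Multiplying by $x_{\mathscr{F}}$ and solving for the $k = s$ summand writes $x_Z^s \cdot x_{\mathscr{F}}$ as $(-1)^{s+1}\prod_t (x_Z + x_{i_t}) \cdot x_{\mathscr{F}} \in \text{im}\ \Phi_Z^q$ minus $\sum_{k=1}^{s-1}\binom{s}{k}\Psi_Z^{k,q}(\beta_{\mathrm{M}_Z}^{s-k}x_{\mathscr{F}})$, completing the base case.

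For the inductive step with $k > s$, a generator of $\text{im}\ \Psi_Z^{k,q}$ has the form $x_Z \cdot \eta$ with $\eta = x_Z^{k-1}x_{\mathscr{F}} \in \text{im}\ \Psi_Z^{k-1, q-1}$. The induction hypothesis (valid since $k-1 \geq s$ and $q-1 \geq k-1$) decomposes $\eta = a + \sum_{p=1}^{k-2} b_p$ with $a \in \text{im}\ \Phi_Z^{q-1}$ and $b_p \in \text{im}\ \Psi_Z^{p, q-1}$; the inclusions of Lemma~\ref{LemmaInclusions} then send $x_Z a$ into $\text{im}\ \Psi_Z^{1,q}$ and each $x_Z b_p$ into $\text{im}\ \Psi_Z^{p+1, q}$, placing $x_Z^k x_{\mathscr{F}}$ in $\sum_{p=1}^{k-1}\text{im}\ \Psi_Z^{p,q}$. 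The main obstacle is in the base case: the closure relation $\prod_t x_{i_t} = 0$ in $A^*(\mathrm{M}, \mathscr{P}_+)$ alone kills only a single term in the expansion of $\prod_t(x_Z + x_{i_t})$, and the decisive ingredient that makes the remaining terms telescope into a formula isolating $x_Z^s$ is the annihilator identity $x_Z(x_Z + x_{i_t} + \beta_{\mathrm{M}_Z}) = 0$ extracted from Lemma~\ref{MinimalFlatRelation}.
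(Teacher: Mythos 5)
Your proposal is correct and follows essentially the same route as the paper: the paper also reduces to $k=\mathrm{rk}_{\mathrm{M}}(Z)$ via the two inclusions of Lemma~\ref{LemmaInclusions}, expands $\prod_{i\in I}(x_i+x_Z)\in\operatorname{im}\Phi_Z$ for a basis $I$ of $Z$, kills the term $\prod_{i\in I}x_i$ with the closure relation, and uses Lemma~\ref{MinimalFlatRelation} to rewrite the remainder as $(-\beta_{\mathrm{M}_Z})^s-(-x_Z-\beta_{\mathrm{M}_Z})^s$, which is exactly your binomial identity. Your only departures are organizational (an explicit induction on $k$ in place of "multiply both sides by powers of $x_Z$ and by $x_{\mathscr{F}}$") and a more explicit writing-out of the collapsed binomial sum.
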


\begin{proof}
By the second statement of Lemma~\ref{LemmaInclusions}, it is enough to prove the assertion when $q=k$:
The general case can be deduced by multiplying both sides of the inclusion by $x_\mathscr{F}$ for $Z<\mathscr{F}$.

By the first statement of Lemma~\ref{LemmaInclusions},  it is enough to justify the above when $k=\text{rk}_\mathrm{M}(Z)$:
The general case can be deduced by multiplying both sides of the inclusion by powers of $x_Z$.

We prove the assertion when $k=q=\text{rk}_\mathrm{M}(Z)$. 
For this we choose a basis $I$ of $Z$, and expand the product
\[
\prod_{i\in I} (x_i+x_Z) \in \text{im}\ \Phi_Z^k.
\]
The closure relation for $I$ shows that the  term  $\prod_{i \in I} x_i$  in the expansion is zero, 
and hence, by Lemma~\ref{MinimalFlatRelation},
\[
\prod_{i \in I} (x_i+x_Z)=(-\beta_{\mathrm{M}_Z})^k-(-x_Z-\beta_{\mathrm{M}_Z})^k \in \text{im}\ \Phi_Z^k.
\]
Expanding the right-hand side, we see that
\[
x_Z^k\in \text{im}\ \Phi_Z^k+ \sum_{p=1}^{k-1} \text{im}\ \Psi_{Z}^{p,k}.
\]
Since $\text{im}\ \Psi_{Z}^{k,k} $ is generated by $x_Z^k$, this implies the asserted inclusion.
\end{proof}

\begin{proof}[Proof of Proposition~\ref{PropositionSurjective}]
By Lemma~\ref{LemmaBasisReduction}, it is enough to show that the sum  
$
\Phi_{Z}^q \oplus \bigoplus_{p=1}^{q} \Psi_Z^{p,q}
$
is surjective.
By Lemma~\ref{LemmaIdeal}, the image of the second summand is the degree $q$ part of the ideal generated by $x_Z$.

We show that any monomial is in the image of the pullback homomorphism $\Phi_Z$ modulo the ideal generated by $x_Z$.
Note that any degree $q$ monomial not in the ideal generated by $x_Z$ is of the form
\[
  \prod_{F\in \mathscr{F}} x_F^{k_F} \ \prod_{i \in I} x_i^{k_i},  \qquad  Z \notin \mathscr{F}.
\]
Modulo the ideal generated by $x_Z$, this monomial is equal to
\[
\Phi_Z\Big(  \prod_{F\in \mathscr{F}} x_F^{k_F} \ \prod_{i \in I} x_i^{k_i}\Big)=\prod_{F\in \mathscr{F}} x_F^{k_F}  \prod_{i \in I \setminus Z} x_i^{k_i}  \prod_{i \in I \cap Z} (x_i+x_Z)^{k_i}. \qedhere
\]
\end{proof}

We use Proposition~\ref{PropositionSurjective}  to show that the Gysin homomorphism between top degrees is an isomorphism.

\begin{proposition}\label{GysinTopIsomorphism}
The Gysin homomorphism $\Psi_Z^{p,q}$ is an isomorphism when $p=\text{rk}(Z)$ and $q=r$:
\[
\Psi_Z^{p,q}: A^{\text{crk}(Z)-1}(\mathrm{M}_Z)\simeq A^r(\mathrm{M},\mathscr{P}_+).
\]
\end{proposition}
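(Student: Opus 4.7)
The plan is to show that $\Psi_Z^{p,q}$ is an isomorphism of free abelian groups of rank one by explicitly computing the image of a generator. First, observe that both source and target are isomorphic to $\mathbb{Z}$: the source $A^{\mathrm{crk}(Z)-1}(\mathrm{M}_Z)$ is the top Chow group of the rank $\mathrm{crk}(Z)$ matroid $\mathrm{M}_Z$, and by Proposition~\ref{PropositionDegreeMap} it is generated by $x_{\mathscr{F}}$ for any maximal flag $\mathscr{F} = F_1\subsetneq\cdots\subsetneq F_{\mathrm{crk}(Z)-1}$ of nonempty proper flats of $\mathrm{M}_Z$ (identified with flats of $\mathrm{M}$ strictly between $Z$ and $E$, each covering the previous one, with $F_1$ covering $Z$); the target $A^r(\mathrm{M},\mathscr{P}_+)$ is $\mathbb{Z}$ by Corollary~\ref{CorollaryTopIsomorphism} combined with Proposition~\ref{PropositionDegreeMap}, and the composition with the pullback isomorphism $\Phi_{\mathscr{P}_+^c}:A^r(\mathrm{M},\mathscr{P}_+)\simeq A^r(\mathrm{M})$ followed by the degree map $\mathrm{deg}$ identifies the target with $\mathbb{Z}$. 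Thus it suffices to check that $\mathrm{deg}\circ\Phi_{\mathscr{P}_+^c}\circ\Psi_Z^{p,q}(x_\mathscr{F}) = \pm 1$.

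The next step is to transport the computation into $A^r(\mathrm{M})$. Since $\Phi_{\mathscr{P}_+^c}$ acts as the identity on the flat variables $x_Z, x_{F_j}$ (these are among its designated generators and no $x_i$-variables appear), we have $\Phi_{\mathscr{P}_+^c}(x_Z^{p}x_\mathscr{F}) = x_Z^{p}x_\mathscr{F} \in A^r(\mathrm{M})$. Now iterate Lemma~\ref{LemmaAlphaRelation}: because $F_1$ covers $Z$, the relation $x_Z x_{F_1}(x_Z + \alpha_{\mathrm{M}^Z}) = 0$ gives $x_Z^{2}x_{F_1} = -x_Z x_{F_1}\alpha_{\mathrm{M}^Z}$, and multiplying by $x_Z^{p-2}x_{F_2}\cdots x_{F_{\mathrm{crk}(Z)-1}}$ yields $x_Z^{p}x_\mathscr{F} = -x_Z^{p-1}x_\mathscr{F}\hspace{0.5mm}\alpha_{\mathrm{M}^Z}$. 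After $p-1$ iterations,
\[
x_Z^{p}x_\mathscr{F} = (-1)^{p-1}\hspace{0.5mm} x_Z\hspace{0.5mm} x_\mathscr{F}\hspace{0.5mm}\alpha_{\mathrm{M}^Z}^{\,p-1} \in A^r(\mathrm{M}).
\]

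The third step is to convert $\alpha_{\mathrm{M}^Z}^{p-1}$ into a flag monomial. Apply Proposition~\ref{PropositionFundamentalClass}(2) to $\mathrm{M}^Z$, which has rank $p$: for any maximal flag $G_1\subsetneq\cdots\subsetneq G_{p-1}$ of nonempty proper flats of $\mathrm{M}^Z$, the identity $\alpha_{\mathrm{M}^Z}^{p-1} = x_{G_1}\cdots x_{G_{p-1}}$ holds in $A^{p-1}(\mathrm{M}^Z)$. To pass from $A^*(\mathrm{M}^Z)$ to $A^*(\mathrm{M})$ one checks that the defining relations of $A^*(\mathrm{M}^Z)$ become valid relations when multiplied by $x_Z$: incomparability relations between subflats of $Z$ are incomparability relations in $\mathrm{M}$; and the linear relations $\alpha_{\mathrm{M}^Z,i_1} - \alpha_{\mathrm{M}^Z,i_2}$ for $i_1,i_2\in Z$ reduce, via the linear relations in $A^*(\mathrm{M})$, to a sum of $x_G$ with $G$ incomparable to $Z$, which annihilate $x_Z$ by the incomparability ideal. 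Hence $x_Z\alpha_{\mathrm{M}^Z}^{p-1} = x_Z x_{G_1}\cdots x_{G_{p-1}}$ in $A^p(\mathrm{M})$, and
\[
x_Z^{p}x_\mathscr{F} = (-1)^{p-1}\hspace{0.5mm} x_{G_1}\cdots x_{G_{p-1}}\hspace{0.5mm} x_Z\hspace{0.5mm} x_{F_1}\cdots x_{F_{\mathrm{crk}(Z)-1}}.
\]
The combined flag $G_1\subsetneq\cdots\subsetneq G_{p-1}\subsetneq Z\subsetneq F_1\subsetneq\cdots\subsetneq F_{\mathrm{crk}(Z)-1}$ is a maximal flag of nonempty proper flats of $\mathrm{M}$ with rank of the $m$-th entry equal to $m$, so Proposition~\ref{PropositionFundamentalClass}(2) identifies this product with $\alpha_\mathrm{M}^r$. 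Since $\mathrm{deg}(\alpha_\mathrm{M}^r)=1$, the generator $x_\mathscr{F}$ maps to $(-1)^{p-1}$ times a generator, proving the isomorphism.

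The main obstacle is the bookkeeping of step three, namely verifying that the identity $\alpha_{\mathrm{M}^Z}^{p-1} = x_{G_1}\cdots x_{G_{p-1}}$ from Proposition~\ref{PropositionFundamentalClass} in $A^*(\mathrm{M}^Z)$ lifts to the identity $x_Z\hspace{0.5mm}\alpha_{\mathrm{M}^Z}^{p-1} = x_Z\hspace{0.5mm} x_{G_1}\cdots x_{G_{p-1}}$ in $A^*(\mathrm{M})$; this comes down to the incomparability and linear-relation analysis described above, which is routine but requires careful interpretation of $\alpha_{\mathrm{M}^Z}$ as an element of $A^1(\mathrm{M})$ (well-defined only after multiplication by $x_Z$).
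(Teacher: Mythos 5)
Your proof is correct and takes essentially the same route as the paper's: both reduce to showing that the composition with $\Phi_{\mathscr{P}_+^c}$ carries the generator $x_\mathscr{F}$ of $A^{\text{crk}(Z)-1}(\mathrm{M}_Z)$ to $\pm\alpha_\mathrm{M}^r \in A^r(\mathrm{M})$, and both do so via the identity $x_Z^{\text{rk}(Z)}x_\mathscr{F}=(-1)^{\text{rk}(Z)-1}x_{\mathscr{Z}_1}x_Z\hspace{0.5mm}x_\mathscr{F}$ obtained from Lemma~\ref{LemmaAlphaRelation} together with Proposition~\ref{PropositionFundamentalClass} for $\mathrm{M}^Z$. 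The only differences are cosmetic: you run the computation in the opposite direction and re-derive the well-definedness of $\Gamma_Z$ inline instead of citing Proposition~\ref{PropositionGysin}.
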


\begin{proof}
We consider the composition
\[
\xymatrixcolsep{3pc}
\xymatrix{
A^{\text{crk}(Z)-1}(\mathrm{M}_Z) \ar[r]^{\Psi^{p,q}_Z} & A^r(\mathrm{M},\mathscr{P}_+) \ar[r]^{\Phi_{\mathscr{P}_+^c}}&A^r(\mathrm{M}),
}
\qquad
x_\mathscr{F} \longmapsto x_Z^{\text{rk}(Z)} \ x_\mathscr{F}.
\]
The second map is an isomorphism by Corollary~\ref{CorollaryTopIsomorphism}, and therefore it is enough to show that the composition  is an isomorphism.

For this we choose two flags of nonempty proper flats of $\mathrm{M}$:
\begin{align*}
\mathscr{Z}_1&=\text{a flag of  flats strictly contained in $Z$ with $|\mathscr{Z}_1|=\text{rk}(Z)-1$,}\\[2pt]
\mathscr{Z}_2&=\text{a flag of  flats strictly containing $Z$ with $|\mathscr{Z}_2|=\text{crk}(Z)-1$.}
\end{align*}
We claim that the composition maps a generator to a generator:
\[
(-1)^{\text{rk}(Z)-1} \ x_Z^{\text{rk}(Z)} \ x_{\mathscr{Z}_2}= x_{\mathscr{Z}_1} \hspace{0.5mm} x_Z \hspace{0.5mm} x_{\mathscr{Z}_2} \in A^*(\mathrm{M}).
\]
Indeed, the map $\Gamma_Z^{1,\text{rk}(Z)}$ applied to the second formula of Proposition~\ref{PropositionFundamentalClass} for  $\mathrm{M}^Z$
gives
\[
 x_{\mathscr{Z}_1}\hspace{0.5mm}x_Z  \hspace{0.5mm}x_{\mathscr{Z}_2}=(\alpha_{\mathrm{M}^Z})^{\text{rk}(Z)-1} \ x_Z \hspace{0.5mm}x_{\mathscr{Z}_2}\in A^*(\mathrm{M}),
\]
and, by Lemma~\ref{LemmaAlphaRelation},  the right-hand side of the above is equal to
\[
 (-1)^{\text{rk}(Z)-1}\ x_Z^{\text{rk}(Z)} \ x_{\mathscr{Z}_2} \in A^*(\mathrm{M}).   \qedhere
\]
\end{proof}

\subsection{}

Let $\mathscr{P}_-$, $\mathscr{P}_+$, and $Z$ be as before, and let $\mathscr{P}$ be any order filter of $\mathscr{P}(\mathrm{M})$.

\begin{theorem}[Decomposition]\label{DecompositionTheorem}
For any positive integer $q$, the sum of the pullback homomorphism and the Gysin homomorphisms
\[
\Phi_{Z}^q \oplus \bigoplus_{p=1}^{\text{rk}(Z)-1} \Psi_Z^{p,q}
\]
is an isomorphism to $A^q(\mathrm{M},\mathscr{P}_+)$.
\end{theorem}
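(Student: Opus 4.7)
The surjectivity is Proposition~\ref{PropositionSurjective}, so the content of the theorem is injectivity. I would argue by induction on the rank $r+1$ of $\mathrm{M}$: since the contracted matroid $\mathrm{M}_Z$ has rank $\text{crk}(Z)<r+1$, I may assume throughout that the Decomposition theorem (and its downstream consequence Poincaré duality, Theorem~\ref{PoincareDuality}) is known for $\mathrm{M}_Z$, and similarly for $\mathrm{M}^Z$. A secondary downward induction on $q$ gives the base case $q=r$ essentially for free: since $\mathrm{M}_Z$ has top degree $\text{crk}(Z)-1=r-\text{rk}(Z)$, the groups $A^{r-p}(\mathrm{M}_Z)$ vanish for every $p<\text{rk}(Z)$, so the theorem in this degree reduces to $\Phi_Z^r$ being an isomorphism, which is Corollary~\ref{CorollaryTopIsomorphism}.

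For the inductive step, suppose $\Phi_Z^q(\xi_0)+\sum_{p=1}^{\text{rk}(Z)-1}\Psi_Z^{p,q}(\xi_p)=0$ in $A^q(\mathrm{M},\mathscr{P}_+)$. I would multiply this relation through by $x_Z$. Lemma~\ref{LemmaInclusions} assures that the result lies in $\sum_{p\geq 1}\text{im}\,\Psi_Z^{p,q+1}$, and Lemma~\ref{MinimalFlatRelation} makes the rewriting explicit: $x_Z\cdot\Phi_Z(x_i)=-x_Z\beta_{\mathrm{M}_Z}$ when $i\in Z$ and $0$ otherwise, while $x_Z\cdot\Phi_Z(x_F)$ equals $\Psi_Z^{1,*}(x_F)$ if $F\supsetneq Z$ and vanishes if $F$ is incomparable with $Z$. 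The outcome is an identity of the form
\[
\Psi_Z^{1,q+1}(\widetilde{\xi}_0) + \sum_{p=1}^{\text{rk}(Z)-2} \Psi_Z^{p+1,q+1}(\xi_p) + \Psi_Z^{\text{rk}(Z),q+1}(\xi_{\text{rk}(Z)-1}) = 0,
\]
for an explicit $\widetilde{\xi}_0\in A^q(\mathrm{M}_Z)$. The final term falls outside the indexing range of our sum, but the proof of Lemma~\ref{LemmaBasisReduction} rewrites $\text{im}\,\Psi_Z^{\text{rk}(Z),q+1}$ inside $\text{im}\,\Phi_Z^{q+1}+\sum_{p<\text{rk}(Z)}\text{im}\,\Psi_Z^{p,q+1}$. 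Applying the inner inductive hypothesis in degree $q+1$ to the resulting expression kills each component; peeling off in decreasing order of $p$ yields $\xi_{\text{rk}(Z)-1}=\cdots=\xi_1=0$ and $\widetilde{\xi}_0=0$.

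One is then left with $\Phi_Z^q(\xi_0)=0$, and the remaining task is to show $\xi_0=0$. Here the outer induction enters: by running a parallel induction on $|\mathscr{P}|$ to establish Poincaré duality for $(\mathrm{M},\mathscr{P}_-)$ alongside the Decomposition theorem, one may assume $A^q(\mathrm{M},\mathscr{P}_-)$ pairs perfectly with $A^{r-q}(\mathrm{M},\mathscr{P}_-)$. For any $\eta\in A^{r-q}(\mathrm{M},\mathscr{P}_-)$ I would compare $\text{deg}(\xi_0\cdot\eta)$ with $\text{deg}\bigl(\Phi_Z^q(\xi_0)\cdot\Phi_Z(\eta)\bigr)=0$ via the isomorphism $\Phi_Z^r$ of Corollary~\ref{CorollaryTopIsomorphism}; this forces every pairing of $\xi_0$ to vanish, and nondegeneracy then gives $\xi_0=0$.

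\textbf{Main obstacle.} The central difficulty is the bookkeeping for the overflow term $\Psi_Z^{\text{rk}(Z),q+1}(\xi_{\text{rk}(Z)-1})$: one must verify that its rewriting under Lemma~\ref{LemmaBasisReduction} feeds only into components already constrained by the inner induction hypothesis, without introducing couplings that tangle the summands together. The subsidiary difficulty is the nesting of the inductions: Poincaré duality for $(\mathrm{M},\mathscr{P}_-)$ is needed to complete injectivity of $\Phi_Z^q$, so decomposition and Poincaré duality must be proved simultaneously by a carefully coordinated induction on both $\text{rk}\,\mathrm{M}$ and $|\mathscr{P}|$.
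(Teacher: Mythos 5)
Your outer architecture is right — surjectivity from Proposition~\ref{PropositionSurjective}, a simultaneous lexicographic induction with Poincar\'e duality on $(\text{rk}\,\mathrm{M},|\mathscr{P}|)$, and the pairing argument you give for $\Phi_Z^q(\xi_0)$ is essentially the paper's Lemma~\ref{LemmaInduction}(1). But the mechanism you propose for the Gysin summands — multiply the relation by $x_Z$, push into degree $q+1$, and invoke a downward induction on $q$ — has a genuine gap, and it is exactly the "main obstacle" you flag. Multiplication by $x_Z$ is not injective, and the information it destroys is precisely what you need. Concretely, after rewriting the overflow term $\Psi_Z^{\text{rk}(Z),q+1}(\xi_{\text{rk}(Z)-1})$ via Lemma~\ref{LemmaBasisReduction}, its $\Psi_Z^{p,q+1}$-component is (up to sign and a binomial coefficient) $\beta_{\mathrm{M}_Z}^{\text{rk}(Z)-p}\cdot\xi_{\text{rk}(Z)-1}$, so the degree-$(q{+}1)$ induction hypothesis yields only the coupled system $\xi_{p-1}=-c_p\,\beta_{\mathrm{M}_Z}^{\text{rk}(Z)-p}\,\xi_{\text{rk}(Z)-1}$; nothing forces $\xi_{\text{rk}(Z)-1}=0$, and "peeling off in decreasing order of $p$" never isolates it. Indeed the identity $\prod_{i\in I}(x_i+x_Z)=(-\beta_{\mathrm{M}_Z})^{\text{rk}(Z)}-(-x_Z-\beta_{\mathrm{M}_Z})^{\text{rk}(Z)}$ underlying Lemma~\ref{LemmaBasisReduction} shows that once $\Psi_Z^{\text{rk}(Z),\ast}$ is allowed into the sum, nontrivial relations of exactly this shape exist, so the degree-$(q{+}1)$ relation you produce is one that holds "for free" and carries no information about $\xi_{\text{rk}(Z)-1}$.

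The missing idea is to treat every summand the way you treat $\Phi_Z^q(\xi_0)$: pair the relation against complementary-degree classes rather than multiplying by $x_Z$. For a nonzero $\xi_p\in A^{q-p}(\mathrm{M}_Z)$, Poincar\'e duality for $\mathrm{M}_Z$ (available by the rank induction) produces $\eta$ with $\xi_p\cdot\eta\neq 0$ in top degree, and then $\Psi_Z^{p,q}(\xi_p)\cdot\Psi_Z^{\text{rk}(Z)-p,r-q}(\eta)=x_Z^{\text{rk}(Z)}\xi_p\eta\neq 0$ because $\Psi_Z^{\text{rk}(Z),r}$ is an isomorphism (Proposition~\ref{GysinTopIsomorphism}). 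The cross terms die by the triangularity of Lemma~\ref{LemmaUpperTriangular}: $\text{im}\,\Psi_Z^{p_2,q}\cdot\text{im}\,\Psi_Z^{\text{rk}(Z)-p_1,r-q}$ lands in $\text{im}\,\Psi_Z^{p_2+\text{rk}(Z)-p_1,r}=0$ when $p_2<p_1$, and $\text{im}\,\Phi_Z^{q}\cdot\text{im}\,\Psi_Z^{p,r-q}=0$ for $p<\text{rk}(Z)$. This makes the full pairing matrix block upper triangular with unimodular diagonal blocks coming from Poincar\'e duality for $(\mathrm{M},\mathscr{P}_-)$ and for $\mathrm{M}_Z$, which gives injectivity of the whole sum uniformly in $q$, with no inner induction on $q$ at all. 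This is the paper's Lemma~\ref{LemmaInduction} and the determinant computation in the proof of Theorems~\ref{DecompositionTheorem} and~\ref{PoincareDuality}.
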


\begin{theorem}[Poincar\'e Duality]\label{PoincareDuality}
For any nonnegative integer $q \le r$, 
 the multiplication map
\[
A^{q}(\mathrm{M},\mathscr{P}) \times A^{r-q}(\mathrm{M},\mathscr{P}) \longrightarrow A^r(\mathrm{M},\mathscr{P}) 
\]
defines an isomorphism between groups
\[
A^{r-q}(\mathrm{M},\mathscr{P}) \simeq \text{Hom}_\mathbb{Z}(A^{q}(\mathrm{M},\mathscr{P}), A^r(\mathrm{M},\mathscr{P})).
\]
\end{theorem}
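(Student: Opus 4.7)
The plan is to prove Poincaré duality by induction on the rank of $\mathrm{M}$ and, for fixed rank, on the cardinality $|\mathscr{P}|$. The base case $\mathscr{P}=\varnothing$ is immediate since $A^*(\mathrm{M},\varnothing)\simeq\mathbb{Z}[x]/(x^{r+1})$ pairs perfectly in complementary degrees. For the inductive step, pick a minimal flat $Z\in\mathscr{P}$ and set $\mathscr{P}_-=\mathscr{P}\setminus\{Z\}$, so that $\mathscr{P}=\mathscr{P}_+$ arises from a matroidal flip with center $Z$. If $\text{rk}(Z)=1$ then $\Phi_Z$ is a ring isomorphism and the claim for $\mathscr{P}_+$ reduces to that for $\mathscr{P}_-$. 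Assume $\text{rk}(Z)\ge 2$. Then the contraction $\mathrm{M}_Z$ has rank $\text{crk}(Z)\le r$, so by the rank induction Poincaré duality holds for $A^*(\mathrm{M}_Z)$, and by the filter induction it holds for $A^*(\mathrm{M},\mathscr{P}_-)$.

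By the Decomposition Theorem we have
\[
A^q(\mathrm{M},\mathscr{P}_+)=\Phi_Z\!\bigl(A^q(\mathrm{M},\mathscr{P}_-)\bigr)\ \oplus\ \bigoplus_{p=1}^{\text{rk}(Z)-1}\Psi_Z^{p,q}\!\bigl(A^{q-p}(\mathrm{M}_Z)\bigr),
\]
and likewise in degree $r-q$. The multiplication pairing $A^q\times A^{r-q}\to A^r\simeq\mathbb{Z}$ will be analyzed block by block, using three computations. First, $\Phi_Z$ is a ring homomorphism and $\Phi_Z^r$ is an isomorphism (Corollary~\ref{CorollaryTopIsomorphism}), so the $(\Phi_Z,\Phi_Z)$-block is identified via $\Phi_Z^r$ with the Poincaré pairing of $(\mathrm{M},\mathscr{P}_-)$. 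Second, iterated application of Lemma~\ref{LemmaInclusions} shows $\Phi_Z(a)\cdot\Psi_Z^{p',r-q}(\tilde b)$ lies in $\text{im}\,\Psi_Z^{p',r}$; since $\mathrm{M}_Z$ has dimension $\text{crk}(Z)-1=r-\text{rk}(Z)$, the group $A^{r-p'}(\mathrm{M}_Z)$ vanishes whenever $p'<\text{rk}(Z)$, so all $\Phi_Z$--$\Psi_Z$ cross pairings are zero. Third, one establishes the Gysin multiplicativity
\[
\Psi_Z^{p,q}(a)\cdot\Psi_Z^{p',r-q}(\tilde b)=\Psi_Z^{p+p',r}(a\tilde b),
\]
where $a\tilde b$ is computed in $A^*(\mathrm{M}_Z)$; this vanishes for $p+p'<\text{rk}(Z)$ by the same dimension count, and for $p+p'=\text{rk}(Z)$ is identified, via the isomorphism $\Psi_Z^{\text{rk}(Z),r}$ of Proposition~\ref{GysinTopIsomorphism}, with the Poincaré pairing of $A^*(\mathrm{M}_Z)$ in complementary degrees.

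Now order the summands of $A^q(\mathrm{M},\mathscr{P}_+)$ as $\Phi_Z,\Psi_Z^{1,q},\Psi_Z^{2,q},\ldots,\Psi_Z^{\text{rk}(Z)-1,q}$ and pair the $\Psi_Z^{p,q}$ summand of $A^q$ with the $\Psi_Z^{\text{rk}(Z)-p,r-q}$ summand of $A^{r-q}$. The three computations show that in this basis the pairing matrix is block lower triangular, with diagonal blocks given by the Poincaré pairings on $(\mathrm{M},\mathscr{P}_-)$ and on $\mathrm{M}_Z$. Each diagonal block is unimodular over $\mathbb{Z}$ by the induction hypothesis, so the full matrix has determinant $\pm 1$ and the pairing for $(\mathrm{M},\mathscr{P}_+)$ is perfect, completing the induction. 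I expect the main technical obstacle to be the Gysin multiplicativity used in the third computation: one must check that the association $x_\mathscr{F}\mapsto x_\mathscr{F}$ from flags of flats of $\mathrm{M}_Z$ to $A^*(\mathrm{M},\mathscr{P}_+)$ becomes a ring homomorphism after multiplying by $x_Z$. The $\mathrm{M}_Z$ incomparability relations coincide with those of $\mathrm{M}$ restricted to flats containing $Z$; the subtler point is that $x_Z$ kills the $x_i$ terms in $\mathrm{M}$'s linear relations (by the complement relations, since $Z$ is minimal in $\mathscr{P}_+$), and together with Lemma~\ref{MinimalFlatRelation} it absorbs the discrepancy between the linear relations of $A^*(\mathrm{M},\mathscr{P}_+)$ and those of $A^*(\mathrm{M}_Z)$.
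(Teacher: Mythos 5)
Your proposal is correct and follows essentially the same route as the paper: the same lexicographic induction on rank and $|\mathscr{P}|$, the same decomposition of $A^q(\mathrm{M},\mathscr{P}_+)$ into the images of $\Phi_Z$ and the $\Psi_Z^{p,q}$, and the same block-triangular pairing matrix whose diagonal blocks are the (unimodular, by induction) Poincar\'e pairings of $(\mathrm{M},\mathscr{P}_-)$ and of $\mathrm{M}_Z$. The only points to make explicit are that the Decomposition Theorem must be carried along inside the same induction (the paper proves it simultaneously with Poincar\'e duality, since its injectivity statement, Lemma~\ref{LemmaInduction}, uses duality for $(\mathrm{M},\mathscr{P}_-)$ and for $\mathrm{M}_Z$), and that the ``Gysin multiplicativity'' you flag as the main obstacle is exactly what the well-definedness argument of Proposition~\ref{PropositionGysin}, combined with Lemma~\ref{LemmaUpperTriangular} and the sign computation in Proposition~\ref{GysinTopIsomorphism}, supplies.
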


In particular, the groups $A^q(\mathrm{M},\mathscr{P})$ are torsion free.
We  simultaneously prove Theorem~\ref{DecompositionTheorem} (Decomposition) and Theorem~\ref{PoincareDuality} (Poincar\'e Duality) by  lexicographic induction on the rank of matroids and the cardinality of the order filters.
The proof is given below in Lemma~\ref{LemmaInduction}.

\begin{lemma}\label{LemmaUpperTriangular}
Let $q_1$ and $q_2$ be positive integers.
\begin{enumerate}[(1)]\itemsep 5pt
\item For any positive integer $p$, we have
\[
\text{im} \ \Psi_Z^{p,q_1} \cdot \text{im} \ \Phi^{q_2}_Z \subseteq \text{im} \ \Psi_Z^{p,q_1+q_2}
\]
\item For any positive integers $p_1$ and $p_2$, we have
\[
 \text{im} \  \Psi_Z^{p_1,q_1} \cdot \text{im} \  \Psi_Z^{p_2,q_2} \subseteq  \text{im} \  \Psi_Z^{p_1+p_2,q_1+q_2}.
\]
\end{enumerate}
\end{lemma}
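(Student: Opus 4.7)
The plan is to handle the two inclusions separately, reducing each to a computation on monomial generators. Recall from Proposition~\ref{PropositionGeneration} applied to $\mathrm{M}_Z$ that $\text{im}\,\Psi_Z^{p,q}$ is $\mathbb{Z}$-spanned by the elements $x_Z^p\,x_{\mathscr{H}}$, where $\mathscr{H}$ ranges over flags of flats of $\mathrm{M}$ strictly containing $Z$ with $|\mathscr{H}|=q-p$.

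For part (2), I would interpret $\Psi_Z^{p,q}$ concretely: given $\xi\in A^{q-p}(\mathrm{M}_Z)$, lift it to a polynomial $\widetilde{\xi}$ in the variables $x_F$ for flats $F$ of $\mathrm{M}$ strictly containing $Z$, and view $\Psi_Z^{p,q}(\xi)=x_Z^p\,\widetilde{\xi}$ in $A^q(\mathrm{M},\mathscr{P}_+)$; that this is well-defined is precisely the content of Proposition~\ref{PropositionGysin}. Then for $\xi_i\in A^{q_i-p_i}(\mathrm{M}_Z)$ one has
\[
\Psi_Z^{p_1,q_1}(\xi_1)\cdot \Psi_Z^{p_2,q_2}(\xi_2)\ =\ x_Z^{p_1+p_2}\,\widetilde{\xi}_1\widetilde{\xi}_2\ =\ \Psi_Z^{p_1+p_2,q_1+q_2}(\xi_1\xi_2),
\]
because $\widetilde{\xi}_1\widetilde{\xi}_2$ is a lift of the product $\xi_1\xi_2\in A^{(q_1+q_2)-(p_1+p_2)}(\mathrm{M}_Z)$.

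For part (1), by linearity it suffices to consider a generator $x_Z^p\,x_{\mathscr{F}}\in \text{im}\,\Psi_Z^{p,q_1}$ together with a monomial $\nu=\prod_i x_i\prod_F x_F$ in $A^{q_2}(\mathrm{M},\mathscr{P}_-)$, and to show that $x_Z^p\,x_{\mathscr{F}}\cdot \Phi_Z(\nu)$ lies in $\text{im}\,\Psi_Z^{p,q_1+q_2}$. I would multiply through $\Phi_Z(\nu)$ one factor at a time. Factors $x_i$ with $i\in E\setminus Z$ (unchanged by $\Phi_Z$) are annihilated by $x_Z$ via the complement relation. Factors $x_F$ with $F\in\mathscr{P}_-$ either strictly contain $Z$ (since $\mathscr{P}_-$ is an order filter and $Z$ is maximal outside it, no $F\in\mathscr{P}_-$ can satisfy $F\subsetneq Z$), in which case the product either enlarges the flag or vanishes by incomparability with a member of $\mathscr{F}$, or else are incomparable to $Z$ and hence annihilated by $x_Z$. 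Finally, factors $\Phi_Z(x_i)=x_i+x_Z$ for $i\in Z$ satisfy, by Lemma~\ref{MinimalFlatRelation},
\[
x_Z^p\,x_{\mathscr{F}}\cdot(x_i+x_Z)\ =\ -x_Z^p\,\beta_{\mathrm{M}_Z}\,x_{\mathscr{F}},
\]
and the right-hand side lies in $\text{im}\,\Psi_Z^{p,q_1+1}$ since $\beta_{\mathrm{M}_Z}\,x_{\mathscr{F}}$ represents an element of $A^{|\mathscr{F}|+1}(\mathrm{M}_Z)$. Each case preserves membership in $\text{im}\,\Psi_Z^{p,\star}$ with unchanged first index $p$.

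The main obstacle is the third case: naively, the summand $x_Z^{p+1}x_{\mathscr{F}}$ appearing in the expansion of $(x_i+x_Z)\cdot x_Z^p$ would raise the $x_Z$-exponent from $p$ to $p+1$, giving only membership in $\text{im}\,\Psi_Z^{p+1,q_1+1}$. The precise cancellation supplied by Lemma~\ref{MinimalFlatRelation}, which trades $x_Z^{p+1}$ against $-x_Z^p x_i-x_Z^p\beta_{\mathrm{M}_Z}$, is exactly what keeps the index $p$ invariant under multiplication by the pullback homomorphism $\Phi_Z$.
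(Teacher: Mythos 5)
Your proof is correct and takes essentially the same route as the paper, whose entire argument is to cite Lemma~\ref{LemmaInclusions}: the case analysis you carry out in part (1) — the complement relation killing $x_Z x_i$ for $i \notin Z$, incomparability for flats not containing $Z$, and the identity $x_Z(x_i+x_Z)=-x_Z\beta_{\mathrm{M}_Z}$ from Lemma~\ref{MinimalFlatRelation} — is exactly the content of the proof of that lemma, which you have simply inlined. Your treatment of part (2) by multiplying lifts is a slightly slicker packaging of the same observation, namely that $\text{im}\,\Psi_Z^{p,q}$ is $x_Z^p$ times the lifts of $A^{q-p}(\mathrm{M}_Z)$.
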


The first inclusion shows that, when $q_1+q_2=r$ and $p$ is less than $\text{rk}(Z)$,
\[
\text{im} \ \Psi_Z^{p,q_1} \cdot \text{im} \ \Phi^{q_2}_Z = 0.
\]
The second inclusion shows  that, when $q_1+q_2=r$ and $p_1+p_2$ is less than $\text{rk}(Z)$,
\[
 \text{im} \  \Psi_Z^{p_1,q_1} \cdot  \text{im} \  \Psi_Z^{p_2,q_2} = 0.
 \]

\begin{proof}
The assertions are direct consequences of Lemma~\ref{LemmaInclusions}.
\end{proof}

\begin{lemma}\label{LemmaInduction}
Let $q$ be a positive integer, and $p_1,p_2$ be distinct positive integers less than $\text{rk}(Z)$.
\begin{enumerate}[(1)]\itemsep 5pt
\item If Poincar\'e Duality holds for $A^*(\mathrm{M},\mathscr{P}_-)$, then
\[
\text{ker} \ \Phi^q_Z=0 \quad \text{and} \quad \text{im} \ \Phi^q_Z \cap \sum_{p=1}^{\text{rk}(Z)-1} \hspace{-1mm}\text{im}\ \Psi_Z^{p,q} =0.
\]
\item  If Poincar\'e Duality  holds for $A^*(\mathrm{M}_Z)$,  then
\[
\text{ker} \ \Psi_Z^{p_1,q}=\text{ker} \ \Psi_Z^{p_2,q}=0 \quad \text{and} \quad  \text{im} \  \Psi_Z^{p_1,q} \cap \text{im} \  \Psi_Z^{p_2,q}=0
\]
\end{enumerate}
\end{lemma}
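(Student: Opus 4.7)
The plan is to prove both parts by dualizing against elements of the appropriate Chow ring and pushing products to the top degree, where Corollary~\ref{CorollaryTopIsomorphism} (for $\Phi_Z^r$) and Proposition~\ref{GysinTopIsomorphism} (for $\Psi_Z^{\text{rk}(Z),r}$) trivialize the question. Poincar\'e Duality in the hypothesized ring then translates vanishing at top degree back to vanishing at degree $q$ or $q-p$. The multiplication rules of Lemma~\ref{LemmaUpperTriangular} together with the dimension bound $A^k(\mathrm{M}_Z)=0$ for $k>r-\text{rk}(Z)$ are the only other ingredients.

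For part (1), the injectivity of $\Phi_Z^q$ goes as follows: if $\Phi_Z^q(\xi)=0$, pair $\xi$ with an arbitrary $\eta\in A^{r-q}(\mathrm{M},\mathscr{P}_-)$, observe that $\Phi_Z^r(\xi\eta)=\Phi_Z(\xi)\hspace{0.5mm}\Phi_Z(\eta)=0$, and invoke the isomorphism $\Phi_Z^r$ of Corollary~\ref{CorollaryTopIsomorphism} together with Poincar\'e Duality for $(\mathrm{M},\mathscr{P}_-)$ to conclude $\xi=0$. The intersection claim follows the same recipe: if $\Phi_Z(\xi)=\sum_{p=1}^{\text{rk}(Z)-1}\Psi_Z^{p,q}(y_p)$, then multiplying by $\Phi_Z(\eta)$ and applying Lemma~\ref{LemmaUpperTriangular}(1) places each summand in $\text{im}\ \Psi_Z^{p,r}$; the key observation is that for $p<\text{rk}(Z)$ the source $A^{r-p}(\mathrm{M}_Z)$ of $\Psi_Z^{p,r}$ is zero by the dimension bound, so $\Phi_Z^r(\xi\eta)=0$ and the previous argument again forces $\xi=0$.

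For part (2), the injectivity of $\Psi_Z^{p,q}$ with $p<\text{rk}(Z)$ follows by pairing $\Psi_Z^{p,q}(y)$ with $\Psi_Z^{\text{rk}(Z)-p,r-q}(w)$ for arbitrary $w\in A^{(\text{crk}(Z)-1)-(q-p)}(\mathrm{M}_Z)$: by Lemma~\ref{LemmaUpperTriangular}(2), this product equals $\Psi_Z^{\text{rk}(Z),r}(yw)$, and the top-degree isomorphism of Proposition~\ref{GysinTopIsomorphism} together with Poincar\'e Duality for $\mathrm{M}_Z$ gives $y=0$. For the intersection of two Gysin images with $p_1<p_2<\text{rk}(Z)$, pair $y_2$ against arbitrary $w\in A^{(\text{crk}(Z)-1)-(q-p_2)}(\mathrm{M}_Z)$: the identity $\Psi_Z^{\text{rk}(Z),r}(y_2 w)=\Psi_Z^{p_2,q}(y_2)\cdot\Psi_Z^{\text{rk}(Z)-p_2,r-q}(w)$, combined with the hypothesis $\Psi_Z^{p_1,q}(y_1)=\Psi_Z^{p_2,q}(y_2)$, places the right-hand side in $\text{im}\ \Psi_Z^{\text{rk}(Z)-(p_2-p_1),r}$, which vanishes because its source $A^{r-\text{rk}(Z)+(p_2-p_1)}(\mathrm{M}_Z)$ is zero by dimension. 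Hence $y_2 w=0$ for all $w$, so $y_2=0$ and then $y_1=0$ by the injectivity just proved.

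The whole argument is short once set up; the real content lies in the degree bookkeeping. The main obstacle to watch out for is making sure that in every pairing one lands in a degree of $A^*(\mathrm{M}_Z)$ that is known to vanish, which is exactly where the strict inequalities $p<\text{rk}(Z)$ and $p_2>p_1$ are used. Everything else is a formal consequence of Lemma~\ref{LemmaUpperTriangular}, the top-degree isomorphisms of Corollary~\ref{CorollaryTopIsomorphism} and Proposition~\ref{GysinTopIsomorphism}, and the hypothesized Poincar\'e Dualities.
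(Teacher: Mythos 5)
Your proof is correct and follows essentially the same route as the paper's: both arguments reduce everything to the top‑degree isomorphisms of Corollary~\ref{CorollaryTopIsomorphism} and Proposition~\ref{GysinTopIsomorphism}, invoke the hypothesized Poincar\'e dualities there, and use the vanishing consequences of Lemma~\ref{LemmaUpperTriangular} (i.e.\ that $\text{im}\,\Psi_Z^{p,r}=0$ for $p<\text{rk}(Z)$) to kill the cross terms. The only difference is cosmetic — you phrase the pairings with explicit elements and order $p_1<p_2$ where the paper works with images and $p_1>p_2$ — so there is nothing further to add.
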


\begin{proof}
Let $\xi$ be a nonzero element in the domain of $\Phi_Z^q$. 
Since $\Phi_Z$ is an isomorphism between top degrees, 
Poincar\'e Duality for $(\mathrm{M},\mathscr{P}_-)$ implies that
\[
\Phi_Z^q(\xi) \cdot  \text{im} \ \Phi^{r-q}_Z  \neq 0.
\]
This shows that $\Phi_Z^q$ is injective.
On the other hand,  Lemma~\ref{LemmaUpperTriangular} shows that
\[
\Bigg( \sum_{p=1}^{\text{rk}(Z)-1}\text{im}\ \Psi_Z^{p,q} \Bigg) \cdot \text{im} \ \Phi^{r-q}_Z=0.
\]
This shows that the image of $\Phi_Z^q$ intersects the image of $ \oplus_{p=1}^{\text{rk}(Z)-1} \Psi_Z^{p,q} $ trivially.

Let $\xi$ be a nonzero element in the domain of $\Psi_Z^{p,q}$, where $p=p_1$ or $p=p_2$.
Since $\Psi_Z$ is an isomorphism between top degrees, 
Poincar\'e Duality for $\mathrm{M}_Z$ implies that
\[
 \Psi^{p,q}_Z(\xi) \cdot \text{im}\ \Psi^{\text{rk}(Z)-p,r-q}_Z \neq 0.
\]
This shows that $\Psi_Z^{p,q}$ is injective.
For the assertion on the intersection, we assume that $p=p_1>p_2$. 
Under this assumption Lemma~\ref{LemmaUpperTriangular} shows 
\[
 \text{im} \  \Psi_Z^{p_2,q} \cdot  \text{im} \  \Psi_Z^{\text{rk}(Z)-p,r-q} = 0.
\]
This shows that the image of $\Psi_Z^{p_1,q}$ intersects the image of $\Psi_Z^{p_2,q}$ trivially.
\end{proof}

\begin{proof}[Proofs of Theorem~\ref{DecompositionTheorem} and Theorem~\ref{PoincareDuality}]
We simultaneously prove Decomposition and Poincar\'e Duality by lexicographic induction on the rank of $\mathrm{M}$ and the cardinality of  $\mathscr{P}_-$.
Note that both statements are valid when $r=1$, and Poincar\'e Duality holds when $q=0$ or $q=r$.
Assuming that Poincar\'e Duality holds for $A^*(\mathrm{M}_Z)$,
we show the implications
\begin{multline*}
\Big(\text{Poincar\'e Duality holds for $A^*(\mathrm{M},\mathscr{P}_-)$}\Big) \Longrightarrow  \\
\Big(\text{Poincar\'e Duality holds for $A^*(\mathrm{M},\mathscr{P}_-)$ and Decomposition   holds for $\mathscr{P}_- \subseteq \mathscr{P}_+$}\Big) \\ \Longrightarrow 
\Big(\text{Poincar\'e Duality holds for $A^*(\mathrm{M},\mathscr{P}_+)$}\Big).
\end{multline*}
The base case of the induction is provided by the isomorphism
\[
A^*(\mathrm{M},\varnothing) \simeq \mathbb{Z}[x]/(x^{r+1}). 
\]
The first implication follows from Proposition~\ref{PropositionSurjective} and Lemma~\ref{LemmaInduction}.

We prove the second implication.
Decomposition for $\mathscr{P}_- \subseteq \mathscr{P}_+$ shows that, for  any positive integer $q<r$, we have
\begin{align*}
A^q(\mathrm{M},\mathscr{P}_+)&= \text{im} \  \Phi_Z^{q} \oplus  \text{im} \  \Psi_Z^{1,q} \oplus  \text{im} \  \Psi_Z^{2,q} \oplus \cdots \oplus  \text{im} \  \Psi_Z^{\text{rk}(Z)-1,q}, \ \ \text{and}
\\[4pt]
A^{r-q}(\mathrm{M},\mathscr{P}_+)&= \text{im} \  \Phi_Z^{r-q}\oplus \text{im} \  \Psi_Z^{\text{rk}(Z)-1,r-q} \oplus  \text{im} \  \Psi_Z^{\text{rk}(Z)-2,r-q} \oplus \cdots  \oplus  \text{im} \  \Psi_Z^{1,r-q}.
\end{align*}
By Poincar\'e Duality for $(\mathrm{M},\mathscr{P}_-)$ and Poincar\'e Duality  for $\mathrm{M}_Z$,  all the summands  above are torsion free.
We construct bases of the sums by choosing bases of their summands.

We use Corollary~\ref{CorollaryTopIsomorphism} and Proposition~\ref{GysinTopIsomorphism} to obtain isomorphisms
\[
A^r(\mathrm{M},\mathscr{P}_-)  \simeq A^r(\mathrm{M},\mathscr{P}_+)  \simeq A^{\text{crk}(Z)-1}(\mathrm{M}_Z) \simeq \mathbb{Z}.
\]
For a positive integer $q<r$, consider the matrices of multiplications
\begin{align*}
\mathscr{M}_+&:=\Big(A^{q}(\mathrm{M},\mathscr{P}_+) \times A^{r-q}(\mathrm{M},\mathscr{P}_+) \longrightarrow \mathbb{Z}\Big),
\\[2pt]
\mathscr{M}_-&:=\Big(A^{q}(\mathrm{M},\mathscr{P}_-) \times A^{r-q}(\mathrm{M},\mathscr{P}_-) \longrightarrow \mathbb{Z}\Big),
\end{align*}
and, for positive integers $p<\text{rk}(Z)$, 
\[
\hspace{6mm} \mathscr{M}_p:=\Big(A^{q-p}(\mathrm{M}_Z) \times A^{r-q-\text{rk}(Z)+p}(\mathrm{M}_Z) \longrightarrow \mathbb{Z}\Big).
\]
By Lemma~\ref{LemmaUpperTriangular}, under the chosen bases ordered as shown above, $\mathscr{M}_+$ is a block upper triangular matrix with block diagonals 
$\mathscr{M}_-$ and $\mathscr{M}_p$, up to signs.
It follows from  Poincar\'e Duality for $(\mathrm{M},\mathscr{P}_-)$ and  Poincar\'e Duality  for $\mathrm{M}_Z$ that
\[
\text{det}\ \mathscr{M}_+=\pm \text{det}\ \mathscr{M}_- \times \prod_{p=1}^{\text{rk}(Z)-1} \text{det}\ \mathscr{M}_p=\pm 1.
\]
This proves the second implication, completing the lexicographic induction.
\end{proof}

\section{Hard Lefschetz property and Hodge-Riemann relations}\label{SectionHLHR}

\subsection{}

Let $r$ be a nonnegative integer.
We record basic algebraic facts concerning Poincar\'e duality, the hard Lefschetz property, and the Hodge-Riemann relations.

\begin{definition}
A graded Artinian ring $R^*$ satisfies  \emph{Poincar\'e duality of dimension $r$} if
\begin{enumerate}[(1)]\itemsep 5pt
\item there are isomorphisms
$R^0 \simeq \mathbb{R}$ and $R^r \simeq \mathbb{R}$, 
\item for every integer $q>r$, we have $R^q \simeq 0$, and,
\item for every integer $q \le r$,
the multiplication defines an isomorphism
\[
R^{r-q} \longrightarrow \text{Hom}_\mathbb{R}(R^{q},R^r).
\]
\end{enumerate}
In this case, we say that $R^*$ is a \emph{Poincar\'e duality algebra of dimension $r$}.
\end{definition}

In the remainder of this subsection, we suppose that $R^*$ is a Poincar\'e duality algebra of dimension $r$.
We fix an isomorphism, called the \emph{degree map} for $R^*$, 
\[
\text{deg}: R^r \longrightarrow \mathbb{R}.
\]

\begin{proposition}\label{PropositionLocalDuality}
For any nonzero element $x$ in $R^d$,  the quotient ring
\[
R^*/\text{ann}(x), \ \ \text{where}\ \ \text{ann}(x):=\{a \in R^* \mid x \cdot a =0\},
\]
is a  Poincar\'e duality algebra of dimension $r-d$.
\end{proposition}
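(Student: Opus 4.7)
The plan is to analyze the graded quotient $Q^* := R^*/\operatorname{ann}(x)$ directly, using that $\operatorname{ann}(x)$ is a graded ideal so $Q^k = R^k/\operatorname{ann}(x)^k$ with $\operatorname{ann}(x)^k = \ker\bigl(\mu_x : R^k \to R^{k+d}\bigr)$, and that the induced map $\mu_x : Q^k \xrightarrow{\sim} xR^k \subseteq R^{k+d}$ is an isomorphism. The degree map for $Q^*$ will be defined by $\deg_Q([a]) := \deg_R(xa)$, which is visibly well-defined on the quotient.

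First I would handle the boundary and vanishing parts of the axioms. The identity $Q^0 = \mathbb{R}$ is immediate from $x \neq 0$ and $R^0 \simeq \mathbb{R}$. For $k > r-d$ we have $R^{k+d}=0$, hence $xR^k = 0$, hence $Q^k = 0$. For $k = r-d$, nonzeroness of $x \in R^d$ together with Poincar\'e duality on $R^*$ produces some $y \in R^{r-d}$ with $xy \neq 0$, so $xR^{r-d}$ is a nonzero subspace of the one-dimensional $R^r$, hence equals $R^r$; thus $Q^{r-d} \simeq R^r \simeq \mathbb{R}$ and $\deg_Q$ is an isomorphism.

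The central step is to show that for $0 \le q \le r-d$ the pairing $Q^q \times Q^{r-d-q} \to Q^{r-d}$ is perfect. I would first establish left nondegeneracy: if $[a] \in Q^q$ satisfies $\deg_R(xab) = 0$ for every $b \in R^{r-d-q}$, then the element $xa \in R^{q+d}$ pairs trivially with all of $R^{r-(q+d)}$; Poincar\'e duality for $R^*$ then forces $xa = 0$, i.e. $[a] = 0$ in $Q^q$. By commutativity the same argument gives right nondegeneracy. Therefore the pairing induces injections
\[
Q^q \hookrightarrow \operatorname{Hom}_\mathbb{R}(Q^{r-d-q}, Q^{r-d}), \qquad Q^{r-d-q} \hookrightarrow \operatorname{Hom}_\mathbb{R}(Q^q, Q^{r-d}),
\]
and a dimension comparison on these two injections (each group is finite-dimensional since $R^*$ is Artinian) forces $\dim Q^q = \dim Q^{r-d-q}$, so both injections are isomorphisms. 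This is precisely the Poincar\'e duality isomorphism required.

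There is no real obstacle; the proof is formal manipulation once one makes the correct identification $Q^q \cong xR^q$ via $\mu_x$. If anything requires care it is bookkeeping the identification of the degree map on the quotient with $\deg_R \circ \mu_x$, and keeping track that the relevant adjointness $\deg_R(x \cdot a \cdot c) = \deg_R(a \cdot xc)$ used in the nondegeneracy step is exactly the symmetry of the Poincar\'e pairing on $R^*$.
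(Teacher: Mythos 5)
Your argument is correct and complete; the paper itself gives no proof of this proposition, deferring to Meyer--Smith \cite[Corollary I.2.3]{Meyer-Smith}, and your verification (identifying $R^*/\text{ann}(x)$ with $xR^*$ shifted by $d$, checking the top and bottom degrees, and deducing perfectness of the induced pairing from nondegeneracy on both sides plus the dimension comparison) is exactly the standard argument that reference supplies. Nothing further is needed.
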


By definition, the degree map for $R^*/\text{ann}(x)$ \emph{induced by $x$} is the homomorphism
\[
\text{deg}(x \cdot -): R^{r-d}/\text{ann}(x) \longrightarrow \mathbb{R}, \qquad a+\text{ann}(x) \longmapsto \text{deg}(x \cdot a).
\]
The Poincar\'e duality for $R^*$ shows that the degree map  for $R^*/\text{ann}(x)$  is an isomorphism.

\begin{proof}
This is straightforward to check, see for example \cite[Corollary I.2.3]{Meyer-Smith}.
\end{proof}

\begin{definition}
Let $\ell$ be an element of $R^1$, and let $q$ be a nonnegative integer $\le \frac{r}{2}$.
\begin{enumerate}[(1)]\itemsep 5pt
\item The \emph{Lefschetz operator} on $R^q$ associated to $\ell$ is the linear map
\[
L^q_{\ell}: R^q \longrightarrow R^{r-q}, \qquad a \longmapsto \ell^{r-2q} \  a.
\]
\item The \emph{Hodge-Riemann form} on $R^q$ associated to $\ell$ is  the symmetric bilinear form
\[
Q^{q}_{\ell}: R^q \times R^q \longrightarrow \mathbb{R}, \qquad (a_1,a_2) \longmapsto (-1)^q\ \text{deg}\ ( a_1 \cdot L_{\ell}^q (a_2) ).
\]
\item The \emph{primitive subspace} of $R^q$ associated to $\ell$ is the subspace
\[
P^q_{\ell}:=\{a \in R^q \mid \ell \cdot L_\ell^q(a)=0\} \subseteq R^q.
\]
\end{enumerate}
\end{definition}

\begin{definition}[Hard Lefschetz property and Hodge-Riemann relations]
We say that
\begin{enumerate}[(1)]\itemsep 5pt
\item  $R^*$  satisfies $\text{HL}(\ell)$ if 
 the Lefschetz operator $L_{\ell}^q$ is an isomorphism on $R^q$ for all $q \le \frac{r}{2}$, and
\item  $R^*$  satisfies $\text{HR}(\ell)$ if 
the Hodge-Riemann form $Q_{\ell}^q$ is positive definite on $P_\ell^q$ for all $q \le \frac{r}{2}$.
\end{enumerate}
\end{definition}

If the Lefschetz operator $L^{q}_\ell$ is an isomorphism, then there is a decomposition
\[
R^{q+1}=P_\ell^{q+1} \oplus \ell \hspace{0.5mm} R^{q}.
\]
Consequently, when $R^*$ satisfies $\text{HL}(\ell)$, we have the \emph{Lefschetz decomposition} of $R^q$ for $q \le \frac{r}{2}$:
\[
R^q=P^q_\ell \oplus \ell\hspace{0.5mm} P^{q-1}_\ell \oplus \cdots \oplus \ell^q \hspace{0.5mm}P^0_\ell.
\]
An important basic fact is that the Lefschetz decomposition of $R^q$ is orthogonal with respect to the Hodge-Riemann form $Q^q_\ell$: For nonnegative integers $q_1<q_2 \le q$, we have
\[
Q_\ell^q\Big(\ell^{q_1} a_1,\ell^{q_2} a_2\Big)=(-1)^q \text{deg}\ \Big(\ell^{q_2-q_1} \big(\ell^{r-2(q-q_1)} a_1\big) a_2\Big)=0, \quad a_1 \in P_\ell^{q-q_1}, \ \ a_2 \in P_\ell^{q-q_2}.
\]

\begin{proposition}\label{HLCharacterization}
The following conditions are equivalent for $\ell \in R^1$:
\begin{enumerate}[(1)]\itemsep 5pt
\item $R^*$ satisfies $\text{HL}(\ell)$.
\item The Hodge-Riemann form $Q_\ell^q$ on $R^q$  is nondegenerate for all $q \le \frac{r}{2}$.
\end{enumerate}
\end{proposition}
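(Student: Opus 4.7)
The plan is to reduce the equivalence to straightforward linear algebra, exploiting the fact that the Hodge--Riemann form on $R^q$ is, up to a sign, nothing more than the Poincar\'e duality pairing $R^q \times R^{r-q} \to \mathbb{R}$ precomposed in the second slot with the Lefschetz operator $L^q_\ell$. Concretely, with the degree map fixed, the formula $Q^q_\ell(a_1,a_2) = (-1)^q \deg\!\bigl(a_1 \cdot L^q_\ell(a_2)\bigr)$ identifies $Q^q_\ell$ with the pullback of the perfect pairing $R^q \times R^{r-q} \to \mathbb{R}$ along $\mathrm{id} \times L^q_\ell$.

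For the implication (1) $\Rightarrow$ (2), fix $q \le r/2$ and a nonzero $a \in R^q$. By the Poincar\'e duality of $R^*$, there is some $c \in R^{r-q}$ with $\deg(a \cdot c) \ne 0$. Since $L^q_\ell$ is surjective by hypothesis, I can write $c = \ell^{r-2q} \cdot b$ for some $b \in R^q$, and then $Q^q_\ell(a,b) = (-1)^q \deg(a \cdot c) \ne 0$. Hence the form $Q^q_\ell$ is nondegenerate for every $q \le r/2$.

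For the converse (2) $\Rightarrow$ (1), I would prove injectivity of $L^q_\ell$ and then invoke Poincar\'e duality to upgrade to bijectivity. Suppose $L^q_\ell(a) = \ell^{r-2q} \cdot a = 0$. Then for every $b \in R^q$,
\[
Q^q_\ell(b,a) = (-1)^q \deg\!\bigl(b \cdot \ell^{r-2q} \cdot a\bigr) = 0,
\]
so the assumed nondegeneracy of $Q^q_\ell$ forces $a = 0$. Hence $L^q_\ell$ is injective. But Poincar\'e duality gives an isomorphism $R^{r-q} \simeq \mathrm{Hom}_\mathbb{R}(R^q, R^r) \simeq (R^q)^\vee$, so $\dim_\mathbb{R} R^q = \dim_\mathbb{R} R^{r-q}$, and an injective linear map between finite-dimensional vector spaces of equal dimension is an isomorphism.

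There is essentially no obstacle here: once one observes that $Q^q_\ell$ is $\deg(\cdot, \cdot)$ composed with $L^q_\ell$ in one variable, both directions follow from the fact that a bilinear pairing obtained by pulling back a perfect pairing through a linear map is nondegenerate if and only if that linear map is an isomorphism. The only mild point worth stating explicitly is the equality $\dim R^q = \dim R^{r-q}$, which comes directly from the Poincar\'e duality assumption on $R^*$.
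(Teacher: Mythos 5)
Your proof is correct and is essentially the paper's argument: the paper also observes that $Q^q_\ell$ is the Poincar\'e duality pairing precomposed with $L^q_\ell$, so that nondegeneracy of $Q^q_\ell$ is equivalent to the composition $R^q \to R^{r-q} \to \mathrm{Hom}_{\mathbb{R}}(R^q,R^r)$ being an isomorphism, which by Poincar\'e duality happens exactly when $L^q_\ell$ is an isomorphism. You have merely unpacked the two directions of that equivalence explicitly; no further comment is needed.
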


\begin{proof}
The Hodge-Riemann form $Q_\ell^q$ on $R^q$ is nondegenerate if and only if the composition
\[
\xymatrixcolsep{2.5pc}
\xymatrix{
R^q \ar[r]^{L^q_\ell\ } & R^{r-q} \ar[r] & \text{Hom}_\mathbb{R}(R^q,R^r)
}
\]
is an isomorphism, where the second map is given by the multiplication in $R^*$.
Since $R^*$ satisfies Poincar\'e duality, the composition is an isomorphism if and only if $L^q_\ell$ is an isomorphism.
\end{proof}

If $L_\ell^q(a)=0$, then $Q_\ell^q(a,a)=0$ and $a \in P_\ell^q$.
Thus the property $\text{HR}(\ell)$ implies the property $\text{HL}(\ell)$.

\begin{proposition}\label{HRCharacterization}
The following conditions are equivalent for $\ell \in R^1$:
\begin{enumerate}[(1)]\itemsep 5pt
\item $R^*$ satisfies $\text{HR}(\ell)$.
\item The Hodge-Riemann form $Q_\ell^q$ on $R^q$  is nondegenerate and has signature
\[
\sum_{p=0}^q (-1)^{q-p} \Big(\text{dim}_\mathbb{R}R^p -\text{dim}_\mathbb{R} R^{p-1}\Big) \ \ \text{for all $q \le \frac{r}{2}$.}
\]
\end{enumerate}
\end{proposition}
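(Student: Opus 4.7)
The plan is to extract both implications from the Lefschetz decomposition of $R^q$, which is available as soon as HL$(\ell)$ holds, together with the orthogonality of this decomposition with respect to $Q_\ell^q$ already recorded in the excerpt. The central computation I would carry out first is that for $a_1,a_2\in P_\ell^p$ with $p\le q\le r/2$,
\[
Q_\ell^q(\ell^{q-p}a_1,\ell^{q-p}a_2)=(-1)^q\,\text{deg}(\ell^{r-2p}a_1a_2)=(-1)^{q-p}Q_\ell^p(a_1,a_2),
\]
so $Q_\ell^q$ restricted to the summand $\ell^{q-p}P_\ell^p$ is identified, via the isomorphism $\ell^{q-p}\colon P_\ell^p\to \ell^{q-p}P_\ell^p$, with $(-1)^{q-p}Q_\ell^p|_{P_\ell^p}$. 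I would also record the auxiliary identity $\dim P_\ell^p=\dim R^p-\dim R^{p-1}$ for $p\le r/2$: the map $\ell^{r-2p+1}\colon R^p\to R^{r-p+1}$ is surjective with kernel $P_\ell^p$, and multiplication by $\ell\colon R^{p-1}\to R^p$ is injective by HL (since it is a factor of the isomorphism $L_\ell^{p-1}$), so $\dim P_\ell^p=\dim R^p-\dim R^{r-p+1}=\dim R^p-\dim R^{p-1}$.

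For the forward direction, assume HR$(\ell)$. Since HR implies HL (as noted just before Proposition~\ref{HLCharacterization}), Proposition~\ref{HLCharacterization} gives nondegeneracy of $Q_\ell^q$ for each $q\le r/2$. Positive definiteness of $Q_\ell^p|_{P_\ell^p}$ together with the sign computation above yields
\[
\text{sig}(Q_\ell^q)=\sum_{p=0}^{q}(-1)^{q-p}\dim P_\ell^p,
\]
and substituting the dimension identity gives the claimed expression.

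Conversely, suppose $Q_\ell^q$ is nondegenerate with the stated signature for every $q\le r/2$. Proposition~\ref{HLCharacterization} immediately delivers HL$(\ell)$, so both the Lefschetz decomposition and the dimension identity are in force. Setting $s_p:=\text{sig}(Q_\ell^p|_{P_\ell^p})$, orthogonality gives $\text{sig}(Q_\ell^q)=\sum_{p=0}^q(-1)^{q-p}s_p$, which by hypothesis equals $\sum_{p=0}^q(-1)^{q-p}\dim P_\ell^p$. Forming the telescoping sum $\text{sig}(Q_\ell^q)+\text{sig}(Q_\ell^{q-1})$ collapses all interior contributions, since $(-1)^{q-p}+(-1)^{q-1-p}=0$ for $p<q$, and leaves $s_q=\dim P_\ell^q$ for every $q\le r/2$. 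A symmetric bilinear form whose signature equals the dimension of its domain is positive definite, so HR$(\ell)$ follows.

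The main point requiring care, though hardly a serious obstacle, is the $(-1)^{q-p}$ sign appearing in the restriction of $Q_\ell^q$ to each Lefschetz summand; it is precisely this sign that converts the alternating-sum formula for $\text{sig}(Q_\ell^q)$ into a telescope in the converse direction and lets one read off the positive-definiteness of each $Q_\ell^p|_{P_\ell^p}$ one summand at a time.
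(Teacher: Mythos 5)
Your proposal is correct and follows essentially the same route as the paper: the Lefschetz decomposition, its $Q_\ell^q$-orthogonality, the $(-1)^{q-p}$ isometry between $(P_\ell^p,Q_\ell^p)$ and $(\ell^{q-p}P_\ell^p,Q_\ell^q)$, and the identity $\dim P_\ell^p=\dim R^p-\dim R^{p-1}$, with the converse recovered by the same telescoping that the paper writes as $\mathrm{sig}(Q_\ell^q|_{P_\ell^q})=\mathrm{sig}(Q_\ell^q|_{R^q})-\mathrm{sig}(Q_\ell^{q-1}|_{R^{q-1}})$.
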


Here, the signature of a symmetric bilinear form is $n_+-n_-$, where $n_+$ and  $n_-$ are the number of positive and negative eigenvalues of any matrix representation the bilinear form \cite[Section 6.3]{Jacobson}.

\begin{proof}
If $R^*$ satisfies $\text{HR}(\ell)$,
then $R^*$ satisfies $\text{HL}(\ell)$,
and therefore we have the Lefschetz decomposition
\[
R^q=P^q_\ell \oplus \ell\hspace{0.5mm} P^{q-1}_\ell \oplus \cdots \oplus \ell^q \hspace{0.5mm}P^0_\ell.
\]
Recall that the Lefschetz decomposition of $R^q$ is orthogonal with respect to  $Q^q_\ell$, and note that there is an isometry
\[
\big(P_\ell^p,Q^p_\ell\big) \simeq \big(\ell^{q-p} \hspace{0.5mm}P_\ell^p,(-1)^{q-p}Q_\ell^q\big) \ \ \text{for every nonnegative integer $p \le q$.}
\]
Therefore, the condition $\text{HR}(\ell)$ implies that
\begin{align*}
\Big(\text{signature of $Q^q_\ell$ on $R^q$}\Big)&=\sum_{p=0}^q (-1)^{q-p}\Big(\text{signature of $Q^{p}_\ell$ on $P_\ell^{p}$}\Big)\\
&=\sum_{p=0}^q (-1)^{q-p} \Big(\text{dim}_\mathbb{R}R^{p} -\text{dim}_\mathbb{R} R^{p-1}\Big). 
\end{align*}

Conversely, suppose that the Hodge-Riemann forms $Q^q_\ell$ are nondegenerate and their signatures are given by the stated formula. 
Proposition~\ref{HLCharacterization} shows that $R^*$ satisfies $\text{HL}(\ell)$, and hence  
\[
R^q=P^q_\ell \oplus \ell\hspace{0.5mm} P^{q-1}_\ell \oplus \cdots \oplus \ell^q \hspace{0.5mm}P^0_\ell.
\]
The Lefschetz decomposition of $R^q$ is orthogonal with respect to  $Q^q_\ell$, and therefore
\begin{align*}
\Big(\text{signature of $Q^q_\ell$ on $P^q_\ell$}\Big)&=
\Big(\text{signature of $Q^q_\ell$ on $R^q$}\Big)
-\Big(\text{signature of $Q^{q-1}_\ell$ on $R^{q-1}$}\Big).
\end{align*}
The assumptions on  the signatures of $Q^q_\ell$ and $Q^{q-1}_\ell$ show that the right-hand side is 
\[
\text{dim}_\mathbb{R}R^{q} -\text{dim}_\mathbb{R} R^{q-1}= \text{dim}_\mathbb{R} P^{q}_\ell.
\]
Since $Q^q_\ell$ is nondegenerate on $P^q_\ell$, this means that $Q^q_\ell$ is positive definite on $P^q_\ell$.
\end{proof}

\subsection{}

In this subsection, we show that the properties $\text{HL}$ and $\text{HR}$ are preserved under the tensor product of Poincar\'e duality algebras.

Let  $R_1^*$ and $R_2^*$ be  Poincar\'e duality algebras  of dimensions $r_1$ and $r_2$ respectively. We choose degree maps for $R_1^*$ and for $R_2^*$, denoted
\[
\text{deg}_1:R_1^{r_1} \longrightarrow \mathbb{R}, \qquad
 \text{deg}_2:R^{r_2}_2 \longrightarrow \mathbb{R}.
\]
We note that $R_1 \otimes_\mathbb{R} R_2$ is a Poincar\'e duality algebra of dimension $r_1+r_2$:
For any two graded components of the tensor product with complementary degrees
\begin{align*}
\Big( R_1^p \otimes_\mathbb{R} R_2^0\Big) \oplus \Big( R_1^{p-1} \otimes_\mathbb{R} R_2^1\Big) \oplus \cdots \oplus \Big( R_1^0 \otimes_\mathbb{R} R_2^p\Big), \\
\Big( R_1^q \otimes_\mathbb{R} R_2^0\Big) \oplus \Big( R_1^{q-1} \otimes_\mathbb{R} R_2^1\Big) \oplus \cdots \oplus \Big( R_1^0 \otimes_\mathbb{R} R_2^q\Big),
\end{align*}
the multiplication of the two can be represented by a block diagonal matrix with  diagonals
\[
\Big( R^{p-k}_1 \otimes_\mathbb{R} R_2^{k}\Big) \times \Big(R^{q-r_2+k}_1 \otimes_\mathbb{R} R_2^{r_2-k} \Big)\longrightarrow R^{r_1}_1 \otimes_\mathbb{R} R_2^{r_2}.
\]
By definition, the \emph{induced degree map} for the tensor product is the isomorphism
\[
\text{deg}_1 \otimes_\mathbb{R} \text{deg}_2:R_1^{r_1} \otimes_\mathbb{R} R_2^{r_2} \longrightarrow \mathbb{R}.
\]
We use the induced degree map whenever we discuss the property $\text{HR}$ for  tensor products.

\begin{proposition}\label{TensorHR}
Let $\ell_1$ be an element of $R^1_1$, and let $\ell_2$ be an element of $R_2^1$.
\begin{enumerate}[(1)]\itemsep 5pt
\item If $R_1^*$ satisfies $\text{HL}(\ell_1)$ and $R_2^*$ satisfies $\text{HL}(\ell_2)$, then $R_1^* \otimes_\mathbb{R} R_2^*$ satisfies $\text{HL}(\ell_1\otimes 1 +1 \otimes \ell_2)$.
\item If $R_1^*$ satisfies $\text{HR}(\ell_1)$ and $R_2^*$ satisfies $\text{HR}(\ell_2)$, then $R_1^* \otimes_\mathbb{R} R_2^*$ satisfies $\text{HR}(\ell_1\otimes 1 +1 \otimes \ell_2)$.
\end{enumerate}
\end{proposition}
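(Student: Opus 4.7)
The plan is to exploit the $\mathfrak{sl}_2$-representation theory underlying the hard Lefschetz property. For each $i$, $\text{HL}(\ell_i)$ is equivalent to the existence of a dual Lefschetz operator $\Lambda_i \colon R_i^{q} \to R_i^{q-1}$ such that $(L_{\ell_i}, \Lambda_i, H_i)$ is an $\mathfrak{sl}_2$-triple on $R_i^*$, where $H_i$ acts on $R_i^q$ as multiplication by $2q - r_i$. Such a $\Lambda_i$ exists uniquely and can be defined explicitly along the Lefschetz decomposition by standard $\mathfrak{sl}_2$-formulas on each summand $\ell_i^k P_{\ell_i}^{q-k}$.

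For part (1), I would set $L = L_{\ell_1} \otimes 1 + 1 \otimes L_{\ell_2}$, which is precisely multiplication by $\ell_1 \otimes 1 + 1 \otimes \ell_2$, together with $\Lambda = \Lambda_1 \otimes 1 + 1 \otimes \Lambda_2$ and the grading operator $H$ acting as $2q - r$ on $(R_1 \otimes R_2)^q$. Since the cross-commutators such as $[L_{\ell_1} \otimes 1, 1 \otimes \Lambda_2]$ vanish, a direct check shows $(L, \Lambda, H)$ is again an $\mathfrak{sl}_2$-triple. Then $\text{HL}(\ell_1 \otimes 1 + 1 \otimes \ell_2)$ follows from the standard representation-theoretic fact that in any finite-dimensional $\mathfrak{sl}_2$-module, $E^{r-2q}$ maps the weight-$(2q-r)$ space isomorphically to the weight-$(r-2q)$ space when $2q \le r$.

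For part (2), I would decompose each $R_i^*$ into a direct sum of irreducible $\mathfrak{sl}_2$-subrepresentations. The pairing $S_i(a, b) := \text{deg}_i(a \cdot b)$ is $\mathfrak{sl}_2$-invariant, and $\text{HR}(\ell_i)$ translates into the statement that each irreducible summand carries a positive-definite polarization relative to $S_i$. Tensoring, $R_1 \otimes R_2$ decomposes further via Clebsch--Gordan into irreducible $\mathfrak{sl}_2$-subrepresentations, and the induced pairing $S_1 \otimes S_2$ is again $\mathfrak{sl}_2$-invariant, so distinct irreducible subrepresentations are orthogonal by Schur's lemma. The primitive space $P_\ell^q$ of the tensor product then splits into the lowest-weight lines of the various Clebsch--Gordan summands, and on each such line the Hodge--Riemann form is a positive multiple of a tensor product of Hodge--Riemann forms on the factors, yielding positive definiteness of $Q_\ell^q$ on $P_\ell^q$.

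The main obstacle is this last positivity verification. Concretely, one must show that the form restricted to the lowest-weight vector of each Clebsch--Gordan summand $V(n_1 + n_2 - 2j) \subset V(n_1) \otimes V(n_2)$ is positive, which requires explicit Clebsch--Gordan coefficients, the binomial expansion $\ell^{r-2q} = \sum_k \binom{r-2q}{k} \ell_1^k \otimes \ell_2^{r-2q-k}$, and careful tracking of the sign $(-1)^{q}$ in $Q_\ell^q$ against the signs $(-1)^{q_i}$ on the factors (the identity $(-1)^{q} = (-1)^{q_1}(-1)^{q_2}$ for $q = q_1 + q_2$ is what makes things align). A more streamlined alternative avoids the explicit Clebsch--Gordan calculation: once $\text{HL}(\ell)$ is established in part (1), Proposition~\ref{HRCharacterization} reduces $\text{HR}(\ell)$ to a signature equality, and the signature of $Q_\ell^q$ on $(R_1 \otimes R_2)^q$ can be computed from the signatures of the $Q_{\ell_i}^{q_i}$ using the block decomposition induced by the Lefschetz splittings of $R_1^*$ and $R_2^*$.
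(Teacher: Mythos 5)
Your part (1) is correct and is a genuinely cleaner route than the paper's: the paper deduces $\text{HL}$ for the tensor product from the same orthogonal decomposition it uses for $\text{HR}$ (and hence still leans on the model computation), whereas your $\mathfrak{sl}_2$-triple argument --- $\Lambda=\Lambda_1\otimes 1+1\otimes\Lambda_2$, vanishing cross-commutators, and hard Lefschetz for finite-dimensional $\mathfrak{sl}_2$-modules --- needs no determinant or signature input at all. Your overall strategy for part (2) is also structurally the same as the paper's: your decomposition of $R_1^*\otimes R_2^*$ into tensor products of irreducible strings is exactly the paper's decomposition into the subspaces $B^*(v_i^p,w_j^q)$, your Schur-type orthogonality is their Lemma~\ref{LemmaOrthogonality}, and the reduction lands on the same basic case, namely $\mathbb{R}[x_1]/(x_1^{r_1+1})\otimes\mathbb{R}[x_2]/(x_2^{r_2+1})$ with $\ell=x_1+x_2$.

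The gap is that this basic case is the entire analytic content of the proposition, and you do not prove it. You correctly flag the positivity on the lowest-weight lines of the Clebsch--Gordan summands as ``the main obstacle,'' but neither of your two proposed resolutions closes it. The explicit Clebsch--Gordan route would work in principle but is precisely the computation you have not done; the paper instead evaluates the relevant Gram matrices of binomial coefficients $\bigl[\binom{r_1+r_2-2q}{r_1-i-j}\bigr]$ and determines the sign of their determinants via the Lindstr\"om--Gessel--Viennot lemma (Lemma~\ref{LemmaPP}), from which the signatures follow by induction on $q$. Your ``streamlined alternative'' is circular: after $\text{HL}(\ell)$ is known, Proposition~\ref{HRCharacterization} does reduce $\text{HR}(\ell)$ to a signature count, but the only decomposition of $(R_1\otimes R_2)^q$ that is orthogonal for $Q^q_\ell$ is the one into tensor products of Lefschetz strings (the naive bigraded splitting $\bigoplus_{q_1+q_2=q}R_1^{q_1}\otimes R_2^{q_2}$ is not $Q^q_\ell$-orthogonal, since $\ell^{r-2q}$ mixes the pieces), and the signature of $Q^q_\ell$ restricted to a single such block is exactly the rank-one model computation again. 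There is also no reference point in the abstract setting from which to run a deformation/continuity argument for the signature. So as written the proposal establishes part (1) and the reduction framework for part (2), but the decisive positivity statement --- the analogue of Lemma~\ref{LemmaPP} --- remains unproved.
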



We begin the proof with the following special case.

\begin{lemma} \label{LemmaPP}
Let $r_1\leq r_2$ be nonnegative integers, and consider the Poincar\'e duality algebras
\[
R^*_1= \mathbb{R}[x_1]/(x_1^{r_1+1}) \ \ \text{and} \ \
R^*_2= \mathbb{R}[x_2]/(x_2^{r_2+1})
\]
equipped with the degree maps
\begin{align*}
\text{deg}_1:R^{r_1}_1 \longrightarrow \mathbb{R}, \qquad x_1^{r_1} \longmapsto 1,\\
\text{deg}_2:R^{r_2}_2 \longrightarrow \mathbb{R}, \qquad x_2^{r_2} \longmapsto 1.
\end{align*}
Then $R_1^*$ satisfies $\text{HR}(x_1)$, $R_2$ satisfies $\text{HR}(x_2)$, and $R^*_1 \otimes_\mathbb{R} R^*_2$ satisfies $\text{HR}(x_1 \otimes 1+1 \otimes x_2)$.
\end{lemma}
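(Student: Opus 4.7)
The plan is to handle each $R_i^*$ directly and then lift HR to the tensor product through the $\mathfrak{sl}_2$-module structure on $R:=R_1^*\otimes_{\mathbb{R}} R_2^*$.

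For $R_i^*$, each graded piece is one-dimensional, so the Lefschetz operator $L^q_{x_i}\colon R_i^q\to R_i^{r_i-q}$ sending $x_i^q\mapsto x_i^{r_i-q}$ is manifestly an isomorphism, and the primitive subspace $P^q_{x_i}=\ker(x_i^{r_i-q+1}\cdot\,)$ vanishes for $q\geq 1$. On $P^0_{x_i}=\mathbb{R}$ we have $Q^0_{x_i}(1,1)=\deg_i(x_i^{r_i})=1>0$, yielding HR$(x_i)$.

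For the tensor product I would equip $R$ with three operators: the multiplication $L:=\ell\cdot\,$, the grading operator $H|_{R^q}=(2q-r)\cdot\,$ where $r=r_1+r_2$, and the lowering operator
\[
\Lambda(x_1^a\otimes x_2^b):=a(r_1-a+1)\,x_1^{a-1}\otimes x_2^b+b(r_2-b+1)\,x_1^a\otimes x_2^{b-1}.
\]
A direct computation on each factor, extended by the Leibniz rule, yields $[L,\Lambda]=H$, $[H,L]=2L$, $[H,\Lambda]=-2\Lambda$. Each $R_i^*$ is thus the irreducible $\mathfrak{sl}_2$-module $V_{r_i}$ of dimension $r_i+1$, and Clebsch-Gordan gives $R\cong\bigoplus_{k=0}^{\min(r_1,r_2)} V_{r-2k}$ with one-dimensional weight spaces. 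In each summand $V_{r-2k}$ the operator $L^{r-2q}$ restricts to an isomorphism between the (at most one-dimensional) degree-$q$ and degree-$(r-q)$ weight spaces whenever both are nonzero; summing over $k$ gives HL$(\ell)$, while $P^q_\ell$ is exactly the lowest-weight line of the summand $V_{r-2q}$ and is therefore one-dimensional for $0\leq q\leq\min(r_1,r_2)$ and zero otherwise.

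It remains to verify that $Q^q_\ell$ is positive on each line $P^q_\ell=\mathbb{R} v_q$. Writing $v_q=\sum_j c_j\,x_1^j\otimes x_2^{q-j}$, the equation $\Lambda(v_q)=0$ gives the explicit recursion $(j+1)(r_1-j)c_{j+1}=-(q-j)(r_2-q+j+1)c_j$, and substituting into $(-1)^q\deg(v_q\cdot\ell^{r-2q}\cdot v_q)$ followed by the binomial expansion of $\ell^{r-2q}$ collapses the expression to a single-signed alternating sum of products of binomial coefficients. The main obstacle is to show that this sum is strictly positive; this is a standard Jacobi-polynomial type identity, equivalent to the Hodge-Riemann positivity of the Kähler class $x_1+x_2$ on the cohomology of $\mathbb{CP}^{r_1}\times\mathbb{CP}^{r_2}$, and can be proved either by invoking the classical Hodge-Riemann relations on the smooth projective variety $\mathbb{CP}^{r_1}\times\mathbb{CP}^{r_2}$ or by an elementary induction on $\min(r_1,r_2)$ reducing to the already-established HR$(x_i)$ on each factor.
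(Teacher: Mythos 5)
Your treatment of the one-variable algebras is fine and matches what the paper leaves as ``easy to check.'' For the tensor product you take a genuinely different route: you build the $\mathfrak{sl}_2$-triple $(L,H,\Lambda)$ on $\mathbb{R}[x_1,x_2]/(x_1^{r_1+1},x_2^{r_2+1})$ and use Clebsch--Gordan to obtain $\text{HL}(\ell)$ together with the one-dimensionality of the primitive subspaces. The commutation relations you state do check out, and $P^q_\ell$ is indeed the lowest-weight line of the summand $V_{r-2q}$, nonzero exactly for $q\le r_1$. The paper deduces these same two facts differently, from a single determinant computation: in the monomial basis the matrix of $(-1)^q Q^q_\ell$ has entries $\binom{r_1+r_2-2q}{r_1-i-j}$, and the claim $(-1)^{q(q+1)/2}\det Q^q_\ell>0$, combined with the Hilbert function of the ring, forces both the hard Lefschetz property and the correct signature. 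Your route buys conceptual clarity about where the primitive vectors live; the paper's buys the positivity for free from the same determinant.

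The gap is precisely in your last step, which is the actual content of the lemma. Having reduced everything to the strict positivity of $(-1)^q\deg\bigl(v_q\cdot\ell^{r-2q}\cdot v_q\bigr)$ on the primitive line, you offer two justifications. The first --- the classical Hodge--Riemann relations for the K\"ahler class $x_1+x_2$ on $\mathbb{C}\mathbb{P}^{r_1}\times\mathbb{C}\mathbb{P}^{r_2}$ --- is a complete proof, and the paper explicitly acknowledges it as such; but the paper deliberately replaces it with a self-contained combinatorial argument (the sign of the determinant above is computed via the Lindstr\"om--Gessel--Viennot lemma applied to non-intersecting lattice paths), since the surrounding program is to establish Hodge theory without importing K\"ahler geometry. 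The second justification, ``an elementary induction on $\min(r_1,r_2)$ reducing to the already-established $\text{HR}(x_i)$ on each factor,'' is not carried out and, as stated, is dangerously close to circular: the general statement that $\text{HR}$ passes to tensor products is Proposition~\ref{TensorHR}, whose proof rests on this very lemma. So your argument is correct if you are willing to cite classical Hodge theory as a black box; otherwise the positivity of your ``Jacobi-polynomial type'' sum is exactly what remains to be proved, and you should either carry out that binomial identity explicitly or adopt something like the paper's lattice-path determinant argument.
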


The first two assertions are easy to check, and the third assertion follows from the Hodge-Riemann relations for the cohomology of the compact K\"ahler manifold $\mathbb{C}\mathbb{P}^{r_1}\times \mathbb{C}\mathbb{P}^{r_2}$.   Below we sketch a combinatorial proof using the Lindstr\"{o}m-Gessel-Viennot lemma (cf. \cite[Proof of Lemma 2.2]{McDaniel}).

\begin{proof}
For the third assertion, we identify the tensor product with
\[
R^*:=\mathbb{R}[x_1,x_2]/(x_1^{r_1+1},x_2^{r_2+1}), \ \ \text{and set} \ \ \ell:= x_1 + x_2.
\]
The induced degree map for the tensor product will be written
\[
\text{deg}: R^{r_1+r_2} \longrightarrow \mathbb{R}, \qquad x_1^{r_1}x_2^{r_2} \longmapsto 1.
\]
\begin{claim}
For some (equivalently any) choice of basis of $R^q$, we have
\[
(-1)^{\frac{q(q+1)}{2}}\text{det}\ \big(Q_\ell^q\big)>0 \ \ \text{for all nonnegative integers} \ \ q \le r_1.
\]
\end{claim}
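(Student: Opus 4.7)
The plan is to compute the Gram matrix of the Hodge--Riemann form $Q_\ell^q$ in the monomial basis $\{x_1^i x_2^{q-i} : 0 \le i \le q\}$ of $R^q$, identify its entries as counts of monotone lattice paths, and evaluate its determinant by the Lindstr\"om--Gessel--Viennot (LGV) lemma, as the statement hints.

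First I would expand $\ell^{r_1+r_2-2q} = (x_1+x_2)^{r_1+r_2-2q}$ via the binomial theorem and use $\deg(x_1^{r_1}x_2^{r_2})=1$ to verify the formula
\[
Q_\ell^q\bigl(x_1^i x_2^{q-i},\, x_1^j x_2^{q-j}\bigr) \;=\; (-1)^q \binom{r_1+r_2-2q}{r_1-i-j},
\]
where the binomial is taken to be zero outside its natural range. Writing this matrix as $(-1)^q M$ and using that $q(q+1)$ is even, the claim reduces to showing $(-1)^{q(q+1)/2}\det(M) > 0$.

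Next I would realize $\binom{r_1+r_2-2q}{r_1-i-j}$ as the number of monotone (right/up) lattice paths from $A_i := (i,\,q-i)$ to $B_j := (r_1-j,\,r_2-q+j)$. The source points $A_0,\dots,A_q$ lie on the antidiagonal $x+y=q$ in order of increasing $x$-coordinate, while the target points $B_0,\dots,B_q$ lie on the antidiagonal $x+y=r_1+r_2-q$ in the opposite order. The LGV lemma then gives
\[
\det(M) \;=\; \sum_\sigma \mathrm{sgn}(\sigma)\cdot \#\{\text{non-crossing tuples of monotone paths } A_i \to B_{\sigma(i)}\}.
\]
Because the $B_j$'s are linearly ordered in reverse relative to the $A_i$'s, the only permutation $\sigma$ admitting a non-crossing system is the reversal $\sigma(i)=q-i$, whose sign is $(-1)^{q(q+1)/2}$. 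Finally, the explicit ``staircase'' path that takes $r_1-q$ right steps followed by $r_2-q$ up steps from each $A_i$ to $B_{q-i}$ furnishes one non-crossing family, so the number $N$ of such families is strictly positive; hence $\det(M)=(-1)^{q(q+1)/2}\,N$ and the claim follows.

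The main obstacle is calibrating the coordinates of $A_i$ and $B_j$ so that simultaneously (a) the entries of $M$ are realized as lattice path counts with exactly the right binomial arithmetic, and (b) the geometric arrangement of the antidiagonals forces a unique non-vanishing permutation in the LGV expansion. Once this setup is in place, the sign of the reversal permutation and the existence of at least one non-crossing family are immediate, and the positivity drops out without further calculation.
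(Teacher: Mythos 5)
Your proposal is correct and follows essentially the same route as the paper: the same monomial basis, the same identification of the Gram matrix entries with the binomial coefficients $\binom{r_1+r_2-2q}{r_1-i-j}$, and the same application of the Lindstr\"om--Gessel--Viennot lemma with sources and sinks on two parallel antidiagonals (your points are the paper's translated by $(r_1,0)$), forcing the reversal permutation and exhibiting the staircase family as a witness. No gaps.
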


We show that it is enough to prove the claim. The inequality of the claim implies that $Q^q_\ell$ is nondegenerate for  $q \le r_1$, and hence $L^q_\ell$ is an isomorphism for $q \le r_1$.
The Hilbert function of $R^*$ forces the dimensions of the primitive subspaces to satisfy
\[
\text{dim}_\mathbb{R} P_\ell^q=\begin{cases} 1 & \text{for $q \le r_1$,} \\ 0 & \text{for $q>r_1$,}\end{cases}
\]
and that  there is a decomposition
\[
R^q=P^q_\ell \oplus  \ell P^{q-1}_\ell \oplus \dots\oplus \ell^q P^0_\ell \ \ \text{for} \ \  q \le r_1.
\]
Every summand of the above decomposition is $1$-dimensional, and hence
\[
\Big(\text{signature of $Q_\ell^q$ on $R^q$}\Big)=\pm1 -\Big(\text{signature of $Q_\ell^{q-1}$ on $R^{q-1}$}\Big).
\]
The claim on  the determinant of $Q_\ell^q$ determines the sign of $\pm 1$ in the above equality:
\[
\Big(\text{signature of $Q_\ell^q$}\Big)=1-\Big(\text{signature of $Q_\ell^{q-1}$}\Big).
\]
It follows that the signature of $Q_\ell^q$ on $P^q_\ell$ is $1$ for $q \le r_1$,
and thus $R$ satisfies $\text{HR}(\ell)$.

To prove the claim, we consider the monomial basis 
\[
\Big\{x_1^ix_2^{q-i}\mid i=0,1,\dots, q\Big\} \subseteq R^q.
\]
The matrix $[a_{ij}]$ which represents $(-1)^q\hspace{0.5mm} Q^q_\ell$  has binomial coefficients as its entries: 
\[
[a_{ij}]
:=\Bigg[\text{deg}\Big((x_1+x_2)^{r_1+r_2-2q}x_1^{i+j}x_2^{q-i+q-j}\Big)\Bigg]
=\Bigg[\binom{r_1+r_2-2q}{r_1-i-j}\Bigg].
\]
The sign of the determinant of $[a_{ij}]$ can be determined using the Lindstr\"{o}m-Gessel-Viennot lemma:
\[
(-1)^{q(q+1)/2}\text{det}\ [a_{ij}]>0. 
\]
See  \cite[Section 5.4]{Aigner} for an exposition and similar examples.  
\end{proof}

Now we reduce  Proposition~\ref{TensorHR} to the case of Lemma~\ref{LemmaPP}.
We first introduce some useful notions to be used in the remaining part of the proof.

Let $R^*$ be a Poincar\'e duality algebra of dimension $r$, and let $\ell$ be an element of $R^1$.

\begin{definition}
Let $V^*$ be a graded subspace of $R^*$.
 We say that
\begin{enumerate}[(1)]\itemsep  5pt
\item $V^*$ satisfies $\text{HL}(\ell)$ if $Q^q_\ell$ restricted to $V^q$ is nondegenerate for all nonnegative $q \le \frac{r}{2}$.
\item $V^*$ satisfies $\text{HR}(\ell)$ if $Q^q_\ell$ restricted to $V^q$ is nondegenerate and has signature
\[
\sum_{p=0}^q (-1)^{q-p} \Big(\text{dim}_\mathbb{R} V^p - \text{dim}_\mathbb{R} V^{p-1}\Big) \ \ \text{for all nonnegative $q \le \frac{r}{2}$}.
\]
\end{enumerate}
\end{definition}

Propositions~\ref{HLCharacterization} and~\ref{HRCharacterization} show that this agrees with the previous definition when $V^*=R^*$.

\begin{definition}
Let $V_1^*$ and $V_2^*$ be graded subspaces of $R^*$.
We write 
\[
V_1^* \perp_{\text{PD}} V_2^*
\]
to mean that $V_1^* \cap V_2^* =0$ and $V_1^{r-q} \ V_2^{q}=0$ for all nonnegative integers $q \le r$, and write
\[
V_1^* \perp_{Q^*_\ell} V_2^*
\]
to mean that $V_1^* \cap V_2^* =0$ and $Q^q_\ell(V_1^q,V_2^q)=0$ for all nonnegative integers $q \le \frac{r}{2}$.
\end{definition}

We record here basic properties of the two notions of orthogonality.
Let $S^*$ be another Poincar\'e duality algebra of dimension $s$.

\begin{lemma}\label{LemmaOrthogonality}
Let $V_1^*, V_2^* \subseteq R^*$ and $W_1^*, W_2^* \subseteq S^*$ be graded subspaces.
\begin{enumerate}[(1)]\itemsep 5pt
\item If $V_1^* \perp_{Q^*_\ell} V_2^*$ and if both $V_1^*$, $V_2^*$ satisfy $\text{HL}(\ell)$, then   $V_1^* \oplus V_2^*$ satisfy $\text{HL}(\ell)$.
\item If $V_1^* \perp_{Q^*_\ell} V_2^*$ and if both $V_1^*$, $V_2^*$ satisfy $\text{HR}(\ell)$, then   $V_1^* \oplus V_2^*$ satisfy $\text{HR}(\ell)$.
\item If $V_1^* \perp_{\text{PD}} V_2^*$ and if $\ell \hspace{0.5mm}V_1^* \subseteq V_1^*$, then $V_1^* \perp_{Q^*_\ell} V_2^*$.
\item If $V_1^* \perp_{\text{PD}} V_2^*$, then $(V^*_1 \otimes_\mathbb{R} W_1^*) \perp_{\text{PD}} (V^*_2 \otimes_\mathbb{R} W_2^*)$.
\end{enumerate}
\end{lemma}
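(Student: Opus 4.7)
The plan for Lemma~\ref{LemmaOrthogonality} is that each of the four statements reduces to elementary linear-algebra bookkeeping: either a bilinear form that is block-diagonal under a direct sum decomposition, or a vanishing of products forced by degree reasons. There is no genuine obstacle; the only step that uses anything beyond tautology is part (3), where one must combine the $\ell$-invariance hypothesis with the commutativity of $R^*$.

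For parts (1) and (2), the hypothesis $V_1^* \perp_{Q^*_\ell} V_2^*$ says precisely that the restriction of $Q^q_\ell$ to $V_1^q \oplus V_2^q$ is block-diagonal for every $q \le \frac{r}{2}$. Nondegeneracy of the restriction to the sum therefore reduces to nondegeneracy of each block, giving (1) via the characterization of $\mathrm{HL}(\ell)$ in Proposition~\ref{HLCharacterization}. For (2), the signature of a block-diagonal symmetric form is the sum of the signatures of its blocks, and the signature predicted by $\mathrm{HR}$ is a $\mathbb{Z}$-linear combination of the numbers $\dim V^p$; this linearity makes the predicted signature additive under direct sums. Summing the signature identities assumed for $V_1^*$ and $V_2^*$ yields the one required for $V_1^* \oplus V_2^*$.

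For (3), I would use the commutativity of $R^*$, which makes $Q^q_\ell$ symmetric, to move the Lefschetz operator across the bilinear form. Given $a_1 \in V_1^q$ and $a_2 \in V_2^q$ with $q \le \frac{r}{2}$, iterating $\ell V_1^* \subseteq V_1^*$ yields $\ell^{r-2q} a_1 \in V_1^{r-q}$, and hence
\[
Q^q_\ell(a_1,a_2) \;=\; (-1)^q \deg\bigl(a_1 \cdot \ell^{r-2q} a_2\bigr) \;=\; (-1)^q \deg\bigl(\ell^{r-2q} a_1 \cdot a_2\bigr) \;\in\; \deg\bigl(V_1^{r-q} \cdot V_2^q\bigr) \;=\; 0,
\]
using $V_1^* \perp_{\text{PD}} V_2^*$. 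The intersection condition $V_1^* \cap V_2^* = 0$ transports directly from the $\perp_{\text{PD}}$ hypothesis to the $\perp_{Q^*_\ell}$ conclusion.

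Part (4) is a three-case degree argument. A pure tensor in $(V_1^* \otimes W_1^*)^{r+s-k}$ has the form $v \otimes w$ with $v \in V_1^a$ and $w \in W_1^{r+s-k-a}$; similarly, a pure tensor in $(V_2^* \otimes W_2^*)^k$ has the form $v' \otimes w'$ with $v' \in V_2^{a'}$ and $w' \in W_2^{k-a'}$. Their product in $R^* \otimes S^*$ is $vv' \otimes ww'$. If $a + a' > r$, then $vv' \in R^{a+a'} = 0$. If $a + a' < r$, then $ww' \in S^{r+s-a-a'}$ with $r+s-a-a' > s$, which vanishes. If $a + a' = r$, then $vv' \in V_1^{r-a'} \cdot V_2^{a'} = 0$ by $V_1^* \perp_{\text{PD}} V_2^*$. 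In every case the product vanishes, and bilinearity extends the conclusion to the full subspaces, giving the required PD-orthogonality in the tensor product algebra.
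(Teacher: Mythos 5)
Your proof is correct and follows essentially the same route as the paper's: parts (1) and (2) are the block-diagonality/signature-additivity argument the paper dismisses as straightforward, part (3) uses commutativity to get $L_\ell^q V_1^q \subseteq V_1^{r-q}$ and then PD-orthogonality, and part (4) is the same degree trichotomy. (One cosmetic point: for graded subspaces, $\mathrm{HL}(\ell)$ is \emph{defined} as nondegeneracy of the restricted form, so in (1) you need only the definition, not Proposition~\ref{HLCharacterization}.)
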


\begin{proof}
The first two assertions are straightforward.
We justify the third assertion: For any nonnegative integer $q \le \frac{r}{2}$, the assumption on $V_1^*$ implies $L_\ell^q V_1^q \subseteq V_1^{r-q}$, and hence
\[
Q_\ell^q(V_1^q,V_2^q) \subseteq \text{deg}(V_1^{r-q} V_2^q)=0.
\]
For the fourth assertion, we check that, for any nonnegative integers $p_1,p_2,q_1,q_2$ whose sum is $r+s$, 
\[
V^{p_1}_1V_2^{p_2} \otimes_\mathbb{R} W_1^{q_1}W_2^{q_2} =0.
\]
The assumption on  $V_1^*$ and $V_2^*$ shows that the  first factor is trivial if $p_1+p_2 \ge r$, and the second factor is trivial if otherwise.
\end{proof}

\begin{proof}[Proof of Proposition~\ref{TensorHR}]
Suppose that $R_1^*$ satisfies $\text{HR}(\ell_1)$ and that $R_2^*$ satisfies $\text{HR}(\ell_2)$.
We set
\[
R^*:=R^*_1 \otimes_\mathbb{R} R_2^*, \qquad \ell:=\ell_1 \otimes 1 + 1 \otimes \ell_2.
\]
We show that $R^*$ satisfy $\text{HR}(\ell)$.
The assertion on $\text{HL}$ can be proved in the same way.

For every $p \le \frac{r_1}{2}$,  choose an orthogonal basis of $P_{\ell_1}^p \subseteq R_1^p$ with respect to $Q_{\ell_1}^p$:
\[
\Big\{v^p_{1},v^p_2,\ldots,v^p_{m(p)}\Big\} \subseteq P_{\ell_1}^p.
\]
Similarly, for every $q \le \frac{r_2}{2}$,  choose an orthogonal basis of $P_{\ell_2}^q \subseteq R_2^q$ with respect to $Q_{\ell_2}^q$:
\[
\Big\{w^q_{1},w^q_2,\ldots,w^q_{n(q)}\Big\} \subseteq P_{\ell_2}^q.
\]
Here we use the upper indices  to indicate the degrees of basis elements.
To each pair of  $v^p_i$ and $w^q_j$, we associate a graded subspace of $R^*$: 
{
\setlength{\jot}{7pt}
\begin{gather*}
B^*(v_i^p,w_j^q):=B^*(v_{i}^{p}) \otimes_\mathbb{R} B^*(w_{j}^{q}), \ \ \text{where}\\
B^*(v_i^p):= \langle v_i^p\rangle \hspace{0.5mm}\oplus \hspace{0.5mm}\ell_1 \langle v_i^p\rangle\hspace{0.5mm} \oplus  \cdots \oplus\hspace{0.5mm} \ell_1^{r_1-2p} \langle v_i^q\rangle  \subseteq R^*_1,\\
B^*(w^q_j):= \langle w_j^q\rangle \oplus \ell_2 \langle w_j^q\rangle \oplus  \cdots \oplus \ell^{r_2-2q} \langle w_j^q\rangle \subseteq R_2^*,
\end{gather*}
}
Let us compare the tensor product $B^*(v_i^p,w_j^q)$ with the truncated polynomial ring
\[
S^*_{p,q}:=\mathbb{R}[x_1,x_2]/(x_1^{r_1-2p+1},x_2^{r_2-2q+1}).
\]
The properties $\text{HR}(\ell_1)$ and $\text{HR}(\ell_2)$ show that, for every  nonnegative integer $k
\le \frac{r_1+r_2-2p-2q}{2}
$, there is an isometry 
\[
\Big( B^{k+p+q}(v_{i}^{p},w_{j}^{q}), \  Q_\ell^{k+p+q} \Big)
\simeq
\Big(S^k_{p,q}, \ (-1)^{p+q} \hspace{0.5mm}Q_{x_1+x_2}^k\Big).
\]
Therefore, by Lemma~\ref{LemmaPP}, the graded subspace $B^*(v_i^p,w_j^q) \subseteq R^*$ satisfies $\text{HR}(\ell)$.

The properties $\text{HL}(\ell_1)$ and $\text{HL}(\ell_2)$ imply that  there is a direct sum decomposition
\[
R^*=\bigoplus_{p,q,i,j} B^*(v_i^p,w_j^q).
\]
It is enough to prove that the above decomposition is orthogonal with respect to $Q_{\ell}^*$:
\begin{claim}
Any two distinct summands of $R^*$  satisfy
$
B^*(v,w) \perp_{Q_\ell^*} B^*(v',w'). 
$
\end{claim}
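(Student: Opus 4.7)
The strategy is to combine parts (3) and (4) of Lemma~\ref{LemmaOrthogonality} to reduce $Q_\ell^*$-orthogonality of two distinct summands to Poincar\'e-duality orthogonality inside a single factor $R_1^*$ (or, symmetrically, $R_2^*$), where it becomes an elementary computation driven by the orthogonality of the chosen primitive bases.

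First I would verify that $\ell = \ell_1 \otimes 1 + 1 \otimes \ell_2$ stabilizes each summand $B^*(v_i^p, w_j^q)$. This is immediate: multiplication by $\ell_1 \otimes 1$ sends $\ell_1^a v_i^p \otimes \ell_2^b w_j^q$ either to the next basis vector $\ell_1^{a+1} v_i^p \otimes \ell_2^b w_j^q \in B^*(v_i^p, w_j^q)$, or to zero by the primitivity relation $\ell_1^{r_1 - 2p + 1} v_i^p = 0$; the component $1 \otimes \ell_2$ is symmetric. Lemma~\ref{LemmaOrthogonality}(3) then reduces the Claim to the Poincar\'e-duality orthogonality $B^*(v_i^p, w_j^q) \perp_{\text{PD}} B^*(v_{i'}^{p'}, w_{j'}^{q'})$ whenever the index tuples differ, which in turn, by Lemma~\ref{LemmaOrthogonality}(4), follows from either of the single-factor statements $B^*(v_i^p) \perp_{\text{PD}} B^*(v_{i'}^{p'})$ (if $(p,i) \neq (p',i')$) or $B^*(w_j^q) \perp_{\text{PD}} B^*(w_{j'}^{q'})$ (if $(q,j) \neq (q',j')$).

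I would therefore concentrate on the first single-factor statement; the second is identical by symmetry. The condition $B^*(v_i^p) \cap B^*(v_{i'}^{p'}) = 0$ is immediate, because in every degree these two subspaces sit in distinct summands of the Lefschetz decomposition of $R_1^*$ determined by $\ell_1$ (either a different $\ell_1^{k-p} P_{\ell_1}^p$ summand if $p \neq p'$, or distinct basis lines inside the same summand if $p = p'$ and $i \neq i'$). The substantive condition is that $\ell_1^a v_i^p \cdot \ell_1^{a'} v_{i'}^{p'} \in R_1^{r_1}$ vanishes whenever $a + p + a' + p' = r_1$. There are two cases: if $p = p'$ and $i \neq i'$, then $a + a' = r_1 - 2p$ and the product equals $v_i^p \cdot L_{\ell_1}^p(v_{i'}^p)$, whose image under $\text{deg}_1$ is $(-1)^p Q_{\ell_1}^p(v_i^p, v_{i'}^p) = 0$ by the orthogonality of the chosen basis of $P_{\ell_1}^p$; if $p \neq p'$, say $p < p'$, then $a + a' = r_1 - p - p' > r_1 - 2p'$, so $\ell_1^{a+a'} v_{i'}^{p'} = 0$ by the primitivity of $v_{i'}^{p'}$, and the product vanishes again.

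The argument is essentially structural: everything follows mechanically from the tensor-Lefschetz formalism together with the orthogonality of the primitive bases, and I do not anticipate a real obstacle. The only step requiring a moment's thought is the exponent bookkeeping in the case $p < p'$, namely the observation that the top-degree requirement $a + p + a' + p' = r_1$ forces the total exponent of $\ell_1$ to strictly exceed the vanishing threshold $r_1 - 2p'$ for the primitive element $v_{i'}^{p'}$.
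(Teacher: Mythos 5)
Your proof is correct and follows essentially the same route as the paper: reduce via Lemma~\ref{LemmaOrthogonality}(3) and (4) to a Poincar\'e-duality orthogonality statement in a single factor, which the paper obtains by assuming $w \neq w'$ and citing the orthogonality of the Lefschetz decomposition of $R_2^*$. The only difference is that you spell out that single-factor computation (the $Q^p_{\ell_1}$-orthogonality of the basis in the $p=p'$ case and the primitivity/exponent count in the $p \neq p'$ case) and the $\ell$-stability of the summands, both of which the paper leaves implicit.
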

For the proof of the claim, we may suppose that $w \neq w'$. 
The orthogonality of the Lefschetz decomposition for $R_2^*$ with respect to $Q_{\ell_2}^*$ shows that
\[
B(w) \perp_{\text{PD}} B(w').
\]
By the fourth assertion  of Lemma~\ref{LemmaOrthogonality}, the above implies
\[
B^*(v,w) \perp_{\text{PD}} B^*(v',w'). 
\]
By the third assertion of Lemma~\ref{LemmaOrthogonality}, this gives the claimed statement.
\end{proof}

\subsection{}

Let $\Sigma$ be a unimodular fan, or more generally a simplicial fan in $\mathbf{N}_\mathbb{R}$. 
The purpose of this subsection is to state and prove Propositions \ref{lHRtoHL} and \ref{ForSomeForAll}, which together support the inductive structure of the proof of Main Theorem \ref{MainTheoremBody}.

\begin{definition}
We say that $\Sigma$ satisfies  \emph{Poincar\'e duality of dimension $r$} if $A^*(\Sigma)_\mathbb{R}$ is a Poincar\'e duality algebra of dimension $r$.
\end{definition}

In the remainder of this subsection, we suppose that $\Sigma$ satisfies  Poincar\'e duality of dimension $r$.
We fix an isomorphism, called the \emph{degree map} for $\Sigma$,
\[
\text{deg}: A^r(\Sigma)_\mathbb{R} \longrightarrow \mathbb{R}.
\]
As before, we write $V_\Sigma$ for the set of primitive ray generators of $\Sigma$.

Note that for any nonnegative integer $q$ and $\mathbf{e}\in V_\Sigma$ there is a commutative diagram 
\[
\xymatrixcolsep{5pc}
\xymatrixrowsep{3pc}
\xymatrix{
A^{q}(\Sigma) \ar[r]^{\textrm{p}_{\mathbf{e}}} \ar[dr]_{x_\mathbf{e} \cdot -}& A^q(\text{star}(\mathbf{e},\Sigma)) \ar[d]^{ x_\mathbf{e} \cdot -}\\
&A^{q+1}(\Sigma),
}
\]
where $\mathrm{p}_\mathbf{e}$ is the pullback homomorphism $\mathrm{p}_{\mathbf{e}\in \Sigma}$
and $x_\mathbf{e}\cdot -$ are  the multiplications by $x_\mathbf{e}$.
It follows that  there is a surjective graded ring homomorphism
\[
\pi_{\mathbf{e}}: A^*(\text{star}(\mathbf{e},\Sigma)) \longrightarrow A^*(\Sigma)/\text{ann}(x_\mathbf{e}).
\]

\begin{proposition}
The star of $\mathbf{e}$ in $\Sigma$ satisfies  Poincar\'e duality of dimension $r-1$ if and only if $\pi_{\mathbf{e}}$ is an isomorphism:
\[
A^*(\text{star}(\mathbf{e},\Sigma)) \simeq A^*(\Sigma)/\text{ann}(x_\mathbf{e}).
\]
\end{proposition}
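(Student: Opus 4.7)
The plan is to observe that the right-hand side $A^*(\Sigma)_\mathbb{R}/\text{ann}(x_\mathbf{e})$ is \emph{always} a Poincaré duality algebra of dimension $r-1$: this is exactly Proposition~\ref{PropositionLocalDuality} applied to the nonzero element $x_\mathbf{e} \in A^1(\Sigma)_\mathbb{R}$. The induced degree map is
\[
\text{deg}(x_\mathbf{e} \cdot -):\ A^{r-1}(\Sigma)_\mathbb{R}/\text{ann}(x_\mathbf{e})\ \longrightarrow\ \mathbb{R}.
\]
With this in hand, the ``if'' direction of the proposition becomes tautological: an isomorphism of graded algebras transports the Poincaré duality algebra structure from the quotient back to $A^*(\text{star}(\mathbf{e},\Sigma))_\mathbb{R}$.

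For the ``only if'' direction, assume $A^*(\text{star}(\mathbf{e},\Sigma))_\mathbb{R}$ is a Poincaré duality algebra of dimension $r-1$. First I would handle the top degree: both source and target of $\pi_\mathbf{e}$ are one-dimensional in degree $r-1$ by Poincaré duality on each side, and $\pi_\mathbf{e}^{r-1}$ is surjective, so it is necessarily an isomorphism (the target is nonzero, so the source cannot map to zero). Next I would show injectivity in every degree $q < r-1$ by the standard Poincaré-duality-pairing argument: given a nonzero $a \in A^q(\text{star}(\mathbf{e},\Sigma))_\mathbb{R}$, Poincaré duality on the source produces some $b \in A^{r-1-q}(\text{star}(\mathbf{e},\Sigma))_\mathbb{R}$ with $a \cdot b \neq 0$ in $A^{r-1}(\text{star}(\mathbf{e},\Sigma))_\mathbb{R}$. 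If $\pi_\mathbf{e}(a) = 0$, then $\pi_\mathbf{e}(a \cdot b) = 0$ by multiplicativity, contradicting the injectivity of $\pi_\mathbf{e}^{r-1}$ established in the previous step. Combined with surjectivity (which is given), this shows $\pi_\mathbf{e}$ is an isomorphism in each degree.

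Since there is no real obstacle—the argument is the standard one for showing that a surjective graded ring homomorphism between Poincaré duality algebras of the same dimension is an isomorphism—the only points that require care are (i) verifying that Proposition~\ref{PropositionLocalDuality} genuinely applies, which it does because $x_\mathbf{e}$ is a nonzero element of $A^1(\Sigma)_\mathbb{R}$ (its image generates a nonzero ideal, as can be seen from the commutative diagram preceding the statement), and (ii) checking that the top-degree isomorphism is compatible with the chosen degree maps. The latter is automatic once we use the degree map on $A^*(\text{star}(\mathbf{e},\Sigma))_\mathbb{R}$ induced by $\pi_\mathbf{e}$ from the one on $A^*(\Sigma)_\mathbb{R}/\text{ann}(x_\mathbf{e})$.
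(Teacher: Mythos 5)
Your proof is correct and takes essentially the same route as the paper: the ``if'' direction is immediate from Proposition~\ref{PropositionLocalDuality} (the quotient $A^*(\Sigma)/\text{ann}(x_\mathbf{e})$ is always a Poincar\'e duality algebra of dimension $r-1$), and the ``only if'' direction is the observation that a surjective graded ring homomorphism between Poincar\'e duality algebras of the same dimension is an isomorphism. The paper states this last fact without proof, whereas you correctly supply the standard argument (top degree first, then injectivity in lower degrees via the duality pairing).
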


\begin{proof}
The ''if'' direction follows from Proposition~\ref{PropositionLocalDuality}: The quotient $A^*(\Sigma)/\text{ann}(x_\mathbf{e})$ is a Poincar\'e duality algebra of dimension $r-1$.

The ''only if'' direction follows from the observation that any surjective graded ring homomorphism between Poincar\'e duality algebras of the same dimension is an isomorphism.
\end{proof}

\begin{definition}
Let $\Sigma$ be a fan that satisfies Poincar\'e duality of dimension $r$.
We say that
\begin{enumerate}[(1)]\itemsep 5pt
\item $\Sigma$ satisfies the \emph{hard Lefschetz property} if $A^*(\Sigma)_\mathbb{R}$ satisfies $\text{HL}(\ell)$ for all $\ell \in \mathscr{K}_\Sigma$,
\item $\Sigma$  satisfies the \emph{Hodge-Riemann relations} if $A^*(\Sigma)_\mathbb{R}$ satisfies $\text{HR}(\ell)$ for all $\ell \in \mathscr{K}_\Sigma$, and
\item $\Sigma$  satisfies the \emph{local Hodge-Riemann relations}  if the Poincar\'e duality algebra 
\[
A^*(\Sigma)_\mathbb{R}/\text{ann}(x_\mathbf{e})
\]
satisfies $\text{HR}(\ell_\mathbf{e})$ with respect to the degree map induced by $x_\mathbf{e}$ for all $\ell \in \mathscr{K}_\Sigma$  and  $\mathbf{e} \in V_\Sigma$.
\end{enumerate}
Hereafter we write $\ell_\mathbf{e}$  for the image of $\ell$ in the quotient $A^*(\Sigma)_\mathbb{R}/\text{ann}(x_\mathbf{e})$.
\end{definition}

\begin{proposition}\label{lHRtoHL}
If $\Sigma$ satisfies the local Hodge-Riemann relations, then $\Sigma$ satisfies the hard Lefschetz property.
\end{proposition}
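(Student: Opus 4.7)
Fix $\ell \in \mathscr{K}_\Sigma$ and an integer $q \le r/2$. By Poincar\'e duality for $\Sigma$ the source and target of $L^q_\ell$ have equal dimension, so injectivity is enough. The case $q=r/2$ (which can occur only when $r$ is even) is immediate since $L^q_\ell$ is then the identity, so I assume $q<r/2$, equivalently $q \le (r-1)/2$. The plan is to prove that any $a \in A^q(\Sigma)_\mathbb{R}$ killed by $\ell^{r-2q}$ satisfies $x_\mathbf{e} a = 0$ for every $\mathbf{e} \in V_\Sigma$. Since $A^*(\Sigma)_\mathbb{R}$ is generated as an $\mathbb{R}$-algebra in degree one by the Courant classes, this forces $b \cdot a = 0$ for all $b \in A^{r-q}(\Sigma)_\mathbb{R}$, and then Poincar\'e duality gives $a = 0$.

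The essential device is the passage from $\Sigma$ to the quotients $R_\mathbf{e} := A^*(\Sigma)_\mathbb{R}/\text{ann}(x_\mathbf{e})$, which by Proposition~\ref{PropositionLocalDuality} are Poincar\'e duality algebras of dimension $r-1$. Using that $\ell$ is strictly convex around the zero cone, I first choose a piecewise linear representative of $\ell$ that is positive on $|\Sigma|\setminus\{0\}$, and write $\ell = \sum_{\mathbf{e} \in V_\Sigma} c_\mathbf{e}\, x_\mathbf{e}$ with $c_\mathbf{e} = \ell(\mathbf{e}) > 0$. For each $\mathbf{e}$, let $\bar{a}_\mathbf{e}$ denote the image of $a$ in $R_\mathbf{e}$. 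From $\ell^{r-2q} a = 0$ I obtain
\[
\ell_\mathbf{e} \cdot L^q_{\ell_\mathbf{e}}(\bar{a}_\mathbf{e}) \;=\; \ell_\mathbf{e}^{r-2q}\bar{a}_\mathbf{e} \;=\; 0,
\]
so $\bar{a}_\mathbf{e}$ lies in the primitive subspace $P^q_{\ell_\mathbf{e}} \subseteq R_\mathbf{e}^q$. Because $q \le (r-1)/2$, the local Hodge--Riemann relations apply and yield $Q^q_{\ell_\mathbf{e}}(\bar{a}_\mathbf{e}, \bar{a}_\mathbf{e}) \ge 0$, with equality if and only if $\bar{a}_\mathbf{e} = 0$.

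I then split the vanishing form $Q^q_\ell(a,a) = 0$ across these quotients. A direct computation, distributing one factor of $\ell$ as $\sum_\mathbf{e} c_\mathbf{e} x_\mathbf{e}$ and using the definition $\text{deg}_\mathbf{e}(\bar{b}) = \text{deg}(x_\mathbf{e} b)$ of the induced degree map on $R_\mathbf{e}$, gives the identity
\[
Q^q_\ell(a,a) \;=\; \sum_{\mathbf{e} \in V_\Sigma} c_\mathbf{e}\, Q^q_{\ell_\mathbf{e}}(\bar{a}_\mathbf{e}, \bar{a}_\mathbf{e}).
\]
Since each $c_\mathbf{e}$ is strictly positive and each summand is nonnegative, every summand vanishes; hence $\bar{a}_\mathbf{e} = 0$, i.e.\ $x_\mathbf{e} a = 0$, for every $\mathbf{e} \in V_\Sigma$, which closes the argument as explained in the first paragraph.

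The only delicate point, and essentially the single obstacle to be arranged carefully, is the choice of representative of $\ell$ with $\ell(\mathbf{e}) > 0$ for \emph{every} ray $\mathbf{e}$: this ray-wise positivity is what converts the vanishing of $Q^q_\ell(a,a)$ into a sum of nonnegative terms, each of which can then be made to vanish by local Hodge--Riemann. Strict convexity of $\ell$ around the zero cone supplies exactly this, since the link of $\{0\}$ in $\Sigma$ is all of $\Sigma \setminus \{0\}$ and thus contains every primitive ray generator. Once this positivity is secured, the local Hodge--Riemann hypothesis provides the sign-definiteness on each summand, and the bootstrapping from local to global requires no further nontrivial ingredient.
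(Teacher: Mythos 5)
Your proposal is correct and follows essentially the same argument as the paper's proof: write $\ell=\sum_{\mathbf{e}}c_\mathbf{e}x_\mathbf{e}$ with all $c_\mathbf{e}>0$, observe that the image of a kernel element in each quotient $A^*(\Sigma)_\mathbb{R}/\text{ann}(x_\mathbf{e})$ is primitive, use the identity $Q^q_\ell(a,a)=\sum_\mathbf{e}c_\mathbf{e}Q^q_{\ell_\mathbf{e}}(\bar a_\mathbf{e},\bar a_\mathbf{e})$ together with the local Hodge--Riemann relations to force each $x_\mathbf{e}a=0$, and conclude via generation of the algebra by the Courant classes. Your explicit justification of the positivity of the $c_\mathbf{e}$ from strict convexity around the zero cone, and of the reduction to $q<r/2$, are details the paper leaves implicit but are correctly handled.
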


\begin{proof}
By definition, for $\ell \in \mathscr{K}_\Sigma$ there are positive real numbers $c_\mathbf{e}$ such that
\[
\ell= \sum_{\mathbf{e} \in V_\Sigma} c_\mathbf{e} \hspace{0.5mm} x_\mathbf{e} \in A^1(\Sigma)_\mathbb{R}.
\]
We need to show that the Lefschetz operator $L^q_\ell$ on $A^q(\Sigma)_\mathbb{R}$ is injective for all $q \le \frac{r}{2}$. 
Nothing is claimed when $r=2q$, so we may assume that $r-2q$ is positive.

Let $f$ be an element in the kernel of $L_\ell^q$, and write
 $f_\mathbf{e}$ for the image of $f$ in the quotient $A^q(\Sigma)_\mathbb{R}/\text{ann}(x_\mathbf{e})$.
Note that the element $f$ has the following properties:
\begin{enumerate}[(1)]\itemsep 2pt
\item For all $\mathbf{e} \in V_\Sigma$, the image $f_\mathbf{e}$ belongs to the primitive subspace $P_{\ell_\mathbf{e}}^q$, and
\item for the positive real numbers $c_\mathbf{e}$ as above, we have
\[
\sum_{\mathbf{e} \in V_\Sigma} c_\mathbf{e} \hspace{0.5mm} Q_{\ell_\mathbf{e}}^q(f_\mathbf{e},f_\mathbf{e})=Q_\ell^q(f,f)=0.
\]
\end{enumerate}
By the local Hodge-Riemann relations, the two properties above show
that all the
$f_\mathbf{e}$ are zero:
\[
x_\mathbf{e} \cdot f =0 \in A^*(\Sigma)_\mathbb{R} \ \ \text{for all $\mathbf{e} \in V_\Sigma$.}
\]
Since the elements $x_\mathbf{e}$ generate the Poincar\'e duality algebra $A^*(\Sigma)_\mathbb{R}$, this implies that $f=0$.
\end{proof}

\begin{proposition}\label{ForSomeForAll}
If $\Sigma$ satisfies the hard Lefschetz property, then the following are equivalent:
\begin{enumerate}[(1)] \itemsep 5pt
\item $A^*(\Sigma)_\mathbb{R}$ satisfies $\text{HR}(\ell)$ for some $\ell \in \mathscr{K}_\Sigma$. 
\item $A^*(\Sigma)_\mathbb{R}$ satisfies $\text{HR}(\ell)$ for all $\ell \in \mathscr{K}_\Sigma$.
\end{enumerate}
\end{proposition}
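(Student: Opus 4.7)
The plan is to use a continuity/connectedness argument on the ample cone $\mathscr{K}_\Sigma$. The direction ``for all $\ell$'' $\Rightarrow$ ``for some $\ell$'' is trivial (provided $\mathscr{K}_\Sigma$ is nonempty, which we assume implicitly since otherwise the statement is vacuous), so the content is in the converse.

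First I would observe that $\mathscr{K}_\Sigma$ is a convex, hence connected, open subset of the finite-dimensional real vector space $A^1(\Sigma)_\mathbb{R}$, and that for each fixed $q \le \tfrac{r}{2}$ the assignment
\[
\mathscr{K}_\Sigma \longrightarrow \mathrm{Sym}^2\bigl(A^q(\Sigma)_\mathbb{R}^\vee\bigr), \qquad \ell \longmapsto Q^q_\ell,
\]
is continuous (in fact polynomial in $\ell$), since $Q^q_\ell(a_1,a_2) = (-1)^q\,\mathrm{deg}(a_1 \cdot \ell^{r-2q} \cdot a_2)$.

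Next, I would invoke the hypothesis that $\Sigma$ satisfies the hard Lefschetz property: by Proposition~\ref{HLCharacterization}, this is equivalent to saying that $Q^q_\ell$ is nondegenerate for every $\ell \in \mathscr{K}_\Sigma$ and every $q \le \tfrac{r}{2}$. A continuous family of nondegenerate real symmetric bilinear forms on a fixed finite-dimensional vector space, parametrized by a connected space, has locally constant, hence globally constant, signature (the eigenvalues of a matrix representative depend continuously on parameters and none can cross zero without the form becoming degenerate).

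Finally, by Proposition~\ref{HRCharacterization}, $A^*(\Sigma)_\mathbb{R}$ satisfies $\mathrm{HR}(\ell)$ if and only if each $Q^q_\ell$ is nondegenerate with signature equal to
\[
\sum_{p=0}^q (-1)^{q-p}\bigl(\dim_\mathbb{R} A^p(\Sigma)_\mathbb{R} - \dim_\mathbb{R} A^{p-1}(\Sigma)_\mathbb{R}\bigr),
\]
a quantity manifestly independent of $\ell$. So if $\mathrm{HR}(\ell_0)$ holds for some $\ell_0 \in \mathscr{K}_\Sigma$, then the signature of $Q^q_{\ell_0}$ equals this target value for each $q$; by the constancy of signature established above, the same holds for $Q^q_\ell$ for every $\ell \in \mathscr{K}_\Sigma$, and Proposition~\ref{HRCharacterization} then yields $\mathrm{HR}(\ell)$ for every $\ell \in \mathscr{K}_\Sigma$.

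There is essentially no obstacle here — the proof is a short formal consequence of Propositions~\ref{HLCharacterization} and~\ref{HRCharacterization}, together with convexity of $\mathscr{K}_\Sigma$ and continuity of eigenvalues of a family of symmetric matrices. The only minor subtlety worth stating cleanly in the write-up is why a continuous path $t \mapsto Q_t$ of nondegenerate real symmetric forms has constant signature, which follows from the intermediate value theorem applied to $\det(Q_t - \lambda I)$ along with the fact that signature changes only when an eigenvalue passes through zero.
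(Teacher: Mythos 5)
Your proof is correct and is essentially the same as the paper's: the paper also connects $\ell_0$ to $\ell_1$ by a segment in the convex cone $\mathscr{K}_\Sigma$, uses the hard Lefschetz hypothesis to keep each $Q^q_{\ell_t}$ nondegenerate, deduces constancy of the signature, and concludes via Proposition~\ref{HRCharacterization}.
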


\begin{proof}
Let $\ell_0$ and $\ell_1$ be elements of $\mathscr{K}_\Sigma$, and suppose that
$A^*(\Sigma)_\mathbb{R}$ satisfies $\text{HR}(\ell_0)$.
Consider the parametrized family 
\[
\ell_t:= (1-t)\hspace{0.5mm} \ell_0+t\hspace{0.5mm} \ell_1, \qquad 0 \le t \le 1.
\]
Since $\mathscr{K}_\Sigma$ is convex, the elements $\ell_t$ are ample for all $t$.

Note that  $Q_{\ell_t}^q$ are nondegenerate on $A^q(\Sigma)_\mathbb{R}$ for all $t$ and $q \le \frac{r}{2}$ because $\Sigma$ satisfies the hard Lefschetz property.
It follows that the signatures of $Q_{\ell_t}^q$ should be independent of $t$ for all $q \le \frac{r}{2}$.
Since $A^*(\Sigma)_\mathbb{R}$ satisfies $\text{HR}(\ell_0)$, the common signature should be
\[
\sum_{p=0}^q (-1)^{q-p} \Big(\text{dim}_\mathbb{R} \ A^p(\Sigma)_\mathbb{R} -\text{dim}_\mathbb{R} \ A^{p-1}(\Sigma)_\mathbb{R} \Big).
\]
We conclude by  Proposition~\ref{HRCharacterization} that
$A^*(\Sigma)_\mathbb{R}$ satisfies $\text{HR}(\ell_1)$.
\end{proof}

\section{Proof of the main theorem}

\subsection{}

As a final preparation for the proof of the main theorem, we show that the property $\text{HR}$ is preserved by a matroidal flip for particular choices of ample classes.

Let $\mathrm{M}$ be as before, and consider the matroidal flip from $\mathscr{P}_-$ to $\mathscr{P}_+$ with center $Z$.
We will use the following homomorphisms:
\begin{enumerate}[(1)]\itemsep 5pt
\item The pullback homomorphism $\Phi_Z:A^*(\mathrm{M},\mathscr{P}_-) \longrightarrow A^*(\mathrm{M},\mathscr{P}_+)$.
\item The Gysin homomorphisms $\Psi_{Z}^{p,q}: A^{q-p}(\mathrm{M}_{Z}) \longrightarrow A^{q}(\mathrm{M},\mathscr{P}_+)$.
\item The pullback homomorphism $\mathrm{p}_{{Z}}: A^*(\mathrm{M},\mathscr{P}_-) \longrightarrow A^*(\mathrm{M}_Z)$.
\end{enumerate}
The homomorphism $\mathrm{p}_{Z}$ is obtained from the  graded ring homomorphism $\mathrm{p}_{\sigma \in \Sigma}$, where $\sigma=\sigma_{Z<\varnothing}$ and $\Sigma=\Sigma_{\mathrm{M},\mathscr{P}_-}$, making use of the  identification 
\[
 \text{star}(\sigma,\Sigma) \simeq \Sigma_{\mathrm{M}_Z}.
\]
In the remainder of this section, we fix a strictly convex piecewise linear function  $\ell_-$ on $\Sigma_{\mathrm{M},\mathscr{P}_-}$.
For nonnegative real numbers $t$, we set
\[
 \ell_+(t):=\Phi_Z(\ell_-)-tx_Z \in A^1(\mathrm{M},\mathscr{P}_+) \otimes_\mathbb{Z} \mathbb{R}.
\]
We write $\ell_Z$ for the pullback of  $\ell_-$ to the star of the cone $\sigma_{Z<\varnothing}$ in the Bergman fan $\Sigma_{\mathrm{M},\mathscr{P}_-}$:
\[
 \ell_Z:=\mathrm{p}_Z(\ell_-) \in A^1(\mathrm{M}_Z) \otimes_\mathbb{Z} \mathbb{R}.
\]
Proposition ~\ref{PropositionAmplePullback} shows that $\ell_Z$ is the class of a strictly convex piecewise linear function on $\Sigma_{\mathrm{M}_Z}$.

\begin{lemma}
$\ell_+(t)$ is  strictly convex for all sufficiently small positive  $t$.
\end{lemma}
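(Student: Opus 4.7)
I would identify $\Phi_Z$ with the geometric pullback of piecewise linear functions under the refinement $\Sigma_{\mathrm{M},\mathscr{P}_+}\to\Sigma_{\mathrm{M},\mathscr{P}_-}$ induced by the matroidal flip. If $f$ is a piecewise linear representative of $\ell_-$ on $\Sigma_{\mathrm{M},\mathscr{P}_-}$, then viewing the same $f$ on the refined fan (which shares its support) gives a representative of $\Phi_Z(\ell_-)$, with the forced value at the new ray $f(\mathbf{e}_Z)=\sum_{i\in Z}f(\mathbf{e}_i)$ by linearity of $f$ on $\sigma_{Z<\varnothing}$, encoding exactly the rule $\Phi_Z(x_i)=x_i+x_Z$ for $i\in Z$. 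Consequently $\Phi_Z(\ell_-)$ is convex on $\Sigma_{\mathrm{M},\mathscr{P}_+}$, failing strict convexity only along the new ray $\mathbf{e}_Z$, where it is linear by construction.

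The strategy is then to verify strict convexity of $\ell_+(t)=\Phi_Z(\ell_-)-tx_Z$ around each cone $\sigma\in\Sigma_{\mathrm{M},\mathscr{P}_+}$ in three cases. If neither $\sigma$ nor its link meets $\mathbf{e}_Z$, then $\sigma$ is already a cone of $\Sigma_{\mathrm{M},\mathscr{P}_-}$ with unchanged link, and $x_Z$ vanishes on this local region, so strict convexity of $\ell_-$ transfers directly for every $t\geq 0$. If $\mathbf{e}_Z$ lies in the link of $\sigma$ but not in $\sigma$, write $\sigma=\sigma_{I<\mathscr{F}}$ with $I\subsetneq Z$ and $\mathscr{F}$ consisting of flats strictly containing $Z$ (possibly $\mathscr{F}=\varnothing$); choose a linear function $m_\sigma$ witnessing strict convexity of $\ell_-$ around $\sigma$ in $\Sigma_{\mathrm{M},\mathscr{P}_-}$ and compute
\[
(\Phi_Z(\ell_-)-m_\sigma)(\mathbf{e}_Z)\,=\,\sum_{i\in Z\setminus I}(\ell_--m_\sigma)(\mathbf{e}_i)\,>\,0,
\]
since each $\mathbf{e}_i$ with $i\in Z\setminus I$ is a ray of $\text{link}(\sigma,\Sigma_{\mathrm{M},\mathscr{P}_-})$. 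Subtracting $t$ keeps this quantity positive as long as $t$ is less than this sum, while positivity at the other link rays is unaffected because $x_Z$ vanishes on them. If $\sigma$ contains $\mathbf{e}_Z$, I reduce to $\text{star}(\mathbf{e}_Z,\Sigma_{\mathrm{M},\mathscr{P}_+})\simeq\Sigma_{\mathrm{M}^Z,\varnothing}\times\Sigma_{\mathrm{M}_Z}$: strict convexity around $\sigma$ in the ambient fan is controlled by strict convexity of the image of $\ell_+(t)$ around $\bar\sigma$ in the star, and this image decomposes along the product, with the $\Sigma_{\mathrm{M}_Z}$-component equal to $\ell_Z=\mathrm{p}_Z(\ell_-)$ (strictly convex by Proposition~\ref{PropositionAmplePullback}) and a $\Sigma_{\mathrm{M}^Z,\varnothing}$-component that becomes strictly convex for small $t>0$ by an argument analogous to the previous case.

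Since $\Sigma_{\mathrm{M},\mathscr{P}_+}$ has finitely many cones, each imposing a strictly positive upper bound on $t$ in the nontrivial cases, taking $t$ below the minimum of these bounds yields a strictly convex $\ell_+(t)$. The main technical burden lies in the third case, where one must match the link of $\sigma$ in the ambient fan with the product structure of the star and make explicit the $-tx_Z$ contribution in the $\mathrm{M}^Z$ factor; conceptually, however, this is the classical mechanism by which a small perturbation in the direction of minus the exceptional ray restores ampleness after a stellar subdivision.
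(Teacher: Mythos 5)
Your handling of the cones $\sigma$ with $\mathbf{e}_Z\notin\sigma$ is essentially the paper's argument: after normalizing $m_\sigma=0$, your bound $t<\sum_{i\in Z\setminus I}(\ell_--m_\sigma)(\mathbf{e}_i)$ is exactly the paper's bound $0<t<\sum_{i\in Z\setminus I}\ell_-(\mathbf{e}_i)$. For the cones containing $\mathbf{e}_Z$ you genuinely diverge: the paper writes down an explicit linear correction $m(t)$ supported on $Z\setminus I$ and $J\setminus Z$ and checks positivity on the link of $\sigma_{I<\mathscr{F}}$ by hand, whereas you pass to $\text{star}(\mathbf{e}_Z,\Sigma_{\mathrm{M},\mathscr{P}_+})\simeq\Sigma_{\mathrm{M}^Z,\varnothing}\times\Sigma_{\mathrm{M}_Z}$ and invoke the product decomposition of ample cones (Proposition~\ref{PropositionAmplePullback}). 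Your route is more structural and, once set up, gives strict convexity around these cones for \emph{every} $t>0$; its price is that you are using the converse of Proposition~\ref{PropositionAmplePullback}(2), namely that strict convexity of the pullback around $\overline{\sigma}$ implies strict convexity around $\sigma$. This converse is true here because the rays of the link of $\overline{\sigma}$ in the star are exactly the images of the rays of the link of $\sigma$ in $\Sigma_{\mathrm{M},\mathscr{P}_+}$ (every cone containing $\sigma$ contains $\mathbf{e}_Z$), but it is not stated in the paper and should be spelled out.

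Two corrections to the third case. First, the two fans do \emph{not} share their support: the matroidal flip is not a subdivision, and $|\Sigma_{\mathrm{M},\mathscr{P}_+}|$ is in general a proper subset of $|\Sigma_{\mathrm{M},\mathscr{P}_-}|$ (a removed cone $\sigma_{I<\mathscr{F}}$ with $I\subsetneq Z$ and $\text{cl}_\mathrm{M}(I)=Z$ is not covered by the new cones). What is true, and all your argument actually uses, is that every cone of $\Sigma_{\mathrm{M},\mathscr{P}_+}$ is contained in a cone of $\Sigma_{\mathrm{M},\mathscr{P}_-}$, so that restricting $\ell_-$ still produces $\Phi_Z(\ell_-)$ with $\Phi_Z(\ell_-)(\mathbf{e}_Z)=\sum_{i\in Z}\ell_-(\mathbf{e}_i)$. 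Second, the $\Sigma_{\mathrm{M}_Z}$-component of the image of $\ell_+(t)$ in the star is \emph{not} equal to $\ell_Z$ for $t>0$: a representative of $-x_Z$ vanishing at $\mathbf{e}_Z$ takes the value $1-|F\setminus Z|/|E\setminus Z|$ at $\mathbf{e}_F$ for $F\supsetneq Z$ (up to a linear function on $N_{E\setminus Z}$), and this is in general a nonzero class on the $\Sigma_{\mathrm{M}_Z}$ factor (already for $\mathrm{M}_Z=U_{2,3}$ it is a generator of $A^1(\Sigma_{\mathrm{M}_Z})_\mathbb{R}\simeq\mathbb{R}$). So the component is $\ell_Z+t\hspace{0.5mm}h$ for a fixed nonzero class $h$ — geometrically, $\mathcal{O}(-E)|_E$ on a blowup is not pulled back from the fiber direction. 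This does not sink the argument, since $\ell_Z+t\hspace{0.5mm}h$ remains ample for small $t$ by openness of the ample cone, but it does reinstate a genuine smallness condition on $t$ in this case and must be fixed as stated.
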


\begin{proof}
It is enough to  show that $\ell_+(t)$ is strictly convex around a given cone $\sigma_{I<\mathscr{F}}$ in $\Sigma_{\mathrm{M},\mathscr{P}_+}$.

When $Z \notin \mathscr{F}$, the cone  $\sigma_{I<\mathscr{F}}$ is in the fan  $\Sigma_{\mathrm{M},\mathscr{P}_-}$, and hence we may suppose  that
\[
\text{$\ell_-$ is zero  on $\sigma_{I<\mathscr{F}}$ and positive on the link of $\sigma_{I<\mathscr{F}}$ in $\Sigma_{\mathrm{M},\mathscr{P}_-}$.}
\]
It is straightforward to deduce from the above that if
\[
0<t<\sum_{i \in Z \setminus I} \ell_-(\mathbf{e}_i),
\]
then 
$\ell_+(t)$ is zero on $\sigma_{I<\mathscr{F}}$ and positive on the link of $\sigma_{I<\mathscr{F}}$ in $\Sigma_{\mathrm{M},\mathscr{P}_+}$.
Note that $Z \setminus I$ is nonempty and each of the summands in the right-hand side of the above inequality is positive.

When $Z \in \mathscr{F}$,  the cone $\sigma_{Z<\mathscr{F} \setminus \{Z\}}$ is in  the fan $\Sigma_{\mathrm{M},\mathscr{P}_-}$, and hence we may suppose  that
\[
\text{$\ell_-$ is zero on $\sigma_{Z<\mathscr{F} \setminus \{Z\}}$ and positive on the link of $\sigma_{Z<\mathscr{F} \setminus \{Z\}}$ in $\Sigma_{\mathrm{M},\mathscr{P}_-}$.}
\]
Let $J$ be the flat $\text{min} \hspace{0.5mm} \mathscr{F} \setminus \{Z\}$, and let $m(t)$ be the linear function on $\mathbf{N}_E$ defined by setting
\[
\mathbf{e}_i \longmapsto 
\left\{\begin{array}{cl}
\frac{t}{|Z \setminus I|} & \text{if $i \in Z \setminus I$}, \\
\frac{-t}{|J \setminus Z|} & \text{if $i \in J \setminus Z$},\\
0  & \text{if otherwise.}
\end{array}\right.
\]
It is straightforward to deduce from the above that,  for all sufficiently small positive $t$,
\[
\text{$\ell_+(t)+m(t)$ is zero on $\sigma_{I<\mathscr{F}}$ and positive on the link of $\sigma_{I<\mathscr{F}}$ in $\Sigma_{\mathrm{M},\mathscr{P}_+}$.}
\]
More precisely, the latter statement is valid for all $t$ that satisfies the inequalities
\[
0<t< \text{min}\Big\{\ell_-(\mathbf{e}_F), \  \text{$\mathbf{e}_F$ is in the link of $\sigma_{Z<\mathscr{F} \setminus \{Z\}}$ in $\Sigma_{\mathrm{M},\mathscr{P}_-}$} \Big\}.
\]
Here  the minimum of the empty set is defined to be $\infty$.
\end{proof}

We write ``$\text{deg}$'' for the degree map of $\mathrm{M}$ and of $\mathrm{M}_Z$, and fix the degree maps
\begin{align*}
\text{deg}_+: A^r(\mathrm{M},\mathscr{P}_+) \longrightarrow \mathbb{Z}, \qquad a \longmapsto \text{deg}\big(\Phi_{\mathscr{P}^c_+}(a)\big), \\
\text{deg}_-: A^r(\mathrm{M},\mathscr{P}_-) \longrightarrow \mathbb{Z}, \qquad a \longmapsto \text{deg}\big(\Phi_{\mathscr{P}^c_-}(a)\big), 
\end{align*}
see Definition~\ref{DefinitionIntermediateDegree}.
We omit the subscripts $+$ and $-$ from the notation when there is no danger of confusion.
The goal of this subsection is to prove the following.

\begin{proposition}\label{BlowupHR}
Let $\ell_-$, $\ell_Z$, and $\ell_+(t)$ be as above, and suppose that
\begin{enumerate}[(1)]\itemsep 5pt
\item the Chow ring of $\Sigma_{\mathrm{M},\mathscr{P}_-}$ satisfies $\text{HR}(\ell_-)$, and
\item the Chow ring of $\Sigma_{\mathrm{M}_Z}$ satisfies $\text{HR}(\ell_Z)$.
\end{enumerate}
Then the Chow ring of $\Sigma_{\mathrm{M},\mathscr{P}_+}$ satisfies $\text{HR}(\ell_+(t))$ for all sufficiently small positive $t$.
\end{proposition}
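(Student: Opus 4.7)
The plan is to use Theorem~\ref{DecompositionTheorem} to split
\[
A^q(\mathrm{M}, \mathscr{P}_+)_\mathbb{R} = \text{im}\,\Phi_Z^q \oplus \bigoplus_{p=1}^{\text{rk}(Z)-1} \text{im}\,\Psi_Z^{p,q},
\]
and to argue that for small $t > 0$ this decomposition is orthogonal with respect to $Q^q_{\ell_+(t)}$ and that each summand satisfies $\text{HR}$ as a graded subspace in the sense of Section~\ref{SectionHLHR}.  The conclusion will then follow from Lemma~\ref{LemmaOrthogonality}(2).

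For the orthogonality between $\text{im}\,\Phi_Z$ and the ideal $(x_Z) = \sum_p \text{im}\,\Psi_Z^p$ (Lemma~\ref{LemmaIdeal}), combine Lemma~\ref{LemmaUpperTriangular}(1) with the vanishing $A^{r-p}(\mathrm{M}_Z) = 0$ for $p < \text{rk}(Z)$ to conclude $\text{im}\,\Phi_Z^{r-q} \cdot (x_Z)^q = 0$; since $(x_Z)$ is an ideal closed under multiplication by $\ell_+(t)$, Lemma~\ref{LemmaOrthogonality}(3) then yields the $Q$-orthogonality.  For $\text{HR}$ on $\text{im}\,\Phi_Z$: since $\Phi_Z$ is an injective ring homomorphism with $\text{deg}_+ \circ \Phi_Z = \text{deg}_-$, the restriction of $Q^q_{\Phi_Z(\ell_-)}$ to $\text{im}\,\Phi_Z$ corresponds under $\Phi_Z$ exactly to $Q^q_{\ell_-}$, which is $\text{HR}$ by hypothesis~(1).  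The perturbation coming from the $-tx_Z$ piece of $\ell_+(t)^{r-2q}$ lands in Gysin images $\text{im}\,\Psi_Z^{p,r}$ with $p \geq 1$, and these vanish for $p < \text{rk}(Z)$, so the perturbation on $\text{im}\,\Phi_Z$ is $O(t^{\text{rk}(Z)})$; by continuity of signature on nondegenerate symmetric forms, $\text{HR}$ is preserved for sufficiently small $t > 0$.

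The main obstacle is establishing $\text{HR}$ on the Gysin summand $(x_Z)$.  Here I would use Proposition~\ref{Star} to identify $\text{star}(\mathbf{e}_Z, \Sigma_{\mathrm{M}, \mathscr{P}_+})$ with $\Sigma_{\mathrm{M}^Z, \varnothing} \times \Sigma_{\mathrm{M}_Z}$, giving $A^*(\text{star}) \simeq \mathbb{R}[\alpha]/(\alpha^{\text{rk}(Z)}) \otimes A^*(\mathrm{M}_Z)$.  The multiplication-by-$x_Z$ isomorphism from $A^*(\text{star})$ onto $(x_Z)$, together with the identity $\overline{x_Z} = -\alpha - \beta_{\mathrm{M}_Z}$ extracted from Lemma~\ref{MinimalFlatRelation}, converts $Q^q_{\ell_+(t)}|_{(x_Z)^q}$ into a bilinear form on $A^{q-1}(\text{star})$ involving a factor $(\alpha+\beta)$ alongside $L(t)^{r-2q}$, where $L(t) := \overline{\ell_+(t)} \in A^1(\text{star})_\mathbb{R}$.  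Proposition~\ref{TensorHR} applied to Lemma~\ref{LemmaPP} and to hypothesis~(2) gives $\text{HR}(a\alpha + \ell_Z)$ on $A^*(\text{star})$ for every $a > 0$, while Proposition~\ref{PropositionAmplePullback} guarantees that $L(t)$ itself is ample on the star for small $t > 0$.

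The hardest step is reconciling this ``mixed'' form on $(x_Z)$---which uses $(\alpha+\beta) L^{r-2q}$ rather than $L^{r-2q+1}$---with the standard Lefschetz form on the star.  I would handle this by rescaling each Gysin summand $\text{im}\,\Psi_Z^p$ by an appropriate power of $t$ so that, in the limit $t \to 0^+$, $Q^q_{\ell_+(t)}$ becomes block-diagonal with respect to the Gysin decomposition.  A degree count using Lemma~\ref{LemmaInclusions} shows that the off-diagonal $(p_1,p_2)$ Gysin block vanishes to order $t^{\text{rk}(Z) - p_1 - p_2}$ while the diagonal $(p,p)$ block vanishes only to order $t^{\text{rk}(Z) - 2p}$, so after rescaling the off-diagonals become subdominant.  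Extracting the leading coefficient in $t$ identifies each diagonal block with a nonzero multiple of $\pm Q^{q-p}_{\ell_Z}$ on $A^{q-p}(\mathrm{M}_Z)_\mathbb{R}$, which is $\text{HR}$ by hypothesis~(2).  Adding the block $\text{HR}$ signatures for sufficiently small $t > 0$ yields $\text{HR}$ on $(x_Z)^q$ as a graded subspace, and Lemma~\ref{LemmaOrthogonality}(2) completes the proof.
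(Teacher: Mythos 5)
Your overall architecture --- decompose $A^q(\mathrm{M},\mathscr{P}_+)_\mathbb{R}$ via Theorem~\ref{DecompositionTheorem}, rescale the Gysin summands by powers of $t$, identify the limit as $t\to 0^+$, and conclude by continuity of signature --- is the paper's, but two of the key steps fail as written. First, the claimed $Q^q_{\ell_+(t)}$-orthogonality between $\text{im}\ \Phi_Z$ and the Gysin part for positive $t$ is false. The identity $\text{im}\ \Phi_Z^{r-q}\cdot (x_Z)^q=0$ does not hold for the full ideal: by Proposition~\ref{GysinTopIsomorphism} the element $x_Z^{\text{rk}(Z)}\,x_{\mathscr{Z}_2}$ generates $A^r$, with $x_{\mathscr{Z}_2}\in\text{im}\ \Phi_Z$ and $x_Z^{\text{rk}(Z)}\in(x_Z)$; indeed $(x_Z)^r=\text{im}\ \Phi_Z^r=A^r$, so the two subspaces are not even disjoint, and $\perp_{\text{PD}}$ fails. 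The vanishing you invoke only applies to the truncated sum $G_Z^*=\bigoplus_{p=1}^{\text{rk}(Z)-1}\text{im}\ \Psi_Z^{p,*}$, which is $\perp_{\text{PD}}$ to $\text{im}\ \Phi_Z$ but is \emph{not} closed under multiplication by $\ell_+(t)$: multiplication by $x_Z$ sends $\text{im}\ \Psi_Z^{\text{rk}(Z)-1,q}$ into $\text{im}\ \Psi_Z^{\text{rk}(Z),q+1}$, which by Lemma~\ref{LemmaBasisReduction} has a nonzero component back in $\text{im}\ \Phi_Z$. So Lemma~\ref{LemmaOrthogonality}(3) applies to neither choice of $V_1$; the two summands are genuinely non-orthogonal for $t>0$, and the orthogonality only emerges in the rescaled limit (this is the content of Proposition~\ref{PropositionLimit}).

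Second, your identification of the limiting form on the Gysin summand is incorrect, and your own degree count shows it: the orders $t^{\text{rk}(Z)-p_1-p_2}$ (off-diagonal) and $t^{\text{rk}(Z)-2p}$ (diagonal) are the same formula, so after rescaling by $t^{-\text{rk}(Z)/2+p}$ \emph{every} block with $p_1+p_2\le\text{rk}(Z)$ survives at order $t^0$; for instance with $\text{rk}(Z)=3$ the off-diagonal $(1,2)$ block is $O(1)$ before rescaling while the diagonal $(1,1)$ block is only $O(t)$, so the off-diagonal is dominant, not subdominant. The limit on the Gysin summand is therefore not $\bigoplus_p\pm Q^{q-p}_{\ell_Z}$; it is the Hodge--Riemann form of the tensor product $\mathbb{R}[x_Z]/(x_Z^{\text{rk}(Z)-1})\otimes A^*(\mathrm{M}_Z)$ with respect to $1\otimes\ell_Z-x_Z\otimes 1$, transported by $x_Z\cdot-$. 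Its positivity on primitives does not follow from hypothesis (2) alone: it requires Proposition~\ref{TensorHR}, i.e., the Lindstr\"om--Gessel--Viennot computation of Lemma~\ref{LemmaPP}, which you invoke in your third paragraph but then abandon in favor of the diagonal-block extraction. (Note also that the relevant truncation has exponent $\text{rk}(Z)-1$, not $\text{rk}(Z)$: the full $A^*(\text{star}(\mathbf{e}_Z,\Sigma_{\mathrm{M},\mathscr{P}_+}))$ maps under $x_Z\cdot-$ onto the ideal $(x_Z)$, which overlaps $\text{im}\ \Phi_Z$ as above, whereas the complementary summand $G_Z^*$ corresponds to the smaller truncation.)
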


Hereafter we suppose   $\text{HR}(\ell_-)$ and  $\text{HR}(\ell_Z)$.
We introduce the main characters appearing in the proof of Proposition~\ref{BlowupHR}:
\begin{enumerate}[(1)]\itemsep 5pt
\item A Poincar\'e duality algebra  of dimension $r$:
\[
\hspace{-30mm}
A^*_+:=\bigoplus_{q=0}^r\ A^q_+, \qquad A_+^q:=A^q(\mathrm{M},\mathscr{P}_+) \otimes_\mathbb{Z} \mathbb{R}.
\]
\item A Poincar\'e duality algebra of dimension $r$:
\[
\hspace{-35mm} 
A^*_- := \bigoplus_{q =0}^r\ A^q_-, \qquad A^q_-:= \Big(\text{im}\ \Phi_Z^q\Big) \otimes_\mathbb{Z} \mathbb{R}.
\]
\item A Poincar\'e duality algebra of dimension $r-2$:
\[
T^*_Z:=\bigoplus_{q=0}^{r-2}  T^q_Z, \ \qquad T^q_Z:=\Big(\mathbb{Z}[x_Z]/(x_Z^{\text{rk}(Z)-1}) \otimes_\mathbb{Z} A^*(\mathrm{M}_Z)\Big)^q \otimes_\mathbb{Z} \mathbb{R}.
\]
\item A graded subspace of $A^*_+$, the sum of the images of the Gysin homomorphisms:
\[
\hspace{-25mm}G^*_Z:=\bigoplus_{q = 1}^{r-1} G^q_Z, \qquad G^q_Z:=\bigoplus_{p=1}^{\text{rk}(Z)-1} \Big(\text{im}\ \Psi_Z^{p,q}\Big) \otimes_\mathbb{Z} \mathbb{R}.
\]
\end{enumerate}
The truncated polynomial ring in the definition of $T^*_Z$ is given the degree map
\[
(-x_Z)^{\text{rk}(Z)-2} \longmapsto 1,
\]
so that the truncated polynomial ring satisfies $\text{HR}(-x_Z)$.
The tensor product $T^*_Z$ is given the induced degree map
\[
(-x_Z)^{\text{rk}(Z)-2} x_\mathscr{Z}\longmapsto 1,
\]
where $\mathscr{Z}$ is any maximal flag of nonempty proper flats of $\mathrm{M}_Z$.
It follows from Proposition~\ref{TensorHR} that the tensor product satisfies $\text{HR}( 1 \otimes \ell_Z -x_Z \otimes 1)$.

\begin{definition}
For nonnegative $q \le \frac{r}{2}$, we write the Poincar\'e duality pairings for $A^*_-$ and $T^*_Z$ by
\begin{align*}
&\big\langle -,- \big\rangle^q_{A^*_-}: A_-^q \times A^{r-q}_- \longrightarrow \mathbb{R}, \\
&\big\langle -,- \big\rangle^{q-1}_{T^*_Z}: T^{q-1}_Z \times T^{r-q-1}_Z \longrightarrow \mathbb{R}.
\end{align*}
We omit the superscripts $q$ and $q-1$ from the notation when there is no danger of confusion.
\end{definition}

Theorem~\ref{DecompositionTheorem} shows that $\Phi_Z$ defines an isomorphism between the graded rings
\[
A^*(\mathrm{M},\mathscr{P}_-) \otimes_\mathbb{Z} \mathbb{R} \simeq A_-^*,
\]
and that there is a decomposition into a direct sum 
\[
A_+^* = A_-^* \oplus G_Z^*.
\]
In addition, it shows that $x_Z \cdot -$ is an isomorphism between the graded vector spaces
\[
T_Z^{*} \simeq G_Z^{*+1}.
\]
The inverse of the isomorphism $x_Z \cdot -$ will be denoted $x_Z^{-1} \cdot -$.

We equip the above graded vector spaces with the following symmetric bilinear forms.

\begin{definition}
Let $q$ be a nonnegative integer $\le \frac{r}{2}$.
\begin{enumerate}[(1)]\itemsep 5pt
\item $\big(A^q_+,Q^q_- \oplus Q^q_Z\big)$: $Q^q_- $ and $Q^q_Z$ are the  bilinear forms on $A^q_-$ and $G^q_Z$ defined below.
\item $\big(A_-^q,Q^q_- \big)$: $Q^q_-$ is the restriction of the Hodge-Riemann form $Q^q_{\ell_+(0)}$ to $A_-^q$.
\item $\big(T_Z^q,Q_\mathscr{T}^q\big)$: $Q_\mathscr{T}^q$ is the Hodge-Riemann form associated to $\mathscr{T}:=\Big(1 \otimes \ell_Z - x_Z \otimes 1\Big) \in T^1_Z$.
\item $\big(G_Z^q,Q_Z^q\big)$: $Q_Z^q$ is the bilinear form defined by saying that  $x_Z \cdot -$ gives  an isometry
\[
\Big(T_Z^{q-1}, Q^{q-1}_{\mathscr{T}}\Big) \simeq \Big(G^{q}_Z\hspace{0.5mm},\hspace{0.5mm}  Q_Z^{q}\Big).
\]
\end{enumerate}
\end{definition}

We observe that $Q^q_- \oplus Q_Z^q$ satisfies the following version of Hodge-Riemann relations:

\begin{proposition}\label{SumOfSignatures}
The bilinear form $Q^q_- \oplus Q_Z^q$ is nondegenerate on $A^q_+$ and has signature
\[
\sum_{p=0}^q (-1)^{q-p}\Big(\text{dim}_\mathbb{R} A^p_+-\text{dim}_\mathbb{R} A^{p-1}_+ \Big) \ \ \text{for all nonnegative $q \le \frac{r}{2}$.}
\]
\end{proposition}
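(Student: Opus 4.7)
The plan is to exploit the direct sum decomposition $A^q_+ = A^q_- \oplus G^q_Z$ from Theorem~\ref{DecompositionTheorem}, which the bilinear form $Q^q_- \oplus Q^q_Z$ is designed to respect orthogonally. Nondegeneracy and additivity of signatures then reduce to separate analyses of the two summands, and the desired total emerges from a short reindexing together with the identity $\dim A^p_+ = \dim A^p_- + \dim G^p_Z$.

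For the first summand, the key step is to observe that $\Phi_Z$ identifies $(A^*(\mathrm{M},\mathscr{P}_-)_\mathbb{R}, Q^*_{\ell_-})$ isometrically with $(A^*_-, Q^*_-)$. Three ingredients go into this: $\Phi_Z$ is an injective graded ring homomorphism with image $A^*_-$ by Theorem~\ref{DecompositionTheorem}; by construction $\Phi_Z(\ell_-) = \ell_+(0)$; and $\Phi_Z$ intertwines the chosen degree maps via the composition law $\Phi_{\mathscr{P}_+^c}\circ\Phi_Z = \Phi_{\mathscr{P}_-^c}$ for pullbacks recorded in Definition~\ref{DefinitionIntermediateDegree}. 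Granted this, the hypothesized $\text{HR}(\ell_-)$ together with Proposition~\ref{HRCharacterization} yields that $Q^q_-$ is nondegenerate on $A^q_-$ with signature $\sum_{p=0}^q (-1)^{q-p}(\dim A^p_- - \dim A^{p-1}_-)$.

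For the second summand, I would first apply Lemma~\ref{LemmaPP} (after the harmless change of variable $y:=-x_Z$) to obtain $\text{HR}(-x_Z)$ on the truncated polynomial ring $\mathbb{R}[x_Z]/(x_Z^{\text{rk}(Z)-1})$ equipped with the degree map $(-x_Z)^{\text{rk}(Z)-2}\mapsto 1$. Combined with the hypothesized $\text{HR}(\ell_Z)$ and Proposition~\ref{TensorHR}, this gives $\text{HR}(\mathscr{T})$ for $T^*_Z$. Since $x_Z\cdot{-}$ is by construction an isometry $(T^{q-1}_Z, Q^{q-1}_\mathscr{T}) \simeq (G^q_Z, Q^q_Z)$, the form $Q^q_Z$ is then nondegenerate with signature $\sum_{p=0}^{q-1}(-1)^{q-1-p}(\dim T^p_Z - \dim T^{p-1}_Z)$.

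The last step is to combine the two signatures. Substituting $p'=p+1$ in the second sum and using the elementary dimension identity $\dim T^{p-1}_Z = \dim G^p_Z$ (each side equals $\sum_{s=1}^{\text{rk}(Z)-1}\dim A^{p-s}(\mathrm{M}_Z)_\mathbb{R}$), the second signature rewrites as $\sum_{p=0}^q (-1)^{q-p}(\dim G^p_Z - \dim G^{p-1}_Z)$, where the $p=0$ contribution vanishes because $G^0_Z=0$. Summing with the $A^*_-$ contribution and collecting the telescoping differences yields the asserted total $\sum_{p=0}^q (-1)^{q-p}(\dim A^p_+ - \dim A^{p-1}_+)$. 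The main obstacle I anticipate is carefully verifying the degree-map compatibility $\Phi_{\mathscr{P}_+^c}\circ\Phi_Z = \Phi_{\mathscr{P}_-^c}$ underlying the isometry identification in step two; once that is in hand, everything else is bookkeeping on top of the already-established tensor product theorem.
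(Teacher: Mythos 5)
Your proposal is correct and follows essentially the same route as the paper: an isometry $(A^q(\mathrm{M},\mathscr{P}_-)_\mathbb{R},Q^q_{\ell_-})\simeq(A^q_-,Q^q_-)$ via $\Phi_Z$, the Hodge--Riemann property of $T^*_Z$ from Proposition~\ref{TensorHR} transported to $G^q_Z$ by the isometry $x_Z\cdot-$, and additivity of signatures over the orthogonal decomposition $A^q_+=A^q_-\oplus G^q_Z$ together with the reindexing $\dim T^{p-1}_Z=\dim G^p_Z$. The degree-map compatibility $\Phi_{\mathscr{P}_+^c}\circ\Phi_Z=\Phi_{\mathscr{P}_-^c}$ that you flag as the main point to verify is immediate from Definition~\ref{DefinitionIntermediateDegree}, so there is no gap.
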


\begin{proof}
Theorem~\ref{DecompositionTheorem} shows that
 $\Phi_Z \otimes_\mathbb{Z} \mathbb{R}$ defines an isometry
 \[
 \Big( A^q(\mathrm{M},\mathscr{P}_-)_\mathbb{R} \hspace{0.5mm},\hspace{0.5mm}  Q^q_{\ell_-}\Big) \simeq \Big(A^q_- \hspace{0.5mm},\hspace{0.5mm} Q^q_{-}\Big).
\]
It follows from the assumption on $\Sigma_{\mathrm{M},\mathscr{P}_-}$ that $Q^q_-$ is nondegenerate on $A^q_-$ and has signature
\[
\sum_{p=0}^q (-1)^{q-p}\Big(\text{dim}_\mathbb{R} A^p_- - \text{dim}_\mathbb{R} A^{p-1}_- \Big).
\]
It follows from the assumption on $\Sigma_{\mathrm{M}_Z}$ that $Q^q_Z$ is nondegenerate on $G^q_Z$ and has signature
\begin{align*}
\sum_{p=0}^{q-1} (-1)^{q-p-1}\Big(\text{dim}_\mathbb{R} T^p_Z-  \text{dim}_\mathbb{R} T^{p-1}_Z \Big)&=
\sum_{p=0}^{q-1} (-1)^{q-p-1}\Big(\text{dim}_\mathbb{R} G^{p+1}_Z- \text{dim}_\mathbb{R} G^{p}_Z \Big)\\
&=\sum_{p=0}^{q} (-1)^{q-p}\Big(\text{dim}_\mathbb{R} G^p_Z- \text{dim}_\mathbb{R} G^{p-1}_Z \Big).
\end{align*}
The assertion is deduced from the fact that the signature of the sum is the sum of the signatures.
\end{proof}

We now construct a continuous family of symmetric bilinear forms $Q^q_t$ on $A^q_+$ parametrized by positive real numbers $t$.
This family $Q^q_t$ will shown to have the following properties:
\begin{enumerate}[(1)]\itemsep 5pt
\item For every positive real number $t$, there is  an isometry
\[
\Big(A_+^q\hspace{0.5mm} ,\hspace{0.5mm} Q^q_t\Big) \simeq \Big(A_+^q\hspace{0.5mm} ,\hspace{0.5mm} Q^q_{\ell_+(t)}\Big).
\]
\item The sequence $Q^q_t$ as $t$ goes to zero converges to the sum of $Q^q_{-}$ and $Q^q_{Z}$:
\[
\lim_{t \to 0} Q^q_t=Q^q_{-} \oplus Q^q_{Z}.
\]
\end{enumerate}
For positive real numbers $t$, we define a graded linear transformation
\[
S_t: A_+^* \longrightarrow A_+^*
\]
to be the sum of the identity on $A_-^*$ and the linear transformations
\[
\Big(\text{im}\ \Psi_Z^{p,q}\Big) \otimes_\mathbb{Z} \mathbb{R} \longrightarrow \Big(\text{im}\ \Psi_Z^{p,q} \Big) \otimes_\mathbb{Z} \mathbb{R}, \qquad a \longmapsto t^{-\frac{\text{rk}(Z)}{2}+p}  \ a.
\]
The inverse transformation $S_t^{-1}$ is the sum of the identity on $A^*_-$ and the linear transformations
\[
\Big(\text{im}\ \Psi_Z^{p,q}\Big) \otimes_\mathbb{Z} \mathbb{R} \longrightarrow \Big(\text{im}\ \Psi_Z^{p,q} \Big) \otimes_\mathbb{Z} \mathbb{R}, \qquad a \longmapsto t^{\frac{\text{rk}(Z)}{2}-p}  \ a.
\]

\begin{definition}
The symmetric bilinear form $Q^q_t$  is defined  so that $S_t$ defines an isometry
\[
\Big(A_+^q\hspace{0.5mm} ,\hspace{0.5mm} Q^q_t \Big)\simeq \Big(A_+^q\hspace{0.5mm} ,\hspace{0.5mm}  Q^q_{\ell_+(t)}\Big) \ \ \text{for all nonnegative integers $q \le \frac{r}{2}$.}
\]
In other words, for any elements $a_1,a_2 \in A_+^q$, we set
\[
Q^q_t(a_1,a_2):=(-1)^q \hspace{0.5mm} \text{deg} \big( S_t(a_1) \cdot \ell_+(t)^{r-2q} \cdot S_t(a_2)\big).
\]
\end{definition}

The first property of $Q^q_t$ mentioned above is built into the definition.
We verify the assertion on the limit of $Q^q_t$ as $t$ goes to zero.

\begin{proposition}\label{PropositionLimit}
For all nonnegative integers $q \le \frac{r}{2}$, we have
\[
\lim_{t \to 0} Q^q_t=Q^q_{-} \oplus Q^q_{Z}.
\]
\end{proposition}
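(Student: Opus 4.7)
The plan is to use the decomposition $A_+^q = A_-^q \oplus G_Z^q$ from Theorem~\ref{DecompositionTheorem}, split $Q_t^q$ into the three natural blocks, and analyze the $t \to 0$ limit of each. Expanding $\ell_+(t)^{r-2q} = \sum_k \binom{r-2q}{k}(-t)^k \Phi_Z(\ell_-)^{r-2q-k} x_Z^k$ and using that $S_t$ is the identity on $A_-^q$ while acting by $t^{-\text{rk}(Z)/2 + p}$ on $\text{im}\,\Psi_Z^{p,q}$, every matrix entry of $Q_t^q$ becomes an explicit Laurent polynomial in $t$ whose coefficients are degrees in $A_+^r$. The case $\text{rk}(Z) = 1$ is automatic since then $\Phi_Z$ is an isomorphism and $G_Z^* = 0$; I assume $\text{rk}(Z) \geq 2$ throughout.

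On $A_-^q \times A_-^q$ the scaling is trivial, so this block is just the restriction of $Q_{\ell_+(t)}^q$, which is continuous in $t$ and equals $Q_-^q$ at $t = 0$ by definition. For the mixed block, take $a_1 \in A_-^q$ and $a_2 \in \text{im}\,\Psi_Z^{p,q}$: Lemma~\ref{LemmaUpperTriangular}(1) combined with Lemma~\ref{LemmaInclusions} places $a_1 \Phi_Z(\ell_-)^{r-2q-k} x_Z^k a_2 \in \text{im}\,\Psi_Z^{p+k, r}$, which vanishes unless $p + k \geq \text{rk}(Z)$, so the surviving terms carry $t^{-\text{rk}(Z)/2 + p + k} \geq t^{\text{rk}(Z)/2} \to 0$. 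The same image-vanishing argument applied to $G_Z^q \times G_Z^q$ shows that for $a_i = \Psi_Z^{p_i, q}(\tilde a_i)$ only the index $k = \text{rk}(Z) - p_1 - p_2$ survives in the limit (contributions vanish automatically when $p_1 + p_2 > \text{rk}(Z)$), and precisely the same exponent is singled out in the expansion of $Q^{q-1}_\mathscr{T}(x_Z^{p_1-1} \otimes \tilde a_1, x_Z^{p_2-1} \otimes \tilde a_2)$ via $\mathscr{T} = 1 \otimes \ell_Z - x_Z \otimes 1$.

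The main obstacle is to match these two surviving expressions. The key step is the pointwise identity
\[
\Phi_Z(\ell_-) \cdot x_Z^p \hspace{0.5mm} \xi \;=\; x_Z^p \cdot \ell_Z \cdot \xi \quad \text{in } A_+^*, \qquad p \geq 1,\ \xi \in A^*(\mathrm{M}_Z).
\]
Writing $\ell_-$ as a combination $\sum c_i x_i + \sum c_F x_F$ of the Courant generators, Lemma~\ref{MinimalFlatRelation} (which gives $x_i x_Z = -x_Z^2 - x_Z \beta_{\mathrm{M}_Z}$ for $i \in Z$), the complement relations ($x_j x_Z = 0$ for $j \notin Z$) and incomparability ($x_F x_Z = 0$ for $F$ incomparable with $Z$) collapse the left-hand side to $x_Z^p \bigl( \sum_{F \supsetneq Z} c_F x_F - c^{(Z)} \beta_{\mathrm{M}_Z} \bigr)$ with $c^{(Z)} := \sum_{i \in Z} c_i$. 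On the other side, computing $\mathrm{p}_Z$ on each Courant function by lifting to a representative vanishing on $\sigma_{Z<\varnothing}$ and restricting to the star $\Sigma_{\mathrm{M}_Z}$ gives $\mathrm{p}_Z(x_i) = -\beta_{\mathrm{M}_Z}$ for $i \in Z$ and $\mathrm{p}_Z(x_j) = 0$ for $j \notin Z$, hence $\ell_Z = \sum_{F \supsetneq Z} c_F x_F - c^{(Z)} \beta_{\mathrm{M}_Z}$, matching the simplified left-hand side. Iterating this identity produces $x_Z^{\text{rk}(Z)} \Phi_Z(\ell_-)^m \tilde a_1 \tilde a_2 = x_Z^{\text{rk}(Z)} \ell_Z^m \tilde a_1 \tilde a_2$ in $A_+^r$. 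Combining with the sign identity $\text{deg}_+(x_Z^{\text{rk}(Z)} \xi) = (-1)^{\text{rk}(Z) - 1} \text{deg}_{\mathrm{M}_Z}(\xi)$ from Proposition~\ref{GysinTopIsomorphism} and the sign $(-1)^{\text{rk}(Z) - 2}$ arising from the normalization $(-x_Z)^{\text{rk}(Z)-2} \otimes x_\mathscr{Z} \mapsto 1$ of $\text{deg}_T$, a bookkeeping of the binomials and signs shows that the limit of $Q_t^q$ on $G_Z^q \times G_Z^q$ equals $Q_Z^q$, giving $\lim_{t \to 0} Q_t^q = Q_-^q \oplus Q_Z^q$.
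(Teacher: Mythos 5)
Your proof is correct, and it reaches the same limit by the same underlying mechanisms as the paper: the decomposition $A_+^q = A_-^q \oplus G_Z^q$, the vanishing statements of Lemma~\ref{LemmaUpperTriangular} together with the rank bound $A^{r-p}(\mathrm{M}_Z)=0$ for $p<\text{rk}(Z)$, and the sign $\text{deg}(x_Z^{\text{rk}(Z)}x_{\mathscr{Z}})=(-1)^{\text{rk}(Z)-1}$ from Proposition~\ref{GysinTopIsomorphism}. The organization differs: the paper factors the computation into two intermediate limits --- a deformed Poincar\'e pairing $\langle -,-\rangle_t$ whose limit is $\langle -,-\rangle_{A_-^*}\ominus\langle x_Z^{-1}-,x_Z^{-1}-\rangle_{T_Z^*}$, and a deformed multiplication operator $M_t^{\ell_+(t)}$ whose limit is $M^{\ell_+(0)}\oplus M^{\mathscr{T}}$ --- and then composes them, whereas you expand $\ell_+(t)^{r-2q}$ binomially and track the $t$-exponent $k+p_1+p_2-\text{rk}(Z)$ block by block. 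The one place where you go beyond the paper is the identity $\Phi_Z(\ell_-)\cdot x_Z^p\xi = x_Z^p\cdot\ell_Z\cdot\xi$, proved via $\mathrm{p}_Z(x_i)=-\beta_{\mathrm{M}_Z}$ for $i\in Z$ and Lemma~\ref{MinimalFlatRelation}; the paper uses exactly this fact implicitly when it asserts that $M^{\ell_+(0)}$ restricted to $G_Z^*$ corresponds to $1\otimes\ell_Z$ on $T_Z^{*-1}$ under $x_Z\cdot -$, so your explicit verification is a genuine (and welcome) addition rather than a detour. Both routes buy the same thing; yours is more self-contained at the level of matrix entries, while the paper's operator-level formulation makes the appearance of $\mathscr{T}=1\otimes\ell_Z-x_Z\otimes 1$ more conceptual.
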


\begin{proof}
We first construct a deformation of the Poincar\'e duality pairing $A_+^q \times A^{r-q}_+ \longrightarrow \mathbb{R}$:
\[
\big\langle a_1,a_2 \big\rangle^q_t:= \text{deg}\big(S_t(a_1),S_t(a_2)\big), \qquad t>0.
\]
We omit the upper index $q$ when there is no danger of confusion.

\begin{claim}[1]
For any $ b_1,b_2 \in A^*_-$ and  $c_1,c_2 \in G^*_Z$ and $a_1=b_1+c_1,a_2=b_2+c_2 \in A^*_+$, 
\[
\big\langle a_1,a_2\big\rangle_0:=\lim_{t \to 0}\big\langle a_1,a_2\big\rangle_t= \big\langle b_1,b_2\big\rangle_{A^*_-} - \big\langle x_Z^{-1} c_1, x_Z^{-1} c_2\big\rangle_{T^*_Z}.
\]
\end{claim}

We write $z:=\text{rk}(Z)$ and choose bases of $A_+^q$ and $A^{r-q}_+$ that respect the decompositions
\begin{align*}
A^q_+&=A^q_- \oplus\Big( \text{im} \  \Psi_Z^{1,q} \oplus  \text{im} \  \Psi_Z^{2,q} \oplus \cdots \oplus  \text{im} \  \Psi_Z^{z-1,q}\Big) \otimes_\mathbb{Z} \mathbb{R}, \ \ \text{and}
\\[4pt]
A^{r-q}_+&= A_-^{r-q}\oplus\Big( \text{im} \  \Psi_Z^{z-1,r-q} \oplus  \text{im} \  \Psi_Z^{z-2,r-q} \oplus \cdots  \oplus  \text{im} \  \Psi_Z^{1,r-q}\Big) \otimes_\mathbb{Z} \mathbb{R}.
\end{align*}
Let $\mathscr{M}_-$ be the matrix of the Poincar\'e duality pairing between $A^q_-$ and $A^{r-q}_-$,
and let $\mathscr{M}_{p_1,p_2}$ is the matrix of 
 the Poincar\'e duality pairing between $\text{im}\ \Psi_Z^{p_1,q} \otimes_\mathbb{Z}\mathbb{R}$ and 
 $ \text{im}\ \Psi_Z^{p_2,r-q} \otimes_\mathbb{Z}\mathbb{R}$.
Lemma~\ref{LemmaUpperTriangular} shows that the matrix of the deformed Poincar\'e pairing on $A^*_+$ is
\[
\left[\begin{array}{cccccc}
\mathscr{M}_- & 0 &0 & 0& \cdots & 0 \\
0 & \mathscr{M}_{1,z-1} & t  \mathscr{M}_{2,z-1} & t^2  \mathscr{M}_{3,z-1} & \cdots & t^{z-2} \mathscr{M}_{z-1,z-1} \\
0 &0& \mathscr{M}_{2,z-2} & t  \mathscr{M}_{3,z-2} &  &  t^{z-3} \mathscr{M}_{z-1,z-2}\\
0 & 0 &0 & \mathscr{M}_{3,z-3} & \cdots &  t^{z-4} \mathscr{M}_{z-1,z-3} \\
\vdots &\vdots&\vdots&\vdots& \ddots &  \vdots\\
0 & 0 &0& 0& 0 & \mathscr{M}_{z-1,1}\\
\end{array}\right].
\]
The claim on the limit of the deformed Poincar\'e duality pairing follows.
The minus sign on the right-hand side of the claim comes from the following computation made in Proposition~\ref{GysinTopIsomorphism}:
\[
\text{deg}\big(x_Z^{\text{rk}(Z)}x_\mathscr{Z}\big)=(-1)^{\text{rk}(Z)-1}.
\]

We use the deformed Poincar\'e duality pairing to understand the limit of the  bilinear form $Q^q_t$.
For an element $a$ of $A^1_+$, we write the multiplication with $a$ by
\[
M^a:A^*_+\longrightarrow A^{*+1}_+,\qquad x \longmapsto a \cdot x,
\]
and define its deformation $M^{a}_t:=S_t^{-1} \circ M^a \circ S_t$.
In terms of the operator $M^{\ell_+(t)}_t$, the bilinear form $Q^q_t$ can be written 
\begin{align*}
Q^q_t(a_1,a_2)&=(-1)^q \hspace{0.5mm}\text{deg}\Big(S_t(a_1)\cdot M^{\ell_+(t)} \circ \cdots \circ M^{\ell_+(t)} \circ S_t \ (a_2)\Big)\\
&=(-1)^q \hspace{0.5mm}\text{deg}\Big(S_t(a_1)\cdot S_t \circ M^{\ell_+(t)}_t \circ \cdots \circ M^{\ell_+(t)}_t \ (a_2)\Big)\\
&=(-1)^q \hspace{0.5mm} \Big\langle a_1 \hspace{0.5mm} ,\hspace{0.5mm}   M^{\ell_+(t)}_t \circ \cdots \circ M^{\ell_+(t)}_t \ (a_2) \Big\rangle_t
\end{align*}
Define linear operators $M^{1 \otimes \ell_Z}$, $M^{x_Z \otimes 1}$, and $M^\mathscr{T}$ on $G^*_Z$ 
by the isomorphisms
\begin{align*}
&\Big(G^*_Z,M^{1 \otimes \ell_Z}\Big) \simeq \Big(T^{*-1},1 \otimes \ell_Z \cdot -\Big),\\
&\Big(G^*_Z,M^{x_Z \otimes 1}\Big) \simeq \Big(T^{*-1},x_Z \otimes 1 \cdot -\Big),\\
&\Big(G^*_Z,M^{\mathscr{T}}\Big) \simeq \Big(T^{*-1},\mathscr{T} \cdot -\Big).
\end{align*}
Note that the linear operator $M^\mathscr{T}$ is the difference $M^{1 \otimes \ell_Z}-M^{x_Z \otimes 1}$.

\begin{claim}[2]
The limit of the operator $M^{\ell_+(t)}_t$as $t$ goes to zero decomposes into the sum
\[
\Big(A^*_+\hspace{0.5mm}  ,\hspace{0.5mm}  \lim_{t\to 0} M^{\ell_+(t)}_t \Big)=\Big(A^*_- \oplus G^*_Z \hspace{0.5mm}  ,\hspace{0.5mm}  M^{\ell_+(0)} \oplus M^\mathscr{T} \Big).
\]
\end{claim}

Assuming the second claim, we finish the proof as follows: 
We have
\[
\lim_{t\to 0} Q_t^q(a_1,a_2)=(-1)^q \hspace{0.5mm} \lim_{t \to 0}  \Big\langle a_1 \hspace{0.5mm} ,\hspace{0.5mm}   M^{\ell_+(t)}_t \circ \cdots \circ M^{\ell_+(t)}_t \ (a_2) \Big\rangle_t
\]
and from the first and the second claim, we see that the right-hand side is
\[
(-1)^q \hspace{0.5mm} \Big\langle a_1 \hspace{0.5mm} ,\hspace{0.5mm}   (M^{\ell_+(0)} \oplus M^\mathscr{T} ) \circ \cdots \circ   (M^{\ell_+(0)} \oplus M^\mathscr{T} ) \ (a_2) \Big\rangle_0 \\
=Q^q_-(b_1,b_2) + Q^q_Z(c_1,c_2),
\]
where $a_i=b_i+c_i$ for $ b_i \in A^*_-$ and $c_i \in G^*_Z$.
Notice that the minus sign in the first claim cancels with $(-1)^{q-1}$ in the Hodge-Riemann form 
\[
\Big(T_Z^{q-1}, Q^{q-1}_{\mathscr{T}}\Big) \simeq \Big(G^{q}_Z\hspace{0.5mm},\hspace{0.5mm}  Q_Z^{q}\Big).
\]

We now prove the second claim made above.  Write $M_t^{\ell_+(t)}$ as the difference 
\[
M_t^{\ell_+(t)}= S_t^{-1} \circ M^{\ell_+(t)} \circ S_t=S_t^{-1} \circ \Big(M^{\ell_+(0)}- M^{tx_Z}\Big) \circ S_t=M_t^{\ell_+(0)}- M_t^{tx_Z}.
\]
By Lemma~\ref{LemmaUpperTriangular}, the operators $M^{\ell_+(0)}$ and $S_t$ commute, and hence
\[
\Big( A^*_+\hspace{0.5mm} ,\hspace{0.5mm}  M^{\ell_+(0)}_t\Big)=\Big(A^*_+\hspace{0.5mm} ,\hspace{0.5mm}  M^{\ell_+(0)}\Big)=\Big( A^*_- \oplus G^*_Z\hspace{0.5mm} ,\hspace{0.5mm}  M^{\ell_+(0)} \oplus M^{1 \otimes \ell_Z}\Big).
\]
Lemma~\ref{LemmaUpperTriangular}  shows that 
the matrix of  $M^{x_Z}$ in the chosen bases of $A^q_+$ and $A^{q+1}_+$ is of the form
\[
\left[
\begin{array}{cccccc}
0& 0 &0 & \cdots & 0 & B_0\\
C &0& 0 & \cdots & 0 & B_1\\
0  &\text{Id} &0& \cdots & 0 & B_2\\
 \vdots  &\vdots & \ddots &\ddots& \vdots & \vdots \\
0   &0 &\cdots &\text{Id} &0 & B_{z-2} \\
 0   &0 &\cdots & 0 &\text{Id} &B_{z-1}
\end{array}
\right],
\]
where ``$\text{Id}$'' are the identity matrices representing 
\[
A^{q-p}(\mathrm{M}_Z)_\mathbb{R} \simeq \text{im} \ \Psi_Z^{p,q} \longrightarrow \text{im} \ \Psi_Z^{p+1,q+1} \simeq A^{q-p}(\mathrm{M}_Z)_\mathbb{R}.
\]
Note that  the matrix of the deformed operator $M^{tx_Z}_t$ can be written
\[
\left[
\begin{array}{cccccc}
0& 0 &0 & \cdots & 0 & t^{\frac{\text{rk}(Z)}{2}} B_0\\
t^{\frac{\text{rk}(Z)}{2}}C &0& 0&\cdots & 0 & t^{\text{rk}(Z)-1}B_1\\
0  &\text{Id} &0&\cdots  &0 &  t^{\text{rk}(Z)-2}B_2\\
  \vdots & \vdots & \ddots &\ddots& \vdots & \vdots \\
0   & 0&\cdots &\text{Id} &0 &t^2B_{z-2} \\
0    &0 & \cdots & 0 &\text{Id} &tB _{z-1}
\end{array}
\right].
\]
At the limit $t= 0$,  the matrix represents the sum  $0\oplus M^{x_Z\otimes 1}$,
and therefore
\begin{align*}
\Big(A^*_+\hspace{0.5mm} ,\hspace{0.5mm}  \lim_{t\to 0} M^{\ell_+(t)}_t\Big)&=\Big( A^*_- \oplus G^*_Z\hspace{0.5mm} ,\hspace{0.5mm}  M^{\ell_+(0)} \oplus M^{1 \otimes \ell_Z}\Big)-
\Big( A^*_- \oplus G^*_Z\hspace{0.5mm} ,\hspace{0.5mm} 0\oplus M^{x_Z\otimes 1} \Big) \\
& =\Big( A^*_- \oplus G^*_Z\hspace{0.5mm} ,\hspace{0.5mm}  M^{\ell_+(0)}\oplus M^{\mathscr{T}}\Big). 
\end{align*}
This completes the proof of the second claim.
\end{proof}

\begin{proof}[Proof of Proposition~\ref{BlowupHR}]
By Proposition~\ref{SumOfSignatures} and Proposition~\ref{PropositionLimit}, 
we know that $\lim_{t \to 0} Q^q_t$ is nondegenerate on $A^q_+$ and has signature
\[
\sum_{p=0}^q (-1)^{q-p}\Big(\text{dim}_\mathbb{R} A^p_+-\text{dim}_\mathbb{R} A^{p-1}_+ \Big) \ \ \text{for all nonnegative $q \le \frac{r}{2}$.}
\]
Therefore the same must be true for $Q^q_t$ for all sufficiently small positive $t$.
By construction, there is an isometry 
\[
\Big(A_+^q\hspace{0.5mm} ,\hspace{0.5mm} Q^q_t \Big)\simeq \Big(A_+^q\hspace{0.5mm} ,\hspace{0.5mm}  Q^q_{\ell_+(t)}\Big),
\]
and thus $A^*_+$ satisfies $\text{HR}(\ell_+(t))$ for all sufficiently small positive $t$.
\end{proof}

\subsection{}

We are now ready to prove the main theorem.
We write ``$\text{deg}$'' for the degree map of $\mathrm{M}$ and, for an order filter $\mathscr{P}$ of $\mathscr{P}_\mathrm{M}$, fix an isomorphism
\[
A^r(\mathrm{M},\mathscr{P}) \longrightarrow \mathbb{Z}, \qquad a \longmapsto \text{deg}\big(\Phi_{\mathscr{P}^c}(a)\big).
\]

\begin{theorem}[Main Theorem]\label{MainTheoremBody}
Let $\mathrm{M}$ be a loopless matroid, and let $\mathscr{P}$ be an order filter of $\mathscr{P}_\mathrm{M}$.
\begin{enumerate}[(1)]\itemsep 5pt
\item The Bergman fan $\Sigma_{\mathrm{M},\mathscr{P}}$ satisfies the hard Lefschetz property.
\item The Bergman fan $\Sigma_{\mathrm{M},\mathscr{P}}$ satisfies the Hodge-Riemann relations.
\end{enumerate}
\end{theorem}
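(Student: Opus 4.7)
The plan is to prove both statements simultaneously by lexicographic induction on the pair (rank of $\mathrm{M}$, $|\mathscr{P}|$). A preliminary reduction via Propositions~\ref{PropositionSimplification} and~\ref{CombinatorialGeometry} will let me assume throughout that $\mathrm{M}$ is a combinatorial geometry, since $\underline{\pi}_{\text{PL}}$ is a graded ring isomorphism $A^*(\mathrm{M},\mathscr{P})_\mathbb{R} \simeq A^*(\overline{\mathrm{M}},\mathscr{P})_\mathbb{R}$ carrying ample cones onto ample cones. The base case is $\mathscr{P} = \varnothing$, where $A^*(\mathrm{M},\varnothing)_\mathbb{R} \simeq \mathbb{R}[x]/(x^{r+1})$, whose ample cone is a ray and whose HR is verified directly in Lemma~\ref{LemmaPP}. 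Since HR implies HL, it will suffice to establish HR for every ample class on $\Sigma_{\mathrm{M},\mathscr{P}}$.

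For the inductive step, write $\mathscr{P}_+ = \mathscr{P}_- \cup \{Z\}$ with $Z$ minimal in $\mathscr{P}_+$ (a matroidal flip, Section~\ref{SectionMatroidalFlip}), and assume the theorem for $(\mathrm{M},\mathscr{P}_-)$ and for every matroid of rank strictly less than $r+1$. My first move will establish the local Hodge-Riemann relations on $\Sigma_{\mathrm{M},\mathscr{P}_+}$: Proposition~\ref{Star} (applicable because $\mathrm{M}$ is a combinatorial geometry) identifies the star of every primitive ray generator $\mathbf{e}$ with a Bergman fan of a matroid of rank $< r+1$, or a product of two such. By the primary induction hypothesis together with Proposition~\ref{TensorHR}, this star satisfies HR with respect to every class in its ample cone; Theorem~\ref{PoincareDuality} then promotes the canonical surjection $\pi_\mathbf{e}$ to an isomorphism $A^*(\text{star}(\mathbf{e},\Sigma_{\mathrm{M},\mathscr{P}_+}))_\mathbb{R} \simeq A^*(\Sigma_{\mathrm{M},\mathscr{P}_+})_\mathbb{R}/\text{ann}(x_\mathbf{e})$, and Proposition~\ref{PropositionAmplePullback}(2) ensures ample classes restrict to ample classes, so local HR holds. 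My second move invokes Proposition~\ref{lHRtoHL} to deduce HL for every ample class on $\Sigma_{\mathrm{M},\mathscr{P}_+}$. My third move produces a single HR-instance: pick an ample $\ell_-$ on $\Sigma_{\mathrm{M},\mathscr{P}_-}$ (existence via Proposition~\ref{Subfan}); the secondary induction hypothesis gives $\text{HR}(\ell_-)$, and since $\ell_Z := \mathrm{p}_Z(\ell_-)$ is ample on $\Sigma_{\mathrm{M}_Z}$ by Proposition~\ref{PropositionAmplePullback}(2), the primary induction hypothesis applied to $\mathrm{M}_Z$ (rank $\text{crk}_\mathrm{M}(Z) < r+1$) gives $\text{HR}(\ell_Z)$. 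Proposition~\ref{BlowupHR} then delivers $\text{HR}(\ell_+(t))$ on $\Sigma_{\mathrm{M},\mathscr{P}_+}$ for all sufficiently small $t > 0$. Finally, Proposition~\ref{ForSomeForAll} propagates HR from this one class to the entire ample cone, closing the induction.

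The main obstacle inside this outline is the first move: one must verify carefully that Proposition~\ref{Star} globalizes into a genuine isomorphism of graded Chow rings identifying $A^*(\Sigma_{\mathrm{M},\mathscr{P}_+})_\mathbb{R}/\text{ann}(x_\mathbf{e})$ with the Chow ring of the corresponding product of Bergman fans in a way compatible with the ample classes, so that both the tensor-product principle of Proposition~\ref{TensorHR} and the primary induction hypothesis can be brought to bear. Every remaining step is a direct assembly of results already proved; the deeper technical heart of the theorem has been absorbed into Proposition~\ref{BlowupHR}, whose delicate signature bookkeeping via the rescaling $S_t$ and the limit computation $\lim_{t \to 0} Q^q_t = Q^q_- \oplus Q^q_Z$ is what supplies the single HR instance that feeds the inductive machine.
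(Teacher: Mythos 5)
Your proposal is correct and follows essentially the same route as the paper's own proof: reduction to the combinatorial geometry, lexicographic induction, local Hodge--Riemann via Proposition~\ref{Star} and Proposition~\ref{TensorHR}, hard Lefschetz via Proposition~\ref{lHRtoHL}, a single Hodge--Riemann instance via Proposition~\ref{BlowupHR}, and propagation over the ample cone via Proposition~\ref{ForSomeForAll}. The ``main obstacle'' you flag (that the star identification is a genuine graded ring isomorphism compatible with ample classes) is exactly the point the paper relies on implicitly, so your outline matches the intended argument.
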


When $\mathscr{P}=\mathscr{P}_\mathrm{M}$, the above implies Theorem \ref{MainTheoremIntroduction} in the introduction because any strictly submodular function defines a strictly convex piecewise linear function on $\Sigma_\mathrm{M}$.

\begin{proof}
We prove  by lexicographic induction on the rank of $\mathrm{M}$ and the cardinality of $\mathscr{P}$.
The base case of the induction is when $\mathscr{P}$ is empty, where we have
\[
A^*(\mathrm{M},\varnothing)_\mathbb{R}\simeq \mathbb{R}[x]/(x^{r+1}), \qquad x_i \longmapsto x.
\]
Under the above identification, the ample cone  is the set of positive multiples of $x$, and the degree $x^r$ is $1$.  It is straightforward to check  in this case that the Bergman fan satisfies the hard Lefschetz property and the Hodge-Riemann relations.

For the general case, we set $\mathscr{P}=\mathscr{P}_+$
and consider the matroidal flip from $\mathscr{P}_-$ to $\mathscr{P}_+$ with center $Z$.
By Proposition~\ref{PropositionSimplification} and Proposition~\ref{CombinatorialGeometry},
we may replace $\mathrm{M}$ by the associated combinatorial geometry $\overline{\mathrm{M}}$.
In this case, Proposition~\ref{Star} shows that the star of every ray in $\Sigma_{\mathrm{M},\mathscr{P}}$ is a product of at most two Bergman fans of matroids (one of which may not be a combinatorial geometry) to which the induction hypothesis on the rank of matroid applies.
We use Proposition~\ref{TensorHR} and  Proposition ~\ref{PropositionAmplePullback} to deduce that the star of every ray in
$\Sigma_{\mathrm{M},\mathscr{P}}$ satisfies the Hodge-Riemann relations, that is,
 $\Sigma_{\mathrm{M},\mathscr{P}}$ satisfies the local Hodge-Riemann relations.
  By Proposition~\ref{lHRtoHL}, this implies that  $\Sigma_{\mathrm{M},\mathscr{P}}$ satisfies the hard Lefschetz property.

Next we show that $\Sigma_{\mathrm{M},\mathscr{P}}$ satisfies the Hodge-Riemann relations.
 Since $\Sigma_{\mathrm{M},\mathscr{P}}$ satisfies the hard Lefschetz property, Proposition~\ref{ForSomeForAll} shows that it is enough to prove  that the Chow ring of $\Sigma_{\mathrm{M},\mathscr{P}}$ satisfies $\text{HR}(\ell)$  for some $\ell \in \mathscr{K}_{\mathrm{M},\mathscr{P}}$.
 Since the induction hypothesis on the size of order filter applies to both $\Sigma_{\mathrm{M},\mathscr{P}_-}$ and $\Sigma_{\mathrm{M}_Z}$, this follows from Proposition~\ref{BlowupHR}.
\end{proof}

We remark that the same inductive approach can be used to prove the following stronger statement (see \cite{Cattani} for an overview of the analogous facts in the context of convex polytopes and compact K\"ahler manifolds).  We leave details  to the interested reader.

\begin{theorem}\label{TheoremMixed}
Let $\mathrm{M}$ be a loopless matroid on $E$, and let $\mathscr{P}$ be an order filter of $\mathscr{P}_\mathrm{M}$.
\begin{enumerate}[(1)]\itemsep 5pt
\item The Bergman fan $\Sigma_{\mathrm{M},\mathscr{P}}$ satisfies the \emph{mixed hard Lefschetz theorem}: For any multiset
\[
\mathscr{L}:=\big\{\ell_1,\ell_2, \ldots, \ell_{r-2q} \big\}  \subseteq \mathscr{K}_{\mathrm{M},\mathscr{P}}, 
\]
 the linear map  given by the multiplication
\[
L_\mathscr{L}^q: A^q(\mathrm{M},\mathscr{P})_{\mathbb{R}} \longrightarrow A^{r-q}(\mathrm{M},\mathscr{P})_{\mathbb{R}}, \qquad a \longmapsto \big( \ell_1 \ell_2 \cdots \ell_{r-2q} \big)  \cdot a
\]
is an isomorphism for all nonnegative integers $q \le \frac{r}{2}$.
\item
The Bergman fan $\Sigma_{\mathrm{M},\mathscr{P}}$ satisfies the \emph{mixed Hodge-Riemann Relations}: For any multiset
\[
\mathscr{L}:=\big\{\ell_1,\ell_2, \ldots, \ell_{r-2q} \big\}  \subseteq \mathscr{K}_{\mathrm{M},\mathscr{P}} \ \ \text{and any} \ \ \ell \in \mathscr{K}_{\mathrm{M},\mathscr{P}},
\]
the symmetric bilinear form given by the multiplication
\[
Q^q_\mathscr{L}: A^q(\mathrm{M},\mathscr{P})_{\mathbb{R}} \times A^q(\mathrm{M},\mathscr{P})_{\mathbb{R}} \longrightarrow \mathbb{R}, \qquad (a_1,a_2) \longmapsto (-1)^q \text{deg}\  \big( a_1 \cdot L_\mathscr{L}^q(a_2)\big)
\]
is positive definite on the kernel of $\ell \cdot L^q_\mathscr{L}$ for all nonnegative integers $q \le \frac{r}{2}$.
\end{enumerate}
\end{theorem}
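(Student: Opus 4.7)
The plan is to carry out the same lexicographic induction on the rank of $\mathrm{M}$ and the cardinality of $\mathscr{P}$ used for Theorem~\ref{MainTheoremBody}, upgrading each of the auxiliary propositions to the multiset setting. The local objects remain the Chow rings of stars of rays in $\Sigma_{\mathrm{M},\mathscr{P}}$, which by Proposition~\ref{Star} are products of Chow rings of smaller matroids after passage to the combinatorial geometry. For a multiset $\mathscr{L}=(\ell_1,\ldots,\ell_{r-2q})$ of classes in an open convex cone $\mathscr{K}\subseteq R^1$ of a Poincar\'e duality algebra $R^*$, the mixed primitive subspace at $\ell\in\mathscr{K}$ will be $P^q_{\mathscr{L},\ell}:=\ker(\ell\cdot L^q_\mathscr{L})$, and mixed HR will mean positivity of $Q^q_\mathscr{L}$ on this subspace.

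The first ingredient is a mixed analogue of Proposition~\ref{TensorHR}: the tensor product of two Poincar\'e duality algebras satisfying mixed HR again satisfies mixed HR with respect to ``sum'' multisets $\mathscr{L}_1\otimes 1+1\otimes\mathscr{L}_2$. The orthogonal Lefschetz decomposition, together with the isometry onto subrings $B^*(v,w)$, carries over because Lemma~\ref{LemmaOrthogonality} depends only on top-degree vanishing, not on the choice of Lefschetz class. The single-case Lindstr\"om--Gessel--Viennot computation of Lemma~\ref{LemmaPP} generalizes by the same lattice-path argument, now counting paths weighted by exponents recording how many copies of each $\ell_i$ are involved. The second ingredient is a mixed analogue of Proposition~\ref{lHRtoHL}: local mixed HR implies mixed HL. If $f\in\ker L^q_\mathscr{L}$, expanding any single entry $\ell_i=\sum_\mathbf{e} c_{i,\mathbf{e}}x_\mathbf{e}$ with $c_{i,\mathbf{e}}>0$ gives the identity $\sum_\mathbf{e} c_{i,\mathbf{e}}Q^q_{\mathscr{L}'_\mathbf{e}}(f_\mathbf{e},f_\mathbf{e})=0$, where $\mathscr{L}'_\mathbf{e}$ is the star restriction of $\mathscr{L}$ with $\ell_i$ removed; each summand is non-negative by local mixed HR, so $x_\mathbf{e} f=0$ for all $\mathbf{e}$ and hence $f=0$.

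The third and hardest ingredient is a mixed analogue of Proposition~\ref{BlowupHR}. Given strictly convex multisets $\mathscr{L}_-=(\ell_-^{(1)},\ldots,\ell_-^{(r-2q)})$ on $\Sigma_{\mathrm{M},\mathscr{P}_-}$ and $\mathscr{L}_Z$ on $\Sigma_{\mathrm{M}_Z}$ satisfying their respective mixed HR, one defines $\ell_+^{(i)}(t):=\Phi_Z(\ell_-^{(i)})-tx_Z$ and aims to show that $\mathscr{L}_+(t):=(\ell_+^{(1)}(t),\ldots,\ell_+^{(r-2q)}(t))$ satisfies mixed HR on $\Sigma_{\mathrm{M},\mathscr{P}_+}$ for sufficiently small $t>0$. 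The rescaling $S_t$ is unchanged, but the deformed form $Q^q_t$ must now be built from the composite operator $\prod_i M^{\ell_+^{(i)}(t)}_t$ in place of $(M^{\ell_+(t)}_t)^{r-2q}$. Each individual factor retains the block upper-triangular shape of the proof of Proposition~\ref{PropositionLimit}, with $t\to 0$ limit $M^{\ell_+^{(i)}(0)}\oplus M^{\mathscr{T}_i}$; combining these factor by factor should yield $\lim_{t\to 0}Q^q_t=Q^q_{\mathscr{L}_-}\oplus Q^q_{\mathscr{L}_Z}$, and the signature of the limit is read off as in Proposition~\ref{SumOfSignatures} using the induction hypothesis on both smaller matroid configurations.

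The final step, a mixed analogue of Proposition~\ref{ForSomeForAll}, propagates mixed HR from a single multiset to all multisets. Once mixed hard Lefschetz has been established via the previous three steps, the form $Q^q_\mathscr{L}$ is nondegenerate for every $\mathscr{L}\in\mathscr{K}_{\mathrm{M},\mathscr{P}}^{r-2q}$; by continuity and the convexity (hence connectedness) of this parameter space its signature is constant, and on the diagonal it is the Hodge--Riemann value provided by Theorem~\ref{MainTheoremBody}. The mixed analogue of Proposition~\ref{HRCharacterization} then upgrades signature agreement to positivity of $Q^q_\mathscr{L}$ on $\ker(\ell\cdot L^q_\mathscr{L})$ for every $\ell\in\mathscr{K}_{\mathrm{M},\mathscr{P}}$. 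I expect the main obstacle to be the block-limit computation in the mixed matroidal flip step: the bookkeeping of cross-terms coming from composing several distinct deformed operators replaces the clean power $(M^{\ell_+(t)}_t)^{r-2q}$, and one must check that no unexpected off-block-diagonal contributions survive the limit $t\to 0$ and that the resulting direct-sum decomposition indeed matches $Q^q_{\mathscr{L}_-}\oplus Q^q_{\mathscr{L}_Z}$ term by term.
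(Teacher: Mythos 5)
The paper itself gives no written proof of this theorem beyond the remark that ``the same inductive approach can be used,'' so your proposal is being measured against the intended strategy rather than an explicit argument. Your overall architecture --- local mixed Hodge--Riemann on stars implies mixed hard Lefschetz, then a constancy-of-signature argument over the connected parameter space of multisets --- is the intended one, and your local-to-global step and your final deformation step are correct as sketched. There is, however, a genuine gap in your tensor-product step. The decomposition $R^*=\bigoplus B^*(v_i^p,w_j^q)$ in the proof of Proposition~\ref{TensorHR} is built from the Lefschetz decompositions of $R_1^*$ and $R_2^*$ with respect to a \emph{single} pair $(\ell_1,\ell_2)$: each summand is the $\mathbb{R}[\ell_1,\ell_2]$-submodule generated by a pair of primitive vectors. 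When the multiset contains classes $\ell_1^{(i)}\otimes 1+1\otimes\ell_2^{(i)}$ with varying $\ell_1^{(i)},\ell_2^{(i)}$, these summands are not stable under multiplication by the other members of the multiset, so the decomposition is neither preserved nor $Q^q_{\mathscr{L}}$-orthogonal (Lemma~\ref{LemmaOrthogonality}(3) needs $\mathscr{L}\cdot V_1^*\subseteq V_1^*$), and the reduction to the two-variable truncated polynomial ring --- where your weighted Lindstr\"om--Gessel--Viennot count would indeed work --- breaks down. This matters, because local mixed HR on stars drives the whole induction and stars are products. A clean repair is to enlarge the induction class to finite products of Bergman fans: the star of a ray in such a product is again such a product of smaller total dimension, so the local-to-global argument applies verbatim, and the signature anchor for a product is supplied by the already-proved unmixed Proposition~\ref{TensorHR} together with Theorem~\ref{MainTheoremBody}.

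A second, less serious point: your ``third and hardest ingredient,'' the mixed analogue of Proposition~\ref{BlowupHR}, is unnecessary. In the unmixed proof the matroidal flip serves only to produce \emph{one} class in $\mathscr{K}_{\mathrm{M},\mathscr{P}}$ satisfying HR, which Proposition~\ref{ForSomeForAll} then propagates. In the mixed setting that anchor already exists: once mixed hard Lefschetz is known for all multisets, the space $\mathscr{K}_{\mathrm{M},\mathscr{P}}^{\, r-2q}$ is convex, the signature of $Q^q_{\mathscr{L}}$ is constant on it, and on the diagonal $\ell_1=\cdots=\ell_{r-2q}$ it is computed by Theorem~\ref{MainTheoremBody} and Proposition~\ref{HRCharacterization}. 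You should therefore drop the mixed flip entirely and instead spell out the mixed analogue of Proposition~\ref{HRCharacterization}: this needs the $Q^q_{\mathscr{L}}$-orthogonal decomposition $A^q(\mathrm{M},\mathscr{P})_{\mathbb{R}}=\ker(\ell\cdot L^q_{\mathscr{L}})\oplus\ell\, A^{q-1}(\mathrm{M},\mathscr{P})_{\mathbb{R}}$ and an induction on $q$ using the augmented multisets $\mathscr{L}\cup\{\ell,\ell\}$, which is where the remaining (routine) work lies.
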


\section{Log-concavity conjectures}\label{SectionLCConjectures}

\subsection{}

Let $\mathrm{M}$ be a loopless matroid of rank $r+1$ on the ground set $E=\{0,1,\ldots,n\}$.
The \emph{characteristic polynomial}  of $\mathrm{M}$ is defined to be
\[
\chi_\mathrm{M}( \lambda)=\sum_{I \subseteq E} (-1)^{|I|}\  \lambda^{\text{crk}(I)},
\]
where the sum is over all subsets $I \subseteq E$ and $\text{crk}(I)$ is the corank of $I$ in $\mathrm{M}$.
Equivalently, 
\[
\chi_\mathrm{M}( \lambda)  = \sum_{F \subseteq E} \mu_\mathrm{M}(\varnothing,F)\   \lambda^{\text{crk}(F)},
\]
where the sum is over all flats $F \subseteq E$ and $\mu_\mathrm{M}$ is the M\"obius function of the lattice of flats of $\mathrm{M}$.
Any one of the two descriptions clearly shows that
\begin{enumerate}[(1)] \itemsep 2pt
\item the degree of the characteristic polynomial is $r+1$,
\item the leading coefficient of the characteristic polynomial is $1$, and
\item the characteristic polynomial satisfies $\chi_\mathrm{M}(1)=0$.
\end{enumerate}
See \cite{Zaslavsky,AignerEncyclopedia} for basic properties of the characteristic polynomial and its coefficients.

\begin{definition}
The \emph{reduced characteristic polynomial} $\overline{\chi}_\mathrm{M}( \lambda)$ is
\[
\overline{\chi}_\mathrm{M}( \lambda):=\chi_\mathrm{M}( \lambda)/ ( \lambda-1).
\]
We define a sequence of integers $\mu^0(\mathrm{M}),\mu^1(\mathrm{M}),\ldots,\mu^r(\mathrm{M})$ by the equality
\[
\overline{\chi}_\mathrm{M}( \lambda)=\sum_{k=0}^r (-1)^k\mu^k(\mathrm{M}) \hspace{0.5mm} \lambda^{r-k}.
\]
\end{definition}

The first number in the sequence is $1$, and the last number in the sequence is the absolute value of the M\"obius number $\mu_\mathrm{M}(\varnothing,E)$.
In general, $\mu^k(\mathrm{M})$ is the alternating sum of the absolute values of the coefficients of the characteristic polynomial
\[
\mu^k(\mathrm{M})=w_k(\mathrm{M})-w_{k-1}(\mathrm{M})+\cdots+(-1)^k w_0(\mathrm{M}).
\]
We will show that the Hodge-Riemann relations for $A^*(\mathrm{M})_\mathbb{R}$ imply the log-concavity 
\[
\mu^{k-1}(\mathrm{M})\mu^{k+1}(\mathrm{M}) \le \mu^k(\mathrm{M})^2 \quad \text{for} \quad 0<k<r.
\]  
Because the convolution of two log-concave sequences is log-concave, the above implies  the log-concavity of the sequence $w_k(\mathrm{M})$.

\begin{definition}
Let $\mathscr{F}=\{F_1\subsetneq F_2\subsetneq\cdots\subsetneq F_k\}$ be a $k$-step flag of nonempty proper flats of $\mathrm{M}$.
\begin{enumerate}[(1)]\itemsep 5pt
\item The flag $\mathscr{F}$ is said to be \emph{initial} if $r(F_m)=m$ for all indices $m$.
\item The flag $\mathscr{F}$ is said to be \emph{descending} if $\text{min}(F_1)>\text{min}(F_2)>\cdots>\text{min}(F_k)>0$.
\end{enumerate}
We write $\text{D}_k(\mathrm{M})$ for the set of initial descending $k$-step flags of nonempty proper flats of $\mathrm{M}$.
\end{definition}

Here, the usual ordering of the ground set $E=\{0,1,\ldots,n\}$ is used to define $\text{min}(F)$.

For inductive purposes it will be useful to consider the \emph{truncation} of $\mathrm{M}$, denoted $\text{tr}(\mathrm{M})$.
This is the matroid on $E$ whose rank function is defined by
\[
\text{rk}_{\text{tr}(\mathrm{M})}(I):=\text{min}(\text{rk}_\mathrm{M}(I),r).
\]
The lattice of flats of $\text{tr}(\mathrm{M})$ is obtained from the lattice of flats of $\mathrm{M}$ by removing all the flats of rank $r$.
It follows that, for any nonnegative integer $k<r$, there is a bijection
\[
\text{D}_k(\mathrm{M}) \simeq \text{D}_k(\text{tr}(\mathrm{M})),
\]
and an equality between the coefficients of the reduced characteristic polynomials
\[
\mu^k(\mathrm{M})=\mu^k(\text{tr}(\mathrm{M})).
\]
The second equality shows that all the integers $\mu^k(\mathrm{M})$ are  positive, see \cite[Theorem 7.1.8]{Zaslavsky}.

\begin{lemma} \label{LemmaShelling} 
For every positive integer $k \le r$, we have 
\[
\mu^k(\mathrm{M})=|\text{D}_k(\mathrm{M})|.
\]
\end{lemma}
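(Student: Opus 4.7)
The plan is to prove the lemma by constructing an explicit bijection via the classical no-broken-circuits (NBC) theory, where a \emph{broken circuit} of $\mathrm{M}$ is a set $C \setminus \{\min C\}$ for some circuit $C$, and a subset is \emph{NBC} if it is independent and contains no broken circuit.

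First I invoke Whitney's theorem: $w_k(\mathrm{M})$ equals the number of $k$-element NBC subsets of $E$. Since $0$ never lies in a broken circuit, adjoining $0$ defines a bijection between NBC sets of size $k-1$ not containing $0$ and NBC sets of size $k$ containing $0$: the result is independent (otherwise $0$ would be the minimum of a circuit, so removing it would produce a broken circuit already in $I$) and NBC. Partitioning NBC sets by membership of $0$ and unwinding the resulting recursion yields
\[
\mu^k(\mathrm{M}) = w_k(\mathrm{M}) - w_{k-1}(\mathrm{M}) + \cdots + (-1)^k w_0(\mathrm{M}) = \big|\{I \text{ NBC} : |I|=k,\ 0 \notin I\}\big|.
\]

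Next I build mutually inverse maps between this set and $\text{D}_k(\mathrm{M})$. The forward map $\Phi$ sends $I = \{i_1 > i_2 > \cdots > i_k\}$ (with $i_k \ge 1$) to the flag $F_j := \text{cl}_\mathrm{M}(\{i_1,\dots,i_j\})$. Independence of $I$ yields $\text{rk}(F_j) = j$, so the flag is initial. The descending property reduces to the identity $\min F_j = i_j$, which is the crux of the argument: if some $a < i_j$ lay in $F_j$, then $\{a\} \cup \{i_1,\dots,i_j\}$ would contain a circuit $C \ni a$ with $\min C \le a < i_j$; minimality forces $\min C = a$, whence $C \setminus \{a\}$ is a broken circuit contained in $I$, contradicting NBC. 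The inverse map $\Psi$ sends a flag $F_\bullet \in \text{D}_k(\mathrm{M})$ to $I := \{\min F_1,\dots,\min F_k\}$; writing $i_j := \min F_j$, a short induction on $j$ (using $i_j < \min F_{j-1}$, hence $i_j \notin F_{j-1}$) gives $F_j = \text{cl}(\{i_1,\dots,i_j\})$, making $I$ a $k$-element independent set. To check NBC, suppose $C \setminus \{a\} \subseteq I$ with $a := \min C$; let $j^*$ be the largest index with $i_{j^*} \in C$. Then $C \setminus \{a\} \subseteq F_{j^*}$ forces $a \in \text{cl}(C \setminus \{a\}) \subseteq F_{j^*}$, contradicting $a < i_{j^*} = \min F_{j^*}$.

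That $\Phi$ and $\Psi$ are mutually inverse follows immediately from the identity $\min F_j = i_j$ proved above. The main obstacle is the NBC verification in both directions, but both instances reduce to this single clean observation, so the argument is essentially forced once the NBC framework is in place.
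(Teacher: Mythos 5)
Your proof is correct, but it takes a genuinely different route from the one in the paper. The paper disposes of the lemma in two lines: for $k=r$ it cites Bj\"orner's result that $\mu^r(\mathrm{M})$ counts the facets of $\Delta_\mathrm{M}$ glued along their entire boundaries in the lexicographic shelling (these being exactly the maximal initial descending flags), and it then reduces general $k$ to the case $k=r$ via the truncation identities $\mu^k(\mathrm{M})=\mu^k(\text{tr}(\mathrm{M}))$ and $\text{D}_k(\mathrm{M})\simeq \text{D}_k(\text{tr}(\mathrm{M}))$ recorded just before the lemma. You instead give a self-contained bijective argument: Whitney's NBC theorem, the telescoping identity $\mu^k(\mathrm{M})=\#\{\text{NBC $k$-sets avoiding }0\}$ (using that $0$ lies in no broken circuit), and an explicit bijection with $\text{D}_k(\mathrm{M})$ whose engine is the identity $\min F_j = i_j$. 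Your route treats all $k\le r$ uniformly with no truncation and in effect reproves the shelling input from scratch, which makes the lemma independent of the cited literature; the paper's route is shorter but outsources the combinatorial content. One small point you should make explicit in the NBC verification for $\Psi$: given a circuit $C$ with $a=\min C$ and $C\setminus\{a\}\subseteq I$, first rule out $a\in I$ (that would give $C\subseteq I$, contradicting the independence of $I$ you have just established); only then is $a\neq i_{j^*}$, hence $a<i_{j^*}=\min F_{j^*}$, and your contradiction with $a\in F_{j^*}$ goes through. With that sentence added, the argument is complete.
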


\begin{proof}
The assertion for $k=r$ is  the known fact that 
$\mu^r(\mathrm{M})$ is the number of facets of  $\Delta_\mathrm{M}$ that are glued along their entire boundaries in its lexicographic shelling; see \cite[Proposition 7.6.4]{Bjorner}.
The general case is obtained from the same equality applied to repeated truncations of $\mathrm{M}$.  See \cite[Proposition 2.4]{Huh-Katz} for an alternative approach using Weisner's theorem.
\end{proof}

We now show that $\mu^k(\mathrm{M})$ is the degree of the product $\alpha_\mathrm{M}^{r-k} \hspace{0.5mm} \beta_\mathrm{M}^k$. 
See Definition~\ref{DefinitionAlphaBeta} for the elements $\alpha_\mathrm{M}, \beta_\mathrm{M} \in A^1(\mathrm{M})$, and
Definition~\ref{MatroidDegreeMap} for the degree map of $\mathrm{M}$.

\begin{lemma}\label{LemmaBetaPower}
For every positive integer $k \le r$, we have
\[
\beta^k_\mathrm{M}=\sum_{\mathscr{F}} x_\mathscr{F} \in A^*(\mathrm{M}),
\]
where the sum is over all descending $k$-step flags of nonempty proper flats of $\mathrm{M}$.
\end{lemma}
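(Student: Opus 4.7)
The plan is to induct on $k$. For the base case $k=1$, choose the representative $\beta_{\mathrm{M}} = \sum_{0\notin F} x_F$; recalling that $x_i$ vanishes in $A^*(\mathrm{M})$ so that every choice of element in $E$ gives the same class, the condition $0 \notin F$ for a nonempty proper flat $F$ is exactly $\min(F)>0$, which is the definition of a descending $1$-step flag, so the claim is immediate.

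For the inductive step, I would expand $\beta_{\mathrm{M}}^{k+1}=\beta_{\mathrm{M}}\cdot\beta_{\mathrm{M}}^{k}$ and, using the inductive hypothesis, analyze $\beta_{\mathrm{M}}\cdot x_\mathscr{F}$ for a fixed descending $k$-flag $\mathscr{F}=\{F_1\subsetneq\cdots\subsetneq F_k\}$. The key device is a flag-dependent representative: use $\beta_{\mathrm{M}}=\sum_{j\notin G} x_G$ with $j:=\min(F_1)$. Since $\mathscr{F}$ is descending, $\min(F_1)$ is the largest of the minima in $\mathscr{F}$, and $\min(F_1)\in F_1\subseteq F_l$ for every $l$, so $j$ lies in every member of $\mathscr{F}$.

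Expanding $\beta_{\mathrm{M}}\cdot x_\mathscr{F} = \sum_{j\notin G} x_G x_\mathscr{F}$, the incomparability relations kill every term in which $G$ is incomparable with some $F_l$. Among the surviving possibilities for $G$---namely $G=F_l$, $G\supsetneq F_k$, $F_l\subsetneq G\subsetneq F_{l+1}$ for some $1\le l\le k-1$, or $G\subsetneq F_1$---the first three all force $G\supseteq F_1$ and hence $j\in G$, contradicting $j\notin G$. Thus only terms with $G\subsetneq F_1$ and $\min(F_1)\notin G$, equivalently $\min(G)>\min(F_1)$, survive, and the sum collapses to $\sum_{G\subsetneq F_1,\,\min(G)>\min(F_1)} x_G\, x_\mathscr{F}$.

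Each such $G$ prepends $\mathscr{F}$ into a descending $(k+1)$-flag $\{G\subsetneq F_1\subsetneq\cdots\subsetneq F_k\}$, and every descending $(k+1)$-flag arises uniquely this way, namely by removing its smallest member. Summing over all descending $k$-flags then yields the claim for $k+1$. The main obstacle I anticipate is justifying the vanishing of the degenerate $G=F_l$ and middle-insertion terms; this is precisely where the tailored choice $j=\min(F_1)$ does the work, since a less judicious representative (say $j=\min(F_k)$) would leave residual contributions $x_{F_l}^2\prod_{m\ne l} x_{F_m}$ and non-descending insertions that one would then have to cancel by hand using the linear relations.
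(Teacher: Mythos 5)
Your proof is correct and follows essentially the same route as the paper's: induction on $k$, with the inductive step carried out by choosing the flag-dependent representative $\beta_{\mathrm{M}} = \sum_{j \notin G} x_G$ with $j = \min(F_1)$, so that the incomparability relations together with $j \in F_1 \subseteq F_l$ leave exactly the terms that prepend a new smallest flat to $\mathscr{F}$. The only difference is that you spell out the case analysis of the surviving $G$'s, which the paper leaves implicit.
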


\begin{proof}
We prove by induction on the positive integer $k$.  
When $k=1$, the assertion is precisely that $\beta_{\mathrm{M},0}$ represents $\beta_{\mathrm{M}}$ in the Chow ring of $\mathrm{M}$:
\[
\beta_\mathrm{M} =\beta_{\mathrm{M},0}= \sum_{0 \notin F} x_F \in A^*(\mathrm{M}).
\]
In the general case, we use the induction hypothesis for $k$ to write
\[
\beta^{k+1}_\mathrm{M}=  \sum_\mathscr{F} \beta_\mathrm{M}\hspace{0.5mm} x_\mathscr{F},
\]
where the sum is over all descending $k$-step flags of nonempty proper flats of $\mathrm{M}$.
For each of the summands $\beta_\mathrm{M}\hspace{0.5mm} x_\mathscr{F}$, we write
\[
\mathscr{F}=\big\{F_1\subsetneq F_2\subsetneq\cdots\subsetneq F_k\big\}, \ \ \text{and set} \ \ i_\mathscr{F}:= \text{min}(F_1).
\]
By considering the representative  of $\beta_\mathrm{M}$ corresponding to the element $i_\mathscr{F}$, we see that
\[
\beta_\mathrm{M}\hspace{0.5mm} x_\mathscr{F}=\Big(\sum_{i_\mathscr{F} \notin F} x_F \Big)x_\mathscr{F}=\sum_{\mathscr{G}} x_{\mathscr{G}},
\]
where the second sum is over all descending flags of nonempty proper flats of $\mathrm{M}$ of the form
\[
\mathscr{G}=\big\{F \subsetneq F_1 \subsetneq \cdots \subsetneq F_k\big\}.
\]
This complete the induction.
\end{proof}

Combining Lemma~\ref{LemmaShelling}, Lemma~\ref{LemmaBetaPower}, and Proposition~\ref{PropositionFundamentalClass},
we see that the coefficients of the reduced characteristic polynomial of $\mathrm{M}$ are given by the degrees of the products $\alpha_\mathrm{M}^{r-k}\hspace{0.5mm} \beta_\mathrm{M}^k$:

\begin{proposition}\label{PropositionIntersectionDegree}
For every nonnegative integer $k \le r$, we have
\[
\mu^k(\mathrm{M})=\deg(\alpha_\mathrm{M}^{r-k} \hspace{0.5mm}\beta_\mathrm{M}^k).
\]
\end{proposition}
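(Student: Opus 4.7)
The proof is essentially an assembly of the three preceding results. The plan is to start from the expansion of $\beta_\mathrm{M}^k$ as a signed-free sum of monomials $x_\mathscr{F}$ over descending flags (Lemma~\ref{LemmaBetaPower}), multiply by $\alpha_\mathrm{M}^{r-k}$, and then apply Proposition~\ref{PropositionFundamentalClass} termwise to see that the only surviving contributions come from flags that are simultaneously descending and initial.

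More precisely, I would argue as follows. By Lemma~\ref{LemmaBetaPower},
\[
\alpha_\mathrm{M}^{r-k}\hspace{0.5mm}\beta_\mathrm{M}^{k} \;=\; \sum_{\mathscr{F}} x_{\mathscr{F}}\hspace{0.5mm}\alpha_\mathrm{M}^{r-k} \in A^r(\Sigma_\mathrm{M}),
\]
where $\mathscr{F}=\{F_1\subsetneq\cdots\subsetneq F_k\}$ ranges over descending $k$-step flags of nonempty proper flats. Proposition~\ref{PropositionFundamentalClass} tells us that $x_{\mathscr{F}}\hspace{0.5mm}\alpha_\mathrm{M}^{r-k}=0$ whenever the flag $\mathscr{F}$ fails to be initial (some $F_m$ has rank $\ne m$), and that $x_{\mathscr{F}}\hspace{0.5mm}\alpha_\mathrm{M}^{r-k}=\alpha_\mathrm{M}^{r}$ whenever $\mathscr{F}$ is initial. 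Hence the only contributing flags $\mathscr{F}$ are precisely the initial descending $k$-step flags, i.e.\ the elements of $\text{D}_k(\mathrm{M})$, and each contributes $\alpha_\mathrm{M}^{r}$.

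Taking degrees and using Proposition~\ref{PropositionDegreeMap} ($\deg(\alpha_\mathrm{M}^r)=1$), we obtain
\[
\deg(\alpha_\mathrm{M}^{r-k}\hspace{0.5mm}\beta_\mathrm{M}^{k}) \;=\; |\text{D}_k(\mathrm{M})|,
\]
which equals $\mu^k(\mathrm{M})$ by Lemma~\ref{LemmaShelling}. The case $k=0$ is the trivial identity $\mu^0(\mathrm{M})=\deg(\alpha_\mathrm{M}^r)=1$. There is no real obstacle to this argument; all the substance is contained in the three lemmas we are invoking. If any care is needed it is only in noting that Lemma~\ref{LemmaBetaPower} gives the expansion as a sum of distinct monomials over descending flags (so there is no overcounting), so that the termwise application of Proposition~\ref{PropositionFundamentalClass} produces exactly one copy of $\alpha_\mathrm{M}^r$ per element of $\text{D}_k(\mathrm{M})$.
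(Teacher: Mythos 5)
Your proof is correct and is exactly the argument the paper intends: the paper itself derives Proposition~\ref{PropositionIntersectionDegree} by "combining Lemma~\ref{LemmaShelling}, Lemma~\ref{LemmaBetaPower}, and Proposition~\ref{PropositionFundamentalClass}," which is precisely your termwise computation, together with $\deg(\alpha_\mathrm{M}^r)=1$ and the separate trivial case $k=0$.
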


We illustrate the proof of the above formula 
for the rank $3$ uniform matroid $\mathrm{U}$ on $\{0,1,2,3\}$ with  flats
\[
\varnothing, \ \{0\}, \ \{1\}, \ \{2\}, \ \{3\},\ \{0,1\},\ \{0,2\},\ \{0,3\},\ \{1,2\},\ \{1,3\},\ \{2,3\}, \ \{0,1,2,3\}.
\]
The constant term $\mu^2(\mathrm{U})$ of the reduced characteristic polynomial of $\mathrm{U}$ is $3$, which is the size of the set of initial descending $2$-step flags of nonempty proper flats,
\[
\text{D}_2(\mathrm{U})=\Big\{\{2\} \subseteq \{1,2\},\ \{3\} \subseteq \{1,3\},\ \{3\} \subseteq \{2,3\} \Big\}.
\]
In the Chow ring of $\mathrm{U}$, we have $\beta_{\mathrm{U},1}=\beta_{\mathrm{U},2}=\beta_{\mathrm{U},3}$ by the linear relations, and hence
\begin{align*}
\beta_{\mathrm{U}}^2 &=\beta_\mathrm{U}(x_1+x_2+x_3+x_{12}+x_{13}+x_{23}) \\
&=\beta_{\mathrm{U},1} (x_1+x_{12}+x_{13})+\beta_{\mathrm{U},2} (x_2+x_{23})+\beta_{\mathrm{U},3} (x_3)\\
&=(x_0+x_2+x_3+x_{02}+x_{03}+x_{23}) (x_1+x_{12}+x_{13})\\
&\quad+(x_0+x_1+x_3+x_{01}+x_{03}+x_{13}) (x_2+x_{23})\\
&\quad +(x_0+x_1+x_2+x_{01}+x_{02}+x_{12}) (x_3).
\end{align*}
Using the incomparability relations, we see that there are only three nonvanishing terms in the expansion of the last expression, each corresponding to one of the three initial descending flag of flats:
\[
\beta^2=x_2x_{12}+x_3x_{13}+x_3x_{23}.
\]
\subsection{}

Now we explain why the Hodge-Riemann relations imply the log-concavity of the reduced characteristic polynomial.   
We first state a lemma involving inequalities among degrees of products:

\begin{lemma} \label{lem:ampleinequality}
Let $\ell_1$ and $\ell_2$ be elements of $A^1(\mathrm{M})_\mathbb{R}$.
If $\ell_2$ is nef, then 
\[
\text{deg}(\ell_1 \hspace{0.5mm} \ell_1 \hspace{0.5mm} \ell_2^{r-2})\ \text{deg}(\ell_2\hspace{0.5mm}  \ell_2 \hspace{0.5mm}  \ell_2^{r-2})\leq \text{deg}(\ell_1\hspace{0.5mm}  \ell_2 \hspace{0.5mm}  \ell_2^{r-2})^2.
\]
\end{lemma}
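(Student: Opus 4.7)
The plan is to apply the Hodge--Riemann relations in degree $q=1$ from Theorem~\ref{MainTheoremBody} to the class $\ell_2$, viewed as inducing the symmetric bilinear form
\[
B(a_1,a_2) := \deg(a_1\, a_2\, \ell_2^{r-2})
\]
on $A^1(\mathrm{M})_{\mathbb{R}}$. The desired inequality is the familiar Hodge index / Khovanskii--Teissier statement: the form $B$ has at most one positive direction, so on the plane spanned by $\ell_1$ and $\ell_2$ it satisfies a reversed Cauchy--Schwarz inequality.

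First I would reduce to the case in which $\ell_2$ is ample rather than merely nef. By Proposition~\ref{MatroidKleiman}, the nef cone $\mathscr{N}_\mathrm{M}$ is the closure of the ample cone $\mathscr{K}_\mathrm{M}$, so any nef class can be written as $\ell_2 = \lim_{t\to 0^+}(\ell_2 + t\,\ell_0)$ for any fixed $\ell_0 \in \mathscr{K}_\mathrm{M}$, with each $\ell_2 + t\,\ell_0$ ample for $t>0$. Both sides of the asserted inequality are polynomial in $t$, hence continuous, so it suffices to prove the inequality when $\ell_2$ is ample.

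Assume now $\ell_2 \in \mathscr{K}_\mathrm{M}$. Theorem~\ref{MainTheoremBody} with $q = 1$ says that $Q^1_{\ell_2}(a_1,a_2) = -B(a_1,a_2)$ is positive definite on the primitive subspace
\[
P^1_{\ell_2} = \big\{\, a \in A^1(\mathrm{M})_{\mathbb{R}} \;\big|\; \deg(\ell_2^{\,r-1}\, a) = 0 \,\big\}.
\]
Applying Theorem~\ref{MainTheoremBody} in degree $q = 0$ gives $\deg(\ell_2^{\,r}) = Q^0_{\ell_2}(1,1) > 0$, so $B(\ell_2,\ell_2)>0$. The hard Lefschetz property furnishes the Lefschetz decomposition $A^1(\mathrm{M})_{\mathbb{R}} = \mathbb{R}\,\ell_2 \oplus P^1_{\ell_2}$, and this decomposition is $B$-orthogonal because $B(\ell_2,a) = \deg(\ell_2^{\,r-1}\,a) = 0$ for every $a \in P^1_{\ell_2}$.

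Now I would write $\ell_1 = c\,\ell_2 + v$ with $c \in \mathbb{R}$ and $v \in P^1_{\ell_2}$. By orthogonality,
\[
B(\ell_1,\ell_2) = c\,B(\ell_2,\ell_2), \qquad B(\ell_1,\ell_1) = c^2\, B(\ell_2,\ell_2) + B(v,v),
\]
and Hodge--Riemann gives $B(v,v) \le 0$. Multiplying the second identity by the positive quantity $B(\ell_2,\ell_2)$ yields
\[
B(\ell_1,\ell_1)\, B(\ell_2,\ell_2) \;=\; c^2\, B(\ell_2,\ell_2)^2 + B(v,v)\, B(\ell_2,\ell_2) \;\le\; c^2\, B(\ell_2,\ell_2)^2 \;=\; B(\ell_1,\ell_2)^2,
\]
which is exactly the claimed inequality. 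The only non-routine point is the nef-to-ample reduction, but this is handled cleanly by Proposition~\ref{MatroidKleiman}; the substance of the argument is the signature calculation, which is delivered essentially for free by the $q=1$ Hodge--Riemann relations established in Theorem~\ref{MainTheoremBody}.
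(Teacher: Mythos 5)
Your proof is correct and follows essentially the same route as the paper: both reduce the nef case to the ample case by perturbing with an ample class and passing to the limit, and both derive the inequality from the degree-one Hodge--Riemann relations via the $B$-orthogonal Lefschetz decomposition $A^1(\mathrm{M})_\mathbb{R}=\mathbb{R}\,\ell_2\oplus P^1_{\ell_2}$ (the paper phrases the final step as a signature statement on the plane $\langle\ell_1,\ell_2\rangle$ rather than your explicit expansion $\ell_1=c\,\ell_2+v$, but this is the same computation).
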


\begin{proof}
We first prove the statement when $\ell_2$ is ample.
Let $Q^1_{\ell_2}$ be the Hodge-Riemann form 
\[
Q^1_{\ell_2}:A^1(\mathrm{M})_\mathbb{R} \times A^1(\mathrm{M})_\mathbb{R} \longrightarrow \mathbb{R}, \qquad (a_1,a_2) \longmapsto -\text{deg}(a_1\hspace{0.5mm}\ell_2^{r-2} \hspace{0.5mm} a_2 ).
\]
Theorem \ref{MainTheoremBody} for $\mathscr{P}=\mathscr{P}_\mathrm{M}$ shows that the Chow ring $A^*(\mathrm{M})$ satisfies $\text{HL}(\ell_2)$ and $\text{HR}(\ell_2)$.
The property $\text{HL}(\ell_2)$ gives the Lefschetz decomposition
\[
A^1(\mathrm{M})_\mathbb{R}=\langle \ell_2 \rangle \oplus P^1_{\ell_2}(\mathrm{M}),
\]
which is orthogonal with respect to the Hodge-Riemann form $Q^1_{\ell_2}$. 
The property $\text{HR}(\ell_2)$ says that $Q^1_{\ell_2}$ is negative definite on 
$\langle \ell_2 \rangle$ and positive definite on its orthogonal complement $P^1_{\ell_2}(\mathrm{M})$.

Consider the restriction of $Q^1_{\ell_2}$ to the subspace $\langle \ell_1,\ell_2 \rangle \subseteq A^1(\mathrm{M})_\mathbb{R}$. 
Either $\ell_1$ is a multiple of $\ell_2$ or the restriction of $Q^1_{\ell_2}$ is indefinite,
and hence
\[
\text{deg}(\ell_1 \hspace{0.5mm} \ell_1 \hspace{0.5mm} \ell_2^{r-2})\ \text{deg}(\ell_2\hspace{0.5mm}  \ell_2 \hspace{0.5mm}  \ell_2^{r-2})\leq \text{deg}(\ell_1\hspace{0.5mm}  \ell_2 \hspace{0.5mm}  \ell_2^{r-2})^2.
\]

Next we prove the statement when $\ell_2$ is nef.
The discussion below Proposition~\ref{PropositionAmplePullback} shows that the ample cone $\mathscr{K}_\mathrm{M}$ is nonempty. 
Choose any ample class $\ell$, and use the assumption that $\ell_2$ is nef to deduce that
\[
\ell_2(t):=\ell_2+t \hspace{0.5mm} \ell  \ \text{is ample for all positive real numbers $t$.}
\]
Using the first part of the proof, we get, for any positive real number $t$, 
\[
\text{deg}(\ell_1 \hspace{0.5mm} \ell_1 \hspace{0.5mm} \ell_2(t)^{r-2})\ \text{deg}(\ell_2(t)\hspace{0.5mm}  \ell_2(t) \hspace{0.5mm}  \ell_2(t)^{r-2})\leq \text{deg}(\ell_1\hspace{0.5mm}  \ell_2(t) \hspace{0.5mm}  \ell_2(t)^{r-2})^2.
\]
By taking the limit $t \to 0$, we obtain the desired inequality.
\end{proof}

\begin{lemma}\label{AlphaNefBetaNef}
Let $\mathrm{M}$ be a loopless matroid.
\begin{enumerate}[(1)]\itemsep 5pt
\item The element $\alpha_\mathrm{M}$ is the class of a convex piecewise linear function on $\Sigma_\mathrm{M}$.
\item The element $\beta_\mathrm{M}$ is the class of a convex piecewise linear function on $\Sigma_\mathrm{M}$.
\end{enumerate}
In other words, $\alpha_\mathrm{M}$ and $\beta_\mathrm{M}$ are nef.
\end{lemma}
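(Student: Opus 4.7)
The plan is to exhibit explicit convex piecewise linear representatives of $\alpha_\mathrm{M}$ and $\beta_\mathrm{M}$ defined on all of $N_{E,\mathbb{R}}$, which \emph{a fortiori} restrict to convex piecewise linear functions on $\Sigma_\mathrm{M}$.

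Fixing any element $i \in E$, I would consider the two functions
\[
\psi_i(\mathbf{u}) := u_i - \min_{j \in E} u_j, \qquad \phi_i(\mathbf{u}) := \max_{j \in E} u_j - u_i,
\]
defined on $N_{E,\mathbb{R}} = \mathbb{R}^E/\langle \mathbf{e}_E\rangle$; the ambiguity in choosing a lift cancels because translating by $\mathbf{e}_E$ shifts both summands by the same constant. Each is the sum of a linear function and a convex function (namely $-\min_j u_j$ or $\max_j u_j$), so both are globally convex functions on $N_{E,\mathbb{R}}$.

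The second step is to check that these functions, when restricted to $\Sigma_\mathrm{M}$, represent the classes $\alpha_{\mathrm{M},i} = \sum_{i \in F} x_F$ and $\beta_{\mathrm{M},i} = \sum_{i \notin F} x_F$. The primitive ray generators of $\Sigma_\mathrm{M}$ are the vectors $\mathbf{e}_F$ for nonempty proper flats $F$, and since $F$ is a proper subset of $E$ a direct computation gives $\min_k (\mathbf{e}_F)_k = 0$ and $\max_k (\mathbf{e}_F)_k = 1$, so $\psi_i(\mathbf{e}_F) = \mathbb{1}[i \in F]$ and $\phi_i(\mathbf{e}_F) = \mathbb{1}[i \notin F]$, matching the Courant-function expressions. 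To see that $\psi_i$ and $\phi_i$ are piecewise linear on $\Sigma_\mathrm{M}$, I would write a general point of a cone $\sigma_{\varnothing < \mathscr{F}}$ with $\mathscr{F} = \{F_1 \subsetneq \cdots \subsetneq F_l\}$ as $\mathbf{u} = \sum_k c_k \mathbf{e}_{F_k}$ with $c_k \geq 0$, and observe that $\min_j u_j = 0$ (attained at any $j \notin F_l$) while $\max_j u_j = \sum_k c_k$ (attained at any $j \in F_1$), so both functions reduce to linear expressions in the $c_k$ on each maximal cone.

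Since the globally convex function $\psi_i$ (respectively $\phi_i$) restricts to a piecewise linear function on $\Sigma_\mathrm{M}$ representing $\alpha_{\mathrm{M},i}$ (resp.~$\beta_{\mathrm{M},i}$), and since any globally convex piecewise linear function is automatically convex around every cone in the sense of Definition~\ref{DefinitionConvexity}, both $\alpha_\mathrm{M}$ and $\beta_\mathrm{M}$ lie in $\mathscr{N}_\mathrm{M}$. I do not anticipate a real obstacle here: the whole argument reduces to the elementary observation that $\max_j u_j$ and $-\min_j u_j$ are convex, together with a bookkeeping check that they are linear on each cone of $\Sigma_\mathrm{M}$. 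Note that neither representative is \emph{strictly} convex—e.g.\ $\psi_i$ vanishes on the entire face of $N_{E,\mathbb{R}}$ spanned by $\mathbf{e}_F$ with $i \in F$—which is consistent with $\alpha_\mathrm{M}$ and $\beta_\mathrm{M}$ being nef but not necessarily ample.
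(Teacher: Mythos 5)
Your proposal is correct, but it follows a genuinely different route from the paper's. The paper argues cone by cone directly from Definition~\ref{DefinitionConvexity}: for a given cone $\sigma_{\varnothing<\mathscr{F}}$ it chooses the representative $\alpha_{\mathrm{M},i}$ with $i$ outside every flat of $\mathscr{F}$ (respectively $\beta_{\mathrm{M},i}$ with $i\in\min\mathscr{F}$), which is a nonnegative combination of Courant functions vanishing on that cone, so convexity around $\sigma$ is immediate. You instead produce a single function $\psi_i$ (resp.\ $\phi_i$) convex on all of $N_{E,\mathbb{R}}$ whose restriction to $|\Sigma_\mathrm{M}|$ is $\alpha_{\mathrm{M},i}$ (resp.\ $\beta_{\mathrm{M},i}$), and then invoke the implication that global convexity gives convexity around every cone. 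That implication is true but is the one step you leave implicit, and since $\Sigma_\mathrm{M}$ is not complete (the paper only records the equivalence of the two convexity notions for complete fans) it deserves a sentence: take a subgradient of $\psi_i$ at a relative interior point of $\sigma$; the resulting linear function $a$ satisfies $a\le\psi_i$ globally with equality at that point, hence equality on all of $\sigma$ (a nonnegative affine function on a cone vanishing at a relative interior point vanishes on the cone, and $a(0)=\psi_i(0)=0$ makes $a$ linear), so $\psi_i-a$ is the required representative. With that supplied the two arguments are of comparable length; yours has the small advantage of exhibiting a global convex extension, and your computation that $\min_j u_j$ is attained off $\max\mathscr{F}$ while $\max_j u_j$ is attained on $\min\mathscr{F}$ is precisely what underlies the paper's per-cone choice of $i$.
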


\begin{proof}
For the first assertion, it is enough to show that $\alpha_\mathrm{M}$ is the class of a nonnegative piecewise linear function that is zero on a given cone $\sigma_{\varnothing < \mathscr{F}}$ in $\Sigma_\mathrm{M}$.
For this we choose an element $i$ not in any of the flats in $\mathscr{F}$.
The representative $\alpha_{\mathrm{M},i}$ of  $\alpha_\mathrm{M}$ has the desired property.

Similarly, for the second assertion, it is enough to show that $\beta_\mathrm{M}$ is the class of a nonnegative piecewise linear function that is zero on a given cone $\sigma_{\varnothing < \mathscr{F}}$ in $\Sigma_\mathrm{M}$.
For this we choose an element $i$ in the flat $\text{min}\ \mathscr{F}$.
The representative $\beta_{\mathrm{M},i}$ of  $\beta_\mathrm{M}$ has the desired property.
\end{proof}

\begin{proposition}\label{PropositionLogConcave}
For every positive integer $k < r$, we have 
\[
\mu^{k-1}(\mathrm{M})\mu^{k+1}(\mathrm{M}) \le \mu^{k}(\mathrm{M})^2.
\]
\end{proposition}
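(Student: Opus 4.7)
The plan is to derive the log-concavity of $\mu^k(\mathrm{M})$ by applying the Khovanskii--Teissier-type inequality of Lemma~\ref{lem:ampleinequality} simultaneously to the matroid $\mathrm{M}$ and to all of its successive truncations $\mathrm{M}^{(j)} := \text{tr}^j(\mathrm{M})$.

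First I would recall from Proposition~\ref{PropositionIntersectionDegree} that $\mu^k(\mathrm{M}) = \deg(\alpha_\mathrm{M}^{r-k} \hspace{0.5mm} \beta_\mathrm{M}^k)$, and from the preceding lemma that $\alpha_\mathrm{M}$ and $\beta_\mathrm{M}$ are nef. So the inequality to prove becomes a Khovanskii--Teissier-type statement about degrees of products of two nef classes in $A^*(\mathrm{M})_\mathbb{R}$. The naive substitution $\ell_1 = \alpha_\mathrm{M}$, $\ell_2 = \beta_\mathrm{M}$ directly into Lemma~\ref{lem:ampleinequality} yields, however, only the single extremal inequality $\mu^{r-2}(\mathrm{M})\hspace{0.5mm}\mu^r(\mathrm{M}) \le \mu^{r-1}(\mathrm{M})^2$, corresponding to $k = r-1$. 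To reach the remaining values of $k$, one varies the ambient matroid.

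The matroid $\mathrm{M}^{(j)}$ has rank $r+1-j$, and iterating the identity $\mu^k(\mathrm{M}) = \mu^k(\text{tr}(\mathrm{M}))$ (valid for $k$ strictly less than the rank of $\mathrm{M}$) yields
\[
\mu^k(\mathrm{M}) = \mu^k(\mathrm{M}^{(j)}) \quad \text{for every} \quad 0 \le k < r+1-j.
\]
Apply Lemma~\ref{lem:ampleinequality} in the Chow ring of $\mathrm{M}^{(j)}$, whose Bergman fan has dimension $r-j$, with $\ell_1 = \alpha_{\mathrm{M}^{(j)}}$ and $\ell_2 = \beta_{\mathrm{M}^{(j)}}$ (both nef by the preceding lemma, applied to $\mathrm{M}^{(j)}$). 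Combined with Proposition~\ref{PropositionIntersectionDegree} applied to $\mathrm{M}^{(j)}$, this gives
\[
\mu^{r-j-2}(\mathrm{M}^{(j)}) \hspace{0.5mm} \mu^{r-j}(\mathrm{M}^{(j)}) \le \mu^{r-j-1}(\mathrm{M}^{(j)})^2.
\]
All three indices are strictly less than $r+1-j$, so by the truncation identity each $\mu^\bullet(\mathrm{M}^{(j)})$ can be replaced by $\mu^\bullet(\mathrm{M})$. Setting $k := r-j-1$, this is exactly $\mu^{k-1}(\mathrm{M})\hspace{0.5mm}\mu^{k+1}(\mathrm{M}) \le \mu^k(\mathrm{M})^2$. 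As $j$ ranges over $0, 1, \ldots, r-2$, the index $k$ sweeps out $r-1, r-2, \ldots, 1$, covering every case.

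The only real obstacle is the bookkeeping: I must verify that Proposition~\ref{PropositionIntersectionDegree} really does apply to each $\mathrm{M}^{(j)}$ with its own rank $r+1-j$ and its own classes $\alpha_{\mathrm{M}^{(j)}}$, $\beta_{\mathrm{M}^{(j)}}$, and that the range of indices produced by one invocation of Lemma~\ref{lem:ampleinequality} on $\mathrm{M}^{(j)}$ lies exactly within the range where the truncation identity $\mu^k(\mathrm{M}) = \mu^k(\mathrm{M}^{(j)})$ is valid. Once these two pieces align, no further analytic or combinatorial input is needed beyond what has already been established.
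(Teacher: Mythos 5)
Your proposal is correct and is essentially the paper's own argument: the paper proves the statement by induction on the rank, reducing $k<r-1$ to the truncation $\text{tr}(\mathrm{M})$ and handling the extremal case $k=r-1$ by Lemma~\ref{lem:ampleinequality} applied to the nef classes $\alpha_\mathrm{M}$ and $\beta_\mathrm{M}$, which is exactly your iteration over $\mathrm{M}^{(j)}$ unrolled. Your index bookkeeping (the inequality for $\mathrm{M}^{(j)}$ involves only indices $\le r-j$, where the iterated truncation identity holds) is accurate.
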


\begin{proof}
We prove by induction on the rank of $\mathrm{M}$.
When $k$ is less than $r-1$, the induction hypothesis applies to the truncation of $\mathrm{M}$.
When $k$ is $r-1$, Proposition~\ref{PropositionIntersectionDegree} shows that the assertion is equivalent to the inequality
\[
\text{deg}(\alpha_\mathrm{M}^2\hspace{0.5mm} \beta_\mathrm{M}^{r-2})\text{deg}(\beta_\mathrm{M}^2\hspace{0.5mm} \beta_\mathrm{M}^{r-2})\leq\text{deg}(\alpha^1_\mathrm{M}  \hspace{0.5mm}\beta^{r-1}_\mathrm{M})^2.
\]
This follows from Lemma \ref{lem:ampleinequality} applied to  $\alpha_\mathrm{M}$ and $\beta_\mathrm{M}$, because $\beta_\mathrm{M}$ is nef by Lemma \ref{AlphaNefBetaNef}.
\end{proof}

We conclude with the proof of the announced log-concavity results.

\begin{theorem}\label{ReducedLogConcave}
Let $\mathrm{M}$ be a matroid, and let $G$ be a graph.
\begin{enumerate}[(1)]\itemsep 5pt
\item The coefficients of the reduced characteristic polynomial of $\mathrm{M}$ form a log-concave sequence. 
\item The coefficients of the characteristic polynomial of $\mathrm{M}$ form a log-concave sequence. 
\item The number of independent subsets of size $i$ of $\mathrm{M}$ form a log-concave sequence in $i$.
\item The coefficients of the chromatic polynomial of $G$ form a log-concave sequence.
\end{enumerate}
\end{theorem}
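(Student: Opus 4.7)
All four parts will be deduced from the log-concavity of the sequence $(\mu^k(\mathrm{M}))_{k=0}^{r}$ established in Proposition~\ref{PropositionLogConcave}, combined with the positivity $\mu^k(\mathrm{M}) > 0$ from Lemma~\ref{LemmaShelling}. Part~(1) is simply Proposition~\ref{PropositionLogConcave} restated, with positivity ruling out internal zeros.

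For Part~(2), the plan is to compare coefficients in the factorization $\chi_\mathrm{M}(\lambda) = (\lambda - 1)\hspace{0.5mm}\overline{\chi}_\mathrm{M}(\lambda)$, which yields
\[
w_k(\mathrm{M}) = \mu^k(\mathrm{M}) + \mu^{k-1}(\mathrm{M}), \qquad 0 \le k \le r+1,
\]
under the convention $\mu^{-1}(\mathrm{M}) = \mu^{r+1}(\mathrm{M}) = 0$. The sequence $(w_k(\mathrm{M}))$ is then the convolution of the positive log-concave sequence $(\mu^k(\mathrm{M}))$ with $(1,1)$, and log-concavity follows from the elementary fact that the convolution of two positive log-concave sequences without internal zeros is itself log-concave.

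Part~(4) reduces to Part~(2) via the factorization $\chi_G(\lambda) = \lambda^{c(G)}\hspace{0.5mm}\chi_{\mathrm{M}(G)}(\lambda)$, where $c(G)$ is the number of connected components of $G$ and $\mathrm{M}(G)$ is its cycle matroid; multiplying by $\lambda^{c(G)}$ only pads the coefficient sequence with leading zeros and so preserves log-concavity.

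For Part~(3), my plan is to reduce to Part~(1) via a classical \emph{free coextension} trick. Given $\mathrm{M}$ loopless of rank $r+1$ on $E$, set $\mathrm{N} := (\mathrm{M}^* + p)^*$, where $\mathrm{M}^* + p$ denotes the free extension of the dual matroid $\mathrm{M}^*$ by a new generic element $p$. Then $\mathrm{N}$ is a loopless matroid of rank $r+2$ on $E \sqcup \{p\}$, and a deletion-contraction calculation with Tutte polynomials, using that $(\mathrm{M}^*+p) \setminus p = \mathrm{M}^*$ and $(\mathrm{M}^* + p)/p$ is the truncation of $\mathrm{M}^*$, yields the identity
\[
\overline{\chi}_\mathrm{N}(\lambda) = \sum_{k=0}^{r+1} (-1)^k f_k(\mathrm{M})\hspace{0.5mm} \lambda^{r+1-k},
\]
so that $\mu^k(\mathrm{N}) = f_k(\mathrm{M})$ for every $k$. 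Applying Part~(1) to $\mathrm{N}$ then yields the log-concavity of $(f_k(\mathrm{M}))$. The main obstacle is the verification of this last combinatorial identity; this is however a classical Tutte polynomial calculation due to Brylawski, and is independent of the rest of the machinery developed in this paper.
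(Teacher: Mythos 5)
Your proposal is correct and follows essentially the same route as the paper: part (1) from Proposition~\ref{PropositionLogConcave}, part (2) by convolving with $(1,1)$, part (3) via the free dual extension $(\mathrm{M}^*+p)^*$ and the classical Brylawski/Lenz identity identifying $f_k(\mathrm{M})$ with the coefficients of its reduced characteristic polynomial, and part (4) via the cycle matroid factorization. The paper likewise cites the coextension identity rather than reproving it, so your plan matches the intended argument in all four parts.
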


The second item proves the aforementioned conjecture of Heron \cite{Heron}, Rota \cite{Rota}, and Welsh \cite{WelshBook}.
The third item proves the conjecture of Mason \cite{Mason} and Welsh \cite{Welsh}.
The last item proves the conjecture  of Read \cite{Read} and Hoggar \cite{Hoggar}.

\begin{proof}
It follows from Proposition~\ref{PropositionLogConcave} that the coefficients of the reduced characteristic polynomial of $\mathrm{M}$ form a log-concave sequence. 
Since the convolution of two log-concave sequences is a log-concave sequence,
the coefficients of the characteristic polynomial of $\mathrm{M}$ also form a log-concave sequence.

To justify the third assertion, we use the result of Brylawski \cite{Brylawski, Lenz} that the number of independent subsets of size $k$ of $\mathrm{M}$ is the absolute value of the coefficient of $\lambda^{r-k}$ of the reduced characteristic polynomial of another matroid. 
It follows that the number of independent subsets of size $k$ of $\mathrm{M}$ form a log-concave sequence in $k$.

For the last assertion, we recall that the chromatic polynomial of a graph is given by the characteristic polynomial of the associated graphic matroid \cite{WelshBook}. More precisely, we have
\[
\chi_G(\lambda) = \lambda^{n_G} \cdot \chi_{\mathrm{M}_G}(\lambda),
\]
where $n_G$ is the number of connected components of $G$. It follows that the coefficients of the chromatic polynomial of $G$ form a log-concave sequence. 
\end{proof}

\end{document}